%
%
%
\documentclass[11pt]{amsart}
\usepackage{amssymb,mathrsfs,graphicx,enumerate,color}
\usepackage{colortbl}
\definecolor{black}{rgb}{0.0, 0.0, 0.0}
\definecolor{red}{rgb}{1.0, 0.5, 0.5}

\topmargin-0.1in \textwidth6.in \textheight8.5in \oddsidemargin0in
\evensidemargin0in
\title[   ]{Well-posedness of the Riemann problem with two shocks for the isentropic Euler system in a class of vanishing physical viscosity limits}

\author[Kang]{Moon-Jin Kang}
\address[Moon-Jin Kang]
{ Department of Mathematical Sciences, \newline
Korea Advanced Institute of
Science and Technology \\
 Daejeon 34141, Korea}
 \email{moonjinkang@kaist.ac.kr}

\author[Vasseur]{Alexis F. Vasseur}
\address[Alexis F. Vasseur]{\newline Department of Mathematics, \newline The University of Texas at Austin, Austin, TX 78712, USA}
\email{vasseur@math.utexas.edu}

\newtheorem{theorem}{Theorem}[section]
\newtheorem{lemma}{Lemma}[section]

\newtheorem{proposition}{Proposition}[section]
\newtheorem{remark}{Remark}[section]

\newcommand{\bbr}{\mathbb R}

\newcommand{\pt}{p(\tilde{v})}
\newcommand{\deo}{\delta_0}

\numberwithin{figure}{section}

\newcommand{\beq}{\begin{equation}}
\newcommand{\eeq}{\end{equation}}
\newcommand{\bsp}{\begin{split}}
\newcommand{\esp}{\end{split}}



\newcommand{\pa}{{\partial}}






\newcommand{\tiv}{\tilde{v}}
\newcommand{\tih}{\tilde{h}}

\newcommand{\lam}{\lambda}



\newcommand{\sgn}{{\text{\rm sgn}}}





\newcommand{\RR}{{\mathbb R}}





\def\eps{\varepsilon}

\newcommand\adots{\mathinner{\mkern2mu\raise1pt\hbox{.}
\mkern3mu\raise4pt\hbox{.}\mkern1mu\raise7pt\hbox{.}}}


%

\def\charf {\mbox{{\text 1}\kern-.30em {\text l}}}



\def\lam{\lambda}  




\newcommand \deltaz {{\delta_0}}
\newcommand \deltao {{\delta_1}}

\newcommand \deltat {{\delta_2}}
\newcommand{\eo}{{\varepsilon_1}}
\newcommand{\et}{{\varepsilon_2}}

\newcommand{\bw}{{\bf w}}
\newcommand{\bv}{{\bf v}}

 \newcommand{\bmat}{\begin{pmatrix}} 
  \newcommand{\emat}{\end{pmatrix}}
  
\newcommand{\s}{\sigma}

\newcommand{\vn}{v} 
\newcommand{\un}{u} 

\begin{document}
\bibliographystyle{plain}

\date{\today}

\subjclass{76N15, 35B35,   35Q30} \keywords{Isentropic Euler system, Riemann problem of two shocks, Uniqueness, Stability, Compressible Navier-Stokes, Vanishing viscosity limit, Relative entropy, Conservation law.}

\thanks{\textbf{Acknowledgment.}  The first author was partially supported by the NRF-2019R1C1C1009355.
The second author was partially supported by the NSF grant: DMS 1614918. 
}

\begin{abstract}
We consider the Riemann problem composed of two shocks for the 1D Euler system. We show that the Riemann solution with two shocks is stable and unique in the class of weak inviscid  limits of solutions to the  Navier-Stokes equations with initial data with bounded energy. This work extends to the case of two shocks a previous result of the authors in the case of a single shock. It is based on the method of weighted relative entropy with shifts known as $a$-contraction theory. A major difficulty due to the method is that very little control is available on the shifts. A modification of the construction of the shifts is needed to ensure that the two shock waves are well separated,  at the level of the Navier-Stokes system, even when subjected  to    large perturbations. This work put the foundations needed to  consider a large family of interacting waves. It is a key result in the program to solve the Bianchini-Bressan conjecture, that is the inviscid limit of solutions to the  Navier-Stokes equation to the unique BV solution of the Euler equation, in the case of small BV initial values. 
  \end{abstract}
\maketitle \centerline{\date}

\tableofcontents

\section{Introduction}
\setcounter{equation}{0}

Consider  the one-dimensional barotropic Navier-Stokes system in the Lagrangian coordinates:
\begin{align}
\begin{aligned}\label{inveq}
\left\{ \begin{array}{ll}
    \displaystyle{    \vn_t - \un_x =0,}\\[0.3cm]
        \displaystyle{    \un_t+p(\vn)_x = \nu\Big(\frac{\mu(\vn)}{\vn} \un_x\Big)_x, \qquad t>0, \ x\in \RR,}\end{array} \right.
\end{aligned}
\end{align}
where $v$ denotes the specific volume, $u$ is the fluid velocity, and $p(v)$ is the pressure law. We consider the case of  a polytropic perfect gas where the pressure is given by
\beq\label{pressure}
p(v)= v^{-\gamma},\quad \gamma> 1,
\eeq
with  $\gamma$ the adiabatic constant. Here, $\mu$ denotes the viscosity coefficient given by 
\beq\label{mu-def}
\mu(v) = bv^{-\alpha},\qquad b>0 .
\eeq
We assume the following relating between $\alpha$ and $\gamma$:
\beq\label{ass-ag}
0<\alpha\le \gamma \le \alpha +1.
\eeq
This includes, for instance, the case of the viscous shallow water equation $\gamma=2$, $\alpha =1$ (see \cite{GP}).
\vskip0.3cm
We are interested in studying on the well-posedness of the inviscid limits $\nu\to 0$.
At least formally, the limit system of \eqref{inveq} is given by the isentropic Euler system:
\begin{align}
\begin{aligned} \label{Euler}
\left\{ \begin{array}{ll}
       v_t - u_x =0,\\[0.3cm]
       u_t+p(v)_x =0.\end{array}\ \right.
\end{aligned}
\end{align}
For  small $BV$ initial values, global weak solutions for conservation laws as \eqref{Euler} have been constructed in 1965 by Glimm \cite{Glimm}. The uniqueness of these weak entropy solutions in the class of small $BV$ functions has been proved later by Bressan, Crasta, and Piccoli  in 2000 \cite{Bressan1} (see also Liu and Yang \cite{Liu-Yang},  Bressan Lui and Yang \cite{Bressan2}, and Bressan \cite{Bressan}). 
In 2005, Bianchini and Bressan considered the inviscid limit of a fully parabolic model (with viscosity, and artificial diffusion in the $v$ equation), and showed that the solutions converge to the unique BV solution in the case of small BV initial values \cite{BB}.  In this work, they mention the problem of the inviscid limit of Navier-Stokes as an open problem. As today, the problem is still unsolved. 
\vskip0.3cm
This paper is a key step in  our program to solve this problem. One key difficulty for the problem is that obtaining a uniform BV estimate on the solutions of Navier-Stokes seems unattainable. Our general philosophy is to avoid completely  this step, by working only with the natural energy estimates via our  method of a-contraction with shifts. In \cite{Kang-V-NS17}, we used the method to show the stability, uniformly in $\nu$, of a single shock wave. In \cite{KV-unique19}, we apply the result to show that shock waves of \eqref{Euler} are unique in the class of  weak limits of \eqref{inveq} whose initial values converges to the shock wave. The basic idea is now to combine several such waves. Because the limit equation \eqref{Euler} has a finite speed of propagation, it is expected to be possible to estimate such evolution, and pass into the limits in the number of waves. However, many difficulties stem from  features of the method itself. First, at the level of the Navier-Stokes equation, the waves do interact at long distance, and are not independent anymore. More importantly,  we can obtain only rough  a priori control on the artificial shifts induced by the method. 
Since we do not have a priori control on the solutions, uniform in $L^\infty$  with respect to $\nu$, these shifts can be very oscillatory at the limit. For the  limit problems \eqref{Euler}, due to the separation of shock speeds, a 1-shock can never collide  from the left with   a 2-shock. This allows, in particular, to solve the Riemann problem. Due to the artificial shifts, and the lack of uniform bounds in $\nu$ on the solutions of  \eqref{inveq}, the separation of waves  is far more complicated at the level of Navier-Stokes. 
A key point is to show that shifts associated to two different families of shocks cannot produce artificial collisions.  We need to ensure that  a 1-shock cannot be pushed through the artificial shift, so much that it would collide with  a 2-shock from the left. This is crucial to recover the Riemann problem at the limit, and this is the problem solved  in this paper.
\vskip0.3cm
Consider the Riemann problem for \eqref{Euler} with the Riemann initial data:
\beq\label{rini}
(\bar v, \bar u)(x)=\left\{ \begin{array}{ll}
         (v_-,u_-)  \quad\mbox{if $ x <0$},\\
        (v_+,u_+) \quad \mbox{if $x >0$}.\end{array} \right.
\eeq
Here, the two end states $(v_-,u_-)$ and $(v_+,u_+)$ are given constants such that the following holds: 
there exists an intermediate (constant) state $(v_m , u_m)$ such that $(v_-,u_-)$ connects with $(v_m , u_m)$ by the 1-Hugoniot curve, and $(v_m , u_m)$ connects with $(v_+,u_+)$ by the 2-Hugoniot curve, that is, those satisfy the two Rankine-Hugoniot conditions and Lax entropy conditions, respectively:
\begin{align}
\begin{aligned}\label{end-con} 
&\left\{ \begin{array}{ll}
       -\sigma_1 (v_m-v_-) - (u_m -u_-) =0,\\
       -\sigma_1 (u_m-u_-) +p(v_m)-p(v_-)=0, \end{array} \right. \\
& \mbox{where} \quad   \sigma_1=- \sqrt{-(p(v_m)-p(v_-))/(v_m - v_-)}, \quad v_->v_m ,~ u_->u_m ;\\   
&\left\{ \begin{array}{ll}
       -\sigma_2 (v_+-v_m) - (u_+-u_m) =0,\\
       -\sigma_2 (u_+-u_m) +p(v_+)-p(v_m)=0, \end{array} \right.  \\
& \mbox{where} \quad  \sigma_2= \sqrt{-(p(v_+)-p(v_m))/(v_+-v_m)},\quad  v_m<v_+,~ u_m>u_+ .      
\end{aligned}
\end{align} 
The Riemann solution $(\bar v,\bar u)(t,x)$, corresponding to the above data, is the self-similar entropy solution for \eqref{Euler} consisting of the 1-shock wave and the 2-shock wave as follows:
\beq\label{shock-0}
(\bar v, \bar u)(t,x)=\left\{ \begin{array}{ll}
         (v_-,u_-)  \quad\mbox{if $ x/t<\sigma_1 $},\\
          (v_m,u_m)  \quad\mbox{if $\sigma_1 <x/t<\sigma_2 $},\\
        (v_+,u_+) \quad \mbox{if $x/t> \sigma_2$}.\end{array} \right.
\eeq

In this paper, we show  the stability and uniqueness of the Riemann solution \eqref{shock-0} in the class of inviscid limits of Navier-Stokes. To achieve our goal, the main step is to get the uniform (in $\nu$) stability for large perturbations of a composite viscous wave related to the Riemann solution. More precisely, we first recall the fact (see Matsumura-Wang \cite{MW}) that the system \eqref{inveq} admits the 1-viscous shock wave $(\tilde v^{\nu}_1,\tilde u^{\nu}_1)(x-\sigma_1 t)$ and the  2-viscous shock wave $(\tilde v^{\nu}_2,\tilde u^{\nu}_2)(x-\sigma_2 t)$ as traveling wave solutions:
\begin{align}
\begin{aligned}\label{shock_0} 
\left\{ \begin{array}{ll}
      -\sigma_1 (\tilde v^{\nu}_1)_x - (\tilde u^{\nu}_1)_x =0,\\
     -\sigma_1 (\tilde u^{\nu}_1)_x+p( \tilde v^{\nu}_1)_x= \nu\Big(\frac{\mu( \tilde v^{\nu}_1)}{ \tilde v^{\nu}_1}  (\tilde u^{\nu}_1)_x \Big)_x \\
     \lim_{x\to-\infty}(\tilde v^{\nu}_1,\tilde u^{\nu}_1)(x-\sigma_1 t)=(v_-, u_-),\quad  \lim_{x\to+\infty}(\tilde v^{\nu}_1,\tilde u^{\nu}_1)(x-\sigma_1 t)=(v_m, u_m), \end{array} \right.\\
\left\{ \begin{array}{ll}
      -\sigma_2 (\tilde v^{\nu}_2)_x - (\tilde u^{\nu}_2)_x =0,\\
      -\sigma_2 (\tilde u^{\nu}_2)_x+p( \tilde v^{\nu}_2)_x= \nu\Big(\frac{\mu( \tilde v^{\nu}_2)}{ \tilde v^{\nu}_2}  (\tilde u^{\nu}_2)_x \Big)_x \\
     \lim_{x\to-\infty}(\tilde v^{\nu}_2,\tilde u^{\nu}_2)(x-\sigma_2 t)=(v_m, u_m),\quad  \lim_{x\to+\infty}(\tilde v^{\nu}_2,\tilde u^{\nu}_2)(x-\sigma_2 t)=(v_+, u_+). \end{array} \right.       
\end{aligned}
\end{align} 
Then, as the viscous version of \eqref{shock-0}, we consider the composite wave  consisting of the two viscous shock waves:
\beq\label{comwave}
(\tiv^\nu, \tilde u^\nu) (t,x) := \Big(\tilde v^{\nu}_1(x-\sigma_1 t)+\tilde v^{\nu}_2(x-\sigma_2 t) -v_m ,\tilde u^{\nu}_1(x-\sigma_1 t)+\tilde u^{\nu}_2(x-\sigma_2 t) -u_m\Big).
\eeq

For the global-in-time existence of solutions to \eqref{inveq}, we introduce the function space:
\begin{align*}
\begin{aligned}
\mathcal{X}_T := \{ (v,u)~&|~ v-\underline v, ~u- \underline u \in C (0,T; H^1(\bbr)),\\
&\qquad\qquad u-\underline u \in L^2 (0,T; H^2(\bbr)),~ 0< v^{-1}\in L^\infty((0,T)\times \bbr) \} ,
\end{aligned}
\end{align*}
where $\underline v$ and $\underline u$ are smooth monotone functions such that
\beq\label{sm-end}
\underline v(x) = v_\pm \quad\mbox{and}\quad \underline u(x) = u_\pm\quad\mbox{for } \pm x \ge 1.
\eeq
Define the relative potential energy as 
$$
Q(v|\underline{v})= Q(v)-Q(\underline{v})-Q'(\underline{v})(v-\underline{v}), 
$$
where $Q'=p$.
Thanks to \cite{KVexistence}, for  any initial value $(v_0,u_0)$ with finite relative energy 
\begin{equation}\label{hyp1}
\displaystyle{\int_\RR\left(\frac{|u_0-\underline{u}|^2}{2}+Q(v_0|\underline{v})\right)\,dx <\infty,}
\end{equation}
and such that 
\begin{equation}\label{hyp2}
\begin{array}{l}
\displaystyle{v_0-\underline v, u_0-\underline u \in H^k(\RR), \qquad  \mathrm{for \ some \ }k\geq 4,}\\[0.3cm]
\displaystyle{0< \underline \kappa_0\nu \leq v_0(x)\leq  \overline \kappa_0/\nu, \qquad  \forall x\in \RR, \qquad \mathrm{for \ some \ constants \ }   \underline \kappa_0,  \overline \kappa_0,}\\[0.3cm]
\displaystyle{\partial_x u_0(x)\leq \frac{v_0^{\alpha-\gamma}}{\nu}, \qquad  \forall x\in \RR,}
\end{array}
\end{equation}
there exists a unique global solution to \eqref{inveq}. Moreover, for any time $T>0$, this solution lies in $\mathcal{X}_T$.

\subsection{Main results}

To estimate the stability and uniqueness of the Riemann solution, we use the relative entropy associated to the entropy of \eqref{Euler} as follows:
For any functions $v_1,u_1,v_2,u_2$,
\beq\label{eta_def}
\eta((v_1,u_1)|(v_2,u_2)) :=\frac{|u_1-u_2|^2}{2} +Q(v_1|v_2),
\eeq
where $Q(v_1|v_2)$ is the relative functional associated with the strictly convex function 
\[
Q(v):=\frac{v^{-\gamma+1}}{\gamma-1},\quad v>0.
\]
However, in the inviscid limit, the first components $v_1$ are limit of Navier-Stokes equations, which can be a measure in $t,x$. 
So, we should extend the definition of the relative entropy to the case of measures defined on $\bbr^+\times \bbr$ as in \cite{KV-unique19}.  We will restrict the definition in the case where we compare a measure $d v$ with a simple function $\bar \bv$ only taking three values  $v_-, v_+$ and  $v_m$ satisfying $v_\pm\ge v_m$ (as the values of \eqref{end-con}).
 Let  $v_a$ denote the  Radon-Nikodym derivative  of $dv$ with respect to the Lebesgue measure and $dv_s$  its singular part, i.e., $dv=v_a \,dt\,dx+dv_s$.
The relative potential energy  is then itself a measure defined as
\beq\label{dQ}
dQ(v|\bar \bv)(t,x) := Q\left(v_a|\bar \bv\right) dt dx +  |Q'(\overline V(t,x))| dv_s (t,x) ,
\eeq
where $\overline V$ is defined everywhere on $\bbr^+\times \bbr$ by
\begin{eqnarray*}
\overline{V}(t,x)
=\left\{ \begin{array}{ll}
        v_- \quad &\mathrm{for  \ \ } (t,x)\in \overline{\{\bar \bv = v_-\}} \quad (=\mbox{the closure of } \{\bar \bv = v_-\}),\\
        v_+ \quad &\mathrm{for  \ \ } (t,x)\in  \overline{\{\bar \bv = v_+\}}, \\
        v_m \quad &\mathrm{for  \ \ } (t,x)\in \left(\overline{\{\bar \bv = v_-\}}\cup \overline{\{\bar \bv = v_+\}}\right)^c,
        \end{array} \right.
\end{eqnarray*}
Note that $|Q'(v_\pm)| \le |Q'(v_m)|$. Also, note that if $v\in L^\infty(\bbr^+;L^\infty(\bbr)+\mathcal{M}(\bbr))$, then   $dQ(v|\bar v)$ is defined in $L^\infty(\bbr^+; L^\infty(\bbr)+\mathcal{M}(\bbr))$, where $\mathcal{M}$ denotes the space of bounded Radon measures.

The main result of this paper is the following. 

\begin{theorem}\label{thm_inviscid}
For each $\nu>0$, consider the system \eqref{inveq}-\eqref{mu-def} with the assumption \eqref{ass-ag}. 
For a given constant $U_*:=(v_*,u_*)\in\bbr^+\times\bbr$, there exists a constant $\eps_0>0$ such that the following holds.\\
Let $U_-:=(v_-,u_-), U_m:=(v_m,u_m), U_+:=(v_+,u_+)\in\bbr^+\times\bbr$ be any constant states such that \eqref{end-con} holds true, and $U_-, U_m, U_+ \in B_{\eps_0}(U_*)$.\\
Then for a given initial datum $(v^0,u^0)$ of \eqref{Euler} satisfying 
\beq\label{basic_ini}
\mathcal{E}_0:=\int_{-\infty}^{\infty} \eta\big((v^0,u^0)| (\bar v, \bar u)\big) dx <\infty,
\eeq
as a perturbation of the Riemann initial datum \eqref{rini},
the following is true.\\
(i) (Well-prepared initial data) There exists a sequence of smooth functions $\{(v^{\nu}_0, u^{\nu}_0)\}_{\nu>0}$ such that 
\begin{align}
\begin{aligned}\label{ini_conv}
&\lim_{\nu\to0} v^{\nu}_0 = v^0,\quad \lim_{\nu\to0} u^{\nu}_0 = u^0\quad \mbox{a.e.},\quad v^{\nu}_0>0,\\
&\lim_{\nu\to 0} \int_\bbr  \left[\frac{1}{2}\left(u^{\nu}_0 +\nu\left(p(v^{\nu}_0)^{\frac{\alpha}{\gamma}}\right)_x -\tilde u^{\nu}(0,x) -\nu\left(p(\tilde v^{\nu}(0,x))^{\frac{\alpha}{\gamma}}\right)_x  \right)^2 +Q(v^{\nu}_0|\tilde v^{\nu}(0,x)) \right] dx  = \mathcal{E}_0 ,
\end{aligned}
\end{align}
where $(\tilde v^{\nu}, \tilde u^{\nu})$ is the composite wave \eqref{comwave} of the two viscous shocks \eqref{shock_0}.\\
(ii) For a given $T>0$, let $\{(v^{\nu}, u^{\nu})\}_{\nu>0}$ be a sequence of solutions in $\mathcal{X}_T$ to \eqref{inveq} with the initial datum $(v^{\nu}_0, u^{\nu}_0)$ as above. 
Then there exist limits $v_{\infty}$ and $u_{\infty}$ such that as $\nu\to0$ (up to a subsequence),
\beq\label{wconv}
v^{\nu}\rightharpoonup  v_{\infty},\quad u^{\nu} \rightharpoonup  u_{\infty}  \quad \mbox{in} ~\mathcal{M}_{\mathrm{loc}}((0,T)\times\bbr) ~\mbox{(space of locally bounded Radon measures)} ,
\eeq
where $v_\infty$ lies in $L^\infty(0,T,L^\infty(\bbr)+\mathcal{M}(\bbr))$, and  $u_\infty$ lies in $L^\infty(0,T,L^\infty(\bbr)+L^2(\bbr))$.\\
In addition, there exist shifts $X_1^{\infty}, X_2^{\infty} \in \mbox{BV}((0,T))$ and constant $C>0$ such that for a.e. $t\in (0,T]$,
\beq\label{limX12}
\sigma_1 t +X_1^\infty(t)\le \frac{\s_1}{2} t <0<\frac{\s_2}{2} t\le \sigma_2 t +X_2^\infty(t),
\eeq
and
\beq\label{uni-est}
\int_{\bbr } \frac{|u_\infty(t,x)-\bar u^{X_1^\infty, X_2^\infty}(t,x)|^2}{2}  dx + \int_{x\in \bbr } d Q(v_\infty | \bar v^{X_1^\infty, X_2^\infty} )(t, x)  \  \le C \mathcal{E}_0,
\eeq
where $(\bar v^{X_1^\infty, X_2^\infty}, \bar u^{X_1^\infty, X_2^\infty})$ denotes the shifted Riemann solution, that is,
\[
(\bar v^{X_1^\infty, X_2^\infty},\bar u^{X_1^\infty, X_2^\infty}) (t,x) =\left\{ \begin{array}{ll}
         (v_-,u_-)  \quad\mbox{if $ x<\sigma_1 t + X_1^\infty(t) $},\\
          (v_m,u_m)  \quad\mbox{if $\sigma_1 t +X_1^\infty(t)  <x<\sigma_2 t +X_2^\infty(t) $},\\
        (v_+,u_+) \quad \mbox{if $x> \sigma_2t +X_2^\infty(t) $}.\end{array} \right.
\]
Moreover, for a.e. $t_0>0$, there exists a positive constant $C(t_0)$ such that
\beq\label{X-control}
|X_1^\infty(t)| +|X_2^\infty(t)| \le C t_0 + C(t_0)\Big( \mathcal{E}_0 + (1+t)\sqrt{\mathcal{E}_0} \Big),\quad\mbox{for a.e. } t\in(0,T).
\eeq
Therefore, the Riemann solution \eqref{shock-0} is stable (up to shifts) and unique in the class of weak inviscid limits of solutions to the Navier-Stokes systems \eqref{inveq}-\eqref{ass-ag}.
\end{theorem}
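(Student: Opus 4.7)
The plan is to reduce this limit statement to a uniform-in-$\nu$ stability estimate for the composite viscous profile $(\tiv^\nu,\tilde u^\nu)$ with two independent artificial shifts $X_1^\nu(t), X_2^\nu(t)$, and then extract a weak-$*$ limit in $\nu$. The stability will be controlled by an $a$-contraction functional
\[
\mathcal{F}^\nu(t) = \int_\RR a^{X_1^\nu, X_2^\nu}(t,x)\,\eta\bigl((v^\nu,h^\nu)\,\big|\,(\tiv^\nu,\tilde h^\nu)^{X_1^\nu, X_2^\nu}\bigr)\,dx,
\]
where $h^\nu := u^\nu + \nu (p(v^\nu)^{\alpha/\gamma})_x$ is the Bresch--Desjardins effective velocity (which explains the specific form of the comparison quantity appearing in the well-preparedness assumption), and $a$ is a piecewise-constant weight taking three distinct values on the regions separated by the two shifted shock positions, with the two jumps of $a$ calibrated to the Lax strengths of the corresponding shocks as in \cite{Kang-V-NS17}. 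The smallness parameter $\eps_0$ enters at the calibration of these jumps.

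For part (i), I would build $(v^\nu_0, u^\nu_0)$ by mollifying and truncating $(v^0,u^0)$ so that the smoothness, positivity, and one-sided $\pa_x u_0$ hypotheses are satisfied and so that $\mathcal{F}^\nu(0) \to \mathcal{E}_0$; this is analogous to the data preparation in \cite{KV-unique19}. The global existence theorem of \cite{KVexistence} then delivers a smooth solution $(v^\nu,u^\nu)\in \mathcal{X}_T$. The central new step is the construction of the two shifts. In the single-shock theory of \cite{Kang-V-NS17}, the shift is defined via an ODE $\dot X^\nu = M\,Y[\cdot]$ that yields contraction but essentially no pathwise control. A naive product construction for two shocks would allow $X_1^\nu$ to drift to the right while $X_2^\nu$ drifts to the left, producing an artificial collision that is absent from the inviscid Riemann structure. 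I therefore plan to modify each ODE by adding a one-sided barrier that activates whenever $\sigma_1 t + X_1^\nu(t)$ approaches $\sigma_1 t/2$ from below, respectively $\sigma_2 t + X_2^\nu(t)$ approaches $\sigma_2 t/2$ from above. The barrier must be engineered so that (a) it enforces the half-speed separation throughout the evolution, (b) the extra terms it introduces in $\frac{d}{dt}\mathcal{F}^\nu$ have a favourable sign or are dominated by the contraction budget, and (c) it produces an a priori bound on $|\dot X_i^\nu|$ in terms of a local $L^1$ norm of the relative entropy, which at the limit gives the quantitative estimate on $X_i^\infty$.

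Differentiating $\mathcal{F}^\nu$ splits the budget into localized contributions near each shock, handled as in \cite{Kang-V-NS17}, plus cross-interaction terms coming from the viscous tail of one shock overlapping the other. These cross terms decay exponentially in $\nu$ provided the separation lower bound enforced by the barrier holds, which is precisely why the barrier is designed around the half-speed curves $\sigma_i t/2$. The heart of the argument, and the main obstacle, is to close this loop: the barrier must simultaneously enforce separation and be harmless for the contraction, while the cross-interaction budget must be quantitatively dominated by the separation it produces. Once this is achieved, the uniform bound $\mathcal{F}^\nu(t) \le C \mathcal{E}_0$ and the BV bound inherited from the modified ODE are in hand.

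Part (ii) then follows by standard compactness: weak-$*$ compactness in $\mathcal{M}_{\mathrm{loc}}((0,T)\times\RR)$ for $(v^\nu,u^\nu)$, and Helly's selection theorem for the shifts, extract the limits $v_\infty,u_\infty,X_1^\infty,X_2^\infty$ along a subsequence; the separation bound passes to the limit since it is uniform in $\nu$. The inequality on the limiting weighted relative entropy is recovered by lower semi-continuity in the measure-valued framework introduced above (the singular part of $dv_\infty$ being captured by $|Q'(\overline V)|\,dv_s$), and the shift bound follows from integrating the a priori control of $|\dot X_i^\nu|$ in time against $\sqrt{\mathcal{F}^\nu}$ and $\mathcal{F}^\nu$. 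Uniqueness is then a consequence of the estimate, since any such weak inviscid limit coincides with the shifted Riemann solution up to a controlled perturbation.
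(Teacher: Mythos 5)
Your overall architecture (BD effective velocity $h^\nu$, weighted relative entropy with two shifts, reduction by scaling to a large-perturbation estimate at $\nu=1$, then weak-$*$ compactness, Helly for the shifts, and lower semicontinuity in the measure-valued framework) does match the paper. However, you leave open precisely the step that is the substance of the result, and one of your closing claims is not correct. First, the separation of the two shifts: you propose a position-dependent ``barrier'' that activates near the half-speed curves and you explicitly defer the verification that it is compatible with the contraction budget (``the heart of the argument\dots is to close this loop''). The paper does not use a barrier at all: the shift ODE \eqref{X-def} is built so that the velocity obeys the one-sided cap unconditionally, $\dot X_1\le -\s_1/2$ and $\dot X_2\ge -\s_2/2$ in every regime of \eqref{explicit-X}, which integrates directly to \eqref{msepX12}; the analytic work then consists in showing that the capped drift terms $-\tfrac{\s_i}{2}Y_i$ and the quadratic penalizations can be absorbed, which is the content of Proposition \ref{prop:main} (using the smallness Lemma \ref{lemmeC2}, the truncation estimates, the wave-interaction decay, and the Poincar\'e-type expansion). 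Without this mechanism your proposal does not actually prove \eqref{limX12}. Second, the quantitative shift bound \eqref{X-control} cannot be obtained, as you claim, ``by integrating the a priori control of $|\dot X_i^\nu|$ in time against $\sqrt{\mathcal{F}^\nu}$ and $\mathcal{F}^\nu$'': the pathwise bound \eqref{aest-shift} necessarily contains an $O(1)$ term (the capped drift $-\s_i/2$ is active regardless of the size of the perturbation), so integration only yields $|X_i^\nu(t)|\le C(t+\nu)$, which does not vanish as $\mathcal{E}_0\to 0$ and hence cannot give uniqueness. The paper derives \eqref{X-control} a posteriori at the limit, using that $\partial_t v_\infty-\partial_x u_\infty=0$ in the sense of distributions, testing with functions localized around each shifted shock, invoking the Rankine--Hugoniot relations \eqref{end-con}, and controlling the remainders by the stability estimate \eqref{uni-est}; this extra argument is indispensable and is missing from your plan.

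A further point that would break your version of the $\nu=1$ estimate: the weight cannot be piecewise constant with jumps at the shifted shock positions. At the Navier--Stokes level the weight must be smooth and monotone across each layer, with $\partial_x a_i\propto \partial_x p(\tilde v_i)/\eps_i$ as in \eqref{weight-a}--\eqref{d-weight}, and with total variation $\lambda$ large compared to the shock strengths ($\eps_i/\lambda<\delta_0$), not ``calibrated to the Lax strengths.'' The good term $\int |\partial_x a|\,Q(v|\tilde v)$ spread over the layer, the convexity of the functionals $Y_i$, the localized smallness of the relative entropy (Lemma \ref{lemmeC2}), and the change of variables in the sharp Poincar\'e-type estimate all rely on this structure; a weight with atomic derivative at the shock locations does not produce these ingredients, so the contraction argument for large perturbations would not close. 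Finally, a small correction: the uniform estimate carries an extra additive constant (becoming $C\nu$ after rescaling) because the superposed viscous shocks are not an exact solution, so the bound is $C\mathcal{E}_0+C\nu$ rather than $C\mathcal{E}_0$; this is harmless in the limit but should be accounted for.
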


\begin{remark}\label{rem-main}
1. Since the shifts $X_1^{\infty}, X_2^{\infty}$ are of BV on $(0,T)$, we have
\begin{align*}
\begin{aligned}
& \overline{\{\bar v^{X_1^\infty, X_2^\infty} = v_-\}} = \{ x\le \s_1 t + X_1^\infty(t) \},\\
& \overline{\{\bar v^{X_1^\infty, X_2^\infty} = v_+\}} = \{ x\ge \s_2 t + X_2^\infty(t) \},\\
&\left(\overline{\{\bar v^{X_1^\infty, X_2^\infty} = v_-\}}\cup \overline{\{\bar v^{X_1^\infty, X_2^\infty} = v_+\}}\right)^c  = \{\sigma_1 t +X_1^\infty(t)  <x<\sigma_2 t +X_2^\infty(t)\}.
\end{aligned}
\end{align*}
Thus it follows from \eqref{dQ} that the measure $d Q(v_\infty | \bar v^{X_1^\infty, X_2^\infty})$ in the stability estimate \eqref{uni-est} is written as follows: for the decomposition $dv_\infty=v_a \,dt\,dx+dv_s$,
\[
d Q(v_\infty | \bar v^{X_1^\infty, X_2^\infty}) (t, x) := Q\left(v_a| \bar v^{X_1^\infty, X_2^\infty}\right) dt dx +  |Q'(\overline V(t,x))| dv_s (t,x),
\]
where
\begin{align}
\begin{aligned}\label{barvd}
\overline{V}(t,x)
=\left\{ \begin{array}{ll}
        v_- \quad &\mathrm{for  \ \ }  x\le \s_1 t + X_1^\infty(t),\\
        v_+ \quad &\mathrm{for  \ \ } x\ge \s_2 t + X_2^\infty(t), \\
        v_m \quad &\mathrm{for  \ \ } \sigma_1 t +X_1^\infty(t)  <x<\sigma_2 t +X_2^\infty(t).
        \end{array} \right.
\end{aligned}
\end{align}
2. Theorem \ref{thm_inviscid} provides the stability and uniqueness of the Riemann solution \eqref{shock-0}  in the wide class of weak inviscid limits of solutions to the Navier-Stokes system. \\
Indeed, for the uniqueness, if $\mathcal{E}_0=0$, then \eqref{X-control} implies that for a.e. $t_0>0$,
\[
|X_1^\infty(t)| +|X_2^\infty(t)| \le C t_0,\qquad \mbox{for a.e. } t\in(0,T),
\]
and so,
\[
X_1^\infty(t) =0, \quad X_2^\infty(t) = 0, \qquad \mbox{for a.e. } t\in(0,T).
\]
This together with \eqref{uni-est} implies that for a.e. $t\in (0,T]$,
\[
\int_{\bbr } \frac{|u_\infty(t,x)-\bar u(t,x)|^2}{2}  dx + \int_{\bbr }  Q(v_a(t,x) | \bar v (t,x)) dx = 0 ,
\]
where the singular part $v_s$ of $v_\infty$ vanishes.  Therefore, we have
\[
u_\infty(t,x) = \bar u(t,x),\qquad  v_\infty(t,x) =  \bar v (t,x),\quad \mbox{ a.e. } (t,x)\in (0,T]\times\bbr.
\]
3. By \eqref{uni-est}, the limits $v_{\infty}, u_{\infty}$ satisfy $v_{\infty}\in \bar v + L^{\infty}(0,T; L^\infty(\bbr)+\mathcal{M}(\bbr))$ and $u_{\infty} \in \bar u + L^{\infty}(0,T; L^2(\bbr))$. The control in measure of $v_\infty$ is due to the fact that $Q(v|\overline{v})\geq c_2|v-\overline{v}|$ for $v\geq 3v_-$ (see (\ref{rel_Q}) in Lemma \ref{lem-pro}). Especially,  $v_\infty$ may have some measure concentrated at infinity. This corresponds physically to cavitation and appearance of vacuum.  \\
Note that we do not  need to know whether the weak inviscid limits $(u,v)$ are solutions to the system \eqref{Euler}, nor any a priory regualrity. The stability of the Riemann's problem needs only   that the perturbations  are generated through inviscid limits of the Navier-Stokes equation \eqref{inveq}. This is very different in spirit from results obtained via compensated compactness, see for instance Chen and Perepelitsa \cite{ChenPere}. The compensated compactness  method  shows that a certain limit verifies the equation. But it does not provide any information on the stability of these functions. \\
4. In fact, the smallness of amplitude of shocks is not needed for the proof of Theorem \ref{thm_inviscid}. The constraint is due to Theorem \ref{thm_general}.\\
\end{remark}

The starting point for the proof of Theorem \ref{thm_inviscid} is to derive the uniform (in $\nu$) stability of any large perturbations of the composite wave for \eqref{inveq}. It is equivalent to  obtaining the contraction property of any large perturbations of the composite wave to \eqref{inveq} with a fixed $\nu=1$:
\begin{align}
\begin{aligned}\label{main}
\left\{ \begin{array}{ll}
        v_t - u_x =0,\\
       u_t+p(v)_x = \Big(\frac{\mu(v)}{v} u_x\Big)_x. \end{array} \right.
\end{aligned}
\end{align}
As in \cite{KV-unique19}, we consider the following relative functional $E(\cdot|\cdot)$ to measure the stability:
\begin{align}
\begin{aligned}\label{psedo}
&\mbox{for any functions } v_1,u_1,v_2,u_2,\\ 
&E((v_1,u_1)|(v_2,u_2)) :=\frac{1}{2}\left(u_1 +\Big(p(v_1)^{\frac{\alpha}{\gamma}}\Big)_x -u_2 -\Big(p(v_2)^{\frac{\alpha}{\gamma}}\Big)_x  \right)^2 +Q(v_1|v_2),
\end{aligned}
\end{align}
where the constants $\gamma, \alpha$ are in \eqref{pressure} and \eqref{mu-def}. Since $Q(v_1|v_2)$ is positive definite, so is the functional $E(\cdot|\cdot)$, that is, for any functions $(v_1,u_1)$ and $(v_2,u_2)$ we have $E((v_1,u_1)|(v_2,u_2))\ge 0$, and 
\[
\quad E((v_1,u_1)|(v_2,u_2))= 0~\mbox{a.e.} \quad\Leftrightarrow\quad (v_1,u_1)=(v_2,u_2)~\mbox{a.e.}
\]
The functional $E$ is associated to the BD entropy (see \eqref{introh}).
The following main result provides the uniform stability of any large perturbations of the composite wave \eqref{comwave} with $\nu=1$. 

\begin{theorem}\label{thm_general}
Assume $\gamma>1$ and $\alpha>0$ satisfying $\alpha\le \gamma \le \alpha +1$.
For a given constant $U_*:=(v_*,u_*)\in\bbr^+\times\bbr$, there exists constant $\delta_0\in (0,1/2)$ such that the following holds.\\
Let $U_-:=(v_-,u_-), U_m:=(v_m,u_m), U_+:=(v_+,u_+)\in\bbr^+\times\bbr$ be any constant states such that \eqref{end-con} and $U_-, U_m, U_+ \in B_{\delta_0}(U_*)$.
Let $\eps_1:=|p(v_-)-p(v_m)|$ and $\eps_2:=|p(v_m)-p(v_+)|$. For any $\lambda >0$ with $\eps_1/\lambda, \eps_2/\lambda<\delta_0$ and $\lambda<\delta_0$, there exist a constant $C>0$ and smooth monotone functions $a_1,a_2$ with $a_1(x), a_2(x)\in[1-\lambda,1]$ for all $x\in\bbr$ such that the following holds.\\
Let $\tilde U(t,x):=(\tiv,\tilde u)(t,x)$ be the composite wave \eqref{comwave} with $\nu=1$. \\
Let
\beq\label{ab_weight}
a(t,x):= a_1(x-\sigma_1t) +a_2(x-\sigma_2t) -1.
\eeq
For a given $T>0$, let $U:=(v,h)$ be a solution in $\mathcal{X}_T$ to \eqref{main} with a initial datum $U_0:=\bmat{v_0}\\{u_0}\emat$ satisfying $\int_{-\infty}^{\infty} E(U_0(x)| \tilde U(0,x)) dx<\infty$. 
Then, there exist shift functions $X_1,X_2\in W^{1,1}((0,T))$ with $X_1(0)=X_2(0)=0$ such that 
for the shifted composite wave
\[
\tilde U^{X_1,X_2}(t,x):=\bmat{\tilde v^{X_1,X_2}(t,x) }\\{\tilde u^{X_1,X_2}(t,x) }\emat :=\bmat{\tilde v_1(x-\sigma_1t-X_1(t))+\tilde v_2(x-\sigma_2t-X_2(t))-v_m}\\ {\tilde u_1(x-\sigma_1t-X_1(t))+\tilde u_2(x-\sigma_2t-X_2(t))-u_m}\emat ,
\]
and the shifted weight
\[
a^{X_1,X_2}(t,x):= a_1(x-\sigma_1t-X_1(t)) +a_2(x-\sigma_2t-X_2(t)) -1,
\]
we have the uniform stability:
\begin{align}
\begin{aligned}\label{acont_main}
&\int_\bbr  a^{X_1,X_2}(t,x) E\big(U(t,x)| \tilde U^{X_1,X_2} (t,x) \big) dx \\
&\qquad  +\int_{0}^{T}\int_{-\infty}^{\infty} | \partial_x a^{X_1,X_2} (t,x)| Q\left(v(t,x)|\tilde v^{X_1,X_2}(t,x)\right) dx dt \\
&\qquad + \int_{0}^{T}\int_{-\infty}^{\infty} v^{\gamma-\alpha}(t,x) \Big|\partial_x\big(p(v(t,x)))-p(\tilde v^{X_1,X_2}(t,x))\big)\Big|^2dxdt  \\
&\quad\le  C \int_{\bbr} a(0,x) E \big(U_0(x)| \tilde U(0,x) \big) dx +C ,
\end{aligned}
\end{align}
and 
\beq\label{amsepX12}
X_1(t) \le -\frac{\s_1}{2} t,\quad X_2(t) \ge -\frac{\s_2}{2} t,\quad\forall t>0,
\eeq
in addition, for each $i=1,2$,
\begin{align}
\begin{aligned} \label{aest-shift}
&|\dot X_i(t)|\le C \left[ f(t) + \int_{\bbr} \eta(U_0(x)|\tilde U(0,x)) dx + 1 \right]\quad \mbox{ for \textit{a.e.} }t\in[0,T] ,\\
&\mbox{for some positive function $f$ satisfying}\quad\|f\|_{L^1(0,T)} \le C \int_{-\infty}^{\infty} \eta \big(U_0(x)| \tilde U(0,x) \big) dx.
\end{aligned}
\end{align}
\end{theorem}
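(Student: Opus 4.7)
The plan is to extend the $a$-contraction with shifts framework, established for a single viscous shock in the authors' earlier work, to the composite wave $\tilde U = \tilde U_1 + \tilde U_2 - U_m$. The core machinery should operate in the Bresch-Desjardins effective-velocity variables $h := u + (p(v)^{\alpha/\gamma})_x$, in which the relative functional $E$ becomes literally $\tfrac12(h-\tilde h)^2 + Q(v|\tilde v)$ and the system \eqref{main} exhibits a parabolic structure in $v$ coupled with a first-order equation for $h$. Because the weight splits as $a = a_1 + a_2 - 1$ and the reference wave is built additively from the two shock profiles, the energy identity splits into a shock-$1$ contribution, a shock-$2$ contribution, and genuinely nonlinear interaction residuals; the whole strategy is to recover on each contribution the one-shock quadratic form analysis and to absorb the residuals using the exponential decay of each $\partial_x \tilde U_i$ away from its front.

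I would construct the shifts $X_1,X_2$ as solutions of a maximal-monotone-projected ODE. In the absence of the separation constraint, the natural choice is
\[
\dot X_i(t) = -\frac{M_i}{\eps_i}\int_{\bbr} a_i'(x-\sigma_i t - X_i(t))\, E(U(t,x)\,|\,\tilde U^{X_1,X_2}(t,x))\,dx,
\]
with $M_i$ large, so that the bad linear-in-$U-\tilde U$ terms in the time derivative of $\int a^{X_1,X_2} E\,dx$ are cancelled. To enforce \eqref{amsepX12} I would project this ODE onto the admissible half-lines $\{X_1 \le -\sigma_1 t/2\}$ and $\{X_2 \ge -\sigma_2 t/2\}$: on the active set one forces $\dot X_1 = -\sigma_1/2$ (resp.\ $\dot X_2 = -\sigma_2/2$), producing an additional term in the identity whose sign should be favorable. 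Standard ODE theory for monotone operators then gives $X_i \in W^{1,1}(0,T)$, and the a.e.\ pointwise bound \eqref{aest-shift} is obtained by estimating the right-hand side of the defining ODE by Cauchy-Schwarz, with one factor a time-integrable spatial integral coming from $|\partial_x a|Q(v|\tilde v)$ and the other a harmless bound coming from the classical entropy $\int \eta(U_0|\tilde U(0))\,dx$.

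The main energy identity is then obtained by differentiating $\int a^{X_1,X_2} E(U|\tilde U^{X_1,X_2})\,dx$ and decomposing every term according to which of $a_i^{X_i}$ or $\partial_x \tilde U_i$ it attaches to. Writing $a^{X_1,X_2}-1 = (a_1^{X_1}-1)+(a_2^{X_2}-1)$, the bulk of the identity splits into two single-shock pieces, on each of which the quadratic-form analysis from \cite{Kang-V-NS17} applies (after using the chosen $\dot X_i$ to cancel linear terms) and yields the good terms $\int |\partial_x a^{X_1,X_2}| Q(v|\tilde v)\,dx$ and $\int v^{\gamma-\alpha}|\partial_x(p(v)-p(\tilde v^{X_1,X_2}))|^2\,dx$ on the left of \eqref{acont_main}. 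The genuinely new terms are (i) cross terms carrying both $\partial_x \tilde U_1$ and $\partial_x \tilde U_2$, and (ii) the error that makes $\tilde U$ fail to solve \eqref{main} exactly. Both types are pointwise controlled by a product like $|\partial_x\tilde U_1(x-\sigma_1 t - X_1)|\cdot|\partial_x\tilde U_2(x-\sigma_2 t - X_2)|$ times bounded quantities; by the exponential decay of each $\partial_x\tilde U_i$ at rate $\sim \eps_i$ and the separation \eqref{amsepX12}, which guarantees that $|(\sigma_1 t + X_1) - (\sigma_2 t + X_2)| \ge \tfrac12|\sigma_1-\sigma_2|t$, these products integrate in $(t,x)$ to a finite constant $C$, producing the additive $C$ on the right-hand side.

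The hard part will be reconciling the projection step with the linear-term cancellation. Away from the active set of the projection the argument is a direct two-shock superposition of the single-shock theory; on the active set, however, $\dot X_i$ is no longer free to kill the bad linear term. The key observation to make this work is that the projection only activates when a shift is being pushed far from its nominal position toward the other shock, and in that regime the displaced weight $\partial_x a_i^{X_i}$ is concentrated at $\sigma_i t + X_i$, which by construction lies strictly on the ``safe'' side of the origin, well separated from $\sigma_j t + X_j$ for $j\neq i$. Consequently the uncancelled linear term is supported where the other shock is exponentially small, and it can be absorbed into the interaction-term bookkeeping rather than into the contraction. Verifying this absorption quantitatively, uniformly in the large-perturbation regime where no $L^\infty$ bound on $U$ is available, is the essential new ingredient and the chief analytic difficulty of the proof.
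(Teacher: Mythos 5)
Your overall scaffolding (passing to the Bresch--Desjardins variable $h=u+(p(v)^{\alpha/\gamma})_x$, writing $a=a_1+a_2-1$, splitting the energy identity into two single-shock pieces plus interaction residuals, and using the exponential localization of $\partial_x\tilde U_i$ together with the separation of the shifted fronts) matches the paper's strategy, but the way you construct the shifts and, above all, the way you propose to close the estimate when the shift can no longer cancel the bad terms, contains a genuine gap. In the paper the constraint \eqref{amsepX12} is not enforced by projecting an unconstrained ODE: the shift velocities are defined directly through \eqref{X-def} with the cutoffs $\Phi_{\eps_i},\Psi_{\eps_i}$ applied to the functionals $Y_i(U)$ of \eqref{ybg-first}, and in \emph{every} branch one has $\dot X_1\le-\sigma_1/2$, $\dot X_2\ge-\sigma_2/2$, so separation is automatic. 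The crucial issue is what happens when $(-1)^{i-1}Y_i(U)\le\eps_i^2$, i.e.\ when the term $\dot X_iY_i$ is no longer available to dominate $\mathcal J^{bad}$ (this includes your ``active set'', where $\dot X_i=-\sigma_i/2$). Your claim that the uncancelled bad terms are then ``supported where the other shock is exponentially small'' and can be absorbed into interaction bookkeeping is not correct: the bad terms attached to shock $i$ (the $(a_i)_x$- and $(\tilde v_i)_x$-weighted integrals of $p(v)-p(\tilde v)$, $h-\tilde h$, and their derivatives) live precisely in the layer of shock $i$ itself, and the distance to the other shock gives no smallness for them. The paper controls this regime by a completely different mechanism: the condition $(-1)^{i-1}Y_i\le\eps_i^2$ is upgraded to the unconditional smallness $\int|(a_i)_x|\,\eta(U|\tilde U)\,dx\le C\eps_i^2/\lambda$ and $|Y_i|\le C_0\eps_i^2/\lambda$ (Proposition \ref{prop:sm}/Lemma \ref{lemmeC2}), then the bad terms are truncated at the level $\delta_1$ in $p(v)-p(\tilde v)$, maximized in $h$, expanded in $\eps_i$, and finally beaten by the good terms through the nonlinear Poincar\'e-type inequality (Propositions \ref{prop:main3} and \ref{prop:main}), with a time-dependent partition of unity $\phi_{1,t},\phi_{2,t}$ splitting the diffusion and good terms between the two waves. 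None of this machinery appears in your proposal, and without it the step ``the quadratic-form analysis from the one-shock case applies after using $\dot X_i$ to cancel the linear terms'' simply does not go through, because in the small-$Y_i$ regime the shift cancels nothing.

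A second, related problem is your treatment of the interaction and superposition errors. You assert these are ``pointwise controlled by $|\partial_x\tilde U_1|\cdot|\partial_x\tilde U_2|$ times bounded quantities'', but in the large-perturbation setting there is no $L^\infty$ bound on $U$, so the factors $p(v)-p(\tilde v)$ and $h-\tilde h$ multiplying $\tilde E_1,\tilde E_2$ are not bounded. The paper must split these errors over $\{p(v)-p(\tilde v)\le\delta_1\}$ and its complement, use the truncation and Young's inequality to absorb them into $\delta_1 D(U)$, $\delta_1 G_{1i}^{\pm}(U)$, $\delta_1 G_{2i}$, and keep only time-integrable remainders of the form $\exp(-C\min(\eps_1,\eps_2)t)$, $t^{-2}\int\eta(U|\tilde U)\,dx$, $t^{-4}$ (Lemmas \ref{lem:sep}, \ref{lem:phi}, \ref{lem:ws} and the estimate \eqref{n6}); the additive constant $C$ in \eqref{acont_main} then comes out of a Gr\"onwall argument applied to these remainders, not from a direct space-time integration of a product of wave derivatives. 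Your bound \eqref{aest-shift} would also need to be rederived for the paper's shift ODE, where it follows from $|\dot X_i|\le\max(\eps_i^{-2}(2|\mathcal J^{bad}|+1),|\sigma_i|/2)$ together with the $L^1_t$ control of $|\mathcal J^{bad}|$ furnished by the contraction estimate itself.
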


\begin{remark}
1. The stability of the viscous shock waves for the Navier-Stokes system is a very important issue in both mathematical and physical viewpoints.
Theorem \ref{thm_general} provides the first result on stability, independent of the size of the  perturbation, for composite wave of two viscous shocks of the compressible Navier-Stokes system. 
To the best of our knowledge, all the previous results on stability of composite wave of viscous shocks (even a single shock) for the Navier-Stokes require smallness conditon of initial perturbations (see for instance \cite{HMa,HLZ,LZ,MZ,MN,VY}).\\
2. Notice that the uniform stability \eqref{acont_main} is not a contraction estimate, contrary to the case of a single shock in \cite{KV-unique19}. This is natural because the composite wave \eqref{comwave} is not a solution to \eqref{main} (due to the nonlinearity of \eqref{main}).
\end{remark}

The rest of the paper is as follows. We first explain the background of our problem in Section \ref{sec:back}. 
Section \ref{sec:idea} provides scenario of proofs of the mains results. In Section \ref{sec:pre}, we present a transformation of the system \eqref{main}, and the statement of Theorem \ref{thm_main} as an equivalent version of Theorem \ref{thm_general}.  In Section \ref{section_theo}, we show that proving Theorem \ref{thm_main} boils down to showing Proposition \ref{prop:main}. The proof of the main Proposition \ref{prop:main} is presented in Section \ref{sec:prop}. 
In Section \ref{sec:abs}, as a special section, we provide useful propositions written in an abstract manner.
Finally, Section \ref{sec:main} is dedicated to the proof of Theorem \ref{thm_inviscid}.

\section{Background and previous results} \label{sec:back}
The method of proof is based on the relative entropy. Consider $U$ the state variable for conservation laws, and $\eta$ an associated strictly convex entropy functional. In the case of the Euler system \eqref{Euler} (or Navier-Stokes \eqref{inveq}), it corresponds to $U=(v,u)$, $\eta(U)= u^2/2+Q(v)$. 
The relative entropy of a state $U$ compared to another state $\tilde{U}$ is defined as 
$$
\eta(U|\tilde{U})=\eta(U)-\eta(\tilde{U})-d \eta(\tilde{U})\cdot(U-\tilde{U}).
$$
Note that the formula is not symmetric in $U$ and $\tilde{U}$. Because of convexity of $\eta$, this quantity provides a pseudo-distance between the state $U$ and $\tilde{U}$ (if $U$ and $\tilde{U}$ stay bounded in a compact set, then it is actually equivalent to $|U-\tilde{U}|^2$). This quantity was used by Dafermos  \cite{Dafermos4} to show the weak/strong uniqueness principle, a stability result for smooth solutions among the large class of merely weak solutions. Especially, it shows that smooth solutions 
$\tilde{U}$ are unique among any weak solutions $U$. Our main idea is to extend this principle to the situation where the solution $\tilde{U}$ has some discontinuities, and $U$ is an inviscid limit of \eqref{inveq} (possibly not solution to \eqref{Euler}). 
\vskip0.3cm
This idea is similar to a program developed in parallel, and initiated in \cite{Vasseur_Book}, to show the stability (and so uniqueness) of small BV solutions of \eqref{Euler} in the large class of bounded weak solutions of \eqref{Euler} verifying a strong trace property. This program is inspired by  the early work of   DiPerna \cite{DiPerna}  who showed the uniqueness of shocks among this family of weak solutions (see also Chen-Frid-Li  \cite{chen1} for the extension to the Riemann problem). 
\vskip0.3cm
Let us first describe a bit the history of this second program which deals directly with the hyperbolic equation as \eqref{Euler} (without considering any approximation as \eqref{inveq}).
When $\tilde{U}$ is a a fixed constant, the relative entropy is an affine perturbation of the entropy $\eta$, and so is an entropy on its own right. Therefore, the relative entropy $\eta(\cdot| \tilde{U})$ is a large family of entropies in the spirit of the Kruzkov entropies for the scalar case $|\cdot-\tilde{u}|$ (see \cite{K1}). The Kruzkov theory generates contraction properties of the weak solutions in $L^1$. Such contraction property cannot be obtained in general from the relative entropy. This is because the $L^2$ norm is incompatible with the Rankine-Huguoniot condition which dictates the displacement of singularities, and is based on the conservation of linear combinations of the conserved quantities. To understand this obstruction, we first considered the case where $\tilde{U}$ is a single shock. In \cite{Leger}, Leger showed that, in the scalar case, the relative entropy generates a contraction in $L^2$ up to a shift. This artificial shift  is very sensitive to the weak perturbation $U$, and is not uniquely defined. Typically, it can be obtained by solving a generalized ODE (Fillippov \cite{Filippov}) depending on the left and right values of the weak solution at a single point.  This is why, in the purely hyperbolic case, the theory needs the notion of strong traces. Unfortunately, this property is known to hold only for the scalar case \cite{Vasseur_trace}, or the isentropic system with $\gamma=3$ \cite{Vasseur_gamma3}.  
In \cite{Serre-Vasseur}, it has been showed   that $L^2$-type contraction for shocks, even up to a shift, is not true for most of the systems, including \eqref{Euler}. However, we showed in \cite{KVARMA,Vasseur-2013} that this contraction property can be recovered by weighting the relative entropy, leading to the theory of $a$-contraction with shifts.   More precisely, it has been shown that for any shocks $\tilde{U}=(U_l,U_r,\sigma)$, there exist weights $a_1, a_2>0$ such that for any weak solution $U$ we can build a shift $t\to X(t)$ such that 
$$
a_1\int_{-\infty}^{X(t)}\eta(U(t,x)|U_l)\,dx+a_2 \int_{X(t)}^{+\infty}  \eta(U(t,x)|U_r)\,dx \qquad \mathrm{ is \ non \ \! increasing \ in \ time. }
$$
In \cite{Krupa-V}, it was showed how to generalize this statement to obtain a stability result  for any solution ${U}$ in the scalar case. To extend the result to the system case where ${U}$ is of BV, two subtle  properties are needed. When considering several waves, we need to associate an artificial shift for each of the shock curves. Note that we have very little control on the artificial shifts $X_i(t)$.  A crucial needed property is  that, even with these artificial shifts, waves which are not supposed to collide (like a 1-shock on the left of a 2-shock) will never do collide. This has been achieved by Krupa in \cite{Krupa2}. 
Note that the large family of waves will generate a large family of weights $a_i$. It is then important to control the strength of variations of $a_i-a_{i-1}$. It can be shown that these variations can be chosen proportionally to the strength of the associated shock wave.   Surprisingly, this problem was first solved in the context of Navier-Stokes in \cite{Kang-V-NS17}. The proof for the hyperbolic case is very different, and will be available  in \cite{future1}. The final result of stability of BV solutions in the class of weak bounded solutions with strong trace can then been proved \cite{future2}.   
\vskip0.3cm
The program to solve the Bianchini-Bressan conjecture consists of following the same strategy, but aiming for a function ${U}$ which is an inviscid limit of \eqref{inveq} instead of being a solution of \eqref{Euler}. 
Note that the results in this context, as Theorem \ref{thm_inviscid}, are stronger than the one obtain directly on the hyperbolic system. Indeed no a priori assumption is needed on the inviscid limit (not even $L^\infty$ bounds or strong trace property). This is because the relative entropy calculus is done at the level of the Navier-Stokes with $\nu>0$ where enough regularity on the solutions ensures that all the computations hold true. But the price is a far higher level of sophistication in the proofs.  
\vskip0.3cm
Even in the case of a single shock, the contraction involves a subtle balance between the hyperbolic structure (forcing toward the singularity), and the parabolic one (fighting against it). An important point  is to ensure that estimates are uniform with respect to $\nu$.  However, we will show that it is enough to consider the case $\nu=1$ (in this paper it corresponds to Theorems \ref{thm_general}, \ref{thm_main}), at the price of considering general large perturbations. For this reason,  Theorem \ref{thm_general} (and Theorem \ref{thm_main}) is a far stronger result than a standard stability result, since it does not assume any smallness on the initial perturbation. This shows the strength of replacing the notion of stability, by the notion of contraction (without smallness on the initial value). The scaling argument is as follows. Consider $\tilde{U}^\nu$ a traveling wave (viscous shock) of \eqref{inveq}.  Assume that we want to show that for any solution $U^\nu$ of \eqref{inveq}, 
we have a contraction up to a weight function $a_\nu$ and a shift $X_\nu$:
$$
\int_\RR a_\nu(x-X_\nu(t)) \eta(U^\nu(t,x)|\tilde{U}^\nu(x-X_\nu(t)))\,dx \qquad \mathrm{ is \ non \ \! increasing \ in \ time. }
$$
Then, the function $U(t,x)=U^\nu(\nu t,\nu x)$ is a solution to \eqref{inveq} with $\nu=1$, and $\tilde{U}(x)=\tilde{U}^\nu(\nu x)$ is a corresponding traveling wave. Therefore,  using the change of variable in $x$, it is equivalent to showing that up to a weight function ($a(x)=a_\nu(\nu x)$) and a shift ($X(t)= X_\nu(\nu t)/\nu$), we have
$$
\int_\RR a(x-X(t)) \eta(U(t,x)|\tilde{U}(x-X(t)))\,dx \qquad \mathrm{ is \ non \ \! increasing \ in \ time. }
$$
However, even if the initial perturbation for the $\nu$ problem is small, let say
$$
\int_\RR  \eta(U^\nu(0,x)|\tilde{U}^\nu(x))\,dx=\delta,
$$
The associated initial perturbation for the rescaled problem  is very big as
$$
\int_\RR \eta(U(0,x)|\tilde{U}(x))\,dx=\frac{\delta}{\nu}.
$$
Especially, every method based on linearization will fail. 
\vskip0.3cm
 In the context of viscous models, the method was first introduced for the viscous scalar case (without weight) in \cite{Kang-V-1} (improved in \cite{Kang19}), and for the multi-dimensional scalar case in \cite{KVW}.  In the case of Navier-Stokes, the $a$-contraction with shift  for large initial perturbation was proved in \cite{Kang-V-NS17}. The method was also applied in \cite{CKKV} to the Keller-Segel-type model. It provided the key tool to show the global-in-time existence of solutions for non-homogenous boundary data \cite{CKV}. 
 To obtain the stability of shocks of \eqref{Euler} in the family of inviscid limits of \eqref{inveq}, we need to pass into the limit $\nu$ goes to 0. This step, performed in \cite{KV-unique19} is also delicate due to the lack of uniform bound both on the solutions $U^\nu$, and on the shifts $X_\nu$. Especially, nothing prevents cavitations, which corresponds to concentration in measure of $v$. It is remarkable that the stability result can handle even this effect. 
This paper is dedicated to  the last crucial property needed before considering a large family of waves. We show the $a$-contraction property with shifts for the Riemann problem consisting of two shocks. The important point  is to show that we can construct two shifts (one for each shock) which will never collide. This result is the  counter part of \cite{Krupa2} for Navier-Stokes. As always in this program, the philosophy is similar to  the hyperbolic case, but the techniques and the results are very different and far more technical. 

\section{An overview of the proof} \label{sec:idea}

We here describe the main steps of the proof. 
\vskip0.3cm
\noindent {\bf The Stability for $\nu=1$: Theorem \ref{thm_general}.}
As explained in the previous section, the main results of this paper boil down to the proof of stability of a composite wave, consisting of the superposition of  two viscous  shocks waves,  to the Navier-Stokes equation UNIFORMLY with  respect to the strength of the viscosity. A mentioned, this is equivalent to the stability for the case $\nu=1$, if we consider  LARGE perturbations (Theorem \ref{thm_general}). One difficulty due to considering a composite wave, is that at the level of Navier-Stokes, superpositions of exact shock waves are  not  exact solutions to Navier-Stokes (because of the viscosity term, the waves should interact). This explains the extra constant term on the right-hand side of \eqref{acont_main}. However, after rescaling to obtain the result of small viscosity $\nu$, this term becomes $C\nu$ and so converges to 0 in the inviscid limit. This is consistent with the fact that there are no  interactions of far away waves vanish for the inviscid equation. The proof of the theorem can be split into several steps.

\vskip0.1cm
{\it Step one: Introducing  a new velocity variable: Section \ref{sec:pre}.}  The  growth of the perturbation is partly due to  hyperbolic terms (flux functionals). Thanks to the relative entropy method, the linear fluxes are easier to handle (the relative functional of linear quantity vanishes). Therefore, the main hyperbolic quantities to control are the pressure terms depending only on the specific volume $v$.  At the core of the method, we are using a generalized Poincar\'e inequality Proposition \ref{prop:W}, first proved in \cite{Kang-V-NS17}. The Navier-Stokes system can be seen as a degenerate parabolic system. But the diffusion is in the other variable, the velocity variable $u$. Bresch and Desjardins (see \cite{BDL, BD_06}) showed that compressible Navier-Stokes systems have a natural perturbed velocity quantity associated to the viscosity:
$$
h^\nu=u^\nu+\nu \left(p(v^\nu)^{\frac{\alpha}{\gamma}}\right)_x.
$$
Remarkably, the system in the variables $(v^\nu, h^\nu)$ exhibits a diffusion in the $v$ variable (the Smoluchowski equation), rather than in the velocity variable. 
For this reason, we are working with the natural relative entropy of this system, which corresponds to the usual relative entropy of the associated p-system in the $U^\nu_h=(v^\nu, h^\nu)$ variable:
$$
\eta(U^\nu_h|\tilde{U}^\nu_h)= E_\nu(U^\nu|\tilde{U}^\nu).
$$
To simplify the notation,  we denote now $U=(v,h)$, since we work on the new system, and only with $\nu=1$. The associated shifted composite wave is then denoted by
$\tilde{U}^{X_1,X_2}$. It is the superposition, in the $(v,h)$ variables, of the two shocks $\tilde{U}_i$, each subjected to an artificial shift  $X_i(t)$ to be determined. 

\vskip0.1cm
{\it Step 2: Evolution of the relative entropy: Lemma \ref{lem-rel}.}
The evolution of the relative entropy, modulated by a weight $a$ which has also to be determined, can be roughly represented as 
\begin{eqnarray*}
&& \frac{d}{dt}\int_{-\infty}^{\infty} a(x) \eta\big(U(t,x)|\tilde{U}^{X_1,X_2} (x)\big) dx\\
 &&\qquad=\sum_{i=1}^2\dot X_i(t) Y_i(U(t)) +\mathcal{J}^{bad}(U(t))- \mathcal{J}^{good}(U(t)).
\end{eqnarray*}
The functional  $\mathcal{J}^{good}(U)$ is non-negative  (good term) and can be split into three terms:  
$$
\mathcal{J}^{good}(U)=\mathcal{J}^{good}_1(U)+\mathcal{G}_2(U)+\mathcal{D}(U),
$$
where only $\mathcal{J}^{good}_1(U)$ depends on $h$ (and actually does not depend on $v$). The term $\mathcal{D}(U)$ corresponds to the diffusive term (which depends on $v$ only, thanks to the transformation of the system).

\vskip0.1cm
{\it Step 3: Construction of the shifts and the weight function $a$.}
The shifts $X_i(t)$ together with choice of weight function $a$ produce  the terms $\dot{X}_i(t) Y_i(U)$.  The key idea of the technique is to take advantage of these terms, when the $Y_i(U(t))$  are not too small, by compensating all the other terms via the choice of the velocity of the shift (see (\ref{X-def})). Specifically, we algebraically ensure  that the contraction holds as long as one of the conditions $(-1)^{i-1}Y_i(U(t))\geq\eps_i^2$ holds, while ensuring that the two shifts  keep the two shocks waves apart. 
This last property is crucial to avoid unnatural collisions between the two waves, and is due to Proposition \ref{prop:sm}. 
 The rest of the analysis is to ensure that when both $(-1)^{i-1}Y_i(U(t))\le\eps_i^2$ hold, the uniform stability still holds. 

The conditions  $(-1)^{i-1}Y_i(U(t))\le\eps_i^2$ ensure  smallness conditions that we want to fully exploit. This is where the non-homogeneity of the semi-norm is crucial. In the  case where the function $a$ is constant,
$Y_i(U)$ are linear functional in $U$. The smallness of linear part of $Y_i(U)$ gives only that a certain weighted mean value of $U$ is almost null. However, when $a$ has the right monotonicity, $Y_i(U)$ becomes convex. The condition $(-1)^{i-1}Y_i(U(t))\le\eps_i^2$ implies, for this fixed time $t$, 
 a control in $L^2$ for moderate values of $v$, and in  $L^1$ for big values of $v$,  in the two layer regions ($|\xi-X_i(t)|\lesssim 1/\eps_i$). 

The problem now looks, at first glance, as a typical problem of stability with a smallness condition.
There are, however, three major difficulties:  The bad term $\mathcal{J}^{bad}(U)$ has some terms depending on the variable $h$ for which we do not have diffusion, we have some smallness  in $v$, only for a very weak norm, and only localized in the layer regions. More importantly, the smallness is measured with respect to the smallness of the shocks. It basically says that, considering only the moderate values of $v$:
 the perturbation is not bigger than $\eps/\lambda$ (which is still very big with respect to the size of the biggest shock  $\eps$). Actually, as we will see later, it is not possible to consider only the linearized problem: Third order terms appear in the  expansion using the smallness condition (the energy method involving the linearization would have only second order term in $\eps$).
 
In the argument, for the values of $t$ satisfying both $(-1)^{i-1}Y_i(U(t))\le\eps_i^2$,  we construct the shifts as  solutions to the ODEs:
\[
\dot X_i (t) =
\left\{ \begin{array}{ll}
      Y_i(U(t,\cdot+X_i(t)))/\eps^4,\quad &\mbox{if} ~ 0\le  (-1)^{i-1} Y_i(U)\le \eps_i^2, \\
        \frac{(-1)^{i-1}\s_i }{2 \eps_i^2} Y_i(U(t,\cdot+X_i(t))),\quad &\mbox{if}  ~ -\eps_i^2  \le  (-1)^{i-1} Y_i(U)\le 0 ,\\
       -\frac{1}{2}\s_i , \quad &\mbox{if}  ~ (-1)^{i-1} Y_i(U) \le -\eps_i^2 ,
       \end{array} \right.
\]       
 From this point, we forget that $U=U(t,\xi)$ is a solution to the equation and that  $X_i(t)$ are the shifts. That is, we leave out the  $X_i(t)$ and the $t$-variable of $U$. Then we show that for any function $U$ satisfying both
  $(-1)^{i-1}Y_i(U(t))\le\eps_i^2$ for $i=1,2$, we have 
\begin{align*}
&\sum_{i=1}^2\bigg( -\frac{1}{\eps_i^4}|Y_i(U)|^2{\mathbf 1}_{\{0\le  (-1)^{i-1} Y_i(U) \le \eps_1^2\}} + \frac{(-1)^{i-1}\s_i }{2 \eps_i^2}|Y_i(U)|^2{\mathbf 1}_{\{-\eps_i^2\le  (-1)^{i-1} Y_i(U) \le 0\}} \\
&\quad\qquad -\frac{\s_i }{2}Y_i(U){\mathbf 1}_{\{ (-1)^{i-1} Y_i(U) \le -\eps_i^2\}}  \bigg)
+\mathcal{J}^{bad}(U)-\mathcal{J}^{good}(U)\\
&\quad \le  C \left[  g_1(t) \int_\bbr  \eta(U|\tilde U) dx +g_2(t)\right] \mathbf{1}_{t\ge t_0} + C \mathbf{1}_{t\le t_0}  ,
\end{align*}
where $g_1, g_2$ are some integrable functions.
This is the main Proposition \ref{prop:main} (actually, the proposition is slightly stronger to ensure the control of the shift). This implies clearly the uniform estimate as desired. 
From now on, we are focusing on the proof of this statement.

\vskip0.1cm
{\it Step 4: Maximization in $h$ for $v$ fixed.} We recall that the new system is parabolic only in the variable $v$. 
Therefore, we need to get rid of the dependence on the $h$ variable from the bad parts $\mathcal{J}^{bad}(U)$. 
It is done in two different ways depending of the value of $p(v)-p(\tilde{v})$ with respect to a threshold $\delta_1$ to be determined (and depending on the Poincar\'e inequality).  When  $p(v)-p(\tilde{v})\geq \delta_1$, the bad terms involving $h$ can  be controlled using additional information from the unconditional estimates $(-1)^{i-1}Y_i(U(t))\le\eps_i^2$. However, when  $p(v)-p(\tilde{v})\leq \delta_1$, the idea is to maximize the bad term with respect to $h$ for $v$ fixed:
$$
\mathcal{B}(v)=\sup_{h} \left(\mathcal{J}^{bad}(v,h)-\mathcal{J}^{good}_1(h)\right).
$$
We then have an inequality depending only on $v$ and $\partial_x v$ (through $\mathcal{D}(U)$) for which we can apply the generalized Poincar\'e inequality.

\vskip0.1cm
{\it Step 5: Expansion in $\eps_i$.}  Although we have no control on the supremum of $|p(v)-p(\tilde{v})|$, we can control independently the contribution of the values $|p(v)-p(\tilde{v})|\geq \delta_1$ in Proposition \ref{prop_out} (for the same $\delta_1$ related to the maximization process above. The coefficient $\delta_1$ can be chosen very small, but  independent of $\eps_i$ and of $\eps_i/\lambda$). The last step is to perform an expansion in the size of the shocks $\eps_i$, uniformly in $v$ (but for a fixed small value of $\delta$). The expansion is done for each shock wave separately.  The generalized nonlinear Poincar\'e inequality, Proposition \ref{prop:W} concludes the proof.

\vskip0.5cm
\noindent{\bf The inviscid limit:  Theorem \ref{thm_inviscid}.}  

We  have now a stability result uniform with respect to the viscosity. It is natural to expect a stability result on the corresponding inviscid limit. The result, however, is not immediate. Several difficulties have to be overcome. First, due to the BD representation as above, the stability result for $\nu$ fixed is on the quantities:
$$
U^\nu_h=(v^\nu, h^\nu),\quad h^\nu=u^\nu +\nu\left(p(v^\nu)^{\frac{\alpha}{\gamma}}\right)_x.
$$ 
This is the reason we need a compatibility condition on the family of initial values $U_0^\nu$. This also leads to a very weak convergence (in measure in $(t,x)$ only). The next difficulty is that for small values of $v$,  the relative entropy control only the $L^1$ norm of $Q(v)=1/v^{\gamma-1}$. Therefore the pressure $p(v)=1/v^\gamma$ cannot be controlled at all. Therefore, we do not control the time derivative of $u$ in any distributional sense in $x$.  We need to show that the shifts converge to $\sigma_i t$ when the perturbation converges to 0. This can be obtained, thanks to the convergence of $v$ in $C^0(\bbr^+, W^{-s,1}_{\mathrm{loc}}(\bbr))$. It is interesting to note that the continuity (in time) of $v$ is enough. We do not obtain any such control on $u$ (nor $h$).

\section{Reformulation for Theorem \ref{thm_general}} \label{sec:pre}
\setcounter{equation}{0}

Following \cite{Kang-V-NS17}, the first step consists of making a change of  variables to work on an equivalent system where the diffusion  is in the  $v$ equation (instead of $u$).  This is important, since the nonlinearity of the hyperbolic term of \eqref{main} are in $v$ only (through the pressure).\\
First of all, since the strength of the coefficient $b$ in $\mu(v)$ does not affect our analysis, as in \cite{Kang-V-NS17}, we set $b=\gamma$ (for simplicity). Then, any solution $(v,u)$ to \eqref{inveq}, we consider 
\beq\label{introh}
h:=u + \Big(p(v)^{\frac{\alpha}{\gamma}}\Big)_x.
\eeq
This quantity is the modulated velocity associated to the BD entropy (see Bresch and Desjardins \cite{BD_cmp03}).   
Let $\beta:=\gamma-\alpha$. The new unknown $(v,h)$ is then a solution to the system:
\begin{align}
\begin{aligned}\label{NS_1}
\left\{ \begin{array}{ll}
       v_t - h_x = -\Big( v^\beta p(v)_x\Big)_{x}\\
       h_t+p(v)_x =0. \end{array} \right.
\end{aligned}
\end{align}
Then, the two viscous traveling waves in the variables $(v,h)$ associated to \eqref{shock_0} are as follows:
\begin{align}
\begin{aligned}\label{small_shock1} 
\left\{ \begin{array}{ll}
       -\sigma_i \partial_x\tilde v_i(x-\s_i t) - \partial_x\tilde h_i(x-\s_i t) =-\partial_x\Big( {\tilde v_i(x-\s_i t)}^\beta \partial_x p(\tilde v_i(x-\s_i t)) \Big)\\
       -\sigma_i \partial_x\tilde h_i (x-\s_i t)+ \partial_x p( \tilde v_i(x-\s_i t))=0, \end{array} \right.
\end{aligned}
\end{align} 
together with
\begin{align*}
\begin{aligned}
&\lim_{x\to -\infty}(\tilde v_1, \tilde h_1)(x-\s_1 t)=(v_-, u_-),\qquad \lim_{x\to +\infty}(\tilde v_1, \tilde h_1)(x-\s_1 t)=(v_m, u_m),\\
&\lim_{x\to -\infty}(\tilde v_2, \tilde h_2)(x-\s_2 t)=(v_m, u_m),\qquad \lim_{x\to +\infty}(\tilde v_2, \tilde h_2)(x-\s_2 t)=(v_+, u_+).
\end{aligned}
\end{align*} 

Note from the space $\mathcal{X}_T$ that the global solutions to \eqref{NS_1} are in the following  function space: (see Remark \ref{rem-ht})
\begin{align}
\begin{aligned}\label{sol-HT}
       \mathcal{H}_T := \{ (v,h)~ &|~ v-\underline v \in C (0,T; H^1(\bbr)), \\
       &\qquad\qquad ~h-\underline u \in C (0,T; L^2(\bbr)),~ 0< v^{-1}\in L^\infty((0,T)\times \bbr) \} ,
\end{aligned}
\end{align}
where $\underline v$ and $\underline u$ are as in \eqref{sm-end}.

Theorem \ref{thm_general} is an immediate consequence of the following theorem. 

\begin{theorem}\label{thm_main}
Assume $\gamma>1$ and $\alpha>0$ satisfying $\alpha\le \gamma \le \alpha +1$.
For a given constant $U_*:=(v_*,u_*)\in\bbr^+\times\bbr$, there exists constant $\delta_0\in (0,1/2)$ such that the following holds.\\
Let $U_-:=(v_-,u_-), U_m:=(v_m,u_m), U_+:=(v_+,u_+)\in\bbr^+\times\bbr$ be any constant states such that \eqref{end-con} and $U_-, U_m, U_+ \in B_{\delta_0}(U_*)$.
Let $\eps_1:=|p(v_-)-p(v_m)|$ and $\eps_1:=|p(v_m)-p(v_+)|$. For any $\lambda >0$ with $\eps_1/\lambda, \eps_2/\lambda<\delta_0$ and $\lambda<\delta_0$, there exist a constant $C>0$ and smooth monotone functions $a_1,a_2$ with $a_1(x), a_2(x)\in[1-\lambda,1]$ for all $x\in\bbr$ such that the following holds.\\
Let $\tilde U$ be the composite wave connecting  $U_-$ and $U_+$ as:
\beq\label{b_wave}
\tilde U(t,x):=\bmat{\tilde v(t,x) }\\{\tilde h(t,x) }\emat :=\bmat{\tilde v_1(x-\sigma_1t)+\tilde v_2(x-\sigma_2t)-v_m}\\ {\tilde h_1(x-\sigma_1t)+\tilde h_2(x-\sigma_2t)-u_m}\emat .
\eeq
Let 
\beq\label{b_weight}
a(t,x):= a_1(x-\sigma_1t) +a_2(x-\sigma_2t) -1.
\eeq
For a given $T>0$, let $U:=(v,h)$ be a solution in $\mathcal{H}_T$ to \eqref{NS_1} with a initial datum $U_0:=\bmat{v_0}\\{u_0}\emat$ satisfying $\int_{-\infty}^{\infty} \eta(U_0(x)| \tilde U(0,x)) dx<\infty$. 
Then, there exist shift functions $X_1,X_2\in W^{1,1}((0,T))$ with $X_1(0)=X_2(0)=0$ such that 
for the shifted composite wave
\[
\tilde U^{X_1,X_2}(t,x):=\bmat{\tilde v^{X_1,X_2}(t,x) }\\{\tilde h^{X_1,X_2}(t,x) }\emat :=\bmat{\tilde v_1(x-\sigma_1t-X_1(t))+\tilde v_2(x-\sigma_2t-X_2(t))-v_m}\\ {\tilde h_1(x-\sigma_1t-X_1(t))+\tilde h_2(x-\sigma_2t-X_2(t))-u_m}\emat ,
\]
and the shifted weight
\[
a^{X_1,X_2}(t,x):= a_1(x-\sigma_1t-X_1(t)) +a_2(x-\sigma_2t-X_2(t)) -1,
\]
we have the uniform stability:
\begin{align}
\begin{aligned}\label{cont_main2}
&\int_\bbr  a^{X_1,X_2}(t,x) \eta\big(U(t,x)| \tilde U^{X_1,X_2} (t,x) \big) dx \\
&\qquad  +\int_{0}^{T}\int_{-\infty}^{\infty} | \partial_x a^{X_1,X_2} (t,x)| Q\left(v(t,x)|\tilde v^{X_1,X_2}(t,x)\right) dx dt \\
&\qquad + \int_{0}^{T}\int_{-\infty}^{\infty} v^{\gamma-\alpha}(t,x) \Big|\partial_x\big(p(v(t,x)))-p(\tilde v^{X_1,X_2}(t,x))\big)\Big|^2dxdt  \\
&\quad\le  C \int_{\bbr} a(0,x) \eta \big(U_0(x)| \tilde U(0,x) \big) dx +C ,
\end{aligned}
\end{align}
and 
\beq\label{msepX12}
X_1(t) \le -\frac{\s_1}{2} t,\quad X_2(t) \ge -\frac{\s_2}{2} t,\quad\forall t>0,
\eeq
in addition, for each $i=1,2$,
\begin{align}
\begin{aligned} \label{est-shift1}
&|\dot X_i(t)|\le C \left[ f(t) + \int_{\bbr} \eta(U_0(x)|\tilde U(0,x)) dx + 1 \right]\quad \mbox{ for \textit{a.e.} }t\in[0,T] ,\\
&\mbox{for some positive function $f$ satisfying}\quad\|f\|_{L^1(0,T)} \le C \int_{-\infty}^{\infty} \eta \big(U_0(x)| \tilde U(0,x) \big) dx.
\end{aligned}
\end{align}
\end{theorem}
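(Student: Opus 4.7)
The plan is to run the $a$-contraction with shifts method, with the extra challenge of constructing a pair of shifts $X_1, X_2$ that (i) keeps the two viscous shocks well-separated, (ii) has enough freedom to absorb the non-contracting hyperbolic terms in the relative-entropy inequality, and (iii) is controlled in $W^{1,1}$ by the initial relative entropy. The composite $\tilde U$ is not an exact solution of \eqref{NS_1}, so the argument must also absorb an interaction residual; this is what produces the extra $+C$ on the right of \eqref{cont_main2}.

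First, following the strategy of \cite{Kang-V-NS17,KV-unique19}, I would compute (Lemma \ref{lem-rel}) the evolution of the weighted relative entropy,
\begin{align*}
\frac{d}{dt}\int_\bbr a^{X_1,X_2}\eta(U|\tilde U^{X_1,X_2})\,dx = \dot X_1 Y_1(U) + \dot X_2 Y_2(U) + \mathcal{J}^{\mathrm{bad}}(U) - \mathcal{J}^{\mathrm{good}}(U) + \mathcal{R},
\end{align*}
where $\mathcal{J}^{\mathrm{good}} = \mathcal{J}_1^{\mathrm{good}}(h) + \mathcal{G}_2(v) + \mathcal{D}(v)$ is nonnegative, $\mathcal{D}$ is the parabolic dissipation supplied by the BD rewriting \eqref{NS_1}, and $\mathcal{R}$ is the wave-interaction residual coming from $\tilde U$ failing to solve \eqref{NS_1} exactly. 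The weights $a_i(x)\in[1-\lambda,1]$ are chosen monotone of amplitude $\lambda$ so that $-\partial_x a_i$ generates precisely the coercive term $|\partial_x a^{X_1,X_2}|Q(v|\tilde v^{X_1,X_2})$ on the left of \eqref{cont_main2}. The residual $\mathcal{R}$ is supported near $x\sim\sigma_1 t$ and $x\sim\sigma_2 t$, decays exponentially in the two shock profiles, and, since $\sigma_1<\sigma_2$, is integrable in time uniformly in $T$; this yields the constant $+C$.

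Next, I would define each shift by the Filippov-regularized ODE of Section \ref{sec:idea}: $\dot X_i = -\sigma_i/2$ when $(-1)^{i-1}Y_i(U)\le-\eps_i^2$, $\dot X_i=\frac{(-1)^{i-1}\sigma_i}{2\eps_i^2}Y_i$ when $(-1)^{i-1}Y_i(U)\in[-\eps_i^2,0]$, and $\dot X_i=Y_i/\eps_i^4$ when $(-1)^{i-1}Y_i(U)\in[0,\eps_i^2]$. Standard Filippov theory gives $X_i\in W^{1,1}((0,T))$. When the third branch fires, $\dot X_1 = -\sigma_1/2 > 0$ and $\dot X_2 = -\sigma_2/2 < 0$ push each shock away from the other; in the other two branches $\dot X_i Y_i$ has a sign and magnitude tailored to beat the bad terms after the reduction below. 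This construction, together with Proposition \ref{prop:sm}, gives the separation \eqref{msepX12}. The bound \eqref{est-shift1} follows because $|\dot X_i|$ is controlled by $|Y_i|$ (hence by an integrable function $f$ stemming from $\eta(U(t)|\tilde U^{X_1,X_2}(t))$) in the two small-$Y_i$ regimes, and by the constant $|\sigma_i|/2$ in the large one.

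The stability then reduces to the pointwise-in-$t$ inequality of Proposition \ref{prop:main}, needed precisely in the case where both $(-1)^{i-1}Y_i(U)\le\eps_i^2$. I would establish this in three stages: (a) maximize the $h$-dependent part of $\mathcal{J}^{\mathrm{bad}}-\mathcal{J}_1^{\mathrm{good}}$ over $h$ at fixed $v$, reducing to a functional $\mathcal{B}(v)$ depending only on $v$ and $\partial_x v$; (b) control the outer contribution $\{|p(v)-p(\tilde v)|\ge\delta_1\}$ by Proposition \ref{prop_out}, choosing $\delta_1$ small independently of $\eps_i$ and $\eps_i/\lambda$; (c) in the inner layer $\{|p(v)-p(\tilde v)|\le\delta_1\}$, expand in $\eps_i$ up to third order around each shock and apply the generalized nonlinear Poincar\'e inequality Proposition \ref{prop:W}. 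I expect stage (c) to be the main obstacle: the smallness $(-1)^{i-1}Y_i(U)\le\eps_i^2$ only delivers an $\eps_i/\lambda$ control on $v-\tilde v$ (larger than the shock amplitude $\eps_i$), so pure linearization is insufficient and genuinely cubic remainders must be absorbed. The scale ordering $\eps_i\ll\lambda\ll 1$ is essential here: the weight-gradient term $\lambda|\partial_x a_i|Q(v|\tilde v)$ absorbs the cubic bad contributions while $\mathcal{D}$ closes the Poincar\'e step on the $v$-variable. Integration in $t$ together with a Gr\"onwall estimate on the $g_1$-term then yields \eqref{cont_main2}, completing the proof.
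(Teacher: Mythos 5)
Your overall architecture is the one the paper follows: BD change of variables, weighted relative entropy (Lemma \ref{lem-rel}), maximization in $h$ at fixed $v$ (Lemma \ref{lem-max}), truncation at a level $\delta_1$ chosen independently of $\eps_i,\eps_i/\lambda$, outer estimates (Proposition \ref{prop_out}), the nonlinear Poincar\'e inequality (Proposition \ref{prop:W}) for the inner expansion, and a Gr\"onwall argument in time. However, there is a concrete gap in the heart of the construction, namely the shift ODE itself.

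As written, your shift law only covers the regime $(-1)^{i-1}Y_i(U)\le \eps_i^2$ and omits the multiplicative factor $\big(2|\mathcal{J}^{bad}(U)|+1\big)$ that appears in the paper's definition \eqref{X-def}--\eqref{explicit-X}. This factor, and the fourth branch $(-1)^{i-1}Y_i(U)\ge\eps_i^2$ where $\dot X_i=(-1)^i\eps_i^{-2}\big(2|\mathcal{J}^{bad}(U)|+1\big)$, are not cosmetic: they force $\dot X_i\,Y_i\le -2|\mathcal{J}^{bad}(U)|$, which is the \emph{only} mechanism that closes the entropy inequality outside the regime covered by Proposition \ref{prop:main} (whose hypotheses are exactly $Y_1\le\eps_1^2$, $Y_2\ge-\eps_2^2$). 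The same factor is what yields \eqref{est-shift1}, since $|\dot X_i|\le\max\big(\eps_i^{-2}(2|\mathcal{J}^{bad}|+1),|\s_i|/2\big)$ and one then shows $|\mathcal{J}^{bad}(U)|\le f(t)+C\int a\,\eta(U|\tilde U^{X_1,X_2})\,dx$ with $f\in L^1$ coming from the time-integrated good terms; your claim that $|\dot X_i|$ is controlled by $|Y_i|$ alone does not produce this. One must also check that this large-velocity branch is compatible with \eqref{msepX12}; it is, because there the velocity has the favorable sign, and in fact the separation follows directly from the pointwise bounds $\dot X_1\le-\s_1/2$, $\dot X_2\ge-\s_2/2$ valid on all branches (not from Proposition \ref{prop:sm}, which is used to get the smallness $|Y_i|\le C\eps_i^2/\lambda$ in Lemma \ref{lemmeC2}). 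In addition, your branch ``$\dot X_i=Y_i/\eps_i^4$ on $0\le(-1)^{i-1}Y_i\le\eps_i^2$'' has the wrong sign for $i=1$: the paper takes $\dot X_i=-\eps_i^{-4}Y_i(2|\mathcal{J}^{bad}|+1)$ there, so that $\dot X_iY_i=-\eps_i^{-4}|Y_i|^2(\cdots)\le0$; this negative quadratic term in $Y_i$ is exactly what is later used (via \eqref{prop:est} and the decomposition of $Y_i$) to dominate $-\frac{1}{\eps_i\delta_1}|\mathcal{Y}^g_i|^2$ in the Poincar\'e step, and with your sign that term becomes a positive $O(1)$ contribution and stage (c) does not close.

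A smaller but related point: you treat the interaction residual (the $\tilde E_1,\tilde E_2$ terms, i.e.\ $B_6$) as integrable in time ``since $\sigma_1<\sigma_2$.'' But the profiles are shifted by the a priori uncontrolled $X_i$, so the exponential decay $\exp(-C\min(\eps_1,\eps_2)t)$ in \eqref{n6} is obtained only \emph{after} the separation \eqref{sx12} is enforced by the ODE construction (Lemma \ref{lem:sep} uses precisely $X_1+\s_1t\le\s_1t/2<0<\s_2t/2\le X_2+\s_2t$). So the residual estimate cannot be decoupled from the shift construction as your plan suggests; it must be proved jointly with it, as in the paper.
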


\begin{remark}\label{rem-ht}
1. Theorem \ref{thm_main} provides the uniform stability for composite waves with suitably small amplitude parametrized by $|p(v_-)-p(v_m)|=\eps_1$ and $|p(v_m)-p(v_+)|=\eps_2$. This smallness together with \eqref{end-con} implies that 
\beq\label{uvsm}
|v_--v_m|=\mathcal{O}(\eps_1),\quad |u_--u_m|=\mathcal{O}(\eps_1),\quad |v_+ -v_m|=\mathcal{O}(\eps_2),\quad |u_+-u_m|=\mathcal{O}(\eps_2).
\eeq
2. If we consider the solution $(v,u)\in \mathcal{X}_T$ to \eqref{main}. Then, \eqref{NS_1} admits the solution $(v,h)$ in $\mathcal{H}_T$. Indeed,
since $v_t=u_x \in L^2(0,T; H^1(\bbr))$ by $\eqref{main}_1$, we have $v-\underline v \in C (0,T; H^1(\bbr))$. 
To show $h-\underline u \in C (0,T; L^2(\bbr))$, we first find that for $(v,u)\in \mathcal{X}_T$,
\[
h-\underline u=u-\underline u + \frac{\alpha}{\gamma} p(v)^{\frac{\alpha}{\gamma}-1} v_x \in L^\infty (0,T; L^2(\bbr)).
\]
Moreover, together with the fact that $v\in L^\infty((0,T)\times \bbr)$ by Sobolev embedding, we find that
\begin{align*}
\begin{aligned} 
& u_t = -p'(v) v_x + \frac{d}{dv}\Big(\frac{\mu(v)}{v}\Big)v_x u_x + \frac{\mu(v)}{v} u_{xx} \in L^2 (0,T; L^2(\bbr)),\\
&\Big(p(v)^{\frac{\alpha}{\gamma}-1} v_x \Big)_t = (\frac{\alpha}{\gamma}-1)p(v)^{\frac{\alpha}{\gamma}-2} v_t v_x + p(v)^{\frac{\alpha}{\gamma}-1} v_{xt}  \in L^2 (0,T; L^2(\bbr)),
\end{aligned}
\end{align*}
which implies $h_t \in L^2 (0,T; L^2(\bbr))$, and therefore $h-\underline u \in C (0,T; L^2(\bbr))$.
\end{remark}

$\bullet$ {\bf Notation:} In what follows, $C$ denotes a positive constant which may change from line to line, but which is independent of $\eps_1, \eps_2$ (the sizes of shocks) and $\lambda$ (the total variation of the weights $a_i$).


\vspace{1cm}

\section{Proof of Theorem \ref{thm_main}}\label{section_theo}
\setcounter{equation}{0}

\subsection{Properties of small shock waves}
In this subsection, we  present  useful properties of the $i$-shock waves $(\tilde v_i,\tilde h_i)$ with small amplitude $\eps_i$. In the sequel, we assume that the 1-shock (resp. 2-shock) satisfy $\tilde v_1(0)=\frac{v_-+v_m}{2}$ (resp. $\tilde v_2(0)=\frac{v_m+v_+}{2}$) without loss of generality (by the translation invarience). 

\begin{lemma}\label{lem:vp}
For a given constant $U_*:=(v_*,u_*)\in\bbr^+\times\bbr$, there exist positive constants $\eps_0, C, C_1, C_2$ such that the following holds.\\
Let $U_-:=(v_-,u_-), U_m:=(v_m,u_m), U_+:=(v_+,u_+)\in\bbr^+\times\bbr$ be any constant such that \eqref{end-con} and $U_-, U_m, U_+ \in B_{\eps_0}(U_*)$, and 
$|p(v_-)-p(v_m)|=:\eps_1<\eps_0$ and $|p(v_m)-p(v_+)|=: \eps_2<\eps_0$.
Let $(\tilde v_1,\tilde h_1)$ and $(\tilde v_2,\tilde h_2)$ be the 1- and 2-shocks respectively connecting from $U_-$ to $U_m$, and from $U_m$ to $U_+$ such that $\tilde v_1(0)=\frac{v_-+v_m}{2}$ and $\tilde v_2(0)=\frac{v_m+v_+}{2}$. Then, the following estimates hold.
\begin{align}
\label{tv1}
&C^{-1}\eps_1 e^{-C_1 \eps_1 |x-\sigma_1 t|} \le \tilde v_1(x-\sigma_1 t) -v_m \le C\eps_1 e^{-C_2 \eps_1 |x-\sigma_1 t|},\quad x\ge \s_1 t,\\
\label{tdv1}
&-C^{-1}\eps_1^2 e^{-C_1 \eps_1 |x-\sigma_1 t|} \le \partial_x\tilde v_1(x-\sigma_1 t) \le -C\eps_1^2 e^{-C_2 \eps_1 |x-\sigma_1 t|},\quad x\in\bbr, ~ t>0,
\end{align}
and
\begin{align}
\label{tv2}
&C^{-1}\eps_2 e^{-C_1 \eps_2 |x-\sigma_2 t|} \le \tilde v_2(x-\sigma_2 t) -v_m \le C\eps_2 e^{-C_2 \eps_2 |x-\sigma_2 t|},\quad x\le \s_2 t,\\
\label{tdv2}
&C^{-1}\eps_2^2 e^{-C_1 \eps_2 |x-\sigma_2 t|} \le \partial_x\tilde v_2(x-\sigma_2 t) \le C\eps_2^2 e^{-C_2 \eps_2 |x-\sigma_2 t|},\quad x\in\bbr, ~ t>0.
\end{align}
As a consequence, for each $i=1, 2$,
\beq\label{lower-v}
\inf_{\left[-\eps_i^{-1}, \,\eps_i^{-1}\right]}| \partial_x\tilde v_i|\ge C\eps_i^2.
\eeq
In addition, for each $i=1, 2$,
\beq\label{twodv}
| (\tilde v_i)_{xx}(x-\sigma_i t)| \le C\eps_i | (\tilde v_i)_x (x-\sigma_i t)|,  \quad x\in\bbr, ~ t>0.
\eeq
\end{lemma}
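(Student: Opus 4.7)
The plan is to reduce the study of each viscous profile to a scalar first-order ODE, analyze this ODE qualitatively, and then linearize at the endpoints to extract the exponential rates. Let $\xi_i = x-\sigma_i t$. Substituting the travelling-wave ansatz $(\tilde v_i(\xi_i),\tilde h_i(\xi_i))$ into \eqref{small_shock1} and integrating the second equation from $\mp\infty$ (using the endpoint conditions) gives $\tilde h_i = u_\star + (p(\tilde v_i)-p(v_\star))/\sigma_i$, where $(v_\star,u_\star)$ is the appropriate endpoint. Substituting this back into the first equation of \eqref{small_shock1} and integrating once yields the scalar profile equation
\[
\sigma_i\, \tilde v_i^{\,\beta}\, p'(\tilde v_i)\, \tilde v_i' \;=\; F_i(\tilde v_i),\qquad F_i(v):=\sigma_i^{2}(v-v_\star)+(p(v)-p(v_\star)).
\]
Because $p$ is strictly convex, $F_i$ is strictly convex; by the Rankine--Hugoniot relation \eqref{end-con} it vanishes precisely at the two endpoints ($v_-,v_m$ for $i=1$ and $v_m,v_+$ for $i=2$), so $F_i<0$ in between. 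Combined with $\sigma_i v^\beta p'(v)$ having constant sign on the relevant interval (negative for $i=2$, positive for $i=1$ with $\sigma_1<0$), this immediately shows $\tilde v_i$ is strictly monotone between the two endpoints, hence well-defined.

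Next I linearize $F_i$ at each endpoint. A Taylor expansion combined with the Rankine--Hugoniot identity $\sigma_i^{2}=-(p(v_+^{(i)})-p(v_-^{(i)}))/(v_+^{(i)}-v_-^{(i)})$ and $p(v_+^{(i)})-p(v_-^{(i)})=\pm\eps_i$ gives $|v_+^{(i)}-v_-^{(i)}|=\mathcal O(\eps_i)$, together with
\[
F_i'(v_\pm^{(i)})\;=\;\sigma_i^{2}+p'(v_\pm^{(i)})\;=\;\pm c_\star\, \eps_i+\mathcal O(\eps_i^{2}),
\]
for some $c_\star>0$ depending only on $v_\star$ (via the Lax condition $-\sqrt{-p'(v_+^{(i)})}<\sigma_i<-\sqrt{-p'(v_-^{(i)})}$ for $i=1$ and the analogue for $i=2$). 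Consequently the linearized ODE at each endpoint has rate $\mathcal O(\eps_i)$, strictly bounded below by $C^{-1}\eps_i$ and above by $C\eps_i$ uniformly for endpoints in $B_{\eps_0}(U_\star)$. A standard comparison/integration argument using $F_i(v)$ bounded above and below by multiples of $(v-v_-^{(i)})(v_+^{(i)}-v)$ then upgrades this local information into the two-sided global bounds \eqref{tv1}--\eqref{tv2}. Differentiating once more and using $\tilde v_i'=F_i(\tilde v_i)/(\sigma_i \tilde v_i^{\,\beta}p'(\tilde v_i))$ gives the derivative bounds \eqref{tdv1}, \eqref{tdv2}; the lower bound \eqref{lower-v} in the layer $|\xi|\le \eps_i^{-1}$ follows by combining these two-sided pointwise estimates.

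For the second derivative bound \eqref{twodv}, I would differentiate the scalar ODE once:
\[
\tilde v_i''\;=\;\frac{d}{dv}\!\left(\frac{F_i(v)}{\sigma_i v^{\beta} p'(v)}\right)\bigg|_{v=\tilde v_i}\cdot\tilde v_i',
\]
and observe that the bracketed coefficient, evaluated at any $v$ on the profile, is $\mathcal O(\eps_i)$ (since $F_i$ itself is $\mathcal O(\eps_i^{2})$ and $dF_i/dv=\mathcal O(\eps_i)$ uniformly in the interval between the endpoints, while the denominator stays bounded away from zero). This gives \eqref{twodv}.

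The main technical obstacle is ensuring that every constant in the argument is uniform with respect to both the shock strengths $\eps_i\in(0,\eps_0)$ and the endpoints $(v_\pm,v_m)\in B_{\eps_0}(U_\star)$. This forces all Taylor expansions to be carried out around $v_\star$ rather than around the endpoints themselves, and it requires the uniform lower bound on $|F_i'(v_\pm^{(i)})|/\eps_i$, which in turn hinges on the strict Lax inequalities at $v_\star$ (these hold by our strict convexity of $p$ and \eqref{end-con}, and persist on $B_{\eps_0}(U_\star)$ for $\eps_0$ sufficiently small). Once this uniformity is in hand, the rest is a direct ODE analysis essentially identical to the one carried out in \cite{Kang-V-NS17} and \cite{MW} for the single-shock case.
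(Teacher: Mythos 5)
Your proposal is correct and follows essentially the same route as the paper: the estimates \eqref{tv1}--\eqref{lower-v} are obtained there by exactly the scalar ODE reduction and tail linearization you describe (the paper simply cites the single-shock computation of \cite[Lemma 2.1]{Kang-V-NS17}), and for \eqref{twodv} both arguments hinge on the same key Taylor/Rankine--Hugoniot estimate $\left|\sigma_i^2+p'(\tilde v_i)\right|\le C\eps_i$ along the profile. The only cosmetic difference is that you get \eqref{twodv} by differentiating the reduced first-order ODE, whereas the paper works directly with the second-order profile equation combined with $\partial_x\tilde h_i=p'(\tilde v_i)\partial_x\tilde v_i/\sigma_i$.
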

\begin{proof} 
Since $v_*/2<\tilde v_i<2v_*$ with choosing $\eps_0$ small enough, the proofs of \eqref{tv1}-\eqref{lower-v} use the same computations as in the proof of \cite[Lemma 2.1]{Kang-V-NS17}. Therefore, we omit the details.\\
To show \eqref{twodv}, we first observe from \eqref{small_shock1} that
\begin{align*}
&\left( \frac{\s_i^2}{p'(\tiv_i)} +1 \right) \frac{p'(\tiv_i) \pa_x\tiv_i}{\s_i} =
   \sigma_i \partial_x\tilde v_i + \partial_x\tilde h_i  =\partial_x\Big( {\tilde v_i}^\beta \partial_x p(\tilde v_i) \Big) \\
&\quad =\beta \tiv_i^{\beta-1} p'(\tiv_i) |(\tiv_i)_x|^2 + \tiv_i^{\beta} p''(\tiv_i) |(\tiv_i)_x|^2 +  \tiv_i^{\beta} p'(\tiv_i) (\tiv_i)_{xx},
\end{align*}
where note that the above waves are evaluated at $x-\s_i t$.\\
Then, using \eqref{tdv1}, \eqref{tdv2} and $v_*/2 \le \tiv_i\le 2v_*$, we find
\[
| (\tilde v_i)_{xx}| \le C\eps_i^2 | (\tilde v_i)_x | + \left| \frac{\s_i^2}{p'(\tiv_i)} +1 \right| | (\tilde v_i)_x |.
\]
Using Taylor theorem together with \eqref{uvsm} and \eqref{end-con}, we have
\[
\s_1= -\sqrt{-p'(v_-)}+\mathcal{O}(\eps_1), \qquad \s_2= -\sqrt{-p'(v_m)}+\mathcal{O}(\eps_2),
\]
and 
\[
p'(\tilde v_1)^{-1}= p'(v_-)^{-1} +\mathcal{O}(\eps_1),\qquad p'(\tilde v_2)^{-1}= p'(v_m)^{-1} +\mathcal{O}(\eps_2).
\]
In addition, since $|v_--v_*|\le\eps_0$ and $|v_m-v_*|\le\eps_0$, we have
\[
\left| \frac{\s_i^2}{p'(\tiv_i)} +1 \right| \le C \eps_i\quad \mbox{for each } i=1, 2.
\]
Therefore we have \eqref{twodv}.
\end{proof}

\begin{remark}\label{rem:vh}
Notice that Lemma \ref{lem:vp} also holds for $\tilde h_i$, since 
\beq\label{vhre}
C^{-1} |\partial_x\tilde v_i|\le |\partial_x\tilde h_i|\le C|\partial_x\tilde v_i|,
\eeq 
which comes from the fact that $\partial_x\tilde h_i=\frac{p'(\tilde v_i)}{\sigma_i} \partial_x\tilde v_i$ and $\frac{1}{2}\frac{p'(v_*)}{\sigma_*} \le\frac{p'(\tilde v_i)}{\sigma_i}\le 2 \frac{p'(v_*)}{\sigma_*}$ by \eqref{end-con} and \eqref{uvsm} with $\eps_1, \eps_2<\eps_0\ll 1$.
Especially, notice that $\tiv_i$ and $\tih_i$ are monotone by \eqref{tdv1}, \eqref{tdv2} and $\partial_x\tilde h_i=\frac{p'(\tilde v_i)}{\sigma_i} \partial_x\tilde v_i$.
\end{remark}

\subsection{Relative entropy method}
A starting point of our analysis is to use the relative entropy method. The method  is purely nonlinear, and allows to handle rough and large perturbations, and so it has been extensively used in studying the stability of shock (or contact discontinuity) (see \cite{AKV,CV,Kang19,Kang,Kang-V-NS17,KVARMA,Kang-V-1,KVW,KVW3D,Leger,Serre-Vasseur,SV_16,SV_16dcds,Vasseur-2013,VW}).

To use the relative entropy method, we rewrite \eqref{NS_1} into the  viscous hyperbolic system of conservation laws:
\beq\label{system-0}
\partial_t U +\partial_x A(U)= { -\partial_{\xi}\big(v^\beta \partial_{\xi}p(v)\big) \choose 0},
\eeq
where 
\[
U:={v \choose h},\quad A(U):={-h \choose p(v)}.
\]
The system \eqref{system-0} has a convex entropy $\eta(U):=\frac{h^2}{2}+Q(v)$, where $Q(v)=\frac{v^{-\gamma+1}}{\gamma-1}$, i.e., $Q'(v)=-p(v)$.\\
Using the derivative of the entropy as 
\beq\label{nablae}
\nabla\eta(U)={-p(v)\choose h},
\eeq
the above system \eqref{system-0} can be rewritten as
\beq\label{system}
\partial_t U +\partial_x A(U)= \partial_x\Big(M(U) \partial_x\nabla\eta(U) \Big),
\eeq
where $M(U)={v^\beta \quad 0 \choose 0\quad 0}$. \\
Also, for each wave 
\[
\tilde U_i(x-\s_i t):=\bmat{\tilde v_i(x-\sigma_i t)}\\ {\tilde h_i(x-\sigma_i t)}\emat,
\]
the system \eqref{small_shock1} can be rewritten as:
\beq\label{re_shock}
-\s_i \partial_x\tilde U_i + \partial_x A(\tilde U_i)= \partial_x\Big(M(\tilde U_i) \partial_x\nabla\eta(\tilde U_i) \Big).
\eeq
Thus, the composite wave
\[
\tilde U(t,x)= \tilde U_1(x-\s_1 t) + \tilde U_2(x-\s_2 t) - \bmat{v_m}\\ {u_m}\emat 
\]
satisfies
\beq\label{cowave-eq}
\partial_t \tilde U= \sum_{i=1}^2 \left( -\partial_x A(\tilde U_i)+\partial_x\Big(M(\tilde U_i) \partial_x\nabla\eta(\tilde U_i) \Big) \right).
\eeq

We define the relative entropy function by
\[
\eta(U|V)=\eta(U)-\eta(V) -\nabla\eta(V) (U-V),
\]
and the relative flux by
\[
A(U|V)=A(U)-A(V) -\nabla A(V) (U-V).
\] 
Let $G(\cdot;\cdot)$ be the flux of the relative entropy defined by
\[
G(U;V) = G(U)-G(V) -\nabla \eta(V) (A(U)-A(V)),
\]
where $G$ is the entropy flux of $\eta$, i.e., $\partial_{i}  G (U) = \sum_{k=1}^{2}\partial_{k} \eta(U) \partial_{i}  A_{k} (U),\quad 1\le i\le 2$.\\
Then, for our system \eqref{system-0}, we have
\begin{align}
\begin{aligned}\label{relative_e}
&\eta(U|\tilde U)=\frac{|h-\tilde h |^2}{2} + Q(v|\tilde v),\\
& A(U|\tilde U)={0 \choose p(v|\tilde v)},\\
&G(U;\tilde U)=(p(v)-p(\tilde v)) (h-\tilde h),
\end{aligned}
\end{align}
where the relative pressure is defined as
\begin{equation}\label{pressure-relative}
p(v|w)=p(v)-p(w)-p'(w)(v-w).
\end{equation}

We will consider a weighted relative entropy between the solution $U$ of \eqref{system} and the shifted composite wave $\tilde U^{X_1,X_2}$ as follows: for shifts $X_1, X_2$ (to be determined),
\[
a^{X_1,X_2}(t,x)\eta\big(U(t,x)|\tilde U^{X_1,X_2}(t,x) \big) ,
\]
where the wave $\tilde U^{X_1,X_2}$ and the weight $a^{X_1,X_2}$ have the form of 
\beq\label{xwave}
\tilde U^{X_1,X_2}(t,x):=\bmat{\tilde v^{X_1,X_2}(t,x) }\\{\tilde h^{X_1,X_2}(t,x) }\emat :=\bmat{\tilde v_1(x-\sigma_1t-X_1(t))+\tilde v_2(x-\sigma_2t-X_2(t))-v_m}\\ {\tilde h_1(x-\sigma_1t-X_1(t))+\tilde h_2(x-\sigma_2t-X_2(t))-u_m}\emat ,
\eeq
and 
\beq\label{xa}
a^{X_1,X_2}(t,x):= a_1(x-\sigma_1t-X_1(t)) +a_2(x-\sigma_2t-X_2(t)) -1 .
\eeq

In Lemma \ref{lem-rel}, we will derive a quadratic structure on 
$$
\frac{d}{dt}\int_{\bbr} a^{X_1,X_2}(t,x)\eta\big(U(t,x)|\tilde U^{X_1,X_2}(t,x) \big) dx.
$$
For notational simplicity, we will use the following notations: for the waves $\tilde U_1, \tilde U_2$ and the functions $a_1, a_2$, and any shifts $X_1, X_2$, 
\[
\tilde U_i^{X_i}:=\tilde U_i (x -\s_i t - X_i(t)), \quad a_i^{X_i}:=a_i(x -\s_i t - X_i(t))
\]

\begin{lemma}\label{lem-rel}
Let $a_1$ and $a_2$ be any positive smooth bounded functions whose derivative is bounded and integrable. Let $U\in  \mathcal{H}_T$ be a solution to \eqref{system}, and $X_1, X_2$ be any absolutely continuous functions on $[0,T]$. 
Let $\tilde U^{X_1,X_2}$ and $a^{X_1,X_2}$ as in \eqref{xwave} and \eqref{xa}. Then,
\begin{align}
\begin{aligned}\label{ineq-0}
\frac{d}{dt}\int_{\bbr} a^{X_1,X_2}(t,x) \eta(U(t,x)|\tilde U^{X_1,X_2}(t,x) ) dx =\sum_{i=1}^2\big(\dot X_i(t) Y_i(U) \big)+\mathcal{J}^{bad}(U) - \mathcal{J}^{good}(U),
\end{aligned}
\end{align}
where
\begin{align}
\begin{aligned}\label{ybg-first}
&Y_i(U):= - \int_{\bbr} (a_i)_x^{X_i}  \eta(U|\tilde U^{X_1,X_2} ) dx +\int_{\bbr} a^{X_1,X_2} (\tilde U_i)_x^{X_i}  \nabla^2\eta(\tilde U^{X_1,X_2}) (U-\tilde U^{X_1,X_2}) dx ,\\
&\mathcal{J}^{bad}(U):=\sum_{i=1}^2\bigg[  \int_\bbr (a_i)_x^{X_i}  \big(p(v)-p(\tilde v^{X_1,X_2} )\big) \big(h-\tilde h^{X_1,X_2} \big) dx +  \s_i \int_\bbr a^{X_1,X_2} (\tilde v_i)_x^{X_i} p(v|\tilde v^{X_1,X_2}) dx\\
 &\quad\quad\quad -\int_\bbr (a_i)_x^{X_i}  v^\beta \big(p(v)-p(\tilde v^{X_1,X_2} )\big)\partial_x \big(p(v)-p(\tilde v^{X_1,X_2} )\big)  dx\\
&\quad\quad\quad -\int_\bbr (a_i)_x^{X_i}  \big(p(v)-p(\tilde v^{X_1,X_2})\big) (v^\beta - (\tilde v^\beta)^{X_1,X_2} ) \partial_{x} p(\tilde v^{X_1,X_2})  dx \bigg] \\
&\quad\quad\quad -\int_\bbr a^{X_1,X_2}  \partial_x \big(p(v)-p(\tilde v^{X_1,X_2})\big) (v^\beta - (\tilde v^\beta)^{X_1,X_2} ) \partial_{x} p(\tilde v^{X_1,X_2})  dx\\
&\quad\quad\quad + \int_\bbr a^{X_1,X_2} \big(p(v)-p(\tilde v^{X_1,X_2}) \big)  \tilde E_1 dx -  \int_\bbr a^{X_1,X_2} (h-\tilde h^{X_1,X_2})  \tilde E_2 dx ,\\
&\mathcal{J}^{good}(U):= \sum_{i=1}^2\bigg( \frac{\sigma_i}{2}\int_\bbr (a_i)_x^{X_i} \left| h-\tilde h^{X_1,X_2} \right|^2 dx  +\sigma_i  \int_\bbr  (a_i)_x^{X_i}  Q(v|\tilde v^{X_1,X_2})  dx\bigg)\\
&\quad\quad\quad +\int_\bbr a^{X_1,X_2}  v^\beta |\partial_x \big(p(v)-p(\tilde v^{X_1,X_2} )\big)|^2 dx,
\end{aligned}
\end{align}
where 
\begin{align*}
&\tilde E_1:=  \partial_x\big( (\tilde v^\beta)^{X_1,X_2} \partial_x p(\tilde v^{X_1,X_2} )\big) -\partial_x\big( (\tilde v_1^\beta)^{X_1} \partial_x p(\tilde v_1^{X_1} )\big) -\partial_x\big( (\tilde v_2^\beta)^{X_2} \partial_x p(\tilde v_2^{X_2} )\big) ,\\
&\tilde E_2:=\partial_x p(\tilde v^{X_1,X_2}) -\partial_x p(\tilde v_1^{X_1} ) -\partial_x p(\tilde v_2^{X_2} ).
\end{align*}
\end{lemma}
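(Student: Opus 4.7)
The identity is a relative-entropy computation whose two novelties compared with the single-shock calculation of \cite{Kang-V-NS17} are: (i) the composite wave $\tilde U^{X_1,X_2}$ is not an exact solution of \eqref{system} -- it solves \eqref{cowave-eq} only up to interaction residuals $\tilde E_1,\tilde E_2$ that measure the failure of the two single-shock equations to add up in nonlinear quantities; and (ii) both the reference profile and the weight now depend on two independent shifts, so each contributes $\dot X_i$-terms through the chain rule. The plan is to differentiate in time, substitute the evolution equations, integrate by parts once, and regroup.

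I would first derive the ``equation with source'' that $\tilde U^{X_1,X_2}$ actually satisfies. Combining \eqref{xwave} with the shifted versions of \eqref{re_shock} for each single wave yields
\begin{equation*}
\partial_t \tilde U^{X_1,X_2} + \partial_x A(\tilde U^{X_1,X_2}) = \partial_x\bigl(M(\tilde U^{X_1,X_2})\partial_x\nabla\eta(\tilde U^{X_1,X_2})\bigr) - \sum_{i=1}^2 \dot X_i\,\partial_x\tilde U_i^{X_i} + (\tilde E_1,\tilde E_2)^\top,
\end{equation*}
since the mismatch between $A$ (resp.\ $M\partial_x\nabla\eta$) evaluated at the composite wave and the sum of the same quantities on the individual shifted waves is exactly $(0,\tilde E_2)^\top$ (resp.\ $(-\tilde E_1,0)^\top$). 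In parallel, $\partial_t a^{X_1,X_2}=-\sum_i(\sigma_i+\dot X_i)(a_i)_x^{X_i}$. Plugging these into the standard relative-entropy expansion
\begin{equation*}
\partial_t\eta(U|\tilde U^{X_1,X_2}) = (\nabla\eta(U)-\nabla\eta(\tilde U^{X_1,X_2}))\cdot\partial_t U - \nabla^2\eta(\tilde U^{X_1,X_2})\partial_t\tilde U^{X_1,X_2}\cdot(U-\tilde U^{X_1,X_2}),
\end{equation*}
and using \eqref{system} for $\partial_t U$, a single integration by parts in $x$ moves all derivatives off $U$ onto either $a^{X_1,X_2}$ or $\tilde U^{X_1,X_2}$.

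The regrouping then follows a clear template. Terms containing a factor $(\tilde v_i)_x^{X_i}$ arise from the hyperbolic flux linearization; via the identity $-\sigma_i\partial_x\tilde U_i^{X_i}+\partial_xA(\tilde U_i^{X_i})=\partial_x(M(\tilde U_i^{X_i})\partial_x\nabla\eta(\tilde U_i^{X_i}))$ from \eqref{small_shock1} and the explicit form $A(U|\tilde U^{X_1,X_2})=(0,p(v|\tilde v^{X_1,X_2}))^\top$ from \eqref{relative_e}, they produce exactly the $\sigma_i$-weighted $p(v|\tilde v^{X_1,X_2})$ integrals in $\mathcal J^{bad}$. Terms carrying a factor $(a_i)_x^{X_i}$ come from integrating the flux and diffusion by parts against the weight; using $\sigma_i\,\eta(U|\tilde U^{X_1,X_2})=\tfrac{\sigma_i}{2}|h-\tilde h^{X_1,X_2}|^2+\sigma_i Q(v|\tilde v^{X_1,X_2})$, they split into the coercive pieces in $\mathcal J^{good}$ (sign-good via the monotonicity of each $a_i$ paired with the sign of $\sigma_i$) plus the cross-terms listed in $\mathcal J^{bad}$. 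All $\dot X_i$-contributions are collected to produce precisely $\dot X_i Y_i(U)$ with $Y_i$ as displayed: the first piece of $Y_i$ comes from $\partial_t a^{X_1,X_2}$ paired with $\eta(U|\tilde U^{X_1,X_2})$, and the second from $-\nabla^2\eta(\tilde U^{X_1,X_2})(U-\tilde U^{X_1,X_2})$ paired with $\partial_x\tilde U_i^{X_i}$. The parabolic term of \eqref{system}, integrated by parts once against $\nabla\eta(U)-\nabla\eta(\tilde U^{X_1,X_2})$ and combined with the corresponding diffusion on $\tilde U^{X_1,X_2}$, produces the coercive $\int a^{X_1,X_2}v^\beta|\partial_x(p(v)-p(\tilde v^{X_1,X_2}))|^2$ together with the remaining $v^\beta$-dependent cross-terms in $\mathcal J^{bad}$. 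Pairing with the residual vector $(\tilde E_1,\tilde E_2)^\top$ yields the last two terms of $\mathcal J^{bad}$.

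The main obstacle is purely bookkeeping. Because both the reference profile and the weight are sums of two shifted objects, every intermediate expression has a symmetric two-index structure, and one must carefully label each factor $(a_i)_x^{X_i}$ and $(\tilde v_i)_x^{X_i}$ with the matching shock index so that all genuinely nonlocal mismatches between the two waves are routed entirely into $\tilde E_1,\tilde E_2$ rather than into the principal terms. This clean index-separation is exactly what is needed downstream, since the exponential localization of $(a_i)_x^{X_i}$ and $(\tilde v_i)_x^{X_i}$ around the $i$-th shock layer (Lemma \ref{lem:vp}) will allow each wave to be analyzed almost independently while the residuals $\tilde E_i$ are handled separately as small interaction errors that produce the extra constant $C$ on the right-hand side of \eqref{cont_main2}.
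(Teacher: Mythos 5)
Your plan is correct and follows essentially the same route as the paper: the paper likewise writes the evolution of the shifted composite wave with the interaction residual $(\tilde E_1,\tilde E_2)^{\top}$ (its quantity $J$), invokes the standard weighted relative-entropy computation of \cite{Kang-V-NS17} to obtain terms $I_1,\dots,I_6$, and regroups exactly as you describe, the key cancellation ($I_2+I_5+I_6$, using the single-shock profile equations and $p(v|\tilde v^{X_1,X_2})+p'(\tilde v^{X_1,X_2})(v-\tilde v^{X_1,X_2})=p(v)-p(\tilde v^{X_1,X_2})$) producing the $\sigma_i\int a^{X_1,X_2}(\tilde v_i)_x^{X_i}p(v|\tilde v^{X_1,X_2})\,dx$ and $\tilde E_1$ terms, while the $\dot X_i Y_i$ bookkeeping is exactly as you state. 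The only cosmetic slip is attributing the coercive $(a_i)_x^{X_i}$-weighted terms to integration by parts of the flux against the weight; they come directly from the $\sigma_i$-part of $\partial_t a^{X_1,X_2}$ paired with $\eta(U|\tilde U^{X_1,X_2})$, a formula you in any case wrote correctly.
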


\begin{remark}
In what follows, we will define the weight function $a$ such that $\sigma_\eps a' >0$. Therefore, $-\mathcal{J}^{good}$ consists of three good terms, while $\mathcal{J}^{bad}$ consists of bad terms. 
\end{remark}

\begin{proof}
By the definition of the relative entropy, we first have
\begin{align*}
&\frac{d}{dt}\int_{\bbr} a^{X_1,X_2}(t,x) \eta(U(t,x)|\tilde U^{X_1,X_2}(t,x) ) dx 
= \int_{\bbr} \partial_t  a^{X_1,X_2} \eta(U|\tilde U^{X_1,X_2} ) dx \\
&\qquad\quad+\int_\bbr \!\! a^{X_1,X_2}\bigg[\Big(\nabla\eta(U)-\nabla\eta(\tilde U^{X_1,X_2})\Big) \partial_t U  -\nabla^2\eta(\tilde U^{X_1,X_2}) (U-\tilde U^{X_1,X_2}) \partial_t \tilde U^{X_1,X_2}  \bigg] dx.
\end{align*}
Since
\[
\partial_t a^{X_1,X_2}(t,x) = -\sum_{i=1}^2 (\s_i +\dot{X}_i ) (a_i)_x^{X_i} ,
\]
and it follows from \eqref{cowave-eq} that
\[
\partial_t \tilde U^{X_1,X_2} (t,x)= \sum_{i=1}^2\left( -\dot{X}_i ( \tilde U_i )_x^{X_i}   -\partial_x A(\tilde U_i^{X_i} )+\partial_x\Big(M(\tilde U_i^{X_i} ) \partial_x\nabla\eta(\tilde U_i^{X_i} ) \Big) \right),
\]
we have
\begin{align*}
&\frac{d}{dt}\int_{\bbr} a^{X_1,X_2}(t,x) \eta(U(t,x)|\tilde U^{X_1,X_2}(t,x) ) dx \\
&=\sum_{i=1}^2 (\dot{X}_i Y_i ) -\sum_{i=1}^2 \s_i  \int_{\bbr} (a_i)_x^{X_i} \eta(U|\tilde U^{X_1,X_2} ) dx \\
&\qquad +\int_\bbr \!\! a^{X_1,X_2} \bigg[\Big(\nabla\eta(U)-\nabla\eta(\tilde U^{X_1,X_2})\Big)\!\Big(\!\!\!-\partial_x A(U)+ \partial_x\Big(M(U)\partial_x\nabla\eta(U) \Big) \Big)\\
&\qquad -\nabla^2\eta(\tilde U^{X_1,X_2}) (U-\tilde U^{X_1,X_2}) \underbrace{ \sum_{i=1}^2  \Big( -\partial_x A(\tilde U_i^{X_i})+\partial_x\Big(M(\tilde U_i^{X_i}) \partial_x\nabla\eta(\tilde U_i^{X_i}) \Big)  \Big) }_{=:J} \bigg] dx ,
\end{align*}
where 
\[
Y_i:= - \int_{\bbr} (a_i)_x^{X_i}  \eta(U|\tilde U^{X_1,X_2} ) dx +\int_{\bbr} a^{X_1,X_2} (\tilde U_i)_x^{X_i} \nabla^2\eta(\tilde U^{X_1,X_2}) (U-\tilde U^{X_1,X_2}) dx.
\]
Since
\[
J=-\partial_x A(\tilde U^{X_1,X_2})+\partial_x\Big(M(\tilde U^{X_1,X_2}) \partial_x\nabla\eta(\tilde U^{X_1,X_2}) \Big) +{\tilde E_1\choose \tilde E_2} ,
\]
where
\[
{\tilde E_1\choose \tilde E_2}:= { \partial_x\big( (\tilde v^\beta)^{X_1,X_2} \partial_x p(\tilde v^{X_1,X_2} )\big) -\partial_x\big( (\tilde v_1^\beta)^{X_1} \partial_x p(\tilde v_1^{X_1} )\big) -\partial_x\big( (\tilde v_2^\beta)^{X_2} \partial_x p(\tilde v_2^{X_2} )\big) \choose \partial_x p(\tilde v^{X_1,X_2}) -\partial_x p(\tilde v_1^{X_1} ) -\partial_x p(\tilde v_2^{X_2} ) }.
\]
Using the same computation in \cite[Lemma 2.3]{Kang-V-NS17} (see also \cite[Lemma 4]{Vasseur_Book}), we have
\[
\frac{d}{dt}\int_{\bbr} a^{X_1,X_2}(t,x) \eta(U(t,x)|\tilde U^{X_1,X_2}(t,x) ) dx=\sum_{i=1}^2 (\dot{X}_i Y_i ) +\sum_{i=1}^6 I_i,
\]
where 
\begin{align*}
\begin{aligned}
&I_1:=  -\sum_{i=1}^2 \s_i  \int_{\bbr} (a_i)_x^{X_i} \eta(U|\tilde U^{X_1,X_2} ) d x -\int_\bbr  a^{X_1,X_2} \partial_x G(U;\tilde U^{X_1,X_2}) dx,\\
&I_2:=- \int_\bbr a^{X_1,X_2} \partial_x \nabla\eta(\tilde U^{X_1,X_2}) A(U|\tilde U^{X_1,X_2}) dx ,\\
&I_3:=\int_\bbr a^{X_1,X_2} \Big( \nabla\eta(U)-\nabla\eta(\tilde U^{X_1,X_2})\Big) \partial_x \Big(M(U) \partial_x \big(\nabla\eta(U)-\nabla\eta(\tilde U^{X_1,X_2})\big) \Big)  dx, \\
&I_4:=\int_\bbr a^{X_1,X_2} \Big( \nabla\eta(U)-\nabla\eta(\tilde U^{X_1,X_2})\Big) \partial_x \Big(\big(M(U)-M(\tilde U^{X_1,X_2})\big) \partial_x \nabla\eta(\tilde U^{X_1,X_2}) \Big)  dx, \\
&I_5:=\int_\bbr a^{X_1,X_2}(\nabla\eta)(U|\tilde U^{X_1,X_2})\partial_x \Big(M(\tilde U^{X_1,X_2}) \partial_x \nabla\eta(\tilde U^{X_1,X_2}) \Big)  dx,\\
&I_6:=-\int_\bbr a^{X_1,X_2} \nabla^2\eta(\tilde U^{X_1,X_2}) (U-\tilde U^{X_1,X_2}) {\tilde E_1\choose \tilde E_2} \,dx .
\end{aligned}
\end{align*}
Using $\partial_x a^{X_1,X_2} = (a_1)_x^{X_1}+(a_2)_x^{X_2}$ and \eqref{relative_e} with \eqref{nablae}, we have
\begin{align*}
\begin{aligned}
I_1&=\sum_{i=1}^2\left( -\s_i  \int_{\bbr} (a_i)_x^{X_i} \eta(U|\tilde U^{X_1,X_2} ) d x + \int_\bbr (a_i)_x^{X_i}  \big(p(v)-p(\tilde v^{X_1,X_2} )\big) \big(h-\tilde h^{X_1,X_2} \big) dx\right) ,
\end{aligned}
\end{align*}
and 
\[
I_2=-\sum_{i=1}^2 \int_\bbr a^{X_1,X_2} (\tilde h_i)_x^{X_i} p(v|\tilde v^{X_1,X_2}) dx.
\]
By integration by parts, we have
\begin{align*}
I_3&=\int_\bbr a^{X_1,X_2}  \big(p(v)-p(\tilde v^{X_1,X_2} )\big)\partial_{x}\Big(v^\beta \partial_{x}\big(p(v)-p(\tilde v^{X_1,X_2} )\big) \Big) dx \\
&=-\int_\bbr a^{X_1,X_2}  v^\beta |\partial_x \big(p(v)-p(\tilde v^{X_1,X_2} )\big)|^2 dx \\
&\quad -\sum_{i=1}^2 \int_\bbr (a_i)_x^{X_i}  v^\beta \big(p(v)-p(\tilde v^{X_1,X_2} )\big)\partial_x \big(p(v)-p(\tilde v^{X_1,X_2} )\big)  dx,
\end{align*}
and
\begin{align*}
I_4&= \int_\bbr a^{X_1,X_2} \big(p(v)-p(\tilde v^{X_1,X_2})\big)\partial_{x}\Big((v^\beta - (\tilde v^\beta)^{X_1,X_2} ) \partial_{x} p(\tilde v^{X_1,X_2}) \Big) dx \\
&=-\sum_{i=1}^2 \int_\bbr (a_i)_x^{X_i}  \big(p(v)-p(\tilde v^{X_1,X_2})\big) (v^\beta - (\tilde v^\beta)^{X_1,X_2} ) \partial_{x} p(\tilde v^{X_1,X_2})  dx \\
&\qquad -\int_\bbr a^{X_1,X_2}  \partial_x \big(p(v)-p(\tilde v^{X_1,X_2})\big) (v^\beta - (\tilde v^\beta)^{X_1,X_2} ) \partial_{x} p(\tilde v^{X_1,X_2})  dx.
\end{align*}
It follows from \eqref{re_shock} and \eqref{nablae} that
\begin{align*}
\begin{aligned}
I_5&=\int_\bbr a^{X_1,X_2}(\nabla\eta)(U|\tilde U^{X_1,X_2})
 \bigg[ \sum_{i=1}^2  \Big( -\s_i  (\tilde U_i)_x^{X_i} +\partial_x A(\tilde U_i^{X_i})  \\
&\quad -\partial_x\Big(M(\tilde U_i^{X_i}) \partial_x\nabla\eta(\tilde U_i^{X_i}) \Big)  \Big)+ \partial_x \Big(M(\tilde U^{X_1,X_2}) \partial_x \nabla\eta(\tilde U^{X_1,X_2}) \Big) \bigg] dx \\
&=\sum_{i=1}^2\bigg[ \s_i \int_\bbr a^{X_1,X_2} (\tilde v_i)_x^{X_i} p(v|\tilde v^{X_1,X_2}) dx +\int_\bbr a^{X_1,X_2} (\tilde h_i)_x^{X_i} p(v|\tilde v^{X_1,X_2}) dx\bigg]\\
&\quad + \int_\bbr a^{X_1,X_2} p(v|\tilde v^{X_1,X_2})  \tilde E_1 dx .
\end{aligned}
\end{align*}
Since
\beq\label{2dere}
 \nabla^2\eta(\tilde U^{X_1,X_2}) = \bmat -p'(\tilde v^{X_1,X_2}) &0\\0&1\emat ,
\eeq
we have
\begin{align*}
\begin{aligned}
I_6&=  \int_\bbr a^{X_1,X_2} p'(\tilde v^{X_1,X_2}) (v-\tilde v^{X_1,X_2})  \tilde E_1 dx
-  \int_\bbr a^{X_1,X_2} (h-\tilde h^{X_1,X_2})  \tilde E_2 dx .
\end{aligned}
\end{align*}
Thus we have some cancellation
\begin{align*}
I_2+I_5+I_6 &=\sum_{i=1}^2\bigg[ \s_i \int_\bbr a^{X_1,X_2} (\tilde v_i)_x^{X_i} p(v|\tilde v^{X_1,X_2}) dx \bigg]\\
&\quad + \int_\bbr a^{X_1,X_2} \big(p(v)-p(\tilde v^{X_1,X_2}) \big)  \tilde E_1 dx
-  \int_\bbr a^{X_1,X_2} (h-\tilde h^{X_1,X_2})  \tilde E_2 dx .
\end{align*}
Therefore, we have the desired representation.
\end{proof}

\subsection{Construction of the weight function}\label{subsec:a}
We define the weight function $a$ by
\begin{align}
\begin{aligned} \label{weight-a}
&a (t,x)=a_1(x-\sigma_1t) +a_2(x-\sigma_2t) -1,\\
&a_1(x-\sigma_1t):= 1-\lambda \frac{p(\tilde v_1(x-\sigma_1t))-p(v_-)}{\eps_1},\\
&a_2(x-\sigma_2t) :=1-\lambda \frac{p(\tilde v_2(x-\sigma_2t))-p(v_+)}{\eps_2}.
\end{aligned}
\end{align}
That is,
\[
a (t,x)= 1-\lambda \frac{p(\tilde v_1(x-\sigma_1t))-p(v_-)}{\eps_1} -\lambda \frac{p(\tilde v_2(x-\sigma_2t))-p(v_+)}{\eps_2}.
\]

We briefly present some useful properties on the weight $a$.\\
First of all, $a_1$ decreases from $1$ to $1-\lambda$ on $\bbr$, but $a_2$ increases from $1-\lambda$ to $1$ on $\bbr$. Thus,
the weight function $a$ satisfies $1-\lambda\le a\le 1$, and
\beq\label{massa}
\int_\bbr |(a_i)_x| dx =\lam,\quad\mbox{for } i=1, 2.
\eeq
Note that for $\eps_1 ,\eps_2<\delta_0\ll1$, $p'(v_*/2)\le p'(\tilde v_i)\le p'(2 v_*)<0$ for all $i=1,2$. Thus, for each $i=1,2$, $\sigma_i (a_i)_x >0$, and
\beq\label{der-a}
(a_i)_x =- \frac{\lam}{\eps_i}  p'(\tilde v_i) (\tilde v_i)_x ,
\eeq
we have 
\beq\label{d-weight}
 C^{-1} \frac{\lambda}{\eps_i}|(\tilde v_i)_x| \le |(a_i)_x| \le C \frac{\lambda}{\eps_i}|(\tilde v_i)_x| .
\eeq

\subsection{Maximization in terms of $h-\tilde h$}\label{sec:mini}

In order to estimate the right-hand side of \eqref{ineq-0}, we will use Proposition \ref{prop:main3}, i.e., a sharp estimate with respect to $p(v)-p(\tilde v)$ near $p(\tiv)$. For that, we need to rewrite $\mathcal{J}^{bad}$ on the right-hand side of \eqref{ineq-0} only in terms of $p(v)$ near $p(\tilde v)$, by separating $h-\tilde h$ from the first term of $\mathcal{J}^{bad}$. Therefore, we will rewrite $\mathcal{J}^{bad}$ into the maximized representation in terms of $h-\tilde h$ in the following lemma. However, we will keep all terms of $\mathcal{J}^{bad}$ in a region $\{p(v)-\pt >\delta\}$ for small values of $v$.

\begin{lemma}\label{lem-max}
Let $\tilde U^{X_1,X_2}$ and $a^{X_1,X_2}$ as in \eqref{xwave} and \eqref{xa}.  Let $\delta$ be any positive constant.
Then, for any $U\in \mathcal{H}_T$, 
\begin{align}
\begin{aligned}\label{ineq-1}
\mathcal{J}^{bad} (U) -\mathcal{J}^{good} (U)=B_\delta(U)- G_\delta(U),
\end{aligned}
\end{align}
where
\begin{align}
\begin{aligned}\label{bad}
&{B}_\delta(U):= \sum_{i=1}^2\bigg[   \s_i \int_\bbr a^{X_1,X_2} (\tilde v_i)_x^{X_i} p(v|\tilde v^{X_1,X_2}) dx\\
 &\quad\quad\quad  + \frac{1}{2\s_i} \int_\bbr (a_i)_x^{X_i}|p(v)-p(\tilde v^{X_1,X_2}) |^2 {\mathbf 1}_{\{p(v)- p(\tilde v^{X_1,X_2}) \leq\delta\}}  dx \\
&\quad\quad\quad + \int_\bbr (a_i)_x^{X_i}  \big(p(v)-p(\tilde v^{X_1,X_2} )\big) \big(h-\tilde h^{X_1,X_2} \big) {\mathbf 1}_{\{p(v)- p(\tilde v^{X_1,X_2})  > \delta\}}  dx\\
 &\quad\quad\quad -\int_\bbr (a_i)_x^{X_i}  v^\beta \big(p(v)-p(\tilde v^{X_1,X_2} )\big)\partial_x \big(p(v)-p(\tilde v^{X_1,X_2} )\big)  dx\\
&\quad\quad\quad -\int_\bbr (a_i)_x^{X_i}  \big(p(v)-p(\tilde v^{X_1,X_2})\big) (v^\beta - (\tilde v^\beta)^{X_1,X_2} ) \partial_{x} p(\tilde v^{X_1,X_2})  dx \bigg] \\
&\quad\quad\quad -\int_\bbr a^{X_1,X_2}  \partial_x \big(p(v)-p(\tilde v^{X_1,X_2})\big) (v^\beta - (\tilde v^\beta)^{X_1,X_2} ) \partial_{x} p(\tilde v^{X_1,X_2})  dx\\
&\quad\quad\quad + \int_\bbr a^{X_1,X_2} \big(p(v)-p(\tilde v^{X_1,X_2}) \big)  \tilde E_1 dx -  \int_\bbr a^{X_1,X_2} (h-\tilde h^{X_1,X_2})  \tilde E_2 dx ,
\end{aligned}
\end{align}
and
\begin{align}
\begin{aligned}\label{good}
{G}_\delta(U) &:=\sum_{i=1}^2\bigg( \frac{\sigma_i}{2}\int_\bbr (a_i)_x^{X_i}  \left|  h-\tilde h^{X_1,X_2} -\frac{p(v)-p(\tilde v^{X_1,X_2})}{\s_i}\right|^2  {\mathbf 1}_{\{p(v)-p(\tilde v^{X_1,X_2})  \leq\delta\}}  dx \\
&+\frac{\sigma_i}{2}\int_\bbr (a_i)_x^{X_i} \left| h-\tilde h^{X_1,X_2} \right|^2  {\mathbf 1}_{\{p(v)-p(\tilde v^{X_1,X_2})  >\delta\}}  dx  +\sigma_i  \int_\bbr  (a_i)_x^{X_i}  Q(v|\tilde v^{X_1,X_2})  dx\bigg)\\
&+\int_\bbr a^{X_1,X_2}  v^\beta |\partial_x \big(p(v)-p(\tilde v^{X_1,X_2} )\big)|^2 dx,
\end{aligned}
\end{align}

\begin{remark}\label{rem:0}
Since $\sigma_i (a_i)_x >0$ and $a>0$, $-{G}_\delta$ consists of good terms. 
\end{remark}

\end{lemma}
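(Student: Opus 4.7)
The plan is to observe that the difference $\mathcal{J}^{bad}-\mathcal{J}^{good}$ in \eqref{ineq-0} already has the form of $B_\delta-G_\delta$ on the set $\{p(v)-p(\tilde v^{X_1,X_2})>\delta\}$, so all the work is concentrated on the set $\{p(v)-p(\tilde v^{X_1,X_2})\leq\delta\}$, where we will complete the square in the variable $h-\tilde h^{X_1,X_2}$. Concretely, the only term in $\mathcal{J}^{bad}$ that is linear in $h-\tilde h^{X_1,X_2}$ and localized by $(a_i)_x^{X_i}$ is
\[
\int_\bbr (a_i)_x^{X_i}\bigl(p(v)-p(\tilde v^{X_1,X_2})\bigr)\bigl(h-\tilde h^{X_1,X_2}\bigr)\,dx,
\]
and the only quadratic $h$-term in $\mathcal{J}^{good}$ that is localized by $(a_i)_x^{X_i}$ is $\frac{\sigma_i}{2}\int (a_i)_x^{X_i}|h-\tilde h^{X_1,X_2}|^2\,dx$. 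All other terms of $\mathcal{J}^{bad}$ and $\mathcal{J}^{good}$ appear unchanged in $B_\delta$ and $G_\delta$ respectively (one checks this by inspection, matching items line by line).

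The key step is the pointwise algebraic identity: for each $i=1,2$,
\[
(a_i)_x^{X_i}\bigl(p(v)-p(\tilde v^{X_1,X_2})\bigr)\bigl(h-\tilde h^{X_1,X_2}\bigr)-\frac{\sigma_i}{2}(a_i)_x^{X_i}\bigl|h-\tilde h^{X_1,X_2}\bigr|^2
\]
is, by completing the square in $h-\tilde h^{X_1,X_2}$, equal to
\[
\frac{1}{2\sigma_i}(a_i)_x^{X_i}\bigl|p(v)-p(\tilde v^{X_1,X_2})\bigr|^2-\frac{\sigma_i}{2}(a_i)_x^{X_i}\left|h-\tilde h^{X_1,X_2}-\frac{p(v)-p(\tilde v^{X_1,X_2})}{\sigma_i}\right|^2.
\]
I would then multiply both sides by $\mathbf{1}_{\{p(v)-p(\tilde v^{X_1,X_2})\leq\delta\}}$ and integrate in $x$; on the complementary set $\{p(v)-p(\tilde v^{X_1,X_2})>\delta\}$ both the linear term (in $\mathcal{J}^{bad}$) and the square $\tfrac{\sigma_i}{2}(a_i)_x^{X_i}|h-\tilde h^{X_1,X_2}|^2$ (in $\mathcal{J}^{good}$) are kept untouched, which accounts for the indicator $\mathbf{1}_{\{p(v)-p(\tilde v^{X_1,X_2})>\delta\}}$ appearing in the third term of $B_\delta$ and the second term inside the $i$-sum of $G_\delta$.

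Summing the resulting identity over $i=1,2$ and combining with all the other (unchanged) terms of $\mathcal{J}^{bad}$ and $\mathcal{J}^{good}$ gives exactly \eqref{ineq-1} with $B_\delta,G_\delta$ as in \eqref{bad}--\eqref{good}. I expect no real obstacle beyond careful bookkeeping: one must match each line of $B_\delta$ with the corresponding line of $\mathcal{J}^{bad}$, verify that the terms involving $(\tilde v_i)_x^{X_i}$, $v^\beta$, $\tilde E_1$, $\tilde E_2$ pass through identically, and verify that $-G_\delta$ recovers the three original good terms after the square-completion on $\{p(v)-p(\tilde v^{X_1,X_2})\leq\delta\}$. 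Note the integrability required to justify writing the absolute value/integration by parts is furnished by $U\in\mathcal{H}_T$ together with the exponential decay of $(\tilde v_i)_x$, $(a_i)_x$ from Lemma \ref{lem:vp} and the definition \eqref{weight-a}.
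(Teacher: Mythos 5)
Your proposal is correct and follows essentially the same route as the paper: split the linear term of $\mathcal{J}^{bad}$ and the quadratic $h$-term of $\mathcal{J}^{good}$ according to the indicator of $\{p(v)-p(\tilde v^{X_1,X_2})\leq\delta\}$, complete the square in $h-\tilde h^{X_1,X_2}$ on that region (the paper uses the identity $\alpha x^2+\beta x=\alpha(x+\tfrac{\beta}{2\alpha})^2-\tfrac{\beta^2}{4\alpha}$, which is your computation), and carry all other terms through unchanged. Your square-completion identity checks out, so the bookkeeping yields exactly \eqref{bad}--\eqref{good}.
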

\begin{proof}
For a given $\delta>0$, we split the first terms of $\mathcal{J}^{bad}$ and $-\mathcal{J}^{good}$ as follows:
\begin{align*}
\begin{aligned}
&\sum_{i=1}^2 \int_\bbr (a_i)_x^{X_i}  \big(p(v)-p(\tilde v^{X_1,X_2} )\big) \big(h-\tilde h^{X_1,X_2} \big)   dx\\
&=\sum_{i=1}^2\bigg[ \underbrace{ \int_\bbr (a_i)_x^{X_i}  \big(p(v)-p(\tilde v^{X_1,X_2} )\big) \big(h-\tilde h^{X_1,X_2} \big) {\mathbf 1}_{\{p(v)-p(\tilde v^{X_1,X_2})  \leq\delta\}}  dx}_{=: J_i} \\
&\quad + \int_\bbr (a_i)_x^{X_i}  \big(p(v)-p(\tilde v^{X_1,X_2} )\big) \big(h-\tilde h^{X_1,X_2} \big) {\mathbf 1}_{\{p(v)- p(\tilde v^{X_1,X_2})  > \delta\}}  dx \bigg]
\end{aligned}
\end{align*}
and 
\begin{align*}
\begin{aligned}
&- \sum_{i=1}^2\frac{\sigma_i}{2}\int_\bbr (a_i)_x^{X_i} \left| h-\tilde h^{X_1,X_2} \right|^2 dx\\
&=-\sum_{i=1}^2\bigg[ \underbrace{ \sum_{i=1}^2\frac{\sigma_i}{2}\int_\bbr (a_i)_x^{X_i} \left| h-\tilde h^{X_1,X_2} \right|^2 {\mathbf 1}_{\{p(v)- p(\tilde v^{X_1,X_2})  \leq\delta\}}  dx }_{=: K_i} \\
&\quad -\sum_{i=1}^2\frac{\sigma_i}{2}\int_\bbr (a_i)_x^{X_i} \left| h-\tilde h^{X_1,X_2} \right|^2 {\mathbf 1}_{\{p(v)-p(\tilde v^{X_1,X_2})  >\delta\}}  dx \bigg]
\end{aligned}
\end{align*}
Applying the quadratic identity $\alpha x^2+ \beta x =\alpha(x+\frac{\beta}{2\alpha})^2-\frac{\beta^2}{4\alpha}$ with $x:=h-\tilde h^{X_1,X_2} $ to the integrands of $J_i+K_i$, we find
\begin{align*}
\begin{aligned}
&- \frac{\sigma_i}{2} \left| h-\tilde h^{X_1,X_2}\right|^2 + \big(p(v)-p(\tilde v^{X_1,X_2}) \big) \big(h-\tilde h^{X_1,X_2} \big)  \\
&\quad = - \frac{\s_i}{2} \left|  h-\tilde h^{X_1,X_2} -\frac{p(v)-p(\tilde v^{X_1,X_2})}{\s_i}\right|^2+ \frac{1}{2\s_i}|p(v)-p(\tilde v^{X_1,X_2})|^2.
\end{aligned}
\end{align*}
Therefore, we have the desired representation.
\end{proof}

\subsection{Construction of shifts}
For a given $\eps>0$, we consider a continuous function $\Phi_\eps$ defined by
\beq\label{Phi-d}
\Phi_\eps (y)=
\left\{ \begin{array}{ll}
      0,\quad &\mbox{if}~ y\le 0, \\
      -\frac{1}{\eps^4}y,\quad &\mbox{if} ~ 0\le y\le \eps^2, \\
       -\frac{1}{\eps^2},\quad &\mbox{if}  ~y\ge \eps^2. \end{array} \right.
\eeq
\beq\label{Psi-d}
\Psi_\eps (y)=
\left\{ \begin{array}{ll}
      1,\quad &\mbox{if}~ y\le -\eps^2, \\
      -\frac{1}{\eps^2}y,\quad &\mbox{if} ~ -\eps^2\le y\le 0, \\
       0,\quad &\mbox{if}  ~y\ge 0. \end{array} \right.
\eeq
For any fixed $\eps_1, \eps_2>0$, and $U\in \mathcal{H}_T$,  we define a pair of shift functions ${X_1\choose X_2}$ as a solution to the system of nonlinear ODEs:
\beq\label{X-def}
\left\{ \begin{array}{ll}
       \dot X_1(t) = \Phi_{\eps_1} (Y_1(U)) \Big(2|\mathcal{J}^{bad}(U)|+1 \Big) -\frac{\s_1}{2}  \Psi_{\eps_1} (Y_1(U)) ,\\
        \dot X_2(t) = -\Phi_{\eps_2} (-Y_2(U)) \Big(2|\mathcal{J}^{bad}(U)|+1 \Big) -\frac{\s_2}{2}  \Psi_{\eps_2} (-Y_2(U)) ,\\
       X_1(0)=X_2(0)=0, \end{array} \right.
\eeq
where $Y_1, Y_2$ and $\mathcal{J}^{bad}$ are as in \eqref{ybg-first}.\\
Then, it is shown in Appendix \ref{app-shift} that the system \eqref{X-def} has a unique absolutely continuous solution ${X_1\choose X_2}$ on $[0,T]$. \\

Since it follows from \eqref{X-def} that for each $i=1, 2$,
\beq\label{explicit-X}
\dot X_i (t) =
\left\{ \begin{array}{ll}
      (-1)^i \eps_i^{-2} \big(2|\mathcal{J}^{bad}(U)|+1 \big),\quad &\mbox{if}~ (-1)^{i-1} Y_i(U) \ge \eps_i^2, \\
  -\eps_i^{-4} Y_i(U) \big(2|\mathcal{J}^{bad}(U)|+1 \big),\quad &\mbox{if} ~ 0\le  (-1)^{i-1} Y_i(U)\le \eps_i^2, \\
       (-1)^{i-1} \frac{1}{2} \s_i \eps_i^{-2}  Y_i(U),\quad &\mbox{if}  ~ -\eps_i^2  \le  (-1)^{i-1} Y_i(U)\le 0 ,\\
       -\frac{1}{2}\s_i , \quad &\mbox{if}  ~ (-1)^{i-1} Y_i(U) \le -\eps_i^2 ,
       \end{array} \right.
\eeq
the shifts satisfy the bounds:
\[
\dot X_1(t) \le -\frac{\s_1}{2},\quad \dot X_2(t) \ge -\frac{\s_2}{2},\quad\forall t>0.
\]
Thus,
\[
X_1(t) \le -\frac{\s_1}{2} t,\quad X_2(t) \ge -\frac{\s_2}{2} t,\quad\forall t>0,
\]
which gives \eqref{msepX12}.\\
Especially, we have
\beq\label{sx12}
X_1(t)+\s_1 t \le \frac{\s_1}{2} t<0,\quad X_2(t)+\s_2 t \ge \frac{\s_2}{2} t>0,\quad\forall t>0.
\eeq

\subsection{Main proposition}
The main proposition for the proof of Theorem \ref{thm_main} is the following.

\begin{proposition}\label{prop:main}
For the given constant $U_*:=(v_*,u_*)\in\bbr^+\times\bbr$, there exist positive constants $\delta_0,\delta_1 \in(0,1/2)$ such that the following holds.
Let $\tilde U^{X_1,X_2}$ be the composite wave for the given constant states $U_-, U_m, U_+ \in B_{\delta_0}(U_*)$, where $\eps_1=|p(v_-)-p(v_m)|$ and $\eps_2=|p(v_m)-p(v_+)|$.
Then, for any $\lambda >0$ with $\eps_1/\lambda, \eps_2/\lambda<\delta_0$ and $\lambda<\delta_0$,  there exist positive constants $C_{\eps,\deltaz}$ and $t_0$  such that the following holds.\\
For any $U\in\mathcal{H}_T$, let ${X_1\choose X_2}$ be the solution to \eqref{X-def}.
Then,  for all $U\in \mathcal{H}_T$ satisfying  $Y_1(U)\le\eps_1^2$ and $Y_2(U) \ge -\eps_2^2$, 
\begin{align}
\begin{aligned}\label{prop:est}
&\mathcal{R}(U):= -\frac{1}{\eps_1^4} |Y_1(U)|^2 {\mathbf 1}_{\{0\le Y_1(U) \le \eps_1^2\}} 
+\frac{\s_1}{2\eps_1^2} |Y_1(U)|^2 {\mathbf 1}_{\{ -\eps_1^2 \le Y_1(U) \le 0 \}} 
-\frac{\s_1}{2} Y_1(U) {\mathbf 1}_{\{Y_1(U) \le -\eps_1^2 \}} \\
&\quad\quad\quad-\frac{1}{\eps_2^4} |Y_2(U)|^2 {\mathbf 1}_{\{-\eps_2^2\le Y_2(U) \le 0 \}} 
-\frac{\s_2}{2\eps_2^2}  |Y_2(U)|^2 {\mathbf 1}_{\{0\le Y_2(U) \le\eps_2^2 \}}
-\frac{\s_2}{2} Y_2(U) {\mathbf 1}_{\{Y_2(U) \ge \eps_2^2 \}}\\
&\quad\quad\quad+  {B}_{\deltao}(U)+\delta_0\frac{\min(\eo,\et)}{\lambda} |{B}_{\deltao}(U)|
-{G}_{11}^{-}(U)-{G}_{11}^{+}(U) -{G}_{12}^{-}(U)-{G}_{12}^{+}(U) \\
&\quad\quad\quad -\left(1-\delta_0\frac{\eps_1}{\lambda}\right){G}_{21}(U) -\left(1-\delta_0\frac{\eps_2}{\lambda}\right){G}_{22}(U) -(1-\delta_0){D}(U) \\
&\quad \le  C_{\eps,\deltaz}\left[   \frac{1}{t^2} \int_\bbr  \eta(U|\tilde U) dx + \left(\exp\big( -C_\eps t\big) + \frac{1}{t^4} \right) \right] \mathbf{1}_{t\ge t_0} + C \mathbf{1}_{t\le t_0} ,
\end{aligned}
\end{align}
where $Y_i$ and ${B}_{\deltao}$ are as in \eqref{ybg-first} and \eqref{bad}, and ${G}_{11}^{-},{G}_{11}^{+}, {G}_{12}^{-},{G}_{12}^{+}, {G}_{21},{G}_{22}, {D}$ denote the good terms of ${G}_{\deltao}$ in  \eqref{good} as follows: for each $i=1, 2$,
\begin{align}
\begin{aligned}\label{ggd}
&{G}_{1i}^{-}(U):=\frac{\sigma_i}{2}\int_\bbr (a_i)_x^{X_i} \left| h-\tilde h^{X_1,X_2} \right|^2  {\mathbf 1}_{\{p(v)-p(\tilde v^{X_1,X_2})  >\deltao\}}  dx,\\
&{G}_{1i}^{+}(U):= \frac{\sigma_i}{2}\int_\bbr (a_i)_x^{X_i}  \left|  h-\tilde h^{X_1,X_2} -\frac{p(v)-p(\tilde v^{X_1,X_2})}{\s_i}\right|^2  {\mathbf 1}_{\{p(v)-p(\tilde v^{X_1,X_2})  \leq\deltao\}}  dx,\\
&{G}_{2i}(U):=\sigma_i  \int_\bbr  (a_i)_x^{X_i}  Q(v|\tilde v^{X_1,X_2})  dx,\\
&{D}(U):= \int_\bbr a^{X_1,X_2}  v^\beta |\partial_x \big(p(v)-p(\tilde v^{X_1,X_2} )\big)|^2 dx .
\end{aligned}
\end{align} 
\end{proposition}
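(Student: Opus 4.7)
My plan is to exploit the wave separation enforced by the shift bounds \eqref{sx12}. I would choose $t_0$ large enough that for $t\geq t_0$ the centers $\sigma_1 t + X_1(t)$ and $\sigma_2 t + X_2(t)$ of the two shifted shocks differ by more than $C(\min(\eps_1,\eps_2))^{-1}$; by the exponential tails in \eqref{tv1}--\eqref{tdv2}, the profiles $\tiv_1^{X_1}, \tiv_2^{X_2}$ and their derivatives then have nearly disjoint essential supports. Every integrand in $B_{\deltao}$, in each ${G}_{1i}^\pm$, in each ${G}_{2i}$, and in ${D}$ splits into a part localized near shock $1$, a part localized near shock $2$, and a cross/interaction remainder. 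For $t\leq t_0$, the right-hand side constant $C$ absorbs everything, so only $t\geq t_0$ requires real analysis.

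\textbf{Interaction remainders.} The cross contributions and the error quantities $\tilde E_1, \tilde E_2$ appearing in $B_{\deltao}$ are built from products in which one factor decays exponentially away from $\sigma_1 t + X_1(t)$ while the other decays away from $\sigma_2 t + X_2(t)$. Using \eqref{tv1}--\eqref{tdv2}, \eqref{vhre}, and the separation \eqref{sx12}, such products are pointwise dominated by $C\exp(-c\min(\eps_1,\eps_2)(\sigma_2-\sigma_1) t/2)$ on the regions where they contribute. Pairing with $p(v)-p(\tiv^{X_1,X_2})$ and $h-\tih^{X_1,X_2}$ via Cauchy--Schwarz, and absorbing the linear parts using ${D}(U)$ together with the slowly-varying $\int \eta(U|\tilde U) dx$, recovers exactly the $\frac{1}{t^2}\int \eta(U|\tilde U) dx$ and the $\exp(-C_\eps t) + 1/t^4$ terms in \eqref{prop:est}.

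\textbf{Localized single-shock cancellations.} After localization, for each $i\in\{1,2\}$ the problem reduces to a uniform estimate in which the shift-induced term involving $Y_i(U)$ cancels a piece of the localized $B_{\deltao}$. Three regimes appear from \eqref{explicit-X}. When $(-1)^{i-1}Y_i(U)\leq -\eps_i^2$ the shift speed $-\sigma_i/2$ produces the strictly negative linear term $-\frac{\sigma_i}{2} Y_i$ that dominates any bad contribution of order $\eps_i$ by sign; this is the easy regime. When $|Y_i(U)|\leq \eps_i^2$ the quadratic $-|Y_i|^2/\eps_i^4$ is weak and the real work concentrates here. Following Lemma \ref{lem-max}, I further split the integrand by the threshold $\{p(v)-p(\tiv^{X_1,X_2}) > \deltao\}$. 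On the large-difference region, the $L^1$ control of $Q(v|\tiv^{X_1,X_2})$ encoded in $|Y_i|\leq \eps_i^2$ together with the diffusion ${D}(U)$ closes the estimate. On the small-difference region I maximize the $h$-dependent bad terms pointwise in $h$ against ${G}_{1i}^+$ (a completed-square) to eliminate the $h$ dependence, leaving a functional in $v$ alone that I expand in powers of $\eps_i$ about $\tiv_i$ and close by the generalized nonlinear Poincar\'e inequality, Proposition \ref{prop:W}, using $(a_i)_x^{X_i}$ as the $L^2$ weight and ${D}(U)$ as the diffusive term.

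\textbf{Main obstacle.} The hardest step will be carrying this expansion to the proper order uniformly in $\eps_i/\lambda$. The natural smallness parameter in the layer region is $\eps_i/\lambda$, which is only bounded by $\deltaz$ and is \emph{large} compared with the shock strength $\eps_i$; purely linearized energy estimates cannot close. I must keep the cubic terms in $p(v|\tiv^{X_1,X_2})$ and $Q(v|\tiv^{X_1,X_2})$ and verify that they are dominated by ${D}(U)$ and by $(1-\deltaz)G_{2i}(U)$ through the Poincar\'e inequality with a constant independent of $\lambda,\eps_1,\eps_2$. Maintaining this uniformity, keeping the prefactors $1-\deltaz\eps_i/\lambda$ and $1-\deltaz$ strictly positive, and preserving the sign conditions $\sigma_i (a_i)_x^{X_i} > 0$ inside each localized region form the technical heart of the argument.
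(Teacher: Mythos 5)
Your outline captures the broad architecture (wave separation via the shift bounds, time-dependent localization, maximization in $h$, expansion keeping cubic terms, closing with Proposition \ref{prop:W}), but it has a genuine gap in the treatment of the $Y_i$ terms, which is the load-bearing part of the proof. First, the regime $(-1)^{i-1}Y_i(U)\le -\eps_i^2$ is not "easy by sign": after the maximization in $h$, the residual leading bad terms $B_{1i}(\bar U)+B_{2i}^+(\bar U)$ are of size $C^*\eps_i^2/\lambda$ (see \eqref{ib12}), i.e. of the \emph{same} order as the good terms $G_{2i}$, whereas the negative linear term $-\tfrac{\sigma_i}{2}Y_i$ is only guaranteed to be of size $\tfrac{|\sigma_i|}{2}\eps_i^2\ll \eps_i^2/\lambda$ at the bottom of that regime, so no sign or magnitude domination is available. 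The paper instead (a) proves the two-sided a priori bound $|Y_i(U)|\le C_0\eps_i^2/\lambda$ from the one-sided hypothesis (Proposition \ref{prop:sm}, Lemma \ref{lemmeC2}) — this step also yields the smallness $\int_\bbr |(a_i)_x^{X_i}|\,\eta(U|\tilde U^{X_1,X_2})\,dx\le C\eps_i^2/\lambda$ that you use implicitly but never derive; (b) converts the linear regime into a quadratic one (\eqref{ylinear}); and (c) uses the resulting $-\tfrac{4}{\eps_i\delta_1}|Y_i|^2$, transferred to $-\tfrac{1}{\eps_i\delta_1}|\mathcal{Y}^g_i(\bar v)|^2$ modulo truncation errors via \eqref{m1}, as the indispensable negative square in the sharp estimate of Proposition \ref{prop:main3}. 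Describing the $\dot X_i Y_i$ contribution as "cancelling a piece of $B_{\delta_1}$" misses this mechanism: without that negative square the $\eps_i$-expansion cannot beat the $O(\eps_i^2/\lambda)$ bad terms. Moreover, the hypothesis \eqref{assYp} of Proposition \ref{prop:main3} must itself be verified for $t\ge t_0$ (Lemma \ref{lem:essy}), and $\delta_1$ has to be fixed \emph{after} the constant $C_2$ appearing there; your outline does not address this ordering of constants.

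The second gap concerns the region $\{p(v)-p(\tilde v^{X_1,X_2})>\delta_1\}$ and the size of $D(U)$. Saying "the $L^1$ control of $Q$ plus $D$ closes the estimate" hides the truncation apparatus of Proposition \ref{prop_out}: every functional must be compared with its value at the truncated state $\bar U$, and these comparisons rest on pointwise bounds obtained by integrating $\sqrt{D(U)}$ in $x$; crucially, the key estimate \eqref{m1} is proved only under $D(U)\lesssim \eps_i^2/(\sqrt{\delta_0}\lambda)$. This forces the trichotomy in the paper's proof — $D$ large (then $-(1-\delta_0)D$ absorbs everything through the crude bound \eqref{n2}), $D$ small (Poincar\'e machinery), and the intermediate case — for which your sketch offers no substitute. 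Finally, the exponential-in-$t$ smallness you invoke applies only to wave--wave interaction terms; the terms created by commuting the cut-offs through $\partial_x$ in $D(U)$ are not exponentially small and are precisely what generates the $t^{-2}\int\eta(U|\tilde U)\,dx$ and $t^{-4}$ contributions, which is why the cut-offs must be spread over the whole region between the shifted shocks, of width $\sim t$ as in \eqref{1phi}--\eqref{2phi}, rather than attached to the wave layers.
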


\subsection{Proof of Theorem~\ref{thm_main} from the main Proposition}
We here show how Proposition \ref{prop:main} implies Theorem~\ref{thm_main}.\\

To prove the contraction estimate \eqref{cont_main2}, following \eqref{ineq-0}, we may estimate 
\begin{align}
\begin{aligned}\label{contem0}
\mathcal{F}(U)&:=\sum_{i=1}^2\big( \dot X_i(t) Y_i(U) \big)  +\mathcal{J}^{bad} (U) -\mathcal{J}^{good} (U).
\end{aligned}
\end{align} 
First, it follows from \eqref{explicit-X} that for each $i=1, 2$,
\[
 \dot X_i(t) Y_i(U) \le
\left\{ \begin{array}{ll}
    -2|\mathcal{J}^{bad}(U)|,\quad &\mbox{if}~ (-1)^{i-1} Y_i(U) \ge \eps_i^2, \\
  -\eps_i^{-4} |Y_i(U)|^2,\quad &\mbox{if} ~ 0\le  (-1)^{i-1} Y_i(U)\le \eps_i^2, \\
     (-1)^{i-1}\frac{1}{2} \s_i \eps_i^{-2} |Y_i(U)|^2,\quad &\mbox{if}  ~ -\eps_i^2  \le  (-1)^{i-1} Y_i(U)\le 0 ,\\
       -\frac{1}{2}\s_i Y_i(U) , \quad &\mbox{if}  ~ (-1)^{i-1} Y_i(U) \le -\eps_i^2 .
       \end{array} \right.
\]
Then we first find that for all $U\in \mathcal{H}_T$ satisfying  $Y_1(U)\ge \eps_1^2$ or $Y_2(U) \le -\eps_2^2$, 
$$
\mathcal{F}(U)  \le -|\mathcal{J}^{bad}(U)|-\mathcal{J}^{good}(U)  \le 0.
$$
Since \eqref{ineq-1} with $\delta=\delta_1$ yields that for all $U\in \mathcal{H}_T$ satisfying  $Y_1(U)\le \eps_1^2$ and $Y_2(U) \ge -\eps_2^2$, 
\begin{align*}
\begin{aligned}
\mathcal{F}(U)
& \le -\frac{1}{\eps_1^4} |Y_1(U)|^2 {\mathbf 1}_{\{0\le Y_1(U) \le \eps_1^2\}} 
+\frac{\s_1}{2\eps_1^2} |Y_1(U)|^2 {\mathbf 1}_{\{ -\eps_1^2 \le Y_1(U) \le 0 \}} 
-\frac{\s_1}{2} Y_1(U) {\mathbf 1}_{\{Y_1(U) \le -\eps_1^2 \}} \\
&\quad-\frac{1}{\eps_2^4} |Y_2(U)|^2 {\mathbf 1}_{\{-\eps_2^2\le Y_2(U) \le 0 \}} 
-\frac{\s_2}{2\eps_2^2}  |Y_2(U)|^2 {\mathbf 1}_{\{0\le Y_2(U) \le\eps_2^2 \}}
-\frac{\s_2}{2} Y_2(U) {\mathbf 1}_{\{Y_2(U) \ge \eps_2^2 \}}\\
&\quad+ {B}_\deltao(U)- {G}_\deltao(U) ,
\end{aligned}
\end{align*}
Proposition \ref{prop:main} implies that for all $U\in \mathcal{H}_T$ satisfying  $Y_1(U)\le \eps_1^2$ and $Y_2(U) \ge -\eps_2^2$, 
\begin{align*}
\begin{aligned}
&\mathcal{F}(U) +
 \delta_0\frac{\min(\eo,\et)}{\lambda} |{B}_{\delta_1}(U)| 
+\delta_0\frac{\eps_1}{\lambda} {G}_{21}(U) +\delta_0\frac{\eps_2}{\lambda}{G}_{22}(U) - \delta_0{D}(U) \\
&\le C_{\eps,\deltaz}\left[   \frac{1}{t^2} \int_\bbr  \eta(U|\tilde U) dx + \left(\exp\big( -C_\eps t\big) + \frac{1}{t^4} \right) \right] \mathbf{1}_{t\ge t_0} + C \mathbf{1}_{t\le t_0}.
\end{aligned}
\end{align*}
Thus, using the above estimates together with $\eps_i/\lam <\delta_0 <1$ and the definition of $\mathcal{J}^{good}$, we find that for all $U\in \mathcal{H}_T$,
 \begin{align*}
\begin{aligned}
&\mathcal{F}(U) +\delta_0\frac{\eps_1}{\lambda} {G}_{21}(U)+\delta_0\frac{\eps_2}{\lambda}{G}_{22}(U) + \delta_0{D}(U)\\
&\quad+ |\mathcal{J}^{bad}(U)| {\mathbf 1}_{\{Y_1(U) \ge \eps_1^2\}\cup\{Y_2(U) \le -\eps_1^2\} } + \delta_0\frac{\min(\eo,\et)}{\lambda} |{B}_{\delta_1}(U)|  {\mathbf 1}_{\{Y_1(U) \le \eps_1^2\}\cap\{Y_2(U) \ge -\eps_1^2\} }  \\
& \le C_{\eps,\deltaz}\left[   \frac{1}{t^2} \int_\bbr  \eta(U|\tilde U) dx + \left(\exp\big( -C_\eps t\big) + \frac{1}{t^4} \right) \right] \mathbf{1}_{t\ge t_0} + C \mathbf{1}_{t\le t_0}.
\end{aligned}
\end{align*}
This together with \eqref{ineq-0}, \eqref{contem0} and $1/2\le a^{X_1,X_2}\le 1$ implies that for a.e. $t>0$,
\begin{align*}
\begin{aligned}
&\frac{d}{dt}\int_{\bbr} a^{X_1,X_2}\eta(U|\tilde U^{X_1,X_2}) dx  +\delta_0\frac{\eps_1}{\lambda} {G}_{21}(U)+\delta_0\frac{\eps_2}{\lambda}{G}_{22}(U) + \delta_0{D}(U)\\
&\quad+ |\mathcal{J}^{bad}(U)| {\mathbf 1}_{\{Y_1(U) \ge \eps_1^2\}\cup\{Y_2(U) \le -\eps_1^2\} } + \delta_0\frac{\min(\eo,\et)}{\lambda} |{B}_{\delta_1}(U)|  {\mathbf 1}_{\{Y_1(U) \le \eps_1^2\}\cap\{Y_2(U) \ge -\eps_1^2\} }  \\
&=\mathcal{F}(U) +\delta_0\frac{\eps_1}{\lambda} {G}_{21}(U)+\delta_0\frac{\eps_2}{\lambda}{G}_{22}(U) + \delta_0{D}(U)\\
&\quad+ |\mathcal{J}^{bad}(U)| {\mathbf 1}_{\{Y_1(U) \ge \eps_1^2\}\cup\{Y_2(U) \le -\eps_1^2\} } + \delta_0\frac{\min(\eo,\et)}{\lambda} |{B}_{\delta_1}(U)|  {\mathbf 1}_{\{Y_1(U) \le \eps_1^2\}\cap\{Y_2(U) \ge -\eps_1^2\} }  \\
&\le C_{\eps,\deltaz}\left[   \frac{1}{t^2} \int_\bbr  \eta(U|\tilde U) dx + \left(\exp\big( -C_\eps t\big) + \frac{1}{t^4} \right) \right] \mathbf{1}_{t\ge t_0} + C \mathbf{1}_{t\le t_0}.
\end{aligned}
\end{align*}
Since $t^{-2}$ and $\exp\big( -C_\eps t \big) +t^{-4}$ are integrable on $[t_0,\infty)$,  
Gr$\ddot{\rm o}$nwall's inequality implies that there exists a positive constant $C(\eps_1,\eps_2,\lambda,\deltaz)$ such that
\begin{align}
\begin{aligned}\label{cont-pre}
&\int_{\bbr} a^{X_1,X_2}\eta(U^X|\tilde U^{X_1,X_2}) dx  +  \int_0^t {G}_{21}(U)ds+ \int_0^t {G}_{22}(U) ds+ \delta_0 \int_0^t {D}(U) ds \\
&\quad + \int_0^t  |\mathcal{J}^{bad}(U)| {\mathbf 1}_{\{Y_1(U) \ge \eps_1^2\}\cup\{Y_2(U) \le -\eps_1^2\} } ds+  \int_0^t  |{B}_{\delta_1}(U)|  {\mathbf 1}_{\{Y_1(U) \le \eps_1^2\}\cap\{Y_2(U) \ge -\eps_1^2\} }ds \\
&\le C(\eps_1,\eps_2,\lambda,\deltaz)\bigg[ \int_{\bbr} a(0,x)\eta(U_0(x)|\tilde U(0,x)) dx +1 \bigg],
\end{aligned}
\end{align}
which completes \eqref{cont_main2}.\\

To estimate $|\dot X_i|$, we first observe that \eqref{explicit-X} implies that for each $i=1, 2$,
\beq\label{contx}
|\dot X_i(t)| \le \max\bigg( \frac{1}{\eps_i^2} (2|\mathcal{J}^{bad}(U)| +1) , \frac{|\s_i|}{2}\bigg) , \qquad\mbox{for a.e. } t\in(0,T).
\eeq
Notice that it follows from \eqref{cont-pre} that
\begin{align}
\begin{aligned}\label{jb-cont}
\int_0^{T} f(t) dt  \le   C(\eps_1,\eps_2,\lambda,\deltaz)\int_{\bbr} \eta(U_0|\tilde U(0,x)) dx,
\end{aligned}
\end{align}
where 
\[
f(t):= |\mathcal{J}^{bad}(U)| {\mathbf 1}_{\{Y_1(U) \ge \eps_1^2\}\cup\{Y_2(U) \le -\eps_1^2\}}  +| \mathcal{B}_{\delta_1}(U)|{\mathbf 1}_{\{Y_1(U) \le \eps_1^2\}\cap\{Y_2(U) \ge -\eps_1^2\}} .
\]
To estimate $|\mathcal{J}^{bad}(U)|$ globally in time, using  \eqref{ineq-1} with the definitions  of $\mathcal{J}^{good}$ and ${G}_{\deltao}$,
we find that
\begin{align*}
\begin{aligned}
&|\mathcal{J}^{bad}(U)|\\
&\le |\mathcal{J}^{bad}(U)| {\mathbf 1}_{\{Y_1(U) \ge \eps_1^2\}\cup\{Y_2(U) \le -\eps_1^2\}} +|\mathcal{J}^{bad}(U)| {\mathbf 1}_{\{Y_1(U) \le \eps_1^2\}\cap\{Y_2(U) \ge -\eps_1^2\}}  \\
&=|\mathcal{J}^{bad}(U)| {\mathbf 1}_{\{Y_1(U) \ge \eps_1^2\}\cup\{Y_2(U) \le -\eps_1^2\}} +| \mathcal{J}^{good}(U) + {B}_{\delta_1}(U)-{G}_{\delta_1}(U)| {\mathbf 1}_{\{Y_1(U) \le \eps_1^2\}\cap\{Y_2(U) \ge -\eps_1^2\}}\\
&\le |\mathcal{J}^{bad}(U)| {\mathbf 1}_{\{Y_1(U) \ge \eps_1^2\}\cup\{Y_2(U) \le -\eps_1^2\}}  +| \mathcal{B}_{\delta_1}(U)|{\mathbf 1}_{\{Y_1(U) \le \eps_1^2\}\cap\{Y_2(U) \ge -\eps_1^2\}} \\
&+ \frac{|\sigma_i| }{2}\int_\bbr |(a_i)_x^{X_i}| \left| \big(h-\tilde h^{X_1,X_2} \big)^2 - \left(h-\tilde h^{X_1,X_2} -\frac{p(v)-p(\tilde v^{X_1,X_2})}{\s_i} \right)^2 \right| {\mathbf 1}_{\{p(v)-p(\tilde v^{X_1,X_2})  \leq\deltao\}} dx\\
&\le f(t) +C \int_\bbr |(a_i)_x^{X_i}|  \Big(\big(h-\tilde h^{X_1,X_2} \big)^2 + \big(p(v)-p(\tilde v^{X_1,X_2}) \big)^2 \Big) {\mathbf 1}_{\{p(v)-p(\tilde v^{X_1,X_2})  \leq\deltao\}} dx .
\end{aligned}
\end{align*}
Since for any $v$ satisfying $p(v)-\pt \leq\deltao$, there exists a positive constant $c_*$ such that $v>c_*^{-1}$ and $|p(v)-\pt|\le c_*$, we use \eqref{pressure2} and \eqref{rel_Q} to have
\begin{align*}
\begin{aligned}
& \int_\bbr  |(a_i)_x^{X_i}| \big(p(v)-p(\tilde v^{X_1,X_2}) \big)^2 {\mathbf 1}_{\{p(v)-p(\tilde v^{X_1,X_2})  \leq\deltao\}} dx \\
&\quad  \le  c_* \int_{v>c_*^{-1}}  \big|p(v)-p(\tilde v^{X_1,X_2}) \big| {\mathbf 1}_{\{v\ge 3v_- \}}  dx + \int_{v>c_*^{-1}} |(a_i)_x^{X_i}| \big|p(v)-p(\tilde v^{X_1,X_2}) \big|^2 {\mathbf 1}_{\{v\le 3v_- \}}  dx  \\
&\quad \le C\int_{v>c_*^{-1}} |(a_i)_x^{X_i}| \Big( |v-\tilde v^{X_1,X_2}| {\mathbf 1}_{\{v\ge 3v_- \}} + |v-\tilde v^{X_1,X_2}|^2 {\mathbf 1}_{\{v\le 3v_- \}} \Big) dx \\
&\quad \le C\int_\bbr |(a_i)_x^{X_i}| Q(v| \tilde v^{X_1,X_2}) dx .
\end{aligned}
\end{align*}
Therefore, using $ |(a_i)_x^{X_i}| \leq C \delta_0$ and $\delta_0\le \frac{1}{2}\le  a^{X_1,X_2}$, we have
\begin{align*}
|\mathcal{J}^{bad}(U)| \le f(t) + C \int_\bbr  a^{X_1,X_2} \eta(U|\tilde U^{X_1,X_2} ) dx,  
\end{align*}
which together with \eqref{cont-pre} and \eqref{contx} implies that
\[
|\dot X_i(t)|\le C(\eps_1,\eps_2,\lambda,\deltaz) \left[ f(t) + \int_{\bbr} \eta(U_0(x)|\tilde U(0,x)) dx + 1 \right] .
\]
This and \eqref{jb-cont} give \eqref{est-shift1}.\\

\section{Abstract propositions for a general setting}\label{sec:abs}
\setcounter{equation}{0}
This section provides some useful propositions that will be all used in Section \ref{sec:prop} for the proof of Proposition \ref{prop:main}. The propositions are generalizations of \cite[Propositions 4.2 and 4.3 and Lemmas 4.4-4.8]{KV-unique19}. Those are stated below in an abstract setting for future application in a general context, for example, for studies on various composite waves.

\subsection{Sharp estimate near a shock wave}\label{sec:poin}

The following proposition is a generalization of \cite[Proposition 4.2]{KV-unique19}.

\begin{proposition}\label{prop:main3}
For any constant $C_2>0$ and any constant $U_*:=(v_*,u_*)\in \bbr^+\times\bbr$, 
there exists $\deltao>0$ such that for any $\eps>0$ and any $\lam, \delta\in(0,\deltao]$  satisfying $\eps/\lam\le \deltao$,
 the following holds.\\ 
Let $U_l:=(v_l,u_l)$ and $U_r:=(v_r,u_r)$ be any constants satisfying $U_l, U_r\in B_{\deltao}(U_*)$ and $\eps= |p(v_l)-p(v_r)|$, and one of the two conditions \eqref{end-con} with the velocity $\s_0$. Let   
 $\tilde U_0:=(\tilde v_0, \tilde h_0)$ be the viscous shock as a solution to the equation \eqref{re_shock} connecting the left end state $U_l$ and the right end state $U_r$.\\
Let $a_0$ be a function such that
\[
\partial_x a_0 = -\lam \frac{\partial_x p(\tilde v_0)}{\eps}. 
\]
Let $a$ be any positive function such that $\|a-1\|_{L^\infty(\bbr)}\le 2\lam$.\\
 Let $\phi$ be any Lipschitz function, and let
 \begin{align}
\begin{aligned}\label{ftn_poin}
&\mathcal{Y}^g(v):=-\frac{1}{2\sigma_0^2}\int_\bbr (a_0)_x \phi^2(x) |p(v)-p(\tilde v_0)|^2 dx -\int_\bbr (a_0)_x \phi^2(x) Q(v|\tilde v_0) dx\\
&\qquad\qquad -\int_\bbr a \partial_x p(\tilde v_0) \phi(x) (v-\tilde v_0)dx+\frac{1}{\sigma_0}\int_\bbr a  (\tilde h_0)_x \phi(x) \big(p(v)-p(\tilde v_0)\big)dx,\\
&\mathcal{I}_{1}(v):= \sigma_0\int_\bbr a (\tilde v_0)_x \phi^2(x)  p(v|\tilde v_0) dx,\\
&\mathcal{I}_{2}(v):= \frac{1}{2\sigma_0} \int_\bbr (a_0)_x  \phi^2(x) |p(v)-p(\tilde v_0)|^2dx,\\
&\mathcal{G}_{2} (v):=\sigma_0  \int_\bbr  (a_0)_x \bigg( \frac{1}{2\gamma}  p(\tilde v_0)^{-\frac{1}{\gamma}-1} \phi^2(x) \big(p(v)-p(\tilde v_0)\big)^2\\
&\qquad\qquad - \frac{1+\gamma}{3\gamma^2} p(\tilde v_0)^{-\frac{1}{\gamma}-2} \phi^3(x)\big(p(v)-p(\tilde v_0)\big)^3 \bigg) dx, \\
&\mathcal{D} (v):=\int_\bbr a \, v^\beta |\partial_x \big(\phi(x) (p(v)-p(\tilde v_0))\big)|^2 dx,
\end{aligned}
\end{align}
For any function $v:\bbr\to \bbr^+$ such that $\mathcal{D}(v)+\mathcal{G}_2(v)$ is finite, if
\beq\label{assYp}
|\mathcal{Y}^g(v)|\leq C_2 \frac{\eps^2}{\lambda},\qquad  \|p(v)-p(\tilde v_0)\|_{L^\infty(\bbr)}\leq 2\delta_{1},
\eeq
then
\begin{align}
\begin{aligned}\label{redelta}
\mathcal{R}_{\delta}(v)&:=-\frac{1}{\eps\delta}|\mathcal{Y}^g(v)|^2 +\mathcal{I}_{1}(v)+\delta|\mathcal{I}_{1}(v)|\\
&\quad\quad+\mathcal{I}_{2}(v)+\delta\left(\frac{\eps}{\lambda}\right)|\mathcal{I}_{2}(v)|-\left(1-\delta\left(\frac{\eps}{\lambda}\right)\right)\mathcal{G}_{2}(v)-(1-\delta)\mathcal{D}(v)\\
&\le 0.
\end{aligned}
\end{align}
\end{proposition}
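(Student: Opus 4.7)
The plan is to follow the general framework developed in \cite{KV-unique19} for a single shock, adapted here to accommodate the Lipschitz cutoff $\phi$ (which will appear from the fact that the shifted weight $a_i^{X_i}$ is essentially localized near the $i$-th shock). The first step is to reduce everything to a problem on a fixed normalized domain. Because $\tilde v_0$ is strictly monotone and $|(\tilde v_0)_x| \sim \eps^2 e^{-C\eps|x|}$, one introduces the change of variable
\[
y := \frac{p(\tilde v_0(x)) - p(v_l)}{\eps} \in [0,1] \quad (\text{up to orientation}),
\]
and the normalized unknown
\[
W(y) := \frac{p(v(x(y))) - p(\tilde v_0(x(y)))}{\eps}.
\]
Under this change, $(a_0)_x\, dx = -\lambda\, dy$, while $\phi$ becomes a bounded Lipschitz function of $y$, and the smallness assumption becomes $\|W\|_{L^\infty}\le 2\delta_1/\eps$. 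One rewrites each of $\mathcal{Y}^g$, $\mathcal{I}_1$, $\mathcal{I}_2$, $\mathcal{G}_2$, $\mathcal{D}$ in the $(y,W)$ coordinates and expands in powers of $\eps$, using the profile equation \eqref{re_shock} to express $(\tilde h_0)_x$, $(\tilde v_0)_x$, and $\partial_x p(\tilde v_0)$ in terms of $(a_0)_x$ modulo $O(\eps)$ errors.

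The second step is a Taylor expansion in the amplitude $\eps$. At leading order, all weights ($a, a_0$, and the coefficients coming from $\tilde v_0$) are frozen at $U_*$, and the functionals reduce to a \emph{fixed} model inequality on $[0,1]$ for the single function $W$. This leading-order model inequality is exactly the generalized Poincaré-type inequality proven in \cite{Kang-V-NS17} (our Proposition \ref{prop:W}): for $W$ with $\|W\|_{L^\infty}\le 2\delta_1/\eps$ and a suitable smallness of a weighted mean, the combination of $\mathcal{I}_1+\mathcal{I}_2$ is dominated by $\mathcal{G}_2+\mathcal{D}$, with room to spare. The constant $C_2$ in the assumption \eqref{assYp} is absorbed against the $\eps/\delta$-factor in the $-\frac{1}{\eps\delta}|\mathcal{Y}^g|^2$ penalty term, which provides the additional slack needed to control the $\mathcal{Y}^g$-contribution after squaring.

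The third step is to estimate the $O(\eps)$ corrections coming from: (i) the difference between $(a_0)_x$ and its value at $v_*$, (ii) variations of $\tilde v_0$ across the layer, (iii) the cubic-in-$W$ remainder of the Taylor expansion of $p(v|\tilde v_0)$ and $Q(v|\tilde v_0)$, and (iv) the fact that $\|a-1\|_{L^\infty}\le 2\lambda$ rather than $a\equiv 1$. All these are higher order in the small parameters $\eps$, $\lambda$, $\eps/\lambda$, and $\delta_1$, and can be absorbed into the slack terms $\delta|\mathcal{I}_1|$, $\delta(\eps/\lambda)|\mathcal{I}_2|$, $\delta(\eps/\lambda)\mathcal{G}_2$, and $\delta\,\mathcal{D}$ on the right-hand side of \eqref{redelta}, provided $\delta_0$ is chosen small enough. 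The cubic correction in $\mathcal{G}_2$ (explicitly written in \eqref{ftn_poin}) is needed precisely to match the cubic remainder arising in the expansion of $p(v|\tilde v_0)$.

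The main obstacle is Step 2: verifying the sharp Poincaré-type inequality for the model problem including the $\phi$-cutoff. The cutoff destroys the purely one-dimensional structure of the original inequality and introduces cross-terms $\phi\phi'$ in the diffusive term $\mathcal{D}$. The strategy is to split $\mathcal{D}$ into $\int a v^\beta \phi^2 |\partial_x(p(v)-p(\tilde v_0))|^2$ plus an error of the form $\int a v^\beta(\partial_x\phi)^2(p(v)-p(\tilde v_0))^2$, and to show the second quantity is either itself a higher-order term (when $\phi'$ is supported away from the shock layer) or controllable by a local pressure bound—this is why the hypothesis $\|p(v)-p(\tilde v_0)\|_{L^\infty}\le 2\delta_1$ is essential, and not merely weighted smallness. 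Once this localization is handled, the leading-order inequality is exactly that of \cite{Kang-V-NS17}, and the absorption argument in Step 3 closes the proof.
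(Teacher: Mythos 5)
Your overall skeleton (normalize by the monotone change of variables $y=\frac{p(\tilde v_0)-p(v_r)}{\eps}$, expand in $\eps$ using Lemma \ref{lem:local} and the profile equation, derive an $L^2$ bound on the normalized unknown from the hypothesis $|\mathcal{Y}^g|\le C_2\eps^2/\lambda$, and conclude with the nonlinear Poincar\'e inequality of Proposition \ref{prop:W}) is the same as the paper's. But your treatment of the cutoff $\phi$ is where the argument breaks. You define $W=\frac{p(v)-p(\tilde v_0)}{\eps}$ \emph{without} $\phi$ and then propose to split $\mathcal{D}$ into $\int a v^\beta\phi^2|\partial_x(p(v)-p(\tilde v_0))|^2$ plus a cross term of the form $\int a v^\beta(\partial_x\phi)^2(p(v)-p(\tilde v_0))^2$, to be dismissed either because ``$\phi'$ is supported away from the shock layer'' or via the $L^\infty$ pressure bound. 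Neither option is available under the hypotheses of the proposition: $\phi$ is an \emph{arbitrary} Lipschitz function (no bound on $\phi'$, no statement about its support relative to the layer), and the conclusion is the exact inequality $\mathcal{R}_\delta(v)\le 0$, with no slack term into which a quantity like $C\delta_1^2\|\partial_x\phi\|_{L^2}^2$ could be absorbed. Moreover, keeping $\phi$ as a separate multiplier mismatches the powers of $\phi$ across the functionals ($\phi$ and $\phi^2$ in $\mathcal{Y}^g$, $\phi^2$ in $\mathcal{I}_1,\mathcal{I}_2$, $\phi^2$ and $\phi^3$ in $\mathcal{G}_2$), so the reduced model problem would not be Proposition \ref{prop:W} but a weighted variant whose delicate quadratic--cubic balance is no longer the one proved in \cite{Kang-V-NS17}.

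The point you are missing is that the functionals \eqref{ftn_poin} are \emph{designed} so that no handling of $\phi$ is needed inside this proposition: setting $w:=\phi\,(p(v)-p(\tilde v_0))$ and $W:=\frac{\lambda}{\eps}w$, every term becomes exactly the corresponding term of the single-shock case $\phi\equiv1$ written in the variable $w$ --- in particular $\mathcal{D}=\int a\,v^\beta|\partial_x w|^2dx$ already contains $\phi$ inside the derivative, the quadratic terms are $w^2$, the cubic term in $\mathcal{G}_2$ is $w^3$ (this is why $\phi^3$, not $\phi^2$, appears there), and the terms $\phi^2Q(v|\tilde v_0)$, $\phi\,(v-\tilde v_0)$ reduce to $w$-expressions via the local expansions of Lemma \ref{lem:local}, for which the assumption $\|p(v)-p(\tilde v_0)\|_{L^\infty}\le2\delta_1$ is used (that is its actual role, not controlling a $\phi'$ cross term). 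With this substitution the proof is verbatim the one of \cite[Proposition 4.2]{KV-unique19}/\cite{Kang-V-NS17}, including the sharp computation of the diffusion Jacobian via Lemma \ref{lem:press}, and no $\phi\phi'$ terms ever appear. The cross-term difficulty you anticipate is real, but it lives elsewhere in the paper: in Lemma \ref{lem:ws}, where the concrete cutoffs $\phi_{i,t}$ with $|\partial_x\phi_{i,t}|\lesssim 1/t$ are inserted and the resulting error is controlled by $\frac{C}{t^2}\int_\bbr\eta(U|\tilde U)\,dx$, producing the time-dependent right-hand side of Proposition \ref{prop:main} --- it cannot, and need not, be handled inside Proposition \ref{prop:main3}.
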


\begin{proof}
The proof is essentially the same as that of \cite[Proposition 4.2 and Appendix A]{KV-unique19} through a generalization. 
Notice that the functionals $\mathcal{Y}^g, \mathcal{I}_{1}, \mathcal{I}_{2}, \mathcal{D}$, by putting $\phi \equiv 1$ in their integrands, are respectively the same as $Y_g, \mathcal{I}_{1}, \mathcal{I}_{2},  \mathcal{D}$ in \cite[Proposition 4.2]{KV-unique19}. 
On the other hand, the integrand of the functional $\mathcal{G}_{2}$ is the same as the approximation for $Q(v|\tilde v)$ of $\mathcal{G}_{2}$ in \cite[Proposition 4.2]{KV-unique19} by \eqref{Q-est11} of Lemma \ref{lem:local} together with $\phi\equiv 1$. \\
For completeness, the main parts of the proof are given in Appendix \ref{app-exp}.
\end{proof}

\subsection{Smallness of the weighted relative entropy}\label{sec:sm}
The following proposition is a generalization of \cite[Lemma 4.4]{KV-unique19}.

\begin{proposition}\label{prop:sm}
For a given constant $U_*:=(v_*,u_*)\in \bbr^+\times\bbr$, there exist positive constants $\delta_0, C, C_0$ such that for any $\eps, \lam>0$ satisfying $\eps/\lambda< \delta_0$ and $\lambda<\delta_0$, the following holds.\\
Let $\tilde U_0:=(\tiv_0,\tih_0): \bbr\to  \bbr^+\times\bbr$, $\bw:\bbr\to\bbr$, $\bv :=(\bv_1,\bv_2) : \bbr\to\bbr^2$ and ${\bf a}:\bbr\to\bbr$ be any functions such that
\begin{align}
&|\tilde U_0(x)-U_*|\le C\delta_0,\quad \tiv_0(x)\ge C^{-1},  \quad  |\bv(x)| \le C \frac{\eps}{\lam}|\bw(x)|, \quad\forall x\in\bbr, \label{asunif} \\
&\bw \mbox{ is either positive or negative globally on } \bbr,  \label{asposi} \\
& \bv_1, \bv_2 \in L^1(\bbr),\quad \int_\bbr |\bw| dx =\lam ,\quad \|{\bf a}\|_{L^\infty(\bbr)} \le 1 \label{asinte}.
\end{align}
Let  $U:=(v,h): \bbr\to  \bbr^+\times\bbr$ be any function such that $\sgn(\bw){\bf Y}(U)\le \eps^2$, where
\[
{\bf Y}(U):=  \int_{\bbr} \bw \eta(U|\tilde U_0 ) dx + \int_{\bbr} {\bf a} \bv^T \nabla^2\eta(\tilde U_0) (U-\tilde U_0) dx .
\]
Then,
\beq\label{genl1}
 \int_\bbr |\bw| |h-\tih_0|^2\,dx + \int_\bbr |\bw| Q(v|\tiv_0)\,dx \le C\frac{\eps^2}{\lambda}, 
 \eeq
and
\beq\label{genyabs}
|{\bf Y}(U)| \le C_0\frac{\eps^2}{\lambda}.
\eeq
\end{proposition}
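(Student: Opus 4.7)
\textbf{Proof plan for Proposition \ref{prop:sm}.} The natural starting point is to make the Hessian term in ${\bf Y}$ explicit. Using $\nabla^2\eta(\tilde U_0)=\mathrm{diag}(-p'(\tilde v_0),1)$ from \eqref{2dere} together with $\eta(U|\tilde U_0)=\tfrac12|h-\tilde h_0|^2+Q(v|\tilde v_0)$, we obtain the decomposition
\begin{equation*}
{\bf Y}(U)=\int_\bbr \bw\,\eta(U|\tilde U_0)\,dx+\int_\bbr {\bf a}\bigl(-p'(\tilde v_0)\bv_1(v-\tilde v_0)+\bv_2(h-\tilde h_0)\bigr)dx.
\end{equation*}
Assume WLOG $\bw>0$, so the hypothesis reads ${\bf Y}(U)\le\eps^2$. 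Since $\tilde U_0$ lies in a compact set with $\tilde v_0\ge C^{-1}$, $|p'(\tilde v_0)|$ is bounded, and combined with $|{\bf a}|\le 1$ and $|\bv|\le C(\eps/\lambda)|\bw|$ this gives
\begin{equation*}
\int_\bbr \bw\,\eta(U|\tilde U_0)\,dx\le \eps^2+C\frac{\eps}{\lambda}\int_\bbr|\bw|\bigl(|v-\tilde v_0|+|h-\tilde h_0|\bigr)dx.
\end{equation*}

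The plan is now to bound the two cross terms in the right-hand side so that they can be absorbed into the quadratic quantity on the left. For the $h$-cross-term, Cauchy--Schwarz with respect to the measure $|\bw|\,dx$ (which has total mass $\lambda$) gives
\begin{equation*}
\frac{\eps}{\lambda}\int_\bbr|\bw||h-\tilde h_0|\,dx\le \frac{\eps}{\sqrt{\lambda}}\Bigl(\int_\bbr|\bw||h-\tilde h_0|^2\,dx\Bigr)^{1/2},
\end{equation*}
and Young's inequality then absorbs a multiple $\tfrac18\int|\bw||h-\tilde h_0|^2\,dx$ at the cost of $C\eps^2/\lambda$. For the $v$-cross-term, I would split according to whether $v\le 3v_-$ or $v>3v_-$. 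In the moderate region the relative entropy controls $(v-\tilde v_0)^2$ quadratically, so the same Cauchy--Schwarz/Young procedure absorbs it. In the large-$v$ region we only have the linear lower bound $Q(v|\tilde v_0)\ge c|v-\tilde v_0|$ (the property recalled in Remark~\ref{rem-main}), so we estimate directly
\begin{equation*}
\frac{\eps}{\lambda}\int_{\{v>3v_-\}}|\bw||v-\tilde v_0|\,dx\le C\frac{\eps}{\lambda}\int_\bbr|\bw|Q(v|\tilde v_0)\,dx,
\end{equation*}
and the prefactor $\eps/\lambda<\delta_0$ allows this to be absorbed directly into the $Q$-part of the quadratic term, provided $\delta_0$ is chosen small enough. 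Using $\eps^2\le \eps^2/\lambda$ (since $\lambda<\delta_0<1$), this yields \eqref{genl1}.

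To derive \eqref{genyabs}, note that the upper direction ${\bf Y}(U)\le\eps^2\le\eps^2/\lambda$ is the assumption. For the lower direction, the first term of ${\bf Y}(U)$ is nonnegative, so it suffices to estimate the linear cross term in absolute value; running the same bounds as above but now using \eqref{genl1} to control $\bigl(\int|\bw||h-\tilde h_0|^2\bigr)^{1/2}$ and $\bigl(\int|\bw|Q(v|\tilde v_0)\bigr)^{1/2}$ by $C\eps/\sqrt{\lambda}$, together with the large-$v$ linear control, gives the cross term bounded by $C\eps^2/\lambda$, whence $|{\bf Y}(U)|\le C_0\eps^2/\lambda$. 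The main subtlety in the argument, and the reason the hypothesis $\eps/\lambda<\delta_0$ is essential, is the large-$v$ regime: there $Q$ only dominates $|v-\tilde v_0|$ linearly so no Cauchy--Schwarz bound is available, and one must rely on the smallness of the ratio $\eps/\lambda$ to absorb the cross term directly into $\int|\bw|Q\,dx$. This is exactly the mechanism by which the proposition remains valid in the presence of possible cavitation.
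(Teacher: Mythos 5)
Your proposal is correct and follows essentially the same route as the paper's proof: use the sign hypothesis to bound $\int|\bw|\,\eta(U|\tilde U_0)\,dx$ by $\eps^2$ plus the linear cross term, absorb the $h$-part and the moderate-$v$ part by Cauchy--Schwarz/Young against the mass $\lambda$ of $|\bw|$, absorb the large-$v$ part (where $Q$ is only linearly coercive) using the smallness of $\eps/\lambda$, and then deduce \eqref{genyabs} by re-estimating the cross term with \eqref{genl1}. The only cosmetic slip is the splitting threshold, which in this abstract setting should be $3v_*$ (as in Lemma \ref{lem-pro}, tied to the reference state $U_*$) rather than $3v_-$.
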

\begin{proof}
$\bullet$ {\it Proof of \eqref{genl1}} :
We first use \eqref{rel_Q} to have
\begin{align}
\begin{aligned}\label{aest-1}
&\int_\bbr  |\bw|\eta(U|\tilde U_0) \ge  \int_\bbr  |\bw| \frac{|h-\tilde h_0 |^2}{2} 
 +c_1\int_{v\le 3v_*}|\bw|  |v-\tilde v_0|^2 + c_2\int_{v> 3v_*}|\bw|  |v-\tilde v_0|.
\end{aligned}
\end{align}
Using \eqref{asposi} and
\[
 \int_{\bbr}  \bw \eta(U|\tilde U_0 ) dx ={\bf Y}(U) - \int_{\bbr} {\bf a} \bv^T \nabla^2\eta(\tilde U_0) (U-\tilde U_0) dx,
\]
we have
\begin{align*}
 \int_{\bbr} | \bw| \eta(U|\tilde U_0 ) dx &= \sgn(\bw) \int_{\bbr} \bw\eta(U|\tilde U_0 )  dx \\
 &= \sgn(\bw) {\bf Y}(U) - \sgn(\bw) \int_{\bbr} {\bf a} \bv^T \nabla^2\eta(\tilde U_0) (U-\tilde U_0) dx.
\end{align*}

Thus, for all $U$ satisfying $\sgn(\bw){\bf Y}(U)\le \eps^2$, 
\[
 \int_{\bbr} | \bw| \eta(U|\tilde U_0 ) dx \le \eps^2 + \int_{\bbr} |{\bf a}|  |\bv^T \nabla^2\eta(\tilde U_0) (U-\tilde U_0)| dx.
\]
Note that the assumptions \eqref{asunif} and \eqref{asinte} with \eqref{2dere} implies
\[
 \int_{\bbr} | \bw| \eta(U|\tilde U_0 ) dx \le  \eps^2 +C\frac{\eps}{\lambda}\int_\bbr   | \bw|  (|v-\tilde v_0| +|h-\tilde h_0| ) dx .
\]
Then using \eqref{asinte} and Young's inequality, we have
\begin{align}
\begin{aligned}\label{aest-2}
 \int_{\bbr} | \bw| \eta(U|\tilde U_0 ) dx &\le \eps^2 +C\frac{\eps}{\lambda}\int_{v> 3v_*}  |\bw| |v-\tilde v_0| dx\\
&\,+C\frac{\eps}{\lambda}\Big(\int_{v\le 3v_*}  |\bw| |v-\tilde v_0|^2 dx+\int_{\bbr}  |\bw| |h-\tilde h_0|^2 dx \Big)^{1/2}\Big(\int_\bbr  |\bw| dx \Big)^{1/2} \\
&\le \eps^2 +C\frac{\eps}{\lambda}\int_{v> 3v_*} |\bw| |v-\tilde v_0| dx\\
&\quad+\frac{c_1}{2}\int_{v\le 3v_*} |\bw| |v-\tilde v_0|^2 dx+\frac{1}{4}\int_{\bbr} |\bw| |h-\tilde h_0|^2 dx +C \frac{\eps^2}{\lambda}.
\end{aligned}
\end{align}
Now, choosing $\deo$ small enough such that $C \eps/\lam < c_2/2$, and then combining the two estimates \eqref{aest-1} and \eqref{aest-2}, we have 
\begin{align}
\begin{aligned}\label{aesth}
\int_\bbr  |\bw| |h-\tilde h_0 |^2 
 +\int_{v\le 3v_*}|\bw|  |v-\tilde v_0|^2 + \int_{v> 3v_*}|\bw|  |v-\tilde v_0|\leq C \frac{\eps^2}{\lambda}.
\end{aligned}
\end{align}
Applying \eqref{aesth} to \eqref{aest-2}, we have \eqref{genl1}. 
\vskip0.2cm

$\bullet$ {\it Proof of \eqref{genyabs}} :
We first use \eqref{2dere} and the definition of $\eta(\cdot|\cdot)$ to rewrite ${\bf Y}(U)$ as
\begin{align*}
{\bf Y} (U)= \underbrace{ \int_\bbr \bw \Big(\frac{|h-\tilde h_0|^2}{2} + Q(v|\tilde v_0) \Big) dx }_{=:J_1} + \underbrace{ \int_\bbr {\bf a} \Big(-\bv_1 p'(\tilde v_0)(v-\tilde v_0) + \bv_2 (h-\tilde h_0) \Big) dx}_{=:J_2}.
\end{align*}
It follows from \eqref{genl1} that
\[
|J_1| \le C \frac{\eps^2}{\lambda}.
\]
As done before, we have
\begin{align*}
\begin{aligned}
|J_2| &\le  C\frac{\eps}{\lambda}\int_\bbr  |\bw| (|v-\tilde v_0| +|h-\tilde h_0 | ) dx\\
&\le C\frac{\eps}{\lambda}\int_{v> 3v_*}  |\bw| |v-\tilde v_0 | dx\\
&\quad+C\frac{\eps}{\lambda}\Big(\int_{v\le 3v_*}  |\bw| |v-\tilde v_0|^2 dx+\int_{\bbr}  |\bw| |h-\tilde h_0|^2 dx \Big)^{1/2}\Big(\int_\bbr  |\bw| dx \Big)^{1/2}\\
&\le C\frac{\eps}{\lambda}\int_\bbr  |\bw| Q(v|\tilde v_0) dx
+C\frac{\eps}{\sqrt\lambda}\Big(\int_\bbr  |\bw|\big(Q(v|\tilde v_0) + |h-\tilde h_0|^2 \big) dx \Big)^{1/2}
\le C \frac{\eps^2}{\lambda}.
\end{aligned}
\end{align*}
\end{proof}

\subsection{Estimates outside truncation}\label{sec:abo}
The following proposition is a generalization of \cite[Lemmas 4.5-4.8]{KV-unique19}.

\begin{proposition}\label{aprop:out}
For a given constant $U_*:=(v_*,u_*)\in \bbr^+\times\bbr$, there exist positive constants $\delta_0,\delta_1$ and $C$ (depending on $\delta_1$) such that for any $\eps, \lam>0$ satisfying $\eps/\lambda< \delta_0$ and $\lambda<\delta_0$, the following holds.\\
Let $\tilde U_0:=(\tiv_0,\tih_0): \bbr\to  \bbr^+\times\bbr$ and $\bw:\bbr\to\bbr$ be any functions such that
\begin{align}
&|\tilde U_0(x)-U_*|\le C\delta_0,\quad \tiv_0(x)\ge C^{-1}, \quad\forall x\in\bbr, \label{aasunif} \\
& |(\tiv_0)_x|\le \delta_0^2, \quad  |{\bf w}(x)|\le C \eps\lam \exp\big(-C^{-1} \eps |x|\big), \quad\forall x\in\bbr, \label{aasup} \\
&  \inf_{-\eps^{-1}\le x\le \eps^{-1}} |\bw(x)| \ge C\eps\lam,\quad \int_\bbr |\bw| dx =\lam. \label{aasinte}
\end{align}
Let  $U:=(v,h): \bbr\to  \bbr^+\times\bbr$ be any function such that 
\beq\label{agenl1}
 \int_\bbr |\bw| |h-\tih_0|^2\,dx + \int_\bbr |\bw| Q(v|\tiv_0)\,dx \le C\frac{\eps^2}{\lambda}.
 \eeq
Let $\bar \bv$ be a $\deltao$-truncation of $v$ defined  by (well-defined since the function $p$ is one to one)
\beq\label{atrunc-def}
p(\bar \bv)-p(\tilde v_0)=\bar \psi\big(p(v)-p(\tilde v_0)\big),\qquad\mbox{where}\quad \bar \psi (y)=\inf\left(\deltao,\sup(-\deltao,y)\right).
\eeq
Let ${\bf\Omega}:=\{x~|~ p(v)-p(\tilde v_0)  \le \deltao \}$, and
\begin{align*}
\begin{aligned}
&{\bf G}_1^{-}(U):=\int_{{\bf\Omega}^c} |\bw| \left| h-\tilde h_0 \right|^2  dx,\\
& {\bf G}_2(U):= \int_\bbr  |\bw|   Q(v|\tilde v_0)  dx,\\
&{\bf \tilde G}_2(U):= \int_\bbr  |(\tiv_0)_x|   Q(v|\tilde v_0)  dx,\\
&{\bf D}(U):= \int_\bbr   v^\beta |\partial_x \big(p(v)-p(\tilde v_0 )\big)|^2 dx .
\end{aligned}
\end{align*}  
Then, the following estimates hold, where $\bar U:= (\bar\bv, h)$.
\begin{eqnarray}
\label{abig1}
&& \int_{{\bf\Omega}} |\bw| \big| p(v)-p(\bar \bv) \big|^2 dx + \int_\bbr|\bw| \big| p(v)-p(\bar \bv) \big| dx  \le \sqrt{\frac{\eps}{\lambda}}{\bf D}(U),\\
\label{abig2}
&&  \int_{{\bf\Omega}} |\bw| \Big | |p(v)-p(\tiv_0)|^2-  |p(\bar\bv)-p(\tiv_0)|^2\Big|  \,dx \le \sqrt{\frac{\eps}{\lambda}}{\bf D}(U),
\end{eqnarray}
\begin{eqnarray}
\label{al3}
&& \int_\bbr |\bw|^2  v^\beta  |p(v)-p(\bar\bv)|^2 dx + \int_\bbr  |\bw|^2 v^\beta  |p(v)-p(\bar\bv)| \,dx \\
\nonumber
&&\qquad\le C\lambda^2 \left( {\bf D}(U) +\delta_0 {\bf \tilde G}_2(U) \right) +C\eps\lam {\bf G}_2(U),\\
\label{al4}
&&  \int_\bbr|\bw|^2 \Big |v^\beta |p(v)-p(\tiv_0)|^2- \bar\bv^\beta |p(\bar\bv)-p(\tiv_0)|^2\Big| \,dx\\
\nonumber
&&\qquad \le C\lambda^2 \left( {\bf D}(U) +\delta_0  {\bf \tilde G}_2(U) \right)+ C\eps\lam {\bf G}_2(U),
\end{eqnarray}
\begin{eqnarray}
\label{al5}
&&  \int_\bbr|\bw| \left|p(v|\tilde{v}_0)-p(\bar\bv|\tilde{v}_0)\right | \,dx\\
\nonumber
&&\qquad \leq C\sqrt{\frac{\eps}{\lambda}} \left({\bf D}(U) +\delta_0  {\bf \tilde G}_2(U) \right) +C\left({\bf G}_2(U)-{\bf G}_{2}(\bar U) \right),\\
\label{al50}
&&   \int_\bbr|\bw| \left|Q(v|\tilde{v}_0)-Q(\bar\bv|\tilde{v}_0)\right | \,dx+\int_\bbr|\bw| |v-\bar\bv| \,dx \leq  C\left({\bf G}_{2}(U)-{\bf G}_{2}(\bar U) \right),
\end{eqnarray}
\begin{eqnarray}
\label{newq}
&&\int_{{\bf\Omega}^c} |\bw| \big|p(v)-p(\bar \bv) \big|^2 dx \le C\frac{\eps^{2-q}}{\lam} \left({\bf D}(U) +\delta_0  {\bf \tilde G}_2(U) \right)^q, \quad q:=\frac{2\gamma}{\gamma +\alpha}, \\
\label{ans1}
&& \int_{{\bf\Omega}^c} |\bw| \big |p(v)-p(\tiv_0)\big| |h-\tilde h_0| dx  \le \delta_0 \left({\bf D}(U) +\delta_0 {\bf \tilde G}_2(U) \right) + (\deltao+C\deltaz)  {\bf G}_{1}^-(U),\\
\label{ans2}
&&\int_{{\bf\Omega}^c} |\bw| \left( Q(\bar\bv|\tilde v_0) +|\bar\bv -\tilde v_0 | \right)  dx \le C \sqrt{\frac{\eps}{\lambda}} \left({\bf D}(U) +\delta_0 {\bf \tilde G}_2(U) \right),
\end{eqnarray}
\begin{eqnarray}
\label{al7}
&&  \int_\bbr|\bw|^2  \frac{|v^\beta-\bar\bv^\beta|^2}{v^\beta} \,dx\le C\lambda  \bigg({\bf D}(U) +\delta_0 {\bf \tilde G}_2(U) \bigg) +C\lam\left({\bf G}_{2}(U)-{\bf G}_{2}(\bar U)\right) ,\\
\label{al8}
&&  \int_\bbr|\bw|^2  \Big| \frac{|v^\beta-\tilde v_0^\beta|^2}{v^\beta} - \frac{|\bar\bv^\beta-\tilde v_0^\beta|^2}{\bar\bv^\beta} \Big| \,dx\\
\nonumber
&&\qquad \le C \lambda  \bigg({\bf D}(U) +\delta_0 {\bf \tilde G}_2(U) \bigg) +C\lam\left({\bf G}_{2}(U)-{\bf G}_{2}(\bar U)\right)  .
\end{eqnarray}
\end{proposition}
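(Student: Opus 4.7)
The plan is to derive all nine estimates \eqref{abig1}--\eqref{al8} from three basic ingredients: (i) the explicit structure of \eqref{atrunc-def}, by which $p(v)-p(\bar\bv)$ vanishes on $\{|p(v)-p(\tilde v_0)|\le\delta_0\}$ and equals the overshoot past $\pm\delta_0$ otherwise, so in particular $\bar\bv$ is clamped to constant pressure values on $\mathbf{\Omega}^c$; (ii) a one-dimensional Sobolev control which, thanks to the weight $v^\beta$ inside $\mathbf{D}(U)$ and the fact that $v$ is bounded below wherever $p(v)-p(\tilde v_0)\ge -\delta_0$, yields a bound of the form $\|p(v)-p(\bar\bv)\|_{L^\infty}^2 \lesssim \mathbf{D}(U)+\delta_0\,\widetilde{\mathbf{G}}_2(U)$; (iii) the assumption \eqref{agenl1} combined with the lower bound $Q(v|\tilde v_0)\ge c\delta_0^2$ on $\mathbf{\Omega}^c$, which produces a measure-type estimate $\int_{\mathbf{\Omega}^c}|\bw|\,dx \lesssim \mathbf{G}_2(U)$ and hence a short interval length bound on $\mathbf{\Omega}^c$.

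First I would prove \eqref{abig1}, \eqref{abig2}, and \eqref{newq}. The $L^\infty$ bound $\|\bw\|_{L^\infty}\le C\eps\lam$ from \eqref{aasup}, combined with (ii) and (iii), converts $\mathbf{D}(U)$ into the stated weighted $L^1$ and $L^2$ integrals; the prefactor $\sqrt{\eps/\lam}$ emerges from the interpolation $\eps\lam\times\lam^{-1}\sqrt{\eps/\lam}$. Estimate \eqref{newq} uses a sharper Sobolev inequality attuned to the $v^\beta$ weight, producing the exponent $q=2\gamma/(\gamma+\alpha)$. Inequalities \eqref{al3} and \eqref{al4} follow with the same scheme but with $|\bw|$ replaced by $|\bw|^2$, so one picks up an extra factor $\|\bw\|_{L^1}=\lam$ (the cavitation remainder becomes the $\eps\lam\,\mathbf{G}_2(U)$ term). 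For \eqref{ans1} I would apply Young's inequality at parameter $\delta_0$ after writing $|p(v)-p(\tilde v_0)|\le|p(v)-p(\bar\bv)|+\delta_0$ on $\mathbf{\Omega}^c$, so that the quadratic-in-$v$ term is absorbed by \eqref{abig1} and the quadratic-in-$h$ term by $\mathbf{G}_1^{-}(U)$. Estimate \eqref{ans2} is immediate: $\bar\bv$ is clamped at a constant on $\mathbf{\Omega}^c$, so the integrand is bounded pointwise, and the length of $\mathbf{\Omega}^c$ is controlled as in (iii).

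For \eqref{al5} and \eqref{al50} I would split the integrands into the moderate range $v\le 3v_*$ and the cavitation tail $v>3v_*$. On the moderate range, Taylor expansion gives $p(v|\tilde v_0)-p(\bar\bv|\tilde v_0)=O(\delta_0)\bigl(p(v)-p(\bar\bv)\bigr)$ and similarly for $Q$, reducing the piece to \eqref{abig1}. On the cavitation tail, $\bar\bv$ sits at its clamped value and the sharp one-sided comparison $Q(v|\tilde v_0)-Q(\bar\bv|\tilde v_0)\ge c|v-\bar\bv|$ from (\ref{rel_Q}) of Lemma \ref{lem-pro} provides exactly the linear control needed, producing the $\mathbf{G}_2(U)-\mathbf{G}_2(\bar U)$ remainder. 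Finally \eqref{al7} and \eqref{al8} follow by expressing $v^\beta-\bar\bv^\beta$ as an integral of $p$-derivatives, which reduces them to \eqref{al3}, \eqref{al4}, and \eqref{al5}.

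The main obstacle is the cavitation regime $v\gg 1$, where $p(v)$ is bounded while $v$ itself blows up and $Q(v|\tilde v_0)$ controls $v-\tilde v_0$ only linearly rather than quadratically. All usual quadratic manipulations must there be replaced by the one-sided estimate $Q(v|\tilde v_0)-Q(\bar\bv|\tilde v_0)\ge c|v-\bar\bv|$; the delicate bookkeeping is in ensuring every single estimate matches the precise right-hand side $\sqrt{\eps/\lam}\bigl(\mathbf{D}(U)+\delta_0\widetilde{\mathbf G}_2(U)\bigr)$ or $\mathbf{G}_2(U)-\mathbf{G}_2(\bar U)$ that the downstream Proposition \ref{prop:main} demands, rather than a larger combination that would fail to close the final a-contraction argument.
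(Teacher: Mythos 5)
Your overall route (exploit the structure of the truncation, convert $\mathbf{D}(U)$ into pointwise control of $p(v)-p(\bar\bv)$ via the fundamental theorem of calculus, and use \eqref{agenl1} as a Chebyshev bound on the set where the truncation is active) is the same philosophy as the paper's, which reduces everything to the two pointwise estimates \eqref{aest6} and \eqref{abeta1} and then invokes the computations of \cite[Lemmas 4.5--4.8]{KV-unique19}. However, your ingredient (ii) is not correct as stated: there is no uniform bound $\|p(v)-p(\bar \bv)\|_{L^\infty}^2\lesssim \mathbf{D}(U)+\delta_0 {\bf \tilde G}_2(U)$. What the argument actually yields is a bound growing like $\sqrt{|x|+\eps^{-1}}$, obtained by integrating from a point $x_0\in[-\eps^{-1},\eps^{-1}]$ where \eqref{agenl1} and \eqref{aasinte} force the truncation to be inactive; moreover on the small-$v$ side $\{p(v)-p(\tiv_0)>\deltao\}$, where $v^\beta$ degenerates, one can only bound the weighted quantity $v^{\beta/2}\big(p(v)-p(\bar\bv_s)\big)$ as in \eqref{abeta1} (this is where the $\delta_0{\bf \tilde G}_2$ correction enters, through $\partial_x p(\tiv_0)$), and this degeneracy is precisely why \eqref{abig1}--\eqref{abig2} are restricted to $\mathbf{\Omega}$ and why \eqref{newq} carries the exponent $q=\frac{2\gamma}{\gamma+\alpha}<2$. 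You also have the regions reversed: $v^\beta$ is bounded below where $p(v)-p(\tiv_0)\le\deltao$ (on $\mathbf{\Omega}$), not where $p(v)-p(\tiv_0)\ge-\delta_0$. Finally, the $\sqrt{\eps/\lam}$ prefactors come from combining the $\sqrt{|x|+\eps^{-1}}$ growth with the exponential decay of $\bw$ and the Chebyshev bound on the active set, not from the interpolation you indicate.

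The more serious gap is your treatment of \eqref{ans2}, and of the linear-in-$h$ piece of \eqref{ans1}. On $\mathbf{\Omega}^c$ one has $p(\bar\bv)-p(\tiv_0)=\deltao$, so the integrand of \eqref{ans2} is bounded above \emph{and below} by constants depending on $\deltao$; hence \eqref{ans2} is essentially equivalent to $\int_{\mathbf{\Omega}^c}|\bw|\,dx\le C\sqrt{\eps/\lam}\big(\mathbf{D}(U)+\delta_0{\bf \tilde G}_2(U)\big)$. Your ``immediate'' argument only uses the Chebyshev bound $\int_{\mathbf{\Omega}^c}|\bw|\,dx\le C\eps^2/(\lam\deltao^2)$ (and no Lebesgue ``length'' bound on $\mathbf{\Omega}^c$ is available, since $\bw$ decays exponentially), which produces a right-hand side of order $\eps^2/\lam$ rather than the required one; these are not interchangeable, because $\mathbf{D}$ and ${\bf \tilde G}_2$ have no lower bound. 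The distinction is essential downstream: in Proposition \ref{aprop_out} and then Proposition \ref{prop:main}, only the terms $\mathbf{B}_1(\bar U)$ and $\mathbf{B}_2^+(\bar U)$ in \eqref{aib12} are permitted to carry an $O(\eps^2/\lam)$ contribution (it is ultimately compensated by the $-|\mathcal{Y}^g|^2/(\eps\delta)$ term in the nonlinear Poincar\'e inequality), while \eqref{ans1}--\eqref{ans2} must be absorbed entirely by the good terms $\mathbf{D}$, ${\bf \tilde G}_2$, $\mathbf{G}_1^-$. Proving that the solution cannot leave the $\deltao$-band without generating a comparable amount of diffusion --- i.e. the bound on $\int_{\mathbf{\Omega}^c}|\bw|\,dx$ by $\sqrt{\eps/\lam}\big(\mathbf{D}+\delta_0{\bf \tilde G}_2\big)$, via \eqref{abeta1} --- is the nontrivial content of the cited lemmas, and it is missing from your proposal; the same control is needed to close your Young-inequality step in \eqref{ans1} without an inadmissible additive $\eps^2/\lam$ remainder.
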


\begin{proof}
First, let $\bar\bv_s$ and $\bar \bv_b$ be one-sided truncations of $v$ defined by
\beq\label{schi}
p(\bar \bv_s)- p(\tilde v_0) :=\bar \psi_s \big(p(v)- p(\tilde v_0)\big),\qquad\mbox{where}\quad \bar\psi_s (y)=\inf (\deltao,y),
\eeq
and
\beq\label{bchi}
p(\bar \bv_b)- p(\tilde v_0) :=\bar \psi_b \big(p(v)- p(\tilde v_0)\big),\qquad\mbox{where}\quad \bar\psi_b (y)=\sup (-\deltao,y).
\eeq
Notice that the function $\bar \bv_s$ (resp. $\bar \bv_b$) represents the truncation of small (resp. big) values of $v$ corresponding to $|p(v)-p(\tilde v_0)|\ge\deltao$.\\
By comparing the definitions of \eqref{atrunc-def}, \eqref{schi} and \eqref{bchi}, we see
\beq\label{acompare1}
\bar\bv = \left\{ \begin{array}{ll}
    \bar\bv_b,\quad &\mbox{on}~ {\bf\Omega}, \\
    \bar\bv_s,\quad &\mbox{on}~ {\bf\Omega}^c ,
       \end{array} \right.
\eeq
and
\begin{align*}
\begin{aligned}
& \left( p(\bar \bv_s)-p(\tilde v_0) \right) {\mathbf 1}_{\{p(v)-p(\tilde v_0) \ge -\deltao\}} = \left( p(\bar \bv)-p(\tilde v_0) \right) {\mathbf 1}_{\{p(v)-p(\tilde v_0) \ge -\deltao\}}  ,\\
& \left( p(\bar \bv_b)-p(\tilde v_0) \right) {\mathbf 1}_{\{p(v)-p(\tilde v_0) \le \deltao\}} = \left( p(\bar \bv)-p(\tilde v_0)\right) {\mathbf 1}_{\{p(v)-p(\tilde v_0) \le \deltao\}}  .
\end{aligned}
\end{align*}
We also note that 
\begin{equation}\label{adef-bar}
\begin{array}{rl}
p(v)-p(\bar \bv_s)=& (p(v)-p(\tilde v_0))+(p(\tilde v_0)-p(\bar \bv_s)) \\[0.2cm]
=&\left(I-\bar\psi_s\right)\left(p(v)-p(\tilde v_0)\right) \\[0.2cm]
=& \left(\left(p(v)-p(\tilde v_0) \right)-\deltao\right)_+,\\[0.2cm]
p(\bar \bv_b)-p(v)=& (p(\bar \bv_b)-p(\tilde v_0))+(p(\tilde v_0)-p(v)) \\[0.2cm]
=&\left(\bar\psi_b-I\right)\left(p(v)-p(\tilde v_0)\right) \\[0.2cm]
=& \left(-\left(p(v)-p(\tilde v_0)\right)-\deltao\right)_+,\\
|p(v)-p(\bar \bv)|=& |(p(v)-p(\tilde v_0))+(p(\tilde v_0)-p(\bar \bv))|\\[0.2cm]
=&|(I-\bar\psi)(p(v)-p(\tilde v_0))|\\[0.2cm]
=& (|p(v)-p(\tilde v_0)|-\deltao)_+.
\end{array}
\end{equation}
Therefore, using \eqref{atrunc-def}, \eqref{bchi}, \eqref{schi} and \eqref{adef-bar}, we have
\begin{align}
\begin{aligned}\label{aeq_D}
{\bf D}(U)&=\int_\bbr  v^{\beta}  |\partial_x (p(v)-p(\tilde v_0))|^2 dx\\
&=\int_\bbr  v^{\beta} |\partial_x (p(v)-p(\tilde v_0))|^2 ( {\mathbf 1}_{\{|p(v)-p(\tilde v_0) |\leq\deltao\}} + {\mathbf 1}_{\{p(v)-p(\tilde v_0) >\deltao\}}+ {\mathbf 1}_{\{p(v)-p(\tilde v_0) <-\deltao\}}  )dx\\
&={\bf D}(\bar U)+\int_\bbr  v^{\beta} |\partial_x (p(v)-p(\bar \bv_s))|^2 dx+\int_\bbr  v^{\beta}  |\partial_x (p(v)-p(\bar \bv_b))|^2 dx\\
&\ge \int_\bbr  v^{\beta} |\partial_x (p(v)-p(\bar \bv_s))|^2 dx+\int_\bbr v^{\beta}  |\partial_x (p(v)-p(\bar \bv_b))|^2 dx,
\end{aligned}
\end{align}
which also yields 
\beq\label{amonod}
{\bf D}(U) \ge {\bf D}(\bar U).
\eeq
On the other hand, since $Q(v|\tilde{v}_0)\geq  Q(\bar v|\tilde{v}_0)$,
we have
\[
 {\bf G}_{2}(U)-{\bf G}_{2}(\bar U)= \int_\bbr |\bw|\left(Q(v|\tilde{v}_0)-Q(\bar v|\tilde{v}_0)\right)\,dx\geq 0,
\]
which together with  \eqref{genl1} yields
\beq\label{al2}
0\leq {\bf G}_{2}(U)-{\bf G}_{2}(\bar U)\leq  {\bf G}_{2}(U)\leq C \int_\bbr |\bw|Q(v|\tilde{v}_0)\,dx \le
C\frac{\eps^2}{\lambda}.
\eeq

To get the desired estimates, we use the same computations as in the proofs of \cite[Lemmas 4.5, 4.6, 4.7, 4.8]{KV-unique19} by considering $\bw$ the spatial derivative of each weight. 
Indeed, the main ideas for the proofs of \cite[Lemmas 4.5, 4.6, 4.7, 4.8]{KV-unique19} are based on the smallness of the weighted relative entropy \eqref{agenl1}, and the following point-wise estimates \eqref{aest6} and \eqref{abeta1}:\\
Since \eqref{agenl1} and \eqref{aasinte} imply
\begin{eqnarray*}
2\eps\int_{-1/\eps}^{1/\eps} Q(v|\tilde{v}_0)\, dx &\leq& \frac{2\eps}{\inf_{[-\eps^{-1},\eps^{-1}]}|\bw|}\int_\bbr|\bw|Q(v|\tilde{v}_0)\,dx \\
&\leq& C \frac{\eps}{\lambda\eps}\frac{\eps^2}{\lambda}=C\left(\frac{\eps}{\lambda}\right)^2 ,
\end{eqnarray*}
there exists $x_0 \in [-\eps^{-1},\eps^{-1}]$ such that $Q(v|\tilde{v}_0) (x_0) \leq C(\eps/\lambda)^2$.
For $\deo$ small enough, and using  \eqref{pQ-equi0}, we have 
$$
|(p(v)-p(\tiv_0))(x_0)|\leq C\frac{\eps}{\lambda}.
$$
Thus, if $\deo$ is small enough such that $C\eps/\lambda\leq \deltao/2$, then we find from \eqref{atrunc-def} that
\[
|(p(v)-p(\bar \bv))(x_0)| =0,
\]
which together with \eqref{adef-bar} implies
$$
|(p(v)-p(\bar \bv_b))(x_0)|=0,\qquad |(p(v)-p(\bar \bv_s))(x_0)|=0.
$$
Therefore, using \eqref{aeq_D}, we find
\begin{align}
\begin{aligned}\label{aest6}
\forall x \in \bbr,\quad |(p(v)-p(\bar \bv_b))(x)|  &\le \int_{x_0}^x \left|\partial_y \big( p(v)-p(\bar \bv_b)\big) \right|  {\mathbf 1}_{\{p(v)-p(\tiv_0) < -\deltao\}} \,dy \\
 &\le C\int_{x_0}^x v^{\beta/2}\left|\partial_y \big( p(v)-p(\bar \bv_b)\big) \right| {\mathbf 1}_{\{p(v)-p(\tiv_0) < -\deltao\}}  \,dy \\
 &\le C\sqrt{|x|+\frac{1}{\eps}}\sqrt{{\bf D}(U)}.
 \end{aligned}
\end{align}
To get a point-wise estimate for $|v^{\beta/2} (p(v)-p(\bar \bv_s))(x)|$, we use
\[
|v^{\beta/2}(p(v)-p(\bar \bv_s))(x)|=\left|\int_{x_0}^x \partial_y \big(v^{\beta/2}(p(v)-p(\bar \bv_s))\big)\,dy\right|.
\]
To control the right-hand side by the good terms, we observe that 
since $v^{\beta/2}=p(v)^{-(\gamma-\alpha)/2\gamma}$, 
we have
\begin{eqnarray*}
&&\partial_y \big(v^{\beta/2}(p(v)-p(\bar \bv_s))\big)= \partial_y \big( p(v)^{-(\gamma-\alpha)/2\gamma} (p(v)-p(\bar \bv_s)) \big)\\
&&\qquad =  p(v)^{-(\gamma-\alpha)/2\gamma}\partial_y (p(v)-p(\bar \bv_s))\\
&&\qquad\quad -\frac{\gamma-\alpha}{2\gamma}p(v)^{-(\gamma-\alpha)/2\gamma} \frac{p(v)-p(\bar \bv_s)}{p(v)}\partial_y \big[ (p(v)-p(\tiv_0))+p(\tiv_0) \big]\\
&&\qquad = v^{\beta/2} \partial_y (p(v)-p(\bar \bv_s)) -\frac{\gamma-\alpha}{2\gamma}v^{\beta/2} \underbrace{ \frac{p(v)-p(\bar \bv_s)}{p(v)}  \partial_y (p(v)-p(\tiv_0))}_{=:K}\\
&&\qquad\quad -\frac{\gamma-\alpha}{2\gamma}v^{\beta/2}  \frac{p(v)-p(\bar \bv_s)}{p(v)} \partial_y p(\tilde v_0).
\end{eqnarray*}
Using the fact that 
\[
p(\bar \bv_s)=p(\tiv_0) + \deltao\quad\mbox{and}\quad  \frac{p(v)-p(\bar \bv_s)}{p(v)} \le C\quad \mbox{on } \{p(v)-p(\tiv_0)>\deltao\}, 
\]
we have
\[
K= \frac{p(v)-p(\bar \bv_s)}{p(v)} {\mathbf 1}_{\{p(v)-p(\tiv_0)>\deltao\}}  \partial_y (p(v)-p(\tiv_0))= \frac{p(v)-p(\bar \bv_s)}{p(v)}  \partial_y (p(v)-p(\bar \bv_s)),
\]
and so,
\[
|K|\le C | \partial_y (p(v)-p(\bar \bv_s))|.
\]
In addition, using $ |\partial_y p(\tiv_0)| = |p'(\tiv_0)| |(\tiv_0)_y | \le C|(\tiv_0)_y |$, we have
\[
|\partial_y \big(v^{\beta/2}(p(v)-p(\bar \bv_s))\big)|\le C v^{\beta/2}(|\partial_y (p(v)-p(\bar \bv_s))| +|(\tiv_0)_y | ).
\]
Therefore, using \eqref{aeq_D}, we have that for any $x\in \bbr$,
\begin{align*}
\begin{aligned}
 |v^{\beta/2} (p(v)-p(\bar \bv_s))(x)|  &=\left|\int_{x_0}^x \partial_y \big(v^{\beta/2} (p(v)-p(\bar \bv_s))\big)\,dy \right| \\
&\le \int_{x_0}^x \big|\partial_y \big(v^{\beta/2} (p(v)-p(\bar \bv_s))\big)\big| {\mathbf 1}_{\{p(v)-p(\tiv_0) > \delta_1\}} dy\\
&\le C \int_{x_0}^x  v^{\beta/2}(|\partial_y ((p(v)-p(\bar \bv_s))| +|(\tiv_0)_y|) {\mathbf 1}_{\{p(v)-p(\tiv_0)  > \delta_1\}} dy\\
 &\le C\sqrt{|x|+\frac{1}{\eps}}\left(\sqrt{{\bf D}(U)} + \sqrt{ \int_\bbr  |(\tiv_0)_y|^2 v^{\beta}{\mathbf 1}_{\{p(v)-p(\tiv_0) > \deltao\}} dy } \right).
\end{aligned}
\end{align*}
Using the condition $\beta=\gamma-\alpha>0$, we have
\begin{align*}
\begin{aligned}
 \int_\bbr |(\tiv_0)_x|^2 v^{\beta}{\mathbf 1}_{\{p(v)-p(\tiv_0)> \delta_1\}} dx  &= \int_\bbr |(\tiv_0)_x|^2 \frac{v^{\beta}}{|v-\tilde v_0|^2}  |v-\tilde v_0|^2 {\mathbf 1}_{\{p(v)-p(\tiv_0) > \delta_1\}} dx\\
&\le  C \int_\bbr |(\tiv_0)_x|^2  |v-\tilde v_0|^2 {\mathbf 1}_{\{p(v)-p(\tiv_0) > \delta_1\}} dx.
\end{aligned}
\end{align*}
In addition, since $\{p(v)-p(\tiv_0) > \delta_1\} =\{v\le C\}$ for some constant $C$, \eqref{aasup} and \eqref{rel_Q} yield
\[
 \int_\bbr  |(\tiv_0)_x|^2 |v-\tilde v_0|^2{\mathbf 1}_{\{p(v)-p(\tiv_0) > \delta_1\}} d\xi\le C \delta_0^2  \int_\bbr  |(\tiv_0)_x| Q(v|\tilde v_0) dx = C \delta_0^2{\bf \tilde G}_2(U) .
\]
Therefore we obtain that  
\begin{align}
\begin{aligned}\label{abeta1}
\forall i=1, 2,\, \forall x \in \bbr,\quad  |v^{\beta/2} (p(v)-p(\bar \bv_s))(x)| \le C\sqrt{|x|+\frac{1}{\eps}}\bigg(\sqrt{{\bf D}(U)} + \delta_0 \sqrt{{\bf \tilde G}_2(U)} \bigg).
\end{aligned}
\end{align}

 The remaining parts use the same computations as in the proofs of \cite[Lemmas 4.5, 4.6, 4.7, 4.8]{KV-unique19}.
 Especially, for the estimate \eqref{newq}, note from \eqref{acompare1} that 
 \[
 \int_{{\bf\Omega}^c} |\bw| \big|p(v)-p(\bar \bv) \big|^2 dx =\int_{{\bf\Omega}^c} |\bw| \big|p(v)-p(\bar \bv_s) \big|^2 dx,
 \]
 and so, its proof follows from the proof of \cite[Lemmas 4.7]{KV-unique19}.
  We omit those details.
\end{proof}

The following proposition is a generalization of \cite[Proposition 4.3]{KV-unique19}.

\begin{proposition}\label{aprop_out}
For a given constant $U_*:=(v_*,u_*)\in \bbr^+\times\bbr$, there exist positive constants $\delta_0,\delta_1, \s, C^*$ and $C$  (in particular, $C$ depends on the constant $\delta_1$, but $C$ and $C^*$ are independent of $\deo$)  such that for any $\eps, \lam>0$ satisfying $\eps/\lambda< \delta_0$ and $\lambda<\delta_0$, the same hypotheses as in Proposition \ref{aprop:out} hold. In addition, let $\bv_1, \bv_2 :\bbr\to\bbr$ be any functions such that
\beq\label{cweight}
|\bv_i|\le C \frac{\eps}{\lam}|\bw(x)|, \quad\forall x\in\bbr,\quad i=1,2,
\eeq
and assume
\beq\label{g2a}
{\bf\tilde G}_{2}(U) \le \frac{\eps^2}{\lam}.
\eeq
Consider the following functionals:
\begin{align*}
\begin{aligned}
&{\bf B}_{1}(U):= \s\int_\bbr \bv_1 p(v|\tilde v_0) dx,\\
&{\bf B}_{2}^- (U):=\int_{{\bf\Omega}^c} \bw  \big(p(v)-p(\tilde v_0 )\big) \big(h-\tilde h_0 \big)dx , \qquad {\bf B}_{2}^+(U) := \frac{1}{2\s}\int_{\bf\Omega} \bw |p(v)-p(\tilde v_0) |^2  dx,\\
&{\bf B}_{3}(U):=- \int_\bbr \bw  v^\beta \big(p(v)-p(\tilde v_0 )\big)\partial_x \big(p(v)-p(\tilde v_0 )\big)  dx,\\
&{\bf B}_{4}(U):=\int_\bbr |\bw| |\bv_1| \big|p(v)-p(\tilde v_0)\big| \big|v^\beta - \tilde v_0^\beta\big|  dx,\\
&{\bf B}_5(U):= \int_\bbr  |\bv_1| \big| \partial_x \big(p(v)-p(\tilde v_0)\big) \big| \big|v^\beta - \tilde v_0^\beta \big| dx,
\end{aligned}
\end{align*}
\begin{align*}
\begin{aligned}
{\bf Y}^g (U)&:= -\frac{1}{2\s^2}\int_{\bf \Omega} \bw |p(v)-p(\tilde v_0)|^2 dx -\int_{\bf\Omega} \bw Q(v|\tilde v_0) dx \\
&\qquad -\int_{\bf\Omega}  \bv_1 p'(\tiv) (v-\tilde v_0) dx +\frac{1}{\s}\int_{\bf\Omega} \bv_2 \big(p(v)-p(\tilde v_0)\big)dx,\\
{\bf Y}^s (U)&:=-\int_{{\bf\Omega}^c} \bw Q(v|\tilde v_0) dx -\int_{{\bf\Omega}^c} \bv_1 p'(\tiv_0)(v-\tilde v_0)dx 
 -\int_{{\bf\Omega}^c} \bw \frac{|h-\tilde h_0|^2}{2} dx+\int_{{\bf\Omega}^c} \bv_2 (h-\tilde h_0) dx,\\
 {\bf Y}^b(U)&:= -\frac{1}{2}\int_{\bf\Omega}  \bw  \Big(h-\tilde h_0-\frac{p(v)-p(\tilde v_0)}{\sigma}\Big)^2 dx -\frac{1}{\s} \int_{\bf\Omega}  \bw  \big(p(v)-p(\tilde v_0)\big)\Big(h-\tilde h_0-\frac{p(v)-p(\tilde v_0)}{\sigma}\Big) dx,\\
{\bf Y}^l(U) &:=\int_{\bf\Omega} \bv_2 \Big(h-\tilde h_0-\frac{p(v)-p(\tilde v_0)}{\sigma}\Big)dx,
\end{aligned}
\end{align*}
and 
\[
{\bf G}_{1}^{+}(U):= \int_{\bf\Omega} |\bw|  \left|  h-\tilde h_0 -\frac{p(v)-p(\tilde v_0)}{\s}\right|^2 dx .
\]
Then, the following estimates hold:
\begin{align}
\begin{aligned} \label{an1}
&|{\bf B}_{1}(U)-{\bf B}_{1}(\bar U)| \leq C\deo \left( {\bf D}(U) +  {\bf \tilde G}_2(U) + \left({\bf G}_{2}(U)-{\bf G}_{2}(\bar U) \right)\right) ,\\
&|{\bf B}_{2}^-(U)|  \le\deo \left({\bf D}(U) + {\bf \tilde G}_2(U)   \right) + (\deltao+C\delta_0)  {\bf G}_{1}^-(U),\\
&|{\bf B}_{2}^+(U)-{\bf B}_{2}^+(\bar U)|   \le \sqrt{\deo}{\bf D}(U),
\end{aligned}
\end{align}
\beq\label{aib12}
|{\bf B}_{1}(\bar U)|+|{\bf B}_{2}^+(\bar U)| \le C\int_\bbr |\bw | Q(\bar \bv|\tilde{v}_0)\,dx \le C^*\frac{\eps^2}{\lambda},
\eeq
\begin{align}
\begin{aligned} \label{an3}
&|{\bf B}_{3}(U)|+|{\bf B}_{4}(U)| + |{\bf B}_{5}(U)|  \le C\deo \left( {\bf D}(U) +  {\bf \tilde G}_2(U) + \left({\bf G}_{2}(U)-{\bf G}_{2}(\bar U) \right)+\frac{\eps}{\lambda} {\bf G}_{2}(\bar U) \right).
\end{aligned}
\end{align}
If, in addition, ${\bf D}(U)\leq \frac{4C^*}{\sqrt\deltaz} \frac{\eps^2}{\lambda}$, then
\begin{align}
\begin{aligned}\label{am1}
& |{\bf Y}^g(U)-{\bf Y}^g(\bar U)|^2 +|{\bf Y}^b(U)|^2 +|{\bf Y}^l(U)|^2+|{\bf Y}^s(U)|^2  \\
&\le C\frac{\eps^2}{\lambda} \bigg( \sqrt{\deo}{\bf D}(U)+\left({\bf G}_{2}(U)- {\bf G}_{2} (\bar U) \right) +\deo{\bf\tilde G}_{2}(U) +\left(\frac{\eps}{\lambda}\right)^{1/4} {\bf G}_{2}(\bar U) \\
&\qquad\qquad+{\bf G}_{1}^-(U) + \left(\frac{\lambda}{\eps}\right)^{1/4} {\bf G}_{1}^+(U) \bigg).
\end{aligned}
\end{align}
\end{proposition}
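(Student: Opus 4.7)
The proof follows the strategy of \cite[Proposition 4.3]{KV-unique19}, adapted to this abstract setting by systematically invoking Proposition \ref{aprop:out} in place of its $\nu=1$ counterpart. The unifying idea is that every bilinear (or quadratic) quantity to be estimated has one of two smallness sources: either a direct factor $\eps/\lam$ from the weight $\bv_i$ via \eqref{cweight}, or a factor $\delta_0$ gained by restricting the integrand to a region where the truncation $\bar\bv$ differs from $v$. Cauchy--Schwarz is then used to pair the remaining factor with one of the available good terms ${\bf D}(U)$, ${\bf \tilde G}_2(U)$, ${\bf G}_2(U)$, or ${\bf G}_1^{\pm}(U)$, and \eqref{agenl1} supplies the underlying $\eps^2/\lam$ smallness.

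For \eqref{an1} I would expand ${\bf B}_1(U)-{\bf B}_1(\bar U)=\sigma\int \bv_1\bigl(p(v|\tilde v_0)-p(\bar\bv|\tilde v_0)\bigr)\,dx$ and combine \eqref{cweight} with \eqref{al5}; the factor $(\eps/\lam)\sqrt{\eps/\lam}$ is absorbed into $\delta_0$. The bound on ${\bf B}_2^-$ is exactly \eqref{ans1} with $\bw$ in place of $(a_i)_x^{X_i}$. For ${\bf B}_2^+(U)-{\bf B}_2^+(\bar U)$, I use \eqref{acompare1} (so that $\bar\bv=\bar\bv_b$ on $\Omega$) and apply \eqref{abig2}. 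The bound \eqref{aib12} follows because on the truncation region $|p(\bar\bv)-p(\tilde v_0)|\le\delta_0$ makes $\bar\bv$ lie in a compact subset of $(0,\infty)$, so Lemma \ref{lem-pro} yields $p(\bar\bv|\tilde v_0)\le CQ(\bar\bv|\tilde v_0)\le CQ(v|\tilde v_0)$, after which \eqref{agenl1} gives the $\eps^2/\lam$ bound. The estimate \eqref{an3} is a Cauchy--Schwarz pairing in each case: for ${\bf B}_3$ one factor is $\sqrt{{\bf D}(U)}$ and the other is controlled by \eqref{al3}, \eqref{al4}; for ${\bf B}_4, {\bf B}_5$, I split $|v^\beta-\tilde v_0^\beta|$ according to whether $v$ is bounded (using $|v^\beta-\tilde v_0^\beta|\lesssim|v-\tilde v_0|$) or not (using \eqref{al7}, \eqref{al8}).

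The principal difficulty is \eqref{am1}. For ${\bf Y}^g(U)-{\bf Y}^g(\bar U)$, all integrands live on $\Omega$, where $\bar\bv=\bar\bv_b$ and $h$ is unchanged, so the differences reduce to \eqref{abig2}, \eqref{al50}, together with \eqref{cweight} applied to the $\bv_1$ integral. For ${\bf Y}^b(U)$, the first term is exactly $\tfrac12{\bf G}_1^+(U)$, and the second is bounded by Cauchy--Schwarz as $\sigma^{-1}\bigl(\int_\Omega|\bw||p(v)-p(\tilde v_0)|^2\bigr)^{1/2}\bigl({\bf G}_1^+(U)\bigr)^{1/2}$; since on $\Omega$ one has $|p(v)-p(\tilde v_0)|\le\max(\delta_0,p(\tilde v_0))\le C$ and $|p(v)-p(\tilde v_0)|^2\le CQ(v|\tilde v_0)$ by \eqref{rel_Q}, the prefactor is $O(\eps^2/\lam)$ by \eqref{agenl1}. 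For ${\bf Y}^l(U)$, Cauchy--Schwarz with weight $|\bw|$ gives $|{\bf Y}^l|^2\le \bigl(\int_\Omega|\bv_2|^2/|\bw|\bigr){\bf G}_1^+(U)$, and $|\bv_2|^2/|\bw|\le C(\eps/\lam)^2|\bw|$ combined with $\int|\bw|\,dx=\lam$ produces the $C\eps^2/\lam$ factor. For ${\bf Y}^s(U)$, which lives on $\Omega^c$, the $h$-quadratic piece is already part of ${\bf G}_1^-(U)$, the $Q$-piece is controlled via \eqref{ans2}, and the cross terms are paired via Cauchy--Schwarz against ${\bf G}_1^-(U)$ and ${\bf G}_2(U)-{\bf G}_2(\bar U)$. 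The fractional factors $(\eps/\lam)^{1/4}$ and $(\lam/\eps)^{1/4}$ arise from unbalanced Cauchy--Schwarz pairings, and the extra hypothesis ${\bf D}(U)\le\bigl(4C^*/\sqrt{\delta_0}\bigr)(\eps^2/\lam)$ is needed precisely to close the $\int_{\Omega^c}|\bw||p(v)-p(\bar\bv)|^2$ term via \eqref{newq}, whose exponent $q=2\gamma/(\gamma+\alpha)$ determines those fractional powers after elementary interpolation.
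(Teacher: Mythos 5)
Your treatment of \eqref{an1}, \eqref{aib12} and \eqref{an3} tracks the paper's proof essentially verbatim: \eqref{al5} plus \eqref{cweight} for ${\bf B}_1(U)-{\bf B}_1(\bar U)$, \eqref{ans1} for ${\bf B}_2^-$, \eqref{abig2} for ${\bf B}_2^+(U)-{\bf B}_2^+(\bar U)$, the local estimates $p(\bar\bv|\tilde v_0)\le C|p(\bar\bv)-p(\tilde v_0)|^2\le CQ(\bar\bv|\tilde v_0)\le CQ(v|\tilde v_0)$ plus \eqref{agenl1} for \eqref{aib12}, and Young's inequality against ${\bf D}(U)$ combined with \eqref{al3}--\eqref{al4}, \eqref{al7}--\eqref{al8} for \eqref{an3} (minor slip: the truncation threshold is $\deltao$, not $\deo$).

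For \eqref{am1}, however, your closing explanation reveals a genuine gap. First, \eqref{newq} and the exponent $q=2\gamma/(\gamma+\alpha)$ play no role here: no term $\int_{{\bf\Omega}^c}|\bw|\,|p(v)-p(\bar\bv)|^2$ arises in estimating ${\bf Y}^g,{\bf Y}^b,{\bf Y}^l,{\bf Y}^s$ (that estimate is needed elsewhere, in the proof of Proposition \ref{prop_out} for the wave-interaction term ${B}_6$), and the factors $(\lam/\eps)^{1/4}$, $(\eps/\lam)^{1/4}$ are not produced by interpolation: they are simply inserted through a deliberately unbalanced Young inequality on the cross term of ${\bf Y}^b$, namely $\big|\int_{\bf\Omega}\bw\,(p(v)-p(\tilde v_0))\big(h-\tilde h_0-\tfrac{p(v)-p(\tilde v_0)}{\s}\big)dx\big|\le(\lam/\eps)^{1/4}{\bf G}_1^+(U)+C(\eps/\lam)^{1/4}\int_{\bf\Omega}|\bw||p(v)-p(\tilde v_0)|^2dx$. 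Second, and more importantly, you never state the mechanism that turns linear bounds into the quadratic bound \eqref{am1}: the paper first proves the a priori smallness $|{\bf Y}^g(U)-{\bf Y}^g(\bar U)|+|{\bf Y}^b(U)|+|Y^s_1|+|Y^s_2|+|Y^s_3|\le C\eps^2/\lam$ — and this is exactly where the extra hypothesis ${\bf D}(U)\le\frac{4C^*}{\sqrt{\deltaz}}\frac{\eps^2}{\lam}$, together with \eqref{g2a} and \eqref{al2} (i.e. ${\bf G}_2(U)-{\bf G}_2(\bar U)\le C\eps^2/\lam$), is used, since the linear bounds coming from \eqref{abig2}, \eqref{al50}, \eqref{ans2} contain $\sqrt{\eps/\lam}\,{\bf D}(U)$, $\deo{\bf\tilde G}_2(U)$ and ${\bf G}_2(U)-{\bf G}_2(\bar U)$ — and then multiplies this smallness by the linear good-term bound to get $|{\bf Y}^\sharp|^2\le C\tfrac{\eps^2}{\lam}(\cdots)$. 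If instead you just square your linear bounds, you produce terms such as $({\bf G}_2(U)-{\bf G}_2(\bar U))^2$ and ${\bf D}(U)^2$ (and, from your bound on ${\bf Y}^b$, $({\bf G}_1^+(U))^2$), none of which is dominated by the right-hand side of \eqref{am1} without invoking precisely those hypotheses. Your sketch has the ingredients for this (you note the $O(\eps^2/\lam)$ size of $\int_{\bf\Omega}|\bw||p(v)-p(\tilde v_0)|^2$), but as written the step is missing and the stated reason for assuming the bound on ${\bf D}(U)$ is incorrect.
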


\begin{proof}
\noindent$\bullet$ {\it Proof of \eqref{an1}} :
First, using \eqref{al5} with \eqref{cweight}, and  \eqref{ans1}, we have
\begin{align*}
\begin{aligned} 
&|{\bf B}_{1}(U)-{\bf B}_{1}(\bar U)| \leq C\frac{\eps}{\lambda} \left( {\bf D}(U) +  {\bf \tilde G}_2(U) + \left({\bf G}_{2}(U)-{\bf G}_{2}(\bar U) \right)  \right) ,\\
&|{\bf B}_{2}^-(U)|  \le\deo \left( {\bf D}(U) +  {\bf \tilde G}_2(U)  \right) + (\deltao+C\delta_0)  {\bf G}_{1}^-(U).
\end{aligned}
\end{align*}
Using  \eqref{abig2}, we have
\[
|{\bf B}_{2}^+(U) - {\bf B}_{2}^+(\bar U)|  =\int_{\bf\Omega} |\bw| \left|| p(v)-p(\tiv_0) |^2 - |p(\bar \bv)-p(\tiv_0)|^2 \right| dx  \le \sqrt\frac{\eps}{\lambda} {\bf D}(U).
\]
Thus, for any $\eps, \lam$ satisfying $\eps/\lam<\delta_0$, we have the desired estimates.\\

\noindent$\bullet$ {\it Proof of \eqref{aib12}}:
Using \eqref{p-est1}, \eqref{pQ-equi0} and \eqref{agenl1}, we have
\beq\label{ab2ip}
|{\bf B}_{1}(\bar U)|+|{\bf B}_{2}^+(\bar U)| \leq C\int_\bbr |\bw | Q(\bar \bv|\tilde{v}_0)\,dx \leq C\int_\bbr |\bw | Q( v|\tilde{v}_0)\,dx  \le C\frac{\eps^2}{\lambda}.
\eeq

\noindent$\bullet$ {\it Proof of \eqref{an3}}:
For ${\bf B}_{3}$, Young's inequality implies
\[
|{\bf B}_{3}(U)|\le \delta_0 {\bf D}(U) + \frac{C}{ \delta_0} \underbrace{ \int_\bbr|\bw|^2 v^\beta  |p(v)-p(\tilde v_0)|^2 \,dx}_{=:{J}_{1}(U)}. 
\]
Note from \eqref{al4} that
\[
|{J}_{1}(U)-{J}_{1} (\bar U)|\le C\lambda^2 \left( {\bf D}(U) +\delta_0 {\bf \tilde G}_2(U) \right) +C\eps\lam {\bf G}_2(U) .
\]
For ${\bf B}_{4}$ and ${\bf B}_{5}$, we first have
\begin{align*}
\begin{aligned}
&|{\bf B}_{4}(U)|\le C{J}_{1}(U) + C
 \underbrace{\int_\bbr  |\bv_1|^2 \frac{|v^\beta-\tilde v_0^\beta|^2}{v^\beta} \,d\xi}_{=: J_2(U)} ,\\
&|{\bf B}_5(U)|\le \delta_0 {\bf D}(U) +   \frac{C}{\delta_0} J_2(U)  . 
\end{aligned}
\end{align*}
Note that \eqref{cweight} and \eqref{al8} yield
\begin{align*}
|{J}_{2}(U)-{J}_{2} (\bar U)| &\le \left(\frac{\eps}{\lam}\right)^2 \int_\bbr |\bw|^2  \Big| \frac{|v^\beta-\tilde v_0^\beta|^2}{v^\beta} - \frac{|\bar\bv^\beta-\tilde v_0^\beta|^2}{\bar\bv^\beta} \Big| \,dx\\
& \le C\frac{\eps^2}{\lam}  \bigg({\bf D}(U) +\delta_0 {\bf \tilde G}_2(U)  +\left({\bf G}_{2}(U)-{\bf G}_{2}(\bar U)\right)\bigg)   .
\end{align*}
Therefore, using $\lambda<\delta_0$ and $\eps/\lam<\deo$, we have
\begin{align*}
\begin{aligned}
 & |{\bf B}_{3}(U)|+|{\bf B}_{4}(U)| +|{\bf B}_{5}(U)| \\
&\quad \le    \frac{C}{ \delta_0} \left(|{J}_1 (\bar U)|+  |{J}_2 (\bar U)| \right)    
+C\deo \left( {\bf D}(U) +  {\bf \tilde G}_2(U) + \left({\bf G}_{2}(U)-{\bf G}_{2}(\bar U) \right)\right).
\end{aligned}
\end{align*}
For the remaining terms, we first use the assumptions \eqref{aasup} and \eqref{cweight} to have
\begin{align*}
 \frac{C}{ \delta_0} \left(|{J}_1 (\bar U)|+  |{J}_2 (\bar U)| \right)  &\le 
\frac{C}{\delta_0} \bigg[  \eps\lambda \int_\bbr|\bw| \bar \bv^\beta  |p(\bar \bv)-p(\tilde v_0)|^2 \,dx+\frac{\eps^3}{\lambda} \int_\bbr|\bw| \frac{|\bar \bv^\beta-\tilde v_0^\beta|^2}{\bar \bv^\beta} \,dx  \bigg] \\
&\le C \delta_0\frac{\eps}{\lambda}  \bigg[  \int_\bbr|\bw| \bar \bv^\beta  |p(\bar \bv)-p(\tilde v_0)|^2 \,dx+ \int_\bbr|\bw| \frac{|\bar \bv^\beta-\tilde v_0^\beta|^2}{\bar \bv^\beta} \,dx  \bigg] .
\end{align*}
Using $C^{-1}\le \bar \bv\le C$ and $Q(\bar \bv|\tilde v_0)\ge C|\bar \bv-\tiv_0|^2\ge C|\bar \bv^\beta-\tilde v_0^\beta|^2$, we have
\begin{align*}
\begin{aligned}
 \int_\bbr|\bw| \frac{|\bar \bv^\beta-\tilde v_0^\beta|^2}{\bar \bv^\beta} \,dx
 \le C \int_\bbr|\bw| Q(\bar\bv|\tilde v_0) \,dx \le C {\bf G}_{2}(\bar U),
\end{aligned}
\end{align*}
Using \eqref{pQ-equi0} with $ |p(\bar \bv)-p(\tilde v_0)|\le\delta_1$, we have
\beq\label{b2ii}
 \int_\bbr|\bw| \bar \bv^\beta  |p(\bar \bv)-p(\tilde v_0)|^2 \,dx \le  C \int_\bbr|\bw| Q(\bar\bv|\tilde v_0) \,dx \le C {\bf G}_{2}(\bar U). 
\eeq
Hence we have 
\begin{align*}
\begin{aligned}
 & |{\bf B}_{3}(U)|+|{\bf B}_{4}(U)| +|{\bf B}_{5}(U)| \\
&\quad \le C\deo \left( {\bf D}(U) +  {\bf \tilde G}_2(U) + \left({\bf G}_{2}(U)-{\bf G}_{2}(\bar U) \right)+\frac{\eps}{\lambda} {\bf G}_{2}(\bar U) \right).
\end{aligned}
\end{align*}

\noindent$\bullet$ {\it Proof of \eqref{am1}}: We split the proof into three steps.
\vskip0.2cm
\noindent{\it Step 1:} 
First of all, we will use notations $Y_{1}^s, Y_{2}^s, Y_{3}^s$ and $Y_{4}^s$ for the terms of ${\bf Y}^s $ as follows : set ${\bf Y}^s =Y_{1}^s +Y_{2}^s + Y_{3}^s +Y_{4}^s$ where
\begin{align*}
&Y_{1}^s:=-\int_{{\bf \Omega}^c} \bw Q(v|\tilde v_0) dx,\qquad Y_{2}^s := -\int_{{\bf \Omega}^c} \bv_1p'(\tiv_0) (v-\tilde v_0) dx,\\
&Y_{3}^s := -\int_{{\bf\Omega}^c} \bw \frac{|h-\tilde h_0|^2}{2} dx,\qquad Y_{4}^s:= \int_{{\bf\Omega}^c} \bv_2 (h-\tilde h_0) dx.
\end{align*}
Using \eqref{abig1}, \eqref{al50} with \eqref{aasunif} and \eqref{cweight}, and \eqref{abig2}, we find that
\begin{align}
\begin{aligned}\label{aYgd}
&|{\bf Y}^g(U)-{\bf Y}^g(\bar U)| +|Y_{1}^s(U)-Y_{1}^s(\bar U)| +|Y_{2}^s(U)-Y_{2}^s(\bar U)|\\
&\quad \le C \int_{\bf\Omega} |\bw| \big| |p(v)-p(\tiv_0)|^2-|p(\bar \bv)-p(\tiv_0)|^2\big| dx + C \int_{\bf\Omega} |\bw| |p(v)-p(\bar \bv)| dx \\
&\quad\quad +C \int_\bbr |\bw|  \Big(\big|Q(v|\tilde{v}_0)-Q(\bar \bv|\tilde{v}_0)\big| +|v-\bar \bv|\Big)\,dx \\
&\quad\leq C\sqrt{\frac{\eps}{\lambda}}{\bf D}(U)+C\left({\bf G}_{2}(U)-{\bf G}_{2}(\bar U) \right).
\end{aligned}
\end{align}
Note that \eqref{ans2} with \eqref{aasunif} and \eqref{cweight} yields
\beq\label{aY12}
|Y_{1}^s(\bar U)|+|Y_{2}^s(\bar U)|\le C \int_{{\bf\Omega}^c} |\bw| \left( Q(\bar \bv|\tilde v_0) +|\bar \bv -\tilde v_0 | \right)  dx \le C \sqrt{\frac{\eps}{\lambda}} \left({\bf D}(U) +\deo{\bf\tilde G}_{2}(U) \right) .
\eeq
Next, since
\begin{align*}
\begin{aligned}
|Y_{3}^s (U)|+|{\bf Y}^b(U)| &\le C\int_{{\bf\Omega}^c} |\bw| |h-\tih_0|^2\,dx + C\int_{\bf\Omega} |\bw| \left(|h-\tih_0|^2+|p(v)-p(\tiv_0)|^2\right)\,dx\\
&\le C\int_\bbr |\bw| |h-\tih_0|^2\,dx +C |{\bf B}_{2}^+(U)| ,
\end{aligned}
\end{align*}
it follows from \eqref{an1} and \eqref{aib12} that
\[
|Y_{3}^s (U)|+|{\bf Y}^b(U)| \le C \int_\bbr |\bw| |h-\tih_0|^2\,dx + C\sqrt{\deo} {\bf D}(U) + C \frac{\eps^2}{\lambda} .
\]
Therefore, using \eqref{agenl1}, \eqref{al2} and the assumptions \eqref{g2a} and ${\bf D}(U)\leq 2\frac{C^*}{\sqrt\deltaz} \frac{\eps^2}{\lambda}$, together with combining \eqref{aYgd}, \eqref{aY12} and the above estimates, we have
\[
 |{\bf Y}^g(U)-{\bf Y}^g(\bar U)| +|Y_{1}^s(U)|  +|Y_{2}^s(U)| +|Y_{3}^s (U)|+|{\bf Y}^b(U)|  \le C \frac{\eps^2}{\lambda}.
\]
\vskip0.2cm
\noindent{\it Step 2:} 
First of all, using Young's inequality, \eqref{an1} and \eqref{ab2ip}, we have
\begin{align*}
\begin{aligned}
&\left|  \int_{\bf\Omega} \bw \big(p(v)-p(\tilde v_0)\big)\Big(h-\tilde h_0 -\frac{p(v)-p(\tilde v_0)}{\sigma}\Big) dx \right|\\
&\quad \le\left(\frac{\lambda}{\eps}\right)^{1/4}{\bf G}_{1}^+(U)+C\left(\frac{\eps}{\lambda}\right)^{1/4}\int_{\bf\Omega}  |\bw|  |p(v)-p(\tiv_0)|^2\,dx \\
&\quad \le\left(\frac{\lambda}{\eps}\right)^{1/4}{\bf G}_{1}^+(U)+C\left(\frac{\eps}{\lambda}\right)^{1/4}\left({\bf B}_{2}^+(\bar U) + \left( {\bf B}_{2}^+(U)-{\bf B}_{2}^+(\bar U) \right) \right)\\
&\quad \le\left(\frac{\lambda}{\eps}\right)^{1/4}{\bf G}_{1}^+(U)+C\left(\frac{\eps}{\lambda}\right)^{1/4}\left({\bf G}_{2}(\bar U) +\sqrt\deo {\bf D}(U) \right),
\end{aligned}
\end{align*}
which yields
\[
|{\bf Y}^b(U)| \le C\left(\frac{\lambda}{\eps}\right)^{1/4}{\bf G}_{1}^+(U)+C\left(\frac{\eps}{\lambda}\right)^{1/4}\left({\bf G}_{2}(\bar U) +\sqrt\deo {\bf D}(U) \right).
\]
Thus, this and \eqref{aYgd}-\eqref{aY12} together with $|Y_{3}^s (U)|\le C {\bf G}_{1}^-(U)$ imply
\begin{align*}
\begin{aligned}
& |{\bf Y}^g(U)-{\bf Y}^g(\bar U)| +|Y_{1}^s(U)|  +|Y_{2}^s(U)| +|Y_{3}^s (U)|+|{\bf Y}^b(U)|   \\
&\le C \bigg( \sqrt{\deo}{\bf D}(U)+\left({\bf G}_{2}(U)- {\bf G}_{2} (\bar U) \right) +\deo{\bf\tilde G}_{2}(U) +\left(\frac{\eps}{\lambda}\right)^{1/4} {\bf G}_{2}(\bar U) \\
&\qquad\qquad+{\bf G}_{1}^-(U) + \left(\frac{\lambda}{\eps}\right)^{1/4} {\bf G}_{1}^+(U) \bigg).
\end{aligned}
\end{align*}
\vskip0.2cm
\noindent{\it Step 3:} 
For the remaining terms, using H\"older's inequality together with \eqref{cweight} and \eqref{aasinte}, we have
\begin{align*}
\begin{aligned}
&|Y_{4}^s (U)|^2\leq C\left(\frac{\eps}{\lambda}\right)^2 \left(\int_\bbr |\bw| \,dx \right)\int_{{\bf\Omega}^c} |\bw|  |h-\tih_0|^2\,dx \leq C\frac{\eps^2}{\lambda} {\bf G}_{1}^-(U),\\
&|{\bf Y}^l(U)|^2\leq C\left(\frac{\eps}{\lambda}\right)^2 \left(\int_\bbr|\bw| \,dx \right)\int_{\bf\Omega}|\bw|  \left(h-\tih_0 -\frac{p(\bar v)-p(\tiv_0)}{\sigma} \right)^2\,dx \leq C\frac{\eps^2}{\lambda} {\bf G}_{1}^+(U).
\end{aligned}
\end{align*}
Therefore, this together with Step1 and Step2 yields
\begin{align*}
\begin{aligned}
& |{\bf Y}^g(U)-{\bf Y}^g(\bar U)|^2 +|{\bf Y}^b(U)|^2 +|{\bf Y}^l(U)|^2+|{\bf Y}^s(U)|^2  \\
&\le 2 \left( |{\bf Y}^g(U)-{\bf Y}^g(\bar U)| +|Y_{1}^s(U)|  +|Y_{2}^s(U)| +|Y_{3}^s (U)|+|{\bf Y}^b(U)|  \right)^2 + 2 |Y_{4}^s(U)|^2 +|{\bf Y}^l(U)|^2  \\
&\le C\frac{\eps^2}{\lambda} \bigg( \sqrt{\deo}{\bf D}(U)+\left({\bf G}_{2}(U)- {\bf G}_{2} (\bar U) \right) +\deo{\bf\tilde G}_{2}(U) +\left(\frac{\eps}{\lambda}\right)^{1/4} {\bf G}_{2}(\bar U) \\
&\qquad\qquad+{\bf G}_{1}^-(U) + \left(\frac{\lambda}{\eps}\right)^{1/4} {\bf G}_{1}^+(U) \bigg).
\end{aligned}
\end{align*}

\end{proof}

\section{Proof of Proposition \ref{prop:main}}\label{sec:prop}
\setcounter{equation}{0}

This section is dedicated to the proof of Proposition \ref{prop:main}.

\subsection{Smallness of the localized relative entropy}\label{sec:small}
In order to use Proposition \ref{prop:main3} in the proof of Proposition \ref{prop:main}, we need to show that all bad terms on the region $\{|p(v)-p(\tilde{v}^{X_1, X_2})|\geq\delta_1\}$ are absorbed by a very small portion of the good terms. For that, we will crucially use the following lemma on smallness of the relative entropy localized by the space-derivative of each weight $a_i$, under an assumption that the functionals $Y_i$ of \eqref{ybg-first} are bounded below or above. 
The following lemma is analogous to \cite[Lemma 3.2]{Kang-V-NS17}. Notice that the below assumption $(-1)^{i-1} Y_i(U)\le \eps_i^2$ is weaker than the condition $|Y_i(U)|\le \eps_i^2$ of \cite[Lemma 3.2]{Kang-V-NS17}.

\begin{lemma}\label{lemmeC2}
For the given constant $U_*:=(v_*,u_*)\in\bbr^+\times\bbr$, there exist positive constants $\delta_0, C, C_0$ such that the following holds.
Let $\tilde U^{X_1,X_2}$ be the composite wave for the given constant states $U_-, U_m, U_+ \in B_{\delta_0}(U_*)$, where $\eps_1=|p(v_-)-p(v_m)|$ and $\eps_1=|p(v_m)-p(v_+)|$.
Then, for any $\lambda >0$ with $\eps_1/\lambda, \eps_2/\lambda<\delta_0$ and $\lambda<\delta_0$,  the following estimates hold.\\
For each $i=1, 2$, and for all $U\in \mathcal{H}_T$ satisfying $(-1)^{i-1} Y_i(U)\le \eps_i^2$, 
\beq\label{l1}
 \int_\bbr|(a_i)_x^{X_i} ||h-\tilde{h}^{X_1, X_2}|^2\,dx + \int_\bbr|(a_i)_x^{X_i} | Q(v|\tilde{v}^{X_1, X_2})\,dx \le C\frac{\eps_i^2}{\lambda},  \quad \forall t>0,
 \eeq
and
\beq\label{yabs}
|Y_i(U)| \le C_0\frac{\eps_i^2}{\lambda},  \quad \forall t>0.
\eeq
\end{lemma}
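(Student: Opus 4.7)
The strategy is to derive Lemma \ref{lemmeC2} as a direct application of the abstract Proposition \ref{prop:sm}. For each $i\in\{1,2\}$, I would make the following identification of the abstract objects with the concrete ones:
\[
\tilde U_0:=\tilde U^{X_1,X_2}(t,\cdot),\qquad \bw:=-(a_i)_x^{X_i},\qquad \bv:=(\tilde U_i)_x^{X_i},\qquad {\bf a}:=a^{X_1,X_2}(t,\cdot).
\]
With these choices, the functional ${\bf Y}(U)$ of Proposition \ref{prop:sm} coincides exactly with $Y_i(U)$ of \eqref{ybg-first}, so that the conclusions \eqref{genl1} and \eqref{genyabs} are precisely the desired \eqref{l1} and \eqref{yabs}.

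The main step is to verify the hypotheses \eqref{asunif}--\eqref{asinte}. The global-sign condition \eqref{asposi} is the only subtle point, and it is the reason the lemma accepts the one-sided assumption $(-1)^{i-1} Y_i(U)\le \eps_i^2$ rather than two-sided as in \cite[Lemma 3.2]{Kang-V-NS17}. By \eqref{der-a} together with $p'(\tilde v_i)<0$ and the monotonicity \eqref{tdv1}--\eqref{tdv2}, one has $(a_1)_x<0$ globally, hence $\bw>0$ (so $\sgn(\bw)=+1$) when $i=1$, and $(a_2)_x>0$ globally, hence $\bw<0$ (so $\sgn(\bw)=-1$) when $i=2$. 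In both cases the assumption $(-1)^{i-1}Y_i(U)\le\eps_i^2$ matches the required $\sgn(\bw){\bf Y}(U)\le \eps_i^2$. The pointwise comparison $|\bv|\le C(\eps_i/\lambda)|\bw|$ in \eqref{asunif} follows by combining \eqref{d-weight} with Remark \ref{rem:vh}. The normalization $\int_\bbr |\bw|\,dx = \lambda$ in \eqref{asinte} is \eqref{massa}, and $\|{\bf a}\|_{L^\infty}\le 1$ follows from $a^{X_1,X_2}\in[1-\lambda,1]$ with $\lambda<\delta_0<1/2$. Finally, because each component of $\tilde U^{X_1,X_2}(t,x)=\tilde U_1^{X_1}+\tilde U_2^{X_2}-(v_m,u_m)$ is a convex-type combination of values in $B_{\delta_0}(U_*)$, one has $|\tilde U^{X_1,X_2}(t,x)-U_*|\le C\delta_0$ and $\tilde v^{X_1,X_2}\ge C^{-1}$ for $\delta_0$ small enough, which yields \eqref{asunif}.

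Once all hypotheses are checked, Proposition \ref{prop:sm} applies with $\eps=\eps_i$ and gives both conclusions simultaneously. I do not anticipate any serious obstacle beyond carefully tracking the sign of $(a_i)_x$ and confirming the compatibility of the one-sided bound on $Y_i(U)$ with the sign convention of Proposition \ref{prop:sm}; the rest is a matter of bookkeeping already carried out in Lemma \ref{lem:vp} and in subsection \ref{subsec:a}.
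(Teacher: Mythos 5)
Your proposal is correct and matches the paper's own proof: both derive the lemma by applying Proposition \ref{prop:sm} with exactly the identifications $\tilde U_0=\tilde U^{X_1,X_2}$, $\bw=-(a_i)_x^{X_i}$, $\bv=(\tilde U_i)_x^{X_i}$, ${\bf a}=a^{X_1,X_2}$, $\eps=\eps_i$, verifying \eqref{asunif}--\eqref{asinte} via Lemma \ref{lem:vp}, Remark \ref{rem:vh}, \eqref{d-weight} and \eqref{massa}, and observing that $\sgn(\bw)=-\sgn((a_i)_x^{X_i})=(-1)^{i-1}$ so the one-sided hypothesis $(-1)^{i-1}Y_i(U)\le\eps_i^2$ is precisely $\sgn(\bw){\bf Y}(U)\le\eps_i^2$. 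No gaps beyond routine bookkeeping.
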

\begin{proof}
We will apply Proposition \ref{prop:sm} to the two cases where for each $i=1, 2$, $\bw= -(a_i)_x^{X_i}, \bv_1 = (\tiv_i)_x^{X_i}, \bv_2 =(\tih_i)_x^{X_i}$, ${\bf a}=a^{X_1,X_2}$, $\eps=\eps_i$,  and  $\tilde U_0 = \tilde U^{X_1,X_2}$ for the composite wave $\tilde U^{X_1,X_2}$.\\
First, since $U_-, U_m, U_+ \in B_{\delta_0}(U_*)$, and $\tiv_i, \tih_i$ are monotone (see Remark \ref{rem:vh}), we find that for each $i=1, 2$,
\[
\|\tiv_i^{X_i} -v_*\|_{L^\infty(\bbr)} < \delta_0,\quad \|\tih_i^{X_i} -u_*\|_{L^\infty(\bbr)} < \delta_0,
\]
which implies
\[
\|\tiv^{X_1,X_2} -v_*\|_{L^\infty(\bbr)} \le \|\tiv_1^{X_1} -v_m\|_{L^\infty(\bbr)} +\|\tiv_2^{X_2} -v_*\|_{L^\infty(\bbr)} \le C\eps_1 + \delta_0 \le C\delta_0,
\]
and similarly, 
\[
\|\tih^{X_1,X_2} -u_*\|_{L^\infty(\bbr)} \le C\delta_0 .
\]
Moreover, note that
\[
\tiv^{X_1,X_2}(x) \ge \frac{v_*}{2}, \quad \forall x\in\bbr.
\]
Thus, the assumptions of \eqref{asunif} hold by the above facts and the first inequality of \eqref{d-weight}.
By the properties of the weights (see Section \ref{subsec:a}) and of the waves (see Lemma \ref{lem:vp} and Remark \ref{rem:vh}), the remaining assumptions \eqref{asposi}-\eqref{asinte} also hold.\\
For any fixed $i=1, 2$, notice that
\[
{\bf Y}(U) =- \int_{\bbr} (a_i)_x^{X_i}  \eta(U|\tilde U^{X_1,X_2} ) dx +\int_{\bbr} a^{X_1,X_2} (\tilde U_i)_x^{X_i}  \nabla^2\eta(\tilde U^{X_1,X_2}) (U-\tilde U^{X_1,X_2}) dx = Y_i(U),
\]
and $\sgn(\bw)=-\sgn\big((a_i)_x^{X_i}\big)=(-1)^{i-1}$, and thus,
\[
\sgn(\bw) {\bf Y}(U) =(-1)^{i-1} Y_i(U) .
\]
Hence, Proposition \ref{prop:sm} implies the desired estimates.
\end{proof}

\subsection{Estimates for fixing the size of truncation}
We will apply Proposition \ref{prop:main3} to each of the waves $\tilde v_1$ and $\tilde v_2$ later on. More precisely,
for the weights $a, a_1, a_2$ as in \eqref{weight-a}, and for the Lipschitz functions $\phi_{1,t}$ and $\phi_{2, t}$ as in \eqref{1phi} and \eqref{2phi},  we consider the following functionals: for each $i=1, 2$,
\begin{align}
\begin{aligned}\label{note-in}
&\mathcal{Y}^g_i(v):=-\frac{1}{2\sigma_i^2}\int_\bbr (a_i)_x \phi_{i,t}^2(x) |p(v)-p(\tilde v_i)|^2 dx -\int_\bbr (a_i)_x \phi_{i,t}^2(x) Q(v|\tilde v_i) dx\\
&\qquad\qquad -\int_\bbr a \partial_x p(\tilde v_i) \phi_{i,t}(x) (v-\tilde v_i)dx+\frac{1}{\sigma_i}\int_\bbr a  (\tilde h_i)_x \phi_{i,t}(x) \big(p(v)-p(\tilde v_i)\big)dx,\\
&\mathcal{I}_{1i}(v):= \sigma_i\int_\bbr a (\tilde v_i)_x \phi_{i,t}^2(x)  p(v|\tilde v_i) dx,\\
&\mathcal{I}_{2i}(v):= \frac{1}{2\sigma_i} \int_\bbr (a_i)_x  \phi_{i,t}^2(x) |p(v)-p(\tilde v_i)|^2dx,\\
&\mathcal{G}_{2i} (v):=\sigma_i  \int_\bbr  (a_i)_x \bigg( \frac{1}{2\gamma}  p(\tilde v_i)^{-\frac{1}{\gamma}-1} \phi_{i,t}^2(x) \big(p(v)-p(\tilde v_i)\big)^2\\
&\qquad\qquad - \frac{1+\gamma}{3\gamma^2} p(\tilde v_i)^{-\frac{1}{\gamma}-2} \phi_{i,t}^3(x)\big(p(v)-p(\tilde v_i)\big)^3 \bigg) dx, \\
&\mathcal{D}_i (v):=\int_\bbr a\, v^\beta |\partial_x \big(\phi_{i,t}(x) (p(v)-p(\tilde v_i))\big)|^2 dx,
\end{aligned}
\end{align}
where the functions $a, (a_i)_x, \tilde v_i, (\tilde v_i)_x, (\tilde h_i)_x$ are evaluated at $x-\s_i t - X_i(t)$.\\

However, since the value of $\delta_1$ is itself conditioned to the constant $C_2$ of Proposition \ref{prop:main3}, we should find the bound of $\mathcal{Y}^g_i$ on the unconditional level (for the assumption \eqref{assYp}).

For that, we will first define a truncation on $|p(v)-p(\tilde v^{X_1, X_2})|$ with any $k>0$, and then  the special case  $k=\delta_1$ as in Proposition \ref{prop:main3}. But for now, we consider a general case $k$ to estimate the constant $C_2$.  For that, let $\psi_k$ be a continuous function on $\bbr$ defined by
\beq\label{psi}
 \psi_k(y)=\inf\left(k,\sup(-k,y)\right),\quad k>0.
\eeq
We then define the function $\bar{v}_k$ uniquely (since the function $p$ is one to one) by
\beq\label{trunc-def}
p(\bar v_k)-p(\tilde v^{X_1, X_2})=\psi_k\big(p(v)-p(\tilde v^{X_1, X_2})\big).
\eeq
Notice that $\|p(\bar v_k)-p(\tilde v^{X_1, X_2})\|_\infty\le k$.\\

We have the following lemma from Lemma \ref{lemmeC2}.

\begin{lemma}\label{lem:essy}
For the given constant $U_*:=(v_*,u_*)\in\bbr^+\times\bbr$, there exists positive constants $\delta_0, C_2, k_0$ such that the following holds.
For any $\eps_1, \eps_2, \lam>0$ satisfying $\eps_1/\lambda, \eps_2/\lambda< \delta_0$ and $\lambda<\delta_0$, there exists $t_0>0$ such that the following estimates hold.
\beq\label{lbis}
 |\mathcal{Y}^g_i(\bar v_k)|\leq C_2\frac{\eps_i^2}{\lambda}, \qquad \forall k\leq k_0,\quad \forall t\ge t_0. 
\eeq
\end{lemma}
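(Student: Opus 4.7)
The plan is to relate $\mathcal{Y}^g_i(\bar v_k)$ to the composite-wave functional $Y_i(U)$ of \eqref{ybg-first}. In the operative regime of Proposition \ref{prop:main}, where $(-1)^{i-1} Y_i(U) \leq \eps_i^2$, Lemma \ref{lemmeC2} yields $|Y_i(U)| \leq C_0 \eps_i^2/\lambda$ together with the weighted $L^2/L^1$ controls \eqref{l1}. I would show that three discrepancies between $\mathcal{Y}^g_i(\bar v_k)$ and $Y_i(U)$ are each $O(\eps_i^2/\lambda)$ for $t \geq t_0$ and $k \leq k_0$, and sum them.

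First, the replacement of the composite wave $(\tilde v^{X_1,X_2}, \tilde h^{X_1,X_2})$ in $Y_i$ by the single wave $(\tilde v_i, \tilde h_i)$ in $\mathcal{Y}^g_i$: by the shift separation \eqref{sx12}, the centers of the two shocks differ by at least $\tfrac{1}{2}|\sigma_j| t$ for $j \neq i$, and the exponential decay \eqref{tv1}--\eqref{tv2} of the profiles (with \eqref{vhre} for $\tilde h_i$) makes $|\tilde v^{X_1,X_2} - \tilde v_i|$ and $|\tilde h^{X_1,X_2} - \tilde h_i|$ pointwise of order $\exp(-c \eps_j t)$ on the support of $(a_i)_x^{X_i}\phi_{i,t}^2$; taking $t_0$ sufficiently large absorbs these errors into $\eps_i^2/\lambda$. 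Second, the truncation $v \to \bar v_k$: by \eqref{trunc-def}, $v \neq \bar v_k$ only on $\Sigma := \{|p(v) - p(\tilde v^{X_1,X_2})| > k\}$, and Lemma \ref{lem-pro} gives $Q(v|\tilde v^{X_1,X_2}) \gtrsim \min(k^2, k)$ on $\Sigma$; \eqref{l1} then bounds the $|(a_i)_x^{X_i}|$-measure of $\Sigma$ by $C\eps_i^2/(\lambda k_0^2)$, and since the truncated integrands of $\mathcal{Y}^g_i$ are pointwise bounded by a $k_0$-dependent constant, their contribution is $O(\eps_i^2/\lambda)$. Third, the substitution of $(h - \tilde h_i)$ by $(p(v) - p(\tilde v_i))/\sigma_i$: this is precisely the completion-of-squares of Lemma \ref{lem-max}, whose residual $\int (a_i)_x^{X_i}\phi_{i,t}^2\big((h - \tilde h_i) - (p(v) - p(\tilde v_i))/\sigma_i\big)^2 dx$ is controlled directly by \eqref{l1}.

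The main obstacle is the truncation step. For small $v$ (large $p(v)$), the relative entropy $Q(v|\tilde v)$ only controls $|v - \tilde v|$ in $L^1$, not $|p(v) - p(\tilde v)|^2$. The truncation fixes this: $\bar v_k$ satisfies $|p(\bar v_k) - p(\tilde v_i)| \leq k + |p(\tilde v^{X_1,X_2}) - p(\tilde v_i)| = O(k_0 + \delta_0)$, making all quadratic integrands in $\mathcal{Y}^g_i$ pointwise bounded, which lifts the $L^1$ information from \eqref{l1} to the required quadratic bound of order $\eps_i^2/\lambda$. Combining the three controls with a slightly oversized constant gives $C_2$, and fixing any $k_0 \leq \delta_0$ small enough makes the truncation constant independent of $\eps_i, \lambda$.
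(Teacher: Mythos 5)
Your overall skeleton (use the smallness \eqref{l1} from Lemma \ref{lemmeC2}, handle the single-wave versus composite-wave discrepancy through the shift separation \eqref{sx12} and the exponential tails, and choose $t_0$ large so the interaction errors fall below $\eps_i^2/\lambda$) is the same as the paper's, which likewise reduces everything to $\int |(a_i)_x|Q(v|\tilde v^{X_1,X_2})\,dx\le C\eps_i^2/\lambda$ plus the interaction bound of Lemma \ref{lem:sep}. However, your treatment of the truncation step has a genuine quantitative gap. On $\Sigma=\{|p(v)-p(\tilde v^{X_1,X_2})|>k\}$ the relative entropy is only bounded below by $c\min(k^2,k)$, so \eqref{l1} gives the $|(a_i)_x^{X_i}|$-measure bound $C\eps_i^2/(\lambda\min(k^2,k))$, not $C\eps_i^2/(\lambda k_0^2)$; multiplied by an $O(1)$ ($k_0$-dependent) pointwise bound on the truncated integrands this yields $O(\eps_i^2/(\lambda k^2))$, which blows up as $k\to0$. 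This is fatal to the statement as used: $C_2$ must be independent of $k$ for all $k\le k_0$, because $\delta_1$ (hence the actual truncation level) is chosen only afterwards, as a function of $C_2$, in Section \ref{section-finale}; a $k$-dependent constant makes that choice circular. The correct route, which is the paper's, avoids the measure-of-$\Sigma$ argument entirely: since $|p(\bar v_k)-p(\tilde v^{X_1,X_2})|\le k\le k_0\le\delta_*/2$, one uses \eqref{pQ-equi0} and the monotonicity \eqref{Q-sim} to get $|p(\bar v_k)-p(\tilde v^{X_1,X_2})|^2\le CQ(\bar v_k|\tilde v^{X_1,X_2})\le CQ(v|\tilde v^{X_1,X_2})$, then \eqref{l1} for the quadratic terms and Cauchy--Schwarz (with $\int|(a_i)_x|\,dx=\lambda$) for the linear terms, with Lemma \ref{lem:tri} converting $Q(\bar v_k|\tilde v_i)$ into $Q(\bar v_k|\tilde v^{X_1,X_2})$ plus interaction terms; all constants are then manifestly uniform in $k$.

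Your third step is also structurally off. There is no completion-of-squares identity relating the term $-\int(a_i)_x^{X_i}|h-\tilde h^{X_1,X_2}|^2/2\,dx$ of $Y_i$ to the term $-\frac{1}{2\sigma_i^2}\int(a_i)_x^{X_i}\phi_{i,t}^2|p(\bar v_k)-p(\tilde v_i)|^2\,dx$ of $\mathcal{Y}^g_i$ (they carry the same sign, and Lemma \ref{lem-max} maximizes $\mathcal{J}^{bad}-\mathcal{J}^{good}$ in $h$, not $Y_i$); moreover the residual you invoke, written with the untruncated $p(v)-p(\tilde v_i)$, is precisely the quantity that \eqref{l1} cannot control for small $v$ -- that failure is the very reason the truncation $\bar v_k$ is introduced. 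The simplest repair is to drop the detour through $Y_i(U)$ and \eqref{yabs} altogether: $\mathcal{Y}^g_i(\bar v_k)$ involves only $\bar v_k$, so each of its four terms can be bounded directly in absolute value by the scheme above, and the $h$-dependence never enters. With that, together with your (essentially correct) interaction step and the choice of $t_0$, the argument closes and matches the paper's proof.
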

\begin{proof}
For simplicity, we omit the dependence of the wave and weight on shifts, that is, $\tilde v:=\tilde v^{X_1, X_2}, \tilde v_i:=\tilde v_i^{X_i}$ and $(a_i)_x:=(a_i)_x^{X_i}$, etc.

To get the desired estimate, we will use \eqref{l1}. For that, we need to replace each wave $\tiv_i$ by the composite wave $\tiv$ as follows.\\
First, using \eqref{d-weight}, \eqref{vhre} and $|\phi_{i,t}|\le 1$, we have
\begin{eqnarray*}
&&|\mathcal{Y}^g_i(\bar v_k)|= \bigg|-\frac{1}{2\sigma_i^2}\int_\bbr (a_i)_x \phi_{i,t}^2 |p(\bar v_k)-p(\tilde v_i)|^2 dx -\int_\bbr (a_i)_x \phi_{i,t}^2 Q(\bar v_k|\tilde v_i) dx\\
&&\qquad\qquad\qquad -\int_\bbr a \partial_x p(\tilde v_i) \phi_{i,t} (\bar v_k-\tilde v_i)dx+\frac{1}{\sigma_i}\int_\bbr a  (\tilde h_i)_x \phi_{i,t} \big(p(\bar v_k)-p(\tilde v_i)\big)dx \bigg| \\
&&\qquad\quad\quad \leq C(I_1+I_2+\cdots + I_5),
\end{eqnarray*}
where
\begin{align*}
&I_1:=  \int_\bbr|(a_i)_x| |p(\bar v_k)-p(\tilde v)|^2\,dx,\\
& I_2:= \int_\bbr \frac{\lam}{\eps_i} |(\tiv_i)_x| |p(\tiv)-p(\tiv_i)|^2\,dx,\\
& I_3:=   \int_\bbr|(a_i)_x| Q(\bar v_k|\tilde{v}_i) \,dx, \\
&I_4:= \int_\bbr \frac{\eps_i}{\lambda}|(a_i)_x|\Big(|\bar v_k-\tilde v| + |p(\bar v_k)-p(\tilde v)| \Big)dx,\\
&I_5:= \int_\bbr |(\tiv_i)_x|\Big(|\tiv-\tilde v_i| + |p(\tiv)-p(\tilde v_i)| \Big)dx.
\end{align*}
Choose $k_0\le\delta_*/2$ for $\delta_*$ of Lemma \ref{lem:local}. \\
Then, for any $k\leq k_0$, we have $|p(\bar v_k)-p(\tilde{v})|\leq k\le \frac{\delta_*}{2}$.\\
Thus, using \eqref{pQ-equi0} with $\delta_0$ small enough, we have
\[
I_1\le C \int_\bbr |(a_i)_x| Q(\bar v_k|\tilde{v})\,dx.
\]
For $I_3$, we use Lemma \ref{lem:tri} to have
\[
I_3 =  \int_\bbr|(a_i)_x| \Big( Q(\bar v_k|\tilde{v}) - Q(\tiv_i|\tiv) + \big(Q'(\tiv_i)-Q'(\tiv)\big) (\tiv_i-\bar v_k) \Big) \,dx. 
\]
Using \eqref{Q-est1} and $|\bar v_k|\le C$, we have
\[
I_3\le  \int_\bbr|(a_i)_x| Q(\bar v_k|\tilde{v}) dx + C \int_\bbr|(a_i)_x|  \big( |p(\tiv)-p(\tiv_i)|^2 + |\tiv_i-\tiv| \big) dx =:I_{31}+I_{32}.
\]
Using \eqref{rel_Q} and \eqref{pQ-equi0}, we have
\begin{align*}
\begin{aligned}
I_4&\le \sqrt{ \int_\bbr\left(\frac{\eps_i}{\lambda}\right)^2|(a_i)_x|\,dx} \sqrt{ \int_\bbr|(a_i)_x| \Big(|\bar v_k-\tilde v|^2 + |p(\bar v_k)-p(\tilde v)|^2 \Big)\,dx}\\
&\le C \sqrt{\frac{\eps_i^2}{\lambda}}\sqrt{ \int_\bbr|(a_i)_x| Q(\bar v_k|\tilde{v})\,dx}.
\end{aligned}
\end{align*}
Notice that since the definition of $\bar v_k$ implies either $\tilde v\le \bar v_k\le v$ or $v\le\bar v_k\le \tilde v$, it follows from \eqref{Q-sim} that
\[
Q(v|\tilde{v})\ge Q(\bar v_k|\tilde{v}).
\]
This and \eqref{l1} imply
\[
I_1+I_{31}+I_4 \le C \int_\bbr |(a_i)_x| Q(v|\tilde{v})\,dx + C \sqrt{\frac{\eps_i^2}{\lambda}}\sqrt{ \int_\bbr|(a_i)_x| Q(v|\tilde{v})\,dx} \le  C\frac{\eps_i^2}{\lambda} .
\]
For the remaining terms, we use $\tiv, \tiv_i \in(v_-/2, 2v_-)$ to have
\begin{align*}
I_2+I_{32}+I_5 \le C\frac{\lam}{\eps_i} \int_\bbr |(\tiv_i)_x| |\tiv-\tilde v_i| dx.
\end{align*}
Then, using \eqref{vdid1} of the following lemma together with taking $\deltaz<\eps_0$, we have
\[
I_2+I_{32}+I_5 \le C \lam \exp\Big( -C\min(\eps_1,\eps_2) t \Big) .
\]
We now choose $t_0$ big enough such that 
\[
\lam \exp\Big( -C\min(\eps_1,\eps_2) t_0 \Big) \le C\frac{(\min(\eps_1,\eps_2))^2}{\lambda}.
\]
Then, for all $t\ge t_0$,
\[
I_2+I_{32}+I_5 \le C \frac{\eps_i^2}{\lambda} .
\]
Hence, for some $C_2>0$,
\[
|\mathcal{Y}^g_i(\bar v_k)| \le C_2\frac{\eps_i^2}{\lambda}, \qquad \forall k\leq k_0,\quad \forall t\ge t_0. 
\]
\end{proof}

The following lemma provides inequalities on the interaction of waves, which are useful in the proofs of Lemma \ref{lem:essy}, the estimate \eqref{n6} and Lemma \ref{lem:ws}.

\begin{lemma}\label{lem:sep}
For given $v_->0$ and $u_-\in\bbr$, there exist positive constants $\eps_0, C$ such that for any $\eps_1, \eps_2\in(0,\eps_0)$, the following estimates hold.\\
For each $i=1, 2$,
\begin{align}\label{vdid1}
\int_\bbr |(\tiv_i)_x^{X_i} |  |\tiv^{X_1, X_2}-\tiv_i^{X_i}|   dx \le C \eps_1\eps_2 \exp\Big( -C\min(\eps_1,\eps_2) t \Big) ,\qquad t>0,
\end{align}
and
\begin{align}\label{vdid2}
\int_\bbr |(\tiv_1)_x^{X_1} | |(\tiv_2)_x^{X_2} |   dx \le C \eps_1\eps_2 \exp\Big( -C\min(\eps_1,\eps_2) t \Big) ,\qquad t>0.
\end{align}
\end{lemma}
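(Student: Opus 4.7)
The proof of both estimates rests on a single separation mechanism coming from the shift bounds \eqref{sx12}. Writing the shifted shock centers as $c_i(t) := \sigma_i t + X_i(t)$ and $y_i := x - c_i(t)$, the bounds \eqref{sx12} immediately give $c_2(t) - c_1(t) \ge \frac{\sigma_2 - \sigma_1}{2} t$, so for every $x \in \bbr$ one has $|y_1 - y_2| \ge \frac{\sigma_2 - \sigma_1}{2} t$ and in particular $|y_1| + |y_2| \ge \frac{\sigma_2 - \sigma_1}{2} t$. This is the only dynamical input; the rest is pointwise estimates on the profiles.

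For \eqref{vdid1} I would first unpack the composite wave: since $\tiv^{X_1,X_2}(t,x) - \tiv_i^{X_i}(t,x) = \tiv_j^{X_j}(t,x) - v_m$ for $j \ne i$, the integral reduces to $\int_\bbr |(\tiv_i)_x(y_i)|\, |\tiv_j(y_j) - v_m|\, dx$. By Lemma \ref{lem:vp}, $|(\tiv_i)_x(y_i)| \le C\eps_i^2 e^{-C\eps_i|y_i|}$ on all of $\bbr$; on the half-line where $\tiv_j$ tends to $v_m$ one also has $|\tiv_j(y_j) - v_m| \le C\eps_j e^{-C\eps_j|y_j|}$, while on the complementary half-line only the uniform bound $|\tiv_j(y_j) - v_m| \le C\eps_j$ is available (via \eqref{uvsm} and monotonicity of $\tiv_j$).

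The core step is an exponential factorization. On the region where both decays apply, I would write
\[
e^{-C\eps_i|y_i|}\, e^{-C\eps_j|y_j|} \;\le\; e^{-\frac{C}{2}\min(\eps_1,\eps_2)(|y_1|+|y_2|)}\cdot e^{-\frac{C}{2}\eps_i|y_i|}\, e^{-\frac{C}{2}\eps_j|y_j|},
\]
use the separation bound to pull $\exp\!\big(-C\min(\eps_1,\eps_2)(\sigma_2-\sigma_1)t/4\big)$ out of the first factor, then integrate the remaining exponential in $y_1$ for $C/\eps_1$. On the complementary region where only the uniform bound is at hand, the separation constraint forces $y_1$ to be bounded away from $0$ by $(\sigma_2-\sigma_1)t/2$, so the exponential smallness comes from $(\tiv_i)_x$ alone after a direct integration, yielding $C\eps_i\eps_j e^{-C\eps_i(\sigma_2-\sigma_1)t/2}$. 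Summing and absorbing the finite factors of $\eps_k \le \eps_0$ gives \eqref{vdid1}.

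The estimate \eqref{vdid2} is strictly easier, since both factors $|(\tiv_i)_x(y_i)|$ enjoy global exponential decay; the same factorization gives $C\eps_1^2\eps_2^2 e^{-C\min(\eps_1,\eps_2)t}\cdot C/\eps_1 = C\eps_1\eps_2^2 e^{-C\min(\eps_1,\eps_2)t}$, which absorbs into the stated bound because $\eps_2 \le \eps_0$. The only real obstacle is the bookkeeping in \eqref{vdid1} for the "wrong-side" region where $\tiv_j - v_m$ lacks exponential decay; the separation \eqref{sx12} handles this cleanly by transferring all the smallness onto $(\tiv_i)_x$.
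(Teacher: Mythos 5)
Your proof is correct and follows essentially the same route as the paper: both arguments use the shift bounds \eqref{sx12} to separate the two shifted shock locations by a distance of order $t$, combine this with the exponential tail bounds of Lemma \ref{lem:vp} (and the uniform $O(\eps_j)$ bound on the non-decaying side), and integrate the leftover exponential to trade one factor of $\eps_i$ for the mass. The only cosmetic difference is bookkeeping: the paper splits the line at $x=0$ and writes $|(\tiv_i)_x| = |(\tiv_i)_x|^{1/2}\cdot|(\tiv_i)_x|^{1/2}$, estimating one half-power in $L^\infty$ and the other in $L^1$, whereas you split at the shock center and halve the exponents directly, which yields the same bound.
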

\begin{proof}
{\it Proof of \eqref{vdid1} } 
We first consider $i=1$. 
By \eqref{tdv1}, there exists $C>0$ such that
\beq\label{vder1}
|(\tiv_1)_x^{X_1} | =|\pa_x\tiv_1(x-\s_1t-X_1(t))|\le C \eps_1^2\exp\big( -C\eps_1|x-\s_1t-X_1(t)| \big),\quad \forall x\in\bbr, ~t>0.
\eeq
Since $\tiv^{X_1,X_2}=\tiv_1^{X_1}+\tiv_2^{X_2}-v_m$ (by \eqref{xwave}), it follows from Lemma \ref{lem:vp} that
\beq\label{vv1}
 |\tiv^{X_1, X_2}-\tiv_1^{X_1}|   = |\tiv_2^{X_2}-v_m| \le 
 \left\{ \begin{array}{ll}
   C \eps_2 \exp\big( -C\eps_2|x-\s_2t-X_2(t)| \big) ,\quad &\mbox{if}~ x\le \s_2 t + X_2(t), \\
  C \eps_2, \quad &\mbox{if} ~  x\ge \s_2 t + X_2(t).
       \end{array} \right.
\eeq
Thus, using the above estimates together with the fact that $\s_2 t + X_2(t)>0$ by \eqref{sx12}, we find
\begin{align*}
|(\tiv_1)_x^{X_1} |^{1/2} |\tiv^{X_1, X_2}-\tiv_1^{X_1}|  \le 
 \left\{ \begin{array}{ll}
   C\eps_1 \eps_2  \exp\big( -C\eps_2|x-\s_2t-X_2(t)| \big)  ,\quad &\mbox{if}~ x\le 0, \\
  C\eps_1 \eps_2 \exp\big( -C\eps_1|x-\s_1t-X_1(t)| \big) , \quad &\mbox{if} ~  x\ge 0.
       \end{array} \right.
\end{align*}
Since \eqref{sx12} implies that
\begin{align}
\begin{aligned}\label{lowsep}
&x\le 0\quad\Rightarrow\quad x-(\s_2 t + X_2(t)) \le x -\frac{\s_2}{2}t \le -\frac{\s_2}{2}t <0,\\
&x\ge 0\quad\Rightarrow\quad x-(\s_1 t + X_1(t)) \ge x -\frac{\s_1}{2}t \ge -\frac{\s_1}{2}t >0,
\end{aligned}
\end{align}
we have
\begin{align*}
|(\tiv_1)_x^{X_1} |^{1/2}  |\tiv^{X_1, X_2}-\tiv_1^{X_1}|  \le  C\eps_1 \eps_2  \exp\big( -C\min(\eps_1,\eps_2) t\big),\quad \forall x\in\bbr, ~t>0.
\end{align*}
Therefore, using 
\beq\label{onemass}
\int_\bbr |(\tiv_1)_x^{X_1} |^{1/2} dx \le C \int_\bbr \eps_1\exp\big( -C\eps_1|x-\s_1t-X_1(t)| \big) dx \le C,
\eeq
we have  \eqref{vdid1} for $i=1$.\\
Likewise, for $i=2$, we use Lemma \ref{lem:vp} to have
\beq\label{vder2}
|(\tiv_2)_x^{X_2} | \le C \eps_2^2\exp\big( -C\eps_2|x-\s_2t-X_2(t)| \big),\quad \forall x\in\bbr, ~t>0,
\eeq
and
\beq\label{vv2}
|\tiv^{X_1, X_2}-\tiv_2^{X_2}|   \le 
 \left\{ \begin{array}{ll}
   C \eps_1  ,\quad &\mbox{if}~ x\le \s_1 t + X_1(t), \\
  C \eps_1\exp\big( -C\eps_1|x-\s_1t-X_1(t)| \big), \quad &\mbox{if} ~  x\ge \s_1 t + X_1(t),
       \end{array} \right.
\eeq
which imply
\begin{align*}
|(\tiv_2)_x^{X_2} |^{1/2}|\tiv^{X_1, X_2}-\tiv_2^{X_2}|   \le 
 \left\{ \begin{array}{ll}
   C\eps_1 \eps_2  \exp\big( -C\eps_2|x-\s_2t-X_2(t)| \big)  ,\quad &\mbox{if}~ x\le 0, \\
  C\eps_1 \eps_2 \exp\big( -C\eps_1|x-\s_1t-X_1(t)| \big) , \quad &\mbox{if} ~  x\ge 0.
       \end{array} \right.
\end{align*}
Therefore we have  the desired estimate for $i=2$.\\
{\it Proof of \eqref{vdid2} } 
First, we use \eqref{tdv1} and \eqref{tdv2} to have
\[
 |(\tiv_1)_x^{X_1}|^{1/2} |(\tiv_2)_x^{X_2} | \le C \eps_1\eps_2^2 \exp\big( -C\eps_1|x-\s_1t-X_1(t)| -C\eps_2|x-\s_2 t-X_2(t)| \big),\quad \forall x\in\bbr, ~t>0.
\]
Then using \eqref{lowsep}, we have
\[
 |(\tiv_1)_x^{X_1}|^{1/2} |(\tiv_2)_x^{X_2} | \le C \eps_1\eps_2^2 \exp\big( -C\min(\eps_1,\eps_2) t\big), \quad \forall x\in\bbr, ~t>0.
\]
Therefore, using \eqref{onemass}, we have \eqref{vdid2}.
\end{proof}

\subsection{Estimates for big values of $|p(v)-p(\tilde v^{X_1, X_2})|$}\label{section-finale}
We now fix the constant $\delta_1$ for Proposition \ref{prop:main3} associated to the constant $C_2$ of Lemma \ref{lem:essy}. If needed, we retake $\delta_1$ such that $\delta_1< k_0$ for the constant $k_0$ of \eqref{lbis}.  (since Proposition \ref{prop:main3} is valid for any smaller $\delta_1$). From now on, we set (without confusion)
$$
\bar v:=\bar v_{\deltao}, \qquad  \bar{U}:=(\bar v, h).
$$
In what follows, for simplicity we use the notation:
\[
\Omega:=\{x~|~ (p(v)-p(\tiv^{X_1, X_2}))(x) \leq\deltao\},
\]
and omit the dependence of the waves and weights on shifts without confusion, for example, $\tilde U:=\tilde U^{X_1, X_2}, \tilde U_i:=\tilde U_i^{X_i}$ and $(a_i)_x:=(a_i)_x^{X_i}$, etc.\\

In order to present all terms to be controlled on the region $\{|p(v)-p(\tilde{v}^{X_1, X_2})|\geq\delta_1\}$, we split $Y_i$ into four parts $Y^g_i$,  $Y^b_i$, $Y^l_i$ and $Y^s_i$ as follows: for each $i=1, 2$,
\begin{align}
\begin{aligned}\label{defyi}
Y_i &= -\int_\bbr  (a_i)_x \Big(\frac{|h-\tilde h|^2}{2}  + Q(v|\tilde v) \Big) dx +\int_\bbr a \Big(-(\tiv_i)_x p'(\tilde v)(v-\tilde v) +(\tih_i)_x (h-\tilde h) \Big) dx\\
&= Y^g_i +Y^b_i +Y^l_i + Y^s_i ,
\end{aligned}
\end{align}
where 
\begin{align*}
\begin{aligned}
 Y^g_i &:= -\frac{1}{2\sigma_i^2}\int_\Omega (a_i)_x |p(v)-p(\tilde v)|^2 dx -\int_\Omega (a_i)_x Q(v|\tilde v) dx -\int_\Omega a (\tiv_i)_x p'(\tilde v)(v-\tilde v) dx \\
&\quad+\frac{1}{\sigma_i}\int_\Omega a  (\tih_i)_x\big(p(v)-p(\tilde v)\big)dx,\\
Y^b_i &:= -\frac{1}{2}\int_\Omega  (a_i)_x  \Big(h-\tilde h-\frac{p(v)-p(\tilde v)}{\sigma_i}\Big)^2 dx \\
&\quad -\frac{1}{\sigma_i} \int_\Omega  (a_i)_x  \big(p(v)-p(\tilde v)\big)\Big(h-\tilde h-\frac{p(v)-p(\tilde v)}{\sigma_i}\Big) dx,\\
Y^l_i &:=\int_\Omega a (\tih_i)_x \Big(h-\tilde h-\frac{p(v)-p(\tilde v)}{\sigma_i}\Big)dx,
\end{aligned}
\end{align*}
and
\[
Y^s_i :=-\int_{\Omega^c} (a_i)_x Q(v|\tilde v) dx -\int_{\Omega^c} a (\tiv_i)_x p'(\tilde v)(v-\tilde v)dx 
 -\int_{\Omega^c} (a_i)_x \frac{|h-\tilde h|^2}{2} dx+\int_{\Omega^c} a (\tih_i)_x (h-\tilde h) dx.
\]

Notice that $Y^g_i$ consists of the terms related to $v-\tilde v$,  while $Y^b_i$ and $Y^l_i$ consist of  terms related to $h-\tilde h$, where $Y^b_i$ is quadratic, and $Y^l_i$ is linear in $h-\tilde h$. 
In Proposition \ref{prop_out}, we will show that $Y^g_i(U)-Y^g_i(\bar U)$, $Y^b_i(U)$, $Y^l_i(U)$ and $Y^s_i(U)$ are negligible by the good terms. 

For the bad terms ${B}_{\delta_1}$ of \eqref{bad} with $\delta=\deltao$, we will use the following notations :
\beq\label{bad0}
{B}_{\delta_1}=\sum_{i=1}^2\Big({B}_{1i}+{B}_{2i}^- +{B}_{2i}^+ +{B}_{3i} +{B}_{4i} \Big)+{B}_5 +{B}_6,
\eeq
where
\begin{align*}
\begin{aligned}
&{B}_{1i}:= \s_i \int_\bbr a (\tilde v_i)_x p(v|\tilde v) dx,\\
&{B}_{2i}^- :=\int_{\Omega^c} (a_i)_x  \big(p(v)-p(\tilde v )\big) \big(h-\tilde h \big)dx , \qquad {B}_{2i}^+ := \frac{1}{2\s_i} \int_\Omega (a_i)_x |p(v)-p(\tilde v) |^2  dx,\\
&{B}_{3i}:= -\int_\bbr (a_i)_x  v^\beta \big(p(v)-p(\tilde v )\big)\partial_x \big(p(v)-p(\tilde v )\big)  dx,\\
&{B}_{4i}:= -\int_\bbr (a_i)_x  \big(p(v)-p(\tilde v)\big) (v^\beta - \tilde v^\beta) \partial_{x} p(\tilde v)  dx,\\
&{B}_5:= -\int_\bbr a  \partial_x \big(p(v)-p(\tilde v)\big) (v^\beta - \tilde v^\beta ) \partial_{x} p(\tilde v)  dx,
\end{aligned}
\end{align*}
and 
\begin{align*}
{B}_6 := \int_\bbr a \big(p(v)-p(\tilde v) \big) \tilde E_1 dx -  \int_\bbr a (h-\tilde h)  \tilde E_2 dx
\end{align*}
with
\begin{align*}
&\tilde E_1:=  \partial_x\big( \tilde v^\beta \partial_x p(\tilde v )\big) -\partial_x\big( \tilde v_1^\beta \partial_x p(\tilde v_1 )\big) -\partial_x\big( \tilde v_2^\beta \partial_x p(\tilde v_2 )\big) ,\\
&\tilde E_2:=\partial_x p(\tilde v) -\partial_x p(\tilde v_1 ) -\partial_x p(\tilde v_2)  .
\end{align*}

We also recall the functionals $G_{1i}^{-}, G_{1i}^{+}, G_{2i}, D$ of \eqref{ggd} for the good terms.\\
Note that
\begin{align*}
\begin{aligned}
{D}(U)&=\int_\bbr av^{\beta}  |\partial_\xi (p(v)-p(\tilde v))|^2 dx\\
&=\int_\bbr a v^{\beta} |\partial_\xi (p(v)-p(\tilde v))|^2 ( {\mathbf 1}_{\{|p(v)-\pt |\leq\deltao\}} + {\mathbf 1}_{\{p(v)-\pt >\deltao\}}+ {\mathbf 1}_{\{p(v)-\pt <-\deltao\}}  )dx\\
&={D}(\bar U)+\int_\bbr a v^{\beta} |\partial_\xi (p(v)-p(\tilde v))|^2 (  {\mathbf 1}_{\{p(v)-\pt >\deltao\}}+ {\mathbf 1}_{\{p(v)-\pt <-\deltao\}}  ) dx\\
&\ge {D}(\bar U),
\end{aligned}
\end{align*}
and it follow from $Q(v|\tilde{v})\geq  Q(\bar v|\tilde{v})$ that
\beq\label{gi2}
 {G}_{2i}(U)-{G}_{2i}(\bar U)=|\sigma_i|\int_\bbr |(a_i)_x|\left(Q(v|\tilde{v})-Q(\bar v|\tilde{v})\right)\,dx\geq 0,
\eeq

We now state the following proposition.

\begin{proposition}\label{prop_out}
There exist constants $\deo, C, C^*>0$ (in particular, $C$ depends on the constant $\delta_1$) such that for any $\eps_1, \eps_2, \lam>0$ satisfying $\eps_1/\lambda, \eps_2/\lambda< \delta_0$ and $\lambda<\delta_0$, the following statements hold.\\
1. For each $i=1, 2$, for all $U$ satisfying $(-1)^{i-1} Y_i(U)\le \eps_i^2$, the following estimates hold:
\begin{align}
\begin{aligned} \label{n1}
&|{B}_{1i}(U)-{B}_{1i}(\bar U)| \leq C\deo \left( {D}(U) + \left({G}_{2i}(U)-{G}_{2i}(\bar U) \right)+\sum_{j=1}^2 \frac{\eps_j}{\lambda}  {G}_{2j}(U)  \right) ,\\
&|{B}_{2i}^-(U)|  \le\deo \left({D}(U) +\sum_{j=1}^2 \frac{\eps_j}{\lambda} {G}_{2j}(U)  \right) + (\deltao+C\delta_0)  {G}_{1i}^-(U),\\
&|{B}_{2i}^+(U)-{B}_{2i}^+(\bar U)|   \le \sqrt{\deo}{D}(U),
\end{aligned}
\end{align}
and
\beq\label{ib12}
|{B}_{1i}(\bar U)|+|{B}_{2i}^+(\bar U)| \le C\int_\bbr |(a_i)_x| Q(\bar v|\tilde{v})\,dx\le C^*\frac{\eps_i^2}{\lambda}.
\eeq
2. For all $U$ such that $(-1)^{i-1} Y_i(U)\le \eps_i^2$ for all $i=1, 2$, the following estimates hold:
\begin{align}
\begin{aligned} \label{n3}
&\sum_{i=1}^2   \Big(|{B}_{3i}(U)|+|{B}_{4i}(U)| \Big)+|{B}_{5}(U)| \\
&\qquad \le  C\deo D(U)  
+C\deo \sum_{i=1}^2  \Big(\left({G}_{2i}(U)-{G}_{2i}(\bar U)\right)+ \frac{\eps_i}{\lambda} {G}_{2i}(\bar U)\Big),
\end{aligned}
\end{align}

\begin{align}
\begin{aligned}\label{n6}
|{B}_6(U)| &\le C\deo D(U)  
+C\deo \sum_{i=1}^2  \Big( {G}_{1i}^-(U) +{G}_{1i}^+(U)+ \left({G}_{2i}(U)-{G}_{2i}(\bar U)\right)+ \frac{\eps_i}{\lambda} {G}_{2i}(\bar U)\Big)  \\
&\quad + C \exp\Big( -C\min(\eps_1,\eps_2) t \Big) ,\qquad t>0,
\end{aligned}
\end{align}
and 
\beq\label{n2}
|{B}_{\deltao}(U)| \le  C\sqrt{\delta_0} {D}(U) +C .
\eeq
3. For each $i=1, 2$, for all $U$ satisfying $(-1)^{i-1} Y_i(U)\le \eps_i^2$ and ${D}(U)\leq 2\frac{C^*}{\sqrt\deltaz} \frac{\eps_i^2}{\lambda}$ ,
\begin{align}
\begin{aligned}\label{m1}
& |Y^g_i(U)-Y^g_i(\bar U)|^2 +|Y^b_i(U)|^2 +|Y^l_i(U)|^2+|Y^s_i(U)|^2  \\
&\le C\frac{\eps_i^2}{\lambda}\bigg( \sqrt{\deo}{D}(U)+\left({G}_{2i}(U)- {G}_{2i} (\bar U) \right) +\deo\sum_{j=1}^2 \frac{\eps_j}{\lambda}  {G}_{2j}(U) +\left(\frac{\eps_i}{\lambda}\right)^{1/4} {G}_{2i}(\bar U) \\
&\qquad\qquad +{G}_{1i}^-(U) + \left(\frac{\lambda}{\eps_i}\right)^{1/4} {G}_{1i}^+(U) \bigg).
\end{aligned}
\end{align}
\end{proposition}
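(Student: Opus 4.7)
The proof strategy is to apply the abstract Propositions~\ref{aprop:out} and~\ref{aprop_out} of Section~\ref{sec:abs} to each of the two shock waves separately, and then assemble the pieces, treating the genuinely coupled terms ($B_5$ and especially $B_6$) by hand using the wave-separation estimates of Lemma~\ref{lem:sep}. For each $i\in\{1,2\}$ I will invoke the abstract results with the identifications $\tilde U_0=\tilde U^{X_1,X_2}$, $\bw=-(a_i)_x^{X_i}$, $\bv_1=(\tiv_i)_x^{X_i}$, $\bv_2=(\tih_i)_x^{X_i}$, $\eps=\eps_i$, ${\bf a}=a^{X_1,X_2}$. Hypotheses \eqref{aasunif}--\eqref{aasinte} and \eqref{cweight} follow from Section~\ref{subsec:a} together with Lemma~\ref{lem:vp} and Remark~\ref{rem:vh}; the smallness input \eqref{agenl1} is exactly Lemma~\ref{lemmeC2}; and the abstract functional ${\bf Y}$ reduces to $Y_i$. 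The abstract quantity ${\bf \tilde G}_2$ is translated via $(\tiv)_x=(\tiv_1)_x+(\tiv_2)_x$ and \eqref{d-weight}, which yield
\[
{\bf \tilde G}_2(U)\le C\sum_{j=1}^{2}\tfrac{\eps_j}{\lambda}G_{2j}(U),
\]
exactly matching the cross terms in \eqref{n1}, \eqref{n3}, \eqref{m1}. With this dictionary, Part~1 of the proposition (estimates \eqref{n1} and \eqref{ib12}) is an immediate transcription of \eqref{an1}--\eqref{aib12}, and in Part~2 the ``diagonal'' pieces $|B_{3i}|+|B_{4i}|$ come from \eqref{an3}.

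The coupled term $B_5$ is handled by splitting $\partial_x p(\tiv)=p'(\tiv)\bigl[(\tiv_1)_x+(\tiv_2)_x\bigr]$ and using Young's inequality separately on each piece,
\[
|B_{5,i}|\le\delta_0\,D(U)+\tfrac{C}{\delta_0}\!\int\! a\,\tfrac{|v^\beta-\tiv^\beta|^2}{v^\beta}|(\tiv_i)_x|^2\,dx;
\]
applying the pointwise bound $|(\tiv_i)_x|\le C\eps_i\cdot|(\tiv_i)_x|^{1/2}\cdot\eps_i^{1/2}$ followed by $|(\tiv_i)_x|\le C(\eps_i/\lambda)|(a_i)_x|$ and then the truncation arguments that underlie \eqref{al7}--\eqref{al8}, the second piece is absorbed into $\delta_0(\eps_i/\lambda)G_{2i}(\bar U)$ plus $\delta_0\bigl(G_{2i}(U)-G_{2i}(\bar U)\bigr)$. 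The interaction term $B_6$ is the principal novelty. Using $\tiv=\tiv_1+\tiv_2-v_m$ one computes
\[
\tilde E_2=\sum_{i=1}^{2}\bigl[p'(\tiv)-p'(\tiv_i)\bigr](\tiv_i)_x,\qquad
\tilde E_1=\partial_x\!\!\sum_{i=1}^{2}\bigl[\tiv^\beta p'(\tiv)-\tiv_i^\beta p'(\tiv_i)\bigr](\tiv_i)_x,
\]
so that $|\tiv-\tiv_i|=|\tiv_{3-i}-v_m|$ and the smoothness of $p,p'$ give $|\tilde E_2|\le C\sum_i|\tiv_{3-i}-v_m||(\tiv_i)_x|$. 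For the $\tilde E_1$ contribution to $B_6$, I integrate the outer $\pa_x$ by parts onto $a(p(v)-p(\tiv))$; the resulting boundary-free expression splits, via Young, into a $\delta_0 D(U)$ piece plus $\frac{1}{\delta_0}\int a\,|\Phi|^2/v^\beta\,dx$ where $\Phi$ is the inner bracket satisfying $|\Phi|\le C\sum_i|\tiv_{3-i}-v_m||(\tiv_i)_x|$ (second derivatives are tamed by \eqref{twodv}). Using Cauchy--Schwarz against $G_{1i}^\pm$ for the $h-\tilde h$ part and against $G_{2i}(\bar U)$ for the $p(v)-p(\tiv)$ part, the leftover purely wave integrals $\int|\tiv_{3-i}-v_m|^2|(\tiv_i)_x|^2$ and $\int|\tiv_{3-i}-v_m||(\tiv_i)_x|$ are then bounded using the decomposition of $\bbr$ into $\{x\le 0\}$ and $\{x\ge 0\}$ exactly as in the proof of Lemma~\ref{lem:sep}, which (via \eqref{sx12} and \eqref{tdv1}--\eqref{tdv2}) produces the desired factor $\exp(-C\min(\eps_1,\eps_2)t)$. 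This yields \eqref{n6}. Estimate \eqref{n2} then follows by summing all pieces: under both smallness hypotheses, Lemma~\ref{lemmeC2} bounds every $G_{1i}^\pm,G_{2i}$ by $C\eps_i^2/\lambda\le C$, and the exponential from $B_6$ is $\le C$, so what remains is $C\sqrt{\delta_0}D(U)+C$.

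Finally, Part~3 (estimate \eqref{m1}) is a direct transcription of \eqref{am1} of Proposition~\ref{aprop_out}, applied at wave $i$ with the substitutions above. The hypothesis $D(U)\le 2C^*\delta_0^{-1/2}\eps_i^2/\lambda$ of our statement is exactly what \eqref{am1} requires, and the cross term $\delta_0\sum_j(\eps_j/\lambda)G_{2j}(U)$ on the right of \eqref{m1} accommodates the ${\bf \tilde G}_2$ contribution without requiring a separate bound on $G_{2j}$ for $j\neq i$. The main obstacle throughout is the $B_6$ analysis: since no good term is an unweighted $L^2$-norm of $h-\tilde h$ or of $p(v)-p(\tiv)$, one must carefully localize to $\Omega$ versus $\Omega^c$ to pair $\tilde E_2$ with $G_{1i}^\pm$ and $\tilde E_1$ (after integration by parts) with $D$ and $G_{2i}(\bar U)$, while extracting enough exponential time decay from the wave-separation argument to make the remaining factors harmless.
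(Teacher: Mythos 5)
Your overall architecture coincides with the paper's: Parts 1 and 3, and the $B_{3i},B_{4i},B_5$ portion of Part 2, are obtained exactly as you say, by applying Propositions \ref{aprop:out}--\ref{aprop_out} wave by wave with $\bw=(a_i)_x^{X_i}$, $\eps=\eps_i$, the input \eqref{agenl1} supplied by Lemma \ref{lemmeC2}, and the translation \eqref{tg2} of ${\bf \tilde G}_2$. The genuine divergence, and the gap, is in your treatment of the $\tilde E_1$ part of $B_6$. After integrating by parts you must estimate $\int a\,\partial_x\big(p(v)-p(\tiv)\big)\Phi\,dx$ together with $\sum_i\int (a_i)_x\big(p(v)-p(\tiv)\big)\Phi\,dx$. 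Young's inequality against $D(U)=\int a\,v^{\beta}|\partial_x(p(v)-p(\tiv))|^2dx$ forces the complementary factor $\frac{1}{\delta_0}\int a\,v^{-\beta}|\Phi|^2dx$, and the weight $v^{-\beta}$ is unbounded where $p(v)$ is large; since $\beta=\gamma-\alpha$ may exceed $\gamma-1$, it is not even controlled pointwise by $Q(v|\tiv)$. So this is \emph{not} a ``purely wave integral'' $\int|\tiv_{3-i}-v_m|^2|(\tiv_i)_x|^2dx$, and Lemma \ref{lem:sep} alone cannot bound it; one would need the truncation machinery behind \eqref{al7}--\eqref{al8}, which your sketch omits at exactly this point. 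Likewise, on $\Omega^c$ the factor $|p(v)-p(\tiv)|$ is unbounded and cannot be paired with $G_{2i}(\bar U)$, which only sees the truncated $\bar v$ with $|p(\bar v)-p(\tiv)|\le\delta_1$; the excess $|p(v)-p(\bar v)|$ there requires \eqref{newq} (with its exponent $q=\frac{2\gamma}{\gamma+\alpha}<2$) followed by Young's inequality to be absorbed into $\delta_0 D(U)$ plus the exponential tail.

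The paper avoids both problems by never integrating by parts in $B_{61}$: it expands $\tilde E_1$ pointwise, writing $\partial_x\tiv=\partial_x\tiv_1+\partial_x\tiv_2$ and using \eqref{twodv} to tame the second derivatives, which gives $|\tilde E_1|\le C\eps_1|(\tiv_1)_x||\tiv-\tiv_1|+C\eps_2|(\tiv_2)_x||\tiv-\tiv_2|+C|(\tiv_1)_x||(\tiv_2)_x|$; the factor $p(v)-p(\tiv)$ is then kept as is, the integral is split over $\Omega$ and $\Omega^c$, the $\Omega$ part is compared with $B_{2i}^+$ (yielding $\sqrt{\delta_0}D(U)+G_{2i}(\bar U)$) and the pure interaction terms handled by Lemma \ref{lem:sep}, while the $\Omega^c$ part is treated by Cauchy--Schwarz and \eqref{newq}. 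Your $\tilde E_2$/$B_{62}$ argument does match the paper. Your route for $\tilde E_1$ is probably repairable with the same tools (insert $\bar v$, use \eqref{newq} on $\Omega^c$, and control $v^{-\beta}|\Phi|^2$ by an \eqref{al7}-type estimate), but as written the reduction to wave-separation integrals skips the essential step. A minor further inaccuracy: for \eqref{n2} you assert that Lemma \ref{lemmeC2} bounds $G_{1i}^{+}$ by $C\eps_i^2/\lambda$; the $(p(v)-p(\tiv))^2$ contribution on $\Omega$ is not in that lemma and is instead bounded through $B_{2i}^{+}(U)$, which costs an extra $C\sqrt{\delta_0}\,D(U)$ --- harmless for \eqref{n2}, but it should be said.
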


\begin{proof}
We will apply Proposition \ref{aprop:out} and Proposition \ref{aprop_out} to the two cases where for each $i=1, 2$, $\bw= (a_i)_x^{X_i}, \bv_1 = a^{X_1,X_2} (\tiv_i)_x^{X_i}, \bv_2 = a^{X_1,X_2} (\tih_i)_x^{X_i}$, $\eps=\eps_i$, $\bar\bv=\bar v$, and $\tilde U_0 = \tilde U^{X_1,X_2}$ for the composite wave $\tilde U^{X_1,X_2}$, in addition, ${\bf\Omega}=\Omega$ . \\
First of all, by Lemma \ref{lem:vp}, \eqref{vhre}, \eqref{d-weight} and \eqref{massa}, the assumptions \eqref{aasunif}-\eqref{aasinte} and \eqref{cweight} of the Propositions hold.
In addition, for each $i=1, 2,$ and any $U$ satisfying $(-1)^{i-1} Y_i(U)\le \eps_i^2$, it follows from \eqref{l1} of Lemma \ref{lemmeC2} that the assumption \eqref{agenl1} holds.\\
Then, by \eqref{an1} and \eqref{aib12} together with $1/2 \le a \le 1$ and $C^{-1} \le |\sigma_j| \le C$, we have the desired estimates \eqref{n1}-\eqref{ib12}, where note that since $(\tiv_0)_x = (\tiv)_x = (\tiv_1)_x + (\tiv_2)_x$, 
\beq\label{tg2}
{\bf\tilde G}_2 (U) \le \sum_{j=1}^2 \frac{\eps_j}{\lambda} \int_\bbr |(a_j)_x| Q(v|\tilde{v}) dx \le \sum_{j=1}^2 \frac{\eps_j}{\lambda} {G}_{2j}(U) .
\eeq

To show \eqref{n3}, we first observe that
\begin{align*}
\begin{aligned}
&{B}_{4i}(U)= -\int_\bbr (a_i)_x  \big(p(v)-p(\tilde v)\big) (v^\beta - \tilde v^\beta) p'(\tilde v)  \big( (\tiv_1)_x + (\tiv_2)_x \big) dx,\\
&{B}_5(U)= -\int_\bbr a  \partial_x \big(p(v)-p(\tilde v)\big) (v^\beta - \tilde v^\beta )p'(\tilde v)  \big( (\tiv_1)_x + (\tiv_2)_x \big) dx ,
\end{aligned}
\end{align*}
which together with $|p'(\tilde v)| \le C$ yield
\begin{align*}
\begin{aligned}
&|{B}_{4i}(U)|\le C\sum_{j=1}^2 \int_\bbr |(a_i)_x| |(\tiv_j)_x| \big|p(v)-p(\tilde v)\big| \big|v^\beta - \tilde v^\beta\big|  dx ,\\
&|{B}_5(U)|\le C\sum_{j=1}^2 \int_\bbr  |(\tiv_j)_x| \big|\partial_x \big(p(v)-p(\tilde v)\big) \big| \big|v^\beta - \tilde v^\beta \big| dx  . 
\end{aligned}
\end{align*}
Then, applying \eqref{an3} together with ${\bf B}_3={B}_{3j}$, $|{B}_{4i}(U)|\le C \sum_{j=1}^2 {\bf B}_{4}(U)$ and $|{B}_{5}(U)|\le C \sum_{j=1}^2 {\bf B}_{5}(U)$ for each $i, j$, and then summing them up together with \eqref{tg2}, we obtain the desired estimate \eqref{n3}. 

For \eqref{m1}, since ${D}(U)\leq 2\frac{C^*}{\sqrt\deltaz} \frac{\eps_i^2}{\lambda}$, applying \eqref{am1} together with ${\bf D}(U) \le 2 D(U)$ (by $1/2\le a$), we have \eqref{m1}.

To show  \eqref{n6}, we set
\begin{align*}
{B}_6 = \underbrace{\int_\bbr a \big(p(v)-p(\tilde v) \big) \tilde E_1 dx }_{=: B_{61}} -  \underbrace{\int_\bbr a (h-\tilde h)  \tilde E_2 dx }_{=: B_{62}} ,
\end{align*}
with
\begin{align*}
&\tilde E_1:=  \partial_x\big( \tilde v^\beta \partial_x p(\tilde v )\big) -\partial_x\big( \tilde v_1^\beta \partial_x p(\tilde v_1 )\big) -\partial_x\big( \tilde v_2^\beta \partial_x p(\tilde v_2 )\big) ,\\
&\tilde E_2:=\partial_x p(\tilde v) -\partial_x p(\tilde v_1 ) -\partial_x p(\tilde v_2)  .
\end{align*}

\noindent{\it Estimate of $B_{61}$ }:
First, using $\pa_x\tiv = \pa_x\tiv_1+ \pa_x\tiv_2$, we observe that by setting $f(y):=y^\beta p'(y)$,
\begin{align*}
\tilde E_1
&= \partial_x\Big( \big( f(\tiv) -f(\tiv_1)\big) \pa_x \tiv_1 +  \big( f(\tiv) -f(\tiv_2)\big) \pa_x \tiv_2 \Big)   \\
&=\big( f(\tiv) -f(\tiv_1)\big) (\tiv_1)_{xx} + \big( f(\tiv) -f(\tiv_2)\big) (\tiv_2)_{xx} \\
&\quad+ \big( f'(\tiv) -f'(\tiv_1)\big) |(\tiv_1)_x |^2  + \big( f'(\tiv) -f'(\tiv_2)\big) |(\tiv_2)_x |^2 + 2 f'(\tiv) (\tiv_1)_x (\tiv_2)_x ,
\end{align*}
which together with \eqref{tdv1}, \eqref{tdv2}, \eqref{twodv} and $C^{-1}<\tiv, \tiv_i<C$ implies
\[
|\tilde E_1|\le C \eps_1|(\tiv_1)_x |  |\tiv-\tiv_1| +C \eps_2|(\tiv_2)_x | |\tiv-\tiv_2| + C 
|(\tiv_1)_x| |(\tiv_2)_x| .
\]
Then, using \eqref{d-weight} and $\eps_1,\eps_2<\deo$, we have
\begin{align*}
&\left| B_{61} \right|=\left| \int_\Omega a \big(p(v)-p(\tilde v) \big) \tilde E_1 dx + \int_{\Omega^c} a \big(p(v)-p(\tilde v) \big) \tilde E_1 dx \right|  \\
&\le C \sum_{i=1}^2 \left( \underbrace{ \deo \frac{\eps_i}{\lam} \int_\Omega  |(a_i)_x |  \big|p(v)-p(\tilde v) \big|^2 dx }_{=:K_1} + \underbrace{\int_\Omega |(\tiv_i)_x |  |\tiv-\tiv_i|^2 dx + \int_\Omega|(\tiv_1)_x| |(\tiv_2)_x| dx }_{=:K_2}  \right) \\
&\quad +C \sum_{i=1}^2  \underbrace{\left(\int_{\Omega^c} |(a_i)_x | \big|p(v)-p(\tilde v) \big|^2 dx \right)^{1/2} \left(\int_{\Omega^c} |(\tiv_i)_x |  |\tiv-\tiv_i|^2 dx + \int_\Omega|(\tiv_1)_x| |(\tiv_2)_x| dx  \right)^{1/2} }_{=:K_3}.
\end{align*}
Using \eqref{n1}, \eqref{ib12} and the definition of ${B}_{2i}^+(U)$ with $\s_i (a_i)_x >0$, we have
\begin{align}
\begin{aligned}\label{k1est}
K_1&\le \deo \frac{\eps_i}{\lam} \left(  \int_\Omega |(a_i)_x| \Big | |p(v)-\pt|^2-  |p(\bar v)-\pt|^2\Big|  \,dx +  \int_\Omega  |(a_i)_x| |p(\bar v)-\pt|^2 dx \right) \\
&\le  \deo \frac{\eps_i}{\lam} \left( |{B}_{2i}^+(U)-{B}_{2i}^+(\bar U)| +|{B}_{2i}^+(\bar U)|  \right)\\
&\le \deo \frac{\eps_i}{\lam} \left(  \sqrt{\deo} {D}(U)+  \int_\Omega  |(a_i)_x|  Q(\bar v|\tilde v) \,d\xi \right) \le \deo \frac{\eps_i}{\lam} \left(  \sqrt{\deo} {D}(U)+  {G}_{2i}(\bar U) \right) .
\end{aligned}
\end{align}
By Lemma \ref{lem:sep} with $ |\tiv-\tiv_i|^2\le C  |\tiv-\tiv_i|$, we have
\[
K_2 \le C \eps_1\eps_2 \exp\Big( -C\min(\eps_1,\eps_2) t \Big) ,\qquad t>0.
\]
For $K_3$, we first see that (as in $K_2$)
\[
K_3  \le C\sqrt{\eps_1\eps_2} \exp\Big( -C\min(\eps_1,\eps_2) t \Big) \left(\int_{\Omega^c} |(a_i)_x | \big|p(v)-p(\tilde v) \big|^2 dx \right)^{1/2} ,\qquad t>0. 
\]
Using $\big|p(\bar v)-p(\tiv) \big|\le\deltao$ and \eqref{massa}, we have
\begin{align*}
\int_{\Omega^c} |(a_i)_x | \big|p(v)-p(\tilde v) \big|^2 dx &\le 2\int_{\Omega^c} |(a_i)_x | \big|p(v)-p(\bar v) \big|^2 dx + 2\int_{\Omega^c} |(a_i)_x | \big|p(\bar v)-p(\tiv) \big|^2 dx\\
 &\le 2\int_{\Omega^c} |(a_i)_x | \big|p(v)-p(\bar v) \big|^2 dx + 2\delta_1^2\lam.
\end{align*}
Applying \eqref{newq} with \eqref{tg2}, we have
\[
\int_{\Omega^c} |(a_i)_x | \big|p(v)-p(\bar v) \big|^2 dx \le C\frac{\eps_i^{2-q}}{\lam}\bigg(D(U) + \sum_{j=1}^2 \frac{\eps_j}{\lambda} {G}_{2j}(U) \bigg)^q,
\]
where $q:=\frac{2\gamma}{\gamma+\alpha}$, and note that $1<q<2$ by $0<\alpha<\gamma$.\\
Therefore, using Young's inequality, we have
\begin{align*}
K_3 &\le C\sqrt{\eps_1\eps_2} \exp\Big( -C\min(\eps_1,\eps_2) t \Big) \left[\sqrt{\frac{\eps_i^{2-q}}{\lam}} \left(D(U) + \sum_{j=1}^2 \frac{\eps_j}{\lambda} {G}_{2j}(U)  \right)^{q/2} + C\sqrt\lam \right]\\
&\le C\sqrt{\frac{\eps_1\eps_2}{\lam}} \left(D(U) + \sum_{j=1}^2 \frac{\eps_j}{\lambda} {G}_{2j}(U)  \right)^{q/2}  \sqrt{\eps_1\eps_2} \exp\Big( -C\min(\eps_1,\eps_2) t \Big) \\
&\qquad + C \exp\Big( -C\min(\eps_1,\eps_2) t \Big) \\
&\le \deo \bigg(D(U) + \sum_{j=1}^2 \frac{\eps_j}{\lambda} {G}_{2j}(U) \bigg) + C \exp\Big( -C\min(\eps_1,\eps_2) t \Big) .
\end{align*} 
Hence,
\begin{align*}
\left|B_{61} \right| &\le  C\deo D(U)  
+C\deo \sum_{i=1}^2  \Big(\left({G}_{2i}(U)-{G}_{2i}(\bar U)\right)+ \frac{\eps_i}{\lambda}{G}_{2i}(\bar U) \Big)  + C \exp\Big( -C\min(\eps_1,\eps_2) t \Big) .
\end{align*}

\noindent{\it Estimate of $B_{62}$ }: Likewise, since
\begin{align*}
|\tilde E_2|\le \big| p'(\tilde v) - p'(\tilde v_1)\big| |(\tiv_1)_x|+  \big| p'(\tilde v) - p'(\tilde v_2)\big| |(\tiv_2)_x| \le C |(\tiv_1)_x |  |\tiv-\tiv_1| +C |(\tiv_2)_x | |\tiv-\tiv_2| ,
\end{align*}
we use the Young's inequality and Lemma \ref{lem:sep} to have
\begin{align*}
|B_{62}| &\le  \sum_{i=1}^2 \bigg( \deo\frac{\eps_i}{\lam} \int_\bbr  |(a_i)_x |  |h-\tih |^2 dx +\frac{C}{\deo}\int_\Omega |(\tiv_i)_x |  |\tiv-\tiv_i|^2 dx \bigg) \\
&\le C \sum_{i=1}^2  \deo\frac{\eps_i}{\lam} \int_\bbr  |(a_i)_x |  |h-\tih |^2 dx 
+ C  \exp\Big( -C\min(\eps_1,\eps_2) t \Big).
\end{align*}
Note that using the good terms $G_{1i}^-$ and $G_{1i}^+$,
\begin{align*}
\int_\bbr  |(a_i)_x |  |h-\tih |^2 dx &=\int_{\Omega^c}  |(a_i)_x |  |h-\tih |^2 dx + \int_\Omega  |(a_i)_x |  |h-\tih |^2 dx \\
&\le C G_{1i}^-(U) + C G_{1i}^+(U)  + C\int_\Omega  |(a_i)_x |  |p(v)-p(\tiv) |^2 dx.
\end{align*}
Therefore, using \eqref{k1est}, we have
\[
|B_{62}| \le  C\deo \frac{\eps_i}{\lam} \left( G_{1i}^-(U) +  G_{1i}^+(U) +  \sqrt{\deo} {D}(U)+  {G}_{2i}(\bar U) \right) + C  \exp\Big( -C\min(\eps_1,\eps_2) t \Big).
\]

\noindent{\bf Proof of \eqref{n2}:}
First, it follows from \eqref{ib12} and \eqref{n1} with \eqref{l1} that
\begin{align*}
G_{1i}^+(U) &\le C \int_{\Omega}  |(a_i)_x |  |h-\tih |^2 dx + \int_{\Omega}  |(a_i)_x |  |p(v)-p(\tiv) |^2 dx \\
& \le C \frac{\eps_i^2}{\lambda} +C |{B}_{2i}^+(U)|  \le C \frac{\eps_i^2}{\lambda}  + C\sqrt{\delta_0} {D}(U).
\end{align*}
This together with using \eqref{n1}-\eqref{n6}, \eqref{l1}, \eqref{gi2} and recalling \eqref{bad0}, implies
\[
|{B}_{\deltao}(U)| \le  C\sqrt{\delta_0} {D}(U) +C \sum_{i=1}^2 \frac{\eps_i^2}{\lambda}  + C  \exp\Big( -C\min(\eps_1,\eps_2) t \Big), \quad t>0.
\]
Therefore, we have the rough bound \eqref{n2}.\\

\end{proof}

\subsection{Estimates for separation of waves}

We define a non-negative Lipschitz monotone function $\phi_{1,t}$ on $\bbr$ as follows: for any fixed $t>0$,
\beq\label{1phi}
\phi_{1,t}(x) = \left\{ \begin{array}{ll}
    1\quad &\mbox{if}~ x<\frac{1}{2} (X_1(t) +\s_1 t), \\
   \mbox{linear}\quad &\mbox{if} ~ \frac{1}{2} (X_1(t) +\s_1 t) \le x \le \frac{1}{2} (X_2(t) +\s_2 t), \\
   0  \quad &\mbox{if}  ~  x> \frac{1}{2} (X_2(t) +\s_2 t) .
       \end{array} \right.
\eeq
Likewise, we define a non-negative Lipschitz monotone function $\phi_{2,t}$ on $\bbr$ such that $\phi_{1,t}(x)+\phi_{2,t}(x)=1$ for all $x\in\bbr$ and $t>0$. Thus, $\phi_{2,t}$ satisfies
\beq\label{2phi}
\phi_{2,t}(x) = \left\{ \begin{array}{ll}
    0\quad &\mbox{if}~ x<\frac{1}{2} (X_1(t) +\s_1 t), \\
   \mbox{linear}\quad &\mbox{if} ~ \frac{1}{2} (X_1(t) +\s_1 t) \le x \le \frac{1}{2} (X_2(t) +\s_2 t), \\
   1 \quad &\mbox{if}  ~  x > \frac{1}{2} (X_2(t) +\s_2 t) .
       \end{array} \right.
\eeq

\begin{lemma}\label{lem:phi}
Let $\phi_{1,t}$ and $\phi_{2,t}$ be the non-negative Lipschitz monotone functions such that \eqref{1phi}-\eqref{2phi}.
For given $v_->0$ and $u_-\in\bbr$, there exist positive constants $\eps_0, C$ such that for any $\eps_1, \eps_2\in(0,\eps_0)$ and for all  $t>0$,
\begin{align*}
\begin{aligned}
&\int_\bbr |(\tiv_1)_x^{X_1} | \phi_{2,t}  dx  \le C \eps_1 \exp\big( -C\eps_1 t \big), \\
&\int_\bbr |(\tiv_2)_x^{X_2} | \phi_{1,t}  dx \le C \eps_2 \exp\big( -C\eps_2 t \big) .
\end{aligned}
\end{align*}
\end{lemma}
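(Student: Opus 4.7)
The plan is to use the exponential decay of $|(\tiv_i)_x|$ given by Lemma~\ref{lem:vp} together with the separation property \eqref{sx12} of the shifts $X_1,X_2$, which guarantees that the support of $\phi_{2,t}$ (resp. $\phi_{1,t}$) lies far from the center of the $i=1$ shock (resp. $i=2$ shock).

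First, I will treat the estimate for $i=1$. Since $\phi_{2,t}\le 1$ and $\phi_{2,t}$ vanishes for $x<\tfrac{1}{2}(X_1(t)+\s_1 t)$, bound \eqref{tdv1} gives
\[
\int_\bbr |(\tiv_1)_x^{X_1}|\,\phi_{2,t}\,dx
\le C\eps_1^2\int_{\frac{1}{2}(X_1(t)+\s_1 t)}^{+\infty}\exp\bigl(-C\eps_1\,|x-\s_1 t-X_1(t)|\bigr)\,dx.
\]
Change variables $y=x-\s_1 t-X_1(t)$. The lower limit becomes $y_0:=-\tfrac{1}{2}(X_1(t)+\s_1 t)$, and by \eqref{sx12} (which yields $X_1(t)+\s_1 t\le \tfrac{\s_1}{2}t<0$) we have $y_0\ge -\tfrac{\s_1}{4}t=\tfrac{|\s_1|}{4}t>0$. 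Hence
\[
\int_\bbr |(\tiv_1)_x^{X_1}|\,\phi_{2,t}\,dx
\le C\eps_1^2\int_{|\s_1|t/4}^{+\infty}\!\!e^{-C\eps_1 y}\,dy
\le C\eps_1\,\exp\!\Bigl(-\tfrac{C|\s_1|}{4}\eps_1 t\Bigr).
\]
Since $|\s_1|$ is bounded below by a positive constant independent of $\eps_0$ (because $\s_1=-\sqrt{-p'(v_-)}+\mathcal O(\eps_1)$ and $p'(v_-)$ is bounded away from zero on $B_{\eps_0}(U_*)$), the exponent is of the claimed form $-C\eps_1 t$.

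The argument for $i=2$ is symmetric: $\phi_{1,t}$ is supported in $x\le \tfrac{1}{2}(X_2(t)+\s_2 t)$, and by \eqref{sx12} we have $X_2(t)+\s_2 t\ge \tfrac{\s_2}{2}t>0$, so after the shift $y=x-\s_2 t-X_2(t)$ we integrate over $y\le -\tfrac{\s_2}{4}t=-\tfrac{|\s_2|}{4}t<0$, and \eqref{tdv2} provides the corresponding exponential decay. No step is particularly delicate; the whole content of the lemma is the reliance on \eqref{sx12}, which is exactly the wave‑separation property that the ODE construction \eqref{X-def} of the shifts was designed to yield.
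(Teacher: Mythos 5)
Your proof is correct and follows essentially the same route as the paper: both rest on the exponential decay of $(\tiv_i)_x$ from Lemma \ref{lem:vp}, the support restriction of $\phi_{j,t}$, and the separation estimate \eqref{sx12} which pushes that support a distance $\gtrsim |\s_i|t/4$ away from the shock center. The only cosmetic difference is that you integrate the exponential tail directly, whereas the paper bounds $\phi_{2,t}|(\tiv_1)_x^{X_1}|^{1/2}$ pointwise by $C\eps_1 e^{-C\eps_1 t}$ and then uses $\int_\bbr |(\tiv_1)_x^{X_1}|^{1/2}\,dx\le C$; both yield the same estimate.
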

\begin{proof}
The proof is similar to the one of Lemma \ref{lem:sep}.\\
First, by \eqref{vder1},
\[
\phi_{2,t} |(\tiv_1)_x^{X_1}|^{1/2}   \le C  \phi_{2,t} \eps_1\exp\big( -C\eps_1|x-\s_1t-X_1(t)| \big) .
\]
Note from \eqref{2phi} that $0\le \phi_{2,t}\le 1$ and
\[
\phi_{2,t} = \phi_{2,t} \mathbf{1}_{\{x\ge (X_1(t)+\s_1t)/2 \}} .
\]
Since \eqref{sx12} implies that
\[
x\ge \frac{X_1(t)+\s_1t}{2}\quad\Rightarrow\quad x-(\s_1 t + X_1(t)) \ge - \frac{X_1(t)+\s_1t}{2} \ge -\frac{\s_1}{4}t >0,
\]
we have
\[
\phi_{2,t} |(\tiv_1)_x^{X_1}|^{1/2}  \le C  \eps_1 \exp\big( -C\eps_1 t \big), \quad \forall x\in\bbr, \quad t>0.
\]
Thus, using \eqref{onemass} again, we have the desired estimate.\\
Likewise, using \eqref{vder2} and
\[
\phi_{1,t} = \phi_{1,t} \mathbf{1}_{\{x\le (X_2(t)+\s_2t)/2 \}} ,
\]
and 
\[
x\le \frac{X_2(t)+\s_2t}{2}\quad\Rightarrow\quad x-(\s_2 t + X_2(t)) \le - \frac{X_2(t)+\s_2t}{2} \le -\frac{\s_2}{4}t <0,
\]
we have the desired estimate.
\end{proof}

\begin{lemma}\label{lem:ws}
Let $\phi_{1,t}$ and $\phi_{2,t}$ be  the non-negative Lipschitz monotone functions such that \eqref{1phi}-\eqref{2phi} and $\phi_{1,t}+\phi_{2,t}=1$. For given $v_->0$ and $u_-\in\bbr$, there exist positive constants $C$ and $\deo$ such that for any $\eps_1, \eps_2, \lam>0$ satisfying $\eps_1/\lambda, \eps_2/\lambda< \delta_0$ and $\lambda<\delta_0$, there exists a constant $C_\eps$ depending on $\eps_1, \eps_2$ such that
the following estimates hold.\\
For each $i=1, 2$ and for all $t>0$,
\begin{align}
\begin{aligned}\label{wsby}
& \left| {B}_{1i}(\bar U) -\mathcal{I}_{1i}(\bar v) \right| +\left|{B}_{2i}^+(\bar U)-\mathcal{I}_{2i}(\bar v) \right| +\left|Y^g_i(\bar U) -\mathcal{Y}^g_i(\bar v) \right|   \le C \delta_0 \exp\Big( -C_\eps t \Big) ,\\
&-\left({G}_{2i} (\bar U)- \mathcal{G}_{2i} (\bar v) \right) \le C  \deltaz  \exp\big( -C_\eps t \big),
\end{aligned}
\end{align}
and
\beq\label{fwsd}
-D(\bar U) \le -  \frac{1}{(1+\deltaz)^2} \sum_{i=1}^2   \mathcal{D}_i (\bar v)+ C \deltaz \left(\exp\big( -C_\eps t\big) + \frac{1}{t^4} \right)+  \frac{C}{\deltaz} \frac{1}{t^2} \int_\bbr  \eta(U|\tilde U) dx.
\eeq
\end{lemma}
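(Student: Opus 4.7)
The proof splits into two parts according to whether the estimate merely compares the composite-wave functional to a sum of single-wave localized functionals (estimates \eqref{wsby}) or extracts a sharp lower bound on the diffusive term (estimate \eqref{fwsd}).

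For \eqref{wsby}, the strategy is to insert the partition of unity $1=\phi_{1,t}+\phi_{2,t}$ into each composite-wave functional and then pass from $\tilde v$ to $\tilde v_i$ on each localized piece. Concretely, for ${B}_{1i}(\bar U) - \mathcal{I}_{1i}(\bar v)$ I would write $p(\bar v|\tilde v) = \phi_{i,t}^2 p(\bar v|\tilde v) + (1-\phi_{i,t}^2)p(\bar v|\tilde v)$ and treat the two pieces separately. The piece with $(1-\phi_{i,t}^2)$ carries the factor $(\tilde v_i)_x \phi_{j,t}$ ($j\ne i$), which is exponentially small in $t$ by Lemma~\ref{lem:phi}. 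On the piece with $\phi_{i,t}^2$, I would Taylor expand $p(\bar v|\tilde v)$ around $\tilde v_i$; since $\tilde v - \tilde v_i = \tilde v_j - v_m$ is exponentially small on the support of $\phi_{i,t}$ (the $j$-wave lies far away), the error is dominated by $\int |(\tilde v_i)_x||\tilde v - \tilde v_i| dx$, which decays exponentially by Lemma~\ref{lem:sep}. The same scheme applied to ${B}_{2i}^+$, $Y^g_i$ and $G_{2i}$ yields all four estimates in \eqref{wsby}, using in addition that $\Omega$ essentially coincides with $\{p(\bar v)-p(\tilde v)\le\delta_1\}$ and $\bar v\in L^\infty$.

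For \eqref{fwsd}, the starting point is the algebraic identity
\[
p(\bar v) - p(\tilde v) \;=\; \sum_{i=1}^{2}\phi_{i,t}\bigl(p(\bar v) - p(\tilde v_i)\bigr) \;+\; R,\qquad R=-\sum_{i=1}^{2}\phi_{i,t}\bigl(p(\tilde v)-p(\tilde v_i)\bigr),
\]
which uses $\phi_{1,t}+\phi_{2,t}=1$. Differentiating yields $\partial_x(p(\bar v)-p(\tilde v)) = \sum_i \partial_x[\phi_{i,t}(p(\bar v)-p(\tilde v_i))] + \partial_x R$. I would then apply Young's inequality of the form $|A+B|^2 \ge (1+\delta_0)^{-1}|A|^2 - \delta_0^{-1}|B|^2$, followed by a second Young's inequality to isolate the two $i=1,2$ contributions inside $|A|^2$ (their cross term is supported on the interaction zone and can be absorbed by the same exponentially small factors from Lemma~\ref{lem:sep}). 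This produces the factor $(1+\delta_0)^{-2}$ in front of $\sum \mathcal{D}_i(\bar v)$. The residual $\delta_0^{-1}|\partial_x R|^2$ has two types of contributions: (i) a wave-interaction part $\phi_{i,t}\partial_x(p(\tilde v)-p(\tilde v_i))$ that is controlled by $\exp(-C_\eps t)$ via Lemma~\ref{lem:sep}, and (ii) a part involving $\phi_{i,t}'(p(\tilde v)-p(\tilde v_i))$. Since $\phi_{i,t}$ is linear on the interval $[\tfrac12(X_1+\sigma_1 t),\tfrac12(X_2+\sigma_2 t)]$ of width $\sim t$ (by \eqref{sx12}), one has $|\phi_{i,t}'|\le C/t$, and the pressure difference on this interval is exponentially small, giving the $\delta_0\,t^{-4}$ error. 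Finally, the one remainder integrand that cannot be cancelled by pure wave-separation bounds produces a term $a\bar v^\beta |\partial_x R|^2$ involving $\partial_x \bar v$ over the middle region; using $|\phi_{i,t}'|^2 \le C/t^2$, $\bar v\in L^\infty$, and majorizing $|v-\tilde v|^2$ on this region by $Q(v|\tilde v)$ (Lemma~\ref{lem-pro}), one absorbs this into $\tfrac{C}{\delta_0\,t^2}\int_\bbr \eta(U|\tilde U)\,dx$.

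The main obstacle is the simultaneous tracking of three small parameters of very different nature in estimate \eqref{fwsd}: the smallness $\delta_0$ coming from Young's inequality (producing the sharp factor $(1+\delta_0)^{-2}$), the exponential decay $e^{-C_\eps t}$ from physical wave separation, and the algebraic decay $t^{-2}$ from the Lipschitz constant of the transition functions $\phi_{i,t}$. The delicate point is that the $t^{-2}$ error cannot be converted to an $e^{-C_\eps t}$ decay, since it arises from spreading the cutoff over a region whose width grows linearly in $t$; one must therefore tolerate the $t^{-4}$ remainder and the $t^{-2}\int \eta$ correction in the final bound, which is precisely the form in which they appear in \eqref{fwsd}. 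Ensuring that these corrections are compatible with the integrability needed to apply Gr\"onwall at the end of the proof of Theorem~\ref{thm_main} is what dictates the exponents.
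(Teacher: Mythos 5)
Your treatment of \eqref{wsby} is essentially the paper's argument: insert the partition $\phi_{1,t}+\phi_{2,t}=1$, bound the far piece carrying $(\tilde v_i)_x\phi_{j,t}$ ($j\ne i$) by Lemma \ref{lem:phi}, and control the replacement $\tilde v\mapsto \tilde v_i$ by $\int|(\tilde v_i)_x||\tilde v-\tilde v_i|\,dx$ via Lemmas \ref{lem:tri} and \ref{lem:sep}. (One point you gloss over: for $G_{2i}$ the bound is only one-sided, because $G_{2i}(\bar U)$ contains the exact $Q(\bar v|\tilde v)$ while $\mathcal{G}_{2i}(\bar v)$ contains its quadratic--cubic truncation, so one must invoke the lower expansion \eqref{Q-est11} together with $\sigma_i(a_i)_x>0$; but this is a minor omission.)

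For \eqref{fwsd}, however, there is a genuine gap. You write $\partial_x(p(\bar v)-p(\tilde v))=A_1+A_2+\partial_x R$ with $A_i=\partial_x[\phi_{i,t}(p(\bar v)-p(\tilde v_i))]$ and claim that the cross term in $|A_1+A_2|^2$ "is supported on the interaction zone and can be absorbed by the same exponentially small factors from Lemma \ref{lem:sep}." This is false as stated. On the overlap region $\tfrac12(X_1+\sigma_1t)\le x\le \tfrac12(X_2+\sigma_2t)$, whose width grows like $t$, the cross term contains $2\phi_{1,t}\phi_{2,t}\,\partial_x(p(\bar v)-p(\tilde v_1))\,\partial_x(p(\bar v)-p(\tilde v_2))$, whose dominant part is comparable to $|\partial_x(p(\bar v)-p(\tilde v))|^2$ itself (only the wave tails $\partial_xp(\tilde v_i)$ are exponentially small there, not $\partial_xp(\bar v)$), i.e.\ it is controlled only by the diffusion functional, not by Lemma \ref{lem:sep}; it also contains $2\phi_{1,t}'\phi_{2,t}'(p(\bar v)-p(\tilde v_1))(p(\bar v)-p(\tilde v_2))=-2|\phi_{1,t}'|^2(\cdots)(\cdots)$, which is genuinely negative and of size $t^{-2}|p(\bar v)-p(\tilde v)|^2$ up to exponentially small corrections, hence only absorbable through $t^{-2}\int Q(v|\tilde v)\,dx\lesssim t^{-2}\int\eta(U|\tilde U)\,dx$. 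Relatedly, your accounting of the $\frac{C}{\delta_0 t^2}\int\eta$ term is misattributed: $R=-\sum_i\phi_{i,t}(p(\tilde v)-p(\tilde v_i))$ involves only the wave profiles (no $v$ or $\bar v$), so $|\partial_x R|^2$ is bounded by $t^{-2}$ times exponentially small quantities and never needs $\int\eta$; the $\int\eta$ correction must come from the terms pairing $\phi_{i,t}'$ with $p(\bar v)-p(\tilde v)$, which in your decomposition sit precisely in the cross terms you propose to discard.

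Your scheme can be repaired (the dangerous quadratic part of the cross term is nonnegative and may be dropped, the mixed $\phi'\times\phi\,\partial_x$ terms absorbed by Young at the cost of replacing $(1+\delta_0)^{-2}$ by $(1+C\delta_0)^{-2}$), but the cleaner route — and the one the paper takes — avoids cross terms altogether: first use $1=\phi_{1,t}+\phi_{2,t}\ge\phi_{1,t}^2+\phi_{2,t}^2$ pointwise to split $D(\bar U)\ge\sum_i\int a\bar v^\beta\phi_{i,t}^2|\partial_x(p(\bar v)-p(\tilde v))|^2dx$, then for each $i$ apply Young twice: once to move $\phi_{i,t}$ inside the derivative, with error $|\phi_{i,t}'|^2|p(\bar v)-p(\tilde v)|^2\lesssim t^{-2}Q(v|\tilde v)$ yielding the $\frac{C}{\delta_0 t^2}\int\eta$ term, and once to swap $\tilde v\to\tilde v_i$, with the purely wave-interaction error $|\partial_x(\phi_{i,t}(p(\tilde v)-p(\tilde v_i)))|^2$ controlled by Lemmas \ref{lem:sep} and \ref{lem:phi}, yielding $C\delta_0(e^{-C_\eps t}+t^{-4})$ and the exact factor $(1+\delta_0)^{-2}$.
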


\begin{proof}
\noindent{\bf Proof of \eqref{wsby}:} 
For simplicity, we here use the following notations: for each $i=1, 2$,
\begin{align*}
\begin{aligned}
&\overline{{B}_{1i}}:= \s_i \int_\bbr a (\tilde v_i)_x \phi_{i,t}^2 p(\bar v|\tilde v) dx,\\
& \overline{{B}_{2i}^+} := \frac{1}{2\s_i} \int_\bbr (a_i)_x\phi_{i,t}^2 \big|p(\bar v)-p(\tilde v)\big|^2  dx,\\
& \overline{{G}_{2i}}:=\sigma_i  \int_\bbr  (a_i)_x \bigg( \frac{1}{2\gamma}  p(\tilde v)^{-\frac{1}{\gamma}-1} \phi_{i,t}^2\big(p(\bar v)-p(\tilde v)\big)^2 - \frac{1+\gamma}{3\gamma^2} p(\tilde v)^{-\frac{1}{\gamma}-2} \phi_{i,t}^3\big(p(\bar v)-p(\tilde v)\big)^3 \bigg) dx, \\
& \overline{{Y}^g_i} :=-\frac{1}{2\sigma_i^2}\int_\bbr (a_i)_x \phi_{i,t}^2 |p(\bar v)-p(\tilde v)|^2 dx -\int_\bbr (a_i)_x \phi_{i,t}^2 Q(\bar v|\tilde v) dx \\
&\qquad\qquad -\int_\bbr a \partial_x p(\tilde v_i) \phi_{i,t} (\bar v-\tilde v)dx+\frac{1}{\sigma_i}\int_\bbr a  (\tilde h_i)_x \phi_{i,t} \big(p(\bar v)-p(\tilde v)\big)dx .
\end{aligned}
\end{align*}
We will only prove the case when $i=1$, since the other case can be shown in the same way.
To estimate $\left|{B}_{11}(\bar U) -\mathcal{I}_{11}(\bar v) \right| $, we first separate it into two parts:
\[
\left|{B}_{11}(\bar U) -\mathcal{I}_{11}(\bar v) \right| \le \left|{B}_{11}(\bar U) - \overline{{B}_{11}} \right| +  \left|\overline{{B}_{11}}-\mathcal{I}_{11}(\bar v) \right| .
\]
Take $\deo$ such that $\deo<\sqrt\eps_0$ for the constant $\eps_0$ of Lemma \ref{lem:phi}.
Since $\phi_{1,t}+\phi_{2,t}=1$ with $|\phi_{i,t}|\le1$ for any $i=1, 2$, and $p(\bar v|\tilde v) \le C$, we use Lemma \ref{lem:phi} to have
\[
\left|{B}_{11}(\bar U) - \overline{{B}_{11}} \right|  \le C \int_\bbr |(\tilde v_1)_x| \phi_{2,t} p(\bar v|\tilde v) dx  \le C \int_\bbr |(\tilde v_1)_x| \phi_{2,t} dx  
\le C \eps_1 \exp\big( -C\eps_1 t \big)  .
\]
To control $\left|\overline{{B}_{11}}-\mathcal{I}_{11}(\bar v) \right|$, we first use Lemma \ref{lem:tri} to find
\[
p(\bar v|\tilde v)-p(\bar v|\tilde v_1) = -p(\tilde v|\tiv_1) + (p'(\tiv)-p'(\tiv_1)) (\tiv - \bar v).
\]
Then, using $\bar v, \tiv, \tiv_i \in (v_-/2 , 2v_-)$ and \eqref{pressure0}, we have
\begin{align*}
 \left|\overline{{B}_{11}}-\mathcal{I}_{11}(\bar v) \right| \le C\int_\bbr |(\tilde v_1)_x| \Big(p(\tilde v|\tiv_1) +|p'(\tiv)-p'(\tiv_1)| |\tiv - \bar v| \Big) dx  \le C \int_\bbr  |(\tilde v_1)_x| |\tiv-\tiv_1| dx ,
\end{align*}
Therefore, using  \eqref{vdid1}, there exists a constant $C_\eps>0$ such that
\begin{align*}
\left|{B}_{11}(\bar U) -\mathcal{I}_{11}(\bar v) \right| &\le C \eps_1 \exp\big( -C\eps_1 t \big)  + C\eps_1\eps_2 \exp\Big( -C\min(\eps_1,\eps_2) t \Big) \\
&\le  C \delta_0 \exp\Big( -C_\eps t \Big).
\end{align*}
Similarly, we have
\begin{align*}
 \left|{B}_{21}^+(\bar U)-\mathcal{I}_{21}(\bar v)  \right| &\le \left|{B}_{21}^+(\bar U) - \overline{{B}_{21}^+} \right| + \left|\overline{{B}_{2i}^+}-\mathcal{I}_{21}(\bar v) \right| \\
 &\le  C\frac{\lambda}{\eps_1}\int_\bbr |(\tiv_1)_x | \bigg(\phi_{2,t} \big| p(\bar v)-p(\tilde v) \big|^2   +  \Big| \big|p(\bar v)-p(\tilde v) \big|^2 -\big|p(\bar v)-p(\tilde v_1)\big|^2 \Big| \bigg)dx \\
 &\le  C\frac{\lambda}{\eps_1}\int_\bbr |(\tiv_1)_x | \bigg(\phi_{2,t} +  |p(\tiv)-p(\tiv_1)| \bigg) dx\\
 &\le  C\frac{\lambda}{\eps_1}\int_\bbr |(\tiv_1)_x | \bigg(\phi_{2,t} +  |\tiv-\tiv_1| \bigg) dx \le  C \delta_0 \exp\Big( -C_\eps t \Big) .
 \end{align*}
Likewise, using \eqref{vhre} and Lemma \ref{lem:tri}, we have
\begin{align*}
\begin{aligned}
& \left| Y^g_1(\bar U) - \mathcal{Y}^g_1(\bar v) \right| \le  \left| Y^g_1(\bar U) - \overline{{Y}^g_1} \right| +\left|\overline{{Y}^g_1}-\mathcal{Y}^g_1(\bar v) \right| \\
 &\le C\frac{\lambda}{\eps_1}\int_\bbr |(\tiv_1)_x | \phi_{2,t} dx + C\frac{\lam}{\eps_1}  \int_\bbr  |(\tilde v_1)_x| \Big(|p(\tiv)-p(\tiv_1)| + Q(\tiv|\tiv_1) + |Q'(\tiv)-Q'(\tiv_1)| \Big) dx\\
&\quad +  C\int_\bbr  |(\tilde v_1)_x| \Big(|\tiv-\tiv_1| + |p(\tiv)-p(\tiv_1)| \Big) dx\\
&\le  C\frac{\lambda}{\eps_1}\int_\bbr |(\tiv_1)_x | \bigg(\phi_{2,t} +  |\tiv-\tiv_1| \bigg) dx \le  C \delta_0 \exp\Big( -C_\eps t \Big) .
\end{aligned}
\end{align*}
Next, to estimate
 \[
 -\left({G}_{21} (\bar U)- \mathcal{G}_{21} (\bar v) \right) =  -\left({G}_{21} (\bar U)- \overline{{G}_{21}} \right) - \left(\overline{{G}_{21}} - \mathcal{G}_{2i} (\bar v) \right),
 \]
we first observe that \eqref{Q-est11} with $\sigma_i (a_i)_x>0$ for any $i$ implies
\begin{align*}
&{G}_{21} (\bar U)- \overline{{G}_{21}} \\
&= \sigma_1  \int_\bbr  (a_1)_x \bigg[ Q(\bar v|\tilde{v}) -\bigg( \frac{1}{2\gamma}  p(\tilde v)^{-\frac{1}{\gamma}-1} \phi_{1,t}^2\big(p(\bar v)-p(\tilde v)\big)^2 \\
&\qquad \qquad - \frac{1+\gamma}{3\gamma^2} p(\tilde v)^{-\frac{1}{\gamma}-2} \phi_{1,t}^3\big(p(\bar v)-p(\tilde v)\big)^3 \bigg) \bigg] dx \\
&\ge \sigma_1  \int_\bbr  (a_1)_x \bigg[ \frac{1}{2\gamma}  p(\tilde v)^{-\frac{1}{\gamma}-1} (1-\phi_{1,t}^2) \big(p(\bar v)-p(\tilde v)\big)^2  \\
&\qquad \qquad - \frac{1+\gamma}{3\gamma^2} p(\tilde v)^{-\frac{1}{\gamma}-2} (1-\phi_{1,t}^3) \big(p(\bar v)-p(\tilde v)\big)^3 \bigg] dx ,
\end{align*}
and thus,
\[
-\left({G}_{21} (\bar U)- \overline{{G}_{21}} \right) \le C\frac{\lambda}{\eps_1}\int_\bbr |(\tiv_1)_x | \phi_{2,t}  dx \le C  \deltaz  \exp\big( -C\eps_1 t \big)  .
\]
Moreover, since
\begin{align*}
\big|\overline{{G}_{21}}-\mathcal{G}_{21} (\bar v) \big| &\le C \frac{\lam}{\eps_1}  \int_\bbr  |(\tilde v_1)_x| \Big(\left|p(\tiv)^{-\frac{1}{\gamma}-1}-p(\tiv_1)^{-\frac{1}{\gamma}-1}\right| + |p(\tiv)-p(\tiv_1)| \\
&\qquad + \left|p(\tiv)^{-\frac{1}{\gamma}-2}-p(\tiv_1)^{-\frac{1}{\gamma}-2}\right| \Big) dx \le C\frac{\lam}{\eps_1}  \int_\bbr  |(\tilde v_1)_x| |\tiv-\tiv_1| dx,
\end{align*}
we have
 \[
 -\left({G}_{21} (\bar U)- \mathcal{G}_{21} (\bar v) \right) \le C \delta_0 \exp\Big( -C_\eps t \Big) .
 \]

\noindent{\bf Proof of \eqref{fwsd}:} 
First, using the fact that $\phi_{1,t}+\phi_{2,t}=1$ and $1\ge \phi_{i,t}\ge \phi_{i,t}^2\ge0$ for any $i$, we separate $D(\bar U)$ into
\[
D(\bar U) = \int_\bbr a\, \bar v^\beta (\phi_{1,t}+\phi_{2,t}) |\partial_x \big(p(\bar v)-p(\tilde v)\big)|^2 dx \ge \sum_{i=1}^2 \int_\bbr a\, \bar v^\beta \phi_{i,t}^2 |\partial_x \big(p(\bar v)-p(\tilde v)\big)|^2 dx .
\]
Since Young's inequality yields
\begin{align*}
\int_\bbr a\, \bar v^\beta |\partial_x \big(\phi_{i,t}(p(\bar v)-p(\tilde v))\big)|^2 dx &\le (1+\deltaz) \int_\bbr a\, \bar v^\beta \phi_{i,t}^2 |\partial_x \big(p(\bar v)-p(\tilde v)\big)|^2 dx \\
&\quad + \frac{C}{\deltaz}  \int_\bbr a\, \bar v^\beta  |\partial_x \phi_{i,t}|^2 |p(\bar v)-p(\tilde v)|^2 dx ,
\end{align*}
we have
\[
-D(\bar U) \le -  \frac{1}{1+\deltaz} \sum_{i=1}^2 \underbrace{\int_\bbr a\, \bar v^\beta |\partial_x \big(\phi_{i,t}(p(\bar v)-p(\tilde v))\big)|^2 dx}_{=:\overline{{D}_i }}
+  \frac{C}{\deltaz} \sum_{i=1}^2 \int_\bbr a\, \bar v^\beta  |\partial_x \phi_{i,t}|^2 |p(\bar v)-p(\tilde v)|^2 dx .
\]
Note that since \eqref{sx12} yields
\[
\frac{1}{2}\Big( (X_2(t)+\s_2 t) - (X_1(t)+\s_1 t) \Big) \ge \frac{\s_2- \s_1}{4} t >0,
\]
it follows from \eqref{1phi}-\eqref{2phi} that for each $i=1, 2$,
\beq\label{phidecay}
|\partial_x \phi_{i,t}(x)| \le \frac{4}{\s_2- \s_1} \frac{1}{t},\quad\forall x\in\bbr, \quad t>0.
\eeq
This together with $C^{-1}\le a \bar v^\beta \le C$ implies
\beq\label{wsd}
-D(\bar U) \le -  \frac{1}{1+\deltaz} \sum_{i=1}^2  \overline{{D}_i } +  \frac{C}{\deltaz} \frac{1}{t^2} \int_\bbr  \eta(U|\tilde U) dx.
\eeq
To estimate $ \overline{{D}_i }$, since Young's inequality yields
\begin{align*}
\int_\bbr a\, \bar v^\beta |\partial_x \big(\phi_{i,t}(p(\bar v)-p(\tilde v_i))\big)|^2 dx &\le (1+\deltaz)\int_\bbr a\, \bar v^\beta |\partial_x \big(\phi_{i,t}(p(\bar v)-p(\tilde v))\big)|^2 dx \\
&\quad + \frac{C}{\deltaz}\int_\bbr a\, \bar v^\beta |\partial_x \big(\phi_{i,t}(p(\tiv)-p(\tiv_i))\big)|^2 dx  ,
\end{align*}
we have
\[
- \overline{{D}_i }  \le -  \frac{1}{1+\deltaz} \int_\bbr a\, \bar v^\beta |\partial_x \big(\phi_{i,t}(p(\bar v)-p(\tilde v_i))\big)|^2 dx
+ \underbrace{ \frac{C}{\deltaz} \int_\bbr a\, \bar v^\beta |\partial_x \big(\phi_{i,t}(p(\tiv)-p(\tiv_i))\big)|^2 dx}_{=:J_i}.
\]
First, when $i=1$, we observe that
\begin{align*}
J_1&\le \frac{C}{\deltaz} \int_\bbr  |\partial_x\phi_{1,t}|^2 |p(\tiv)-p(\tiv_1)|^2 dx + \frac{C}{\deltaz}\int_\bbr  \phi_{1,t}^2 |\partial_x\big(p(\tiv)-p(\tiv_1)\big)|^2 dx \\
&\le \frac{C}{\deltaz}\underbrace{\int_\bbr  |\partial_x\phi_{1,t}|^2 |\tiv-\tiv_1|^2 dx}_{=:J_{11}}+ \frac{C}{\deltaz}\underbrace{ \int_\bbr \Big( |p'(\tiv)-p'(\tiv_1)|^2 |(\tiv_1)_x|^2 +\phi_{1,t}^2 |p'(\tiv)| |(\tiv_2)_x|^2 \Big) dx }_{=:J_{12}} .
\end{align*}
Using \eqref{vdid1} and Lemma \ref{lem:phi}, we have
\begin{align*}
J_{12} &\le C \int_\bbr \Big( |\tiv-\tiv_1|^2 |(\tiv_1)_x|^2 + |\phi_{1,t} (\tiv_2)_x|^2 \Big) dx \\
&\le C\eps_1\eps_2 \exp\Big( -C\min(\eps_1,\eps_2) t \Big)
+ C \eps_2 \exp\big( -C\eps_2 t\big) .
\end{align*}
Since \eqref{1phi}-\eqref{2phi} implies that for each $i=1, 2$,
\[
\partial_x\phi_{i,t}(x) =\partial_x\phi_{i,t}(x) \mathbf{1}_{\{\frac{1}{2}(X_1(t)+\s_1t) \le x\le \frac{1}{2}(X_2(t)+\s_2t)\}} ,
\]
it follows from \eqref{vv1} and \eqref{phidecay} with \eqref{1phi} that
\beq\label{temphiv}
|\partial_x\phi_{1,t}| |\tiv-\tiv_1| \le \frac{C}{t} \eps_2 \exp\big( -C\eps_2|x-\s_2t-X_2(t)| \big)\mathbf{1}_{\{\frac{1}{2}(X_1(t)+\s_1t) \le x\le \frac{1}{2}(X_2(t)+\s_2t)\}} .
\eeq
Then, using
\[
x\le \frac{X_2(t)+\s_2t}{2}\quad\Rightarrow\quad x-(\s_2 t + X_2(t)) \le - \frac{X_2(t)+\s_2t}{2} \le -\frac{\s_2}{4}t <0,
\]
we have
\[
|\partial_x\phi_{1,t}| |\tiv-\tiv_1| \le \frac{C}{t} \eps_2 \exp\big( -C\eps_2 t\big),
\]
which together with \eqref{temphiv} implies
\[
|\partial_x\phi_{1,t}|^2 |\tiv-\tiv_1|^2 \le \frac{C}{t^2} \eps_2^2 \exp\big( -C\eps_2 t\big) \exp\big( -C\eps_2|x-\s_2t-X_2(t)| \big).
\]
Thus, we have
\[
J_{11}\le \frac{C}{t^2} \eps_2\exp\big( -C\eps_2 t\big).
\]
Therefore, 
\[
J_1\le  C \deltaz \left(\exp\big( -C_\eps t\big) + \frac{1}{t^4} \right).
\]
Likewise, using \eqref{vdid1}, Lemma \ref{lem:phi} and \eqref{vv2}, we have
\[
J_{2} \le   C \deltaz \left(\exp\big( -C_\eps t\big) + \frac{1}{t^4} \right).
\]
Hence we have 
\[
- \overline{{D}_i }  \le -  \frac{1}{1+\deltaz} \mathcal{D}_i (\bar v)
+ C \deltaz \left(\exp\big( -C_\eps t\big) + \frac{1}{t^4} \right).
\]
which together with \eqref{wsd} gives the desired result.

\end{proof}

\subsection{Proof of Proposition \ref{prop:main}}
We first apply Proposition \ref{prop:main3} to the functionals of \eqref{note-in} for each waves, and to the weights of \eqref{weight-a}.
Let us take $\deltaz$ small enough as $\deltaz<\delta_1$. 
Then, $U_-, U_m, U_+ \in B_{\delta_0}(U_*)\subset B_{\deltao}(U_*)$. Note that it follows from \eqref{weight-a} that for each $i=1, 2$,
\[
\partial_x a_i = -\lam \frac{\partial_x p(\tilde v_i)}{\eps_i}, \quad\mbox{where } \eps_1=|p(v_-)-p(v_m)|,\quad \eps_2=|p(v_m)-p(v_+)|,
\]
and the weight $a$ satisfies $\|a-1\|_{L^\infty(\bbr)}\le \|a_1-1\|_{L^\infty(\bbr)}+\|a_2-1\|_{L^\infty(\bbr)} \le 2\lam$.\\
Moreover, it follows from Lemma \ref{lem:essy} that for each $i=1, 2$,
\begin{equation}\label{YC2}
|\mathcal{Y}^g_i(\bar v)| \leq C_2 \frac{\eps_i^2}{\lambda}, \quad \forall t\ge t_0. 
\end{equation}
In addition, using \eqref{vv1}, \eqref{vv2} with $\eps_i\le\deltaz\ll\deltao$ (by taking $\deltaz$ small enough as $\deltaz\ll\delta_1$), we have
\[
|p(\bar v)-p(\tiv_i)|\le |p(\bar v)-p(\tiv)| +|p(\tiv)-p(\tiv_i)| \le \deltao +C|\tiv-\tiv_i|\le \deltao + C\deltaz \le 2\deltao.
\]
Therefore, since the two waves $\tilde U_1, \tilde U_2$ and the $\bar v$ satisfy the hypotheses of Proposition \ref{prop:main3} when $t\ge t_0$, we find that for each $i=1, 2$,
\begin{align}
\begin{aligned}\label{newinside}
\mathcal{R}_{\deltao}^i(\bar v)&:=-\frac{1}{\eps_i\deltao}|\mathcal{Y}^g_i(\bar v)|^2 +\mathcal{I}_{1i}(\bar v)+\deltao|\mathcal{I}_{1i}(\bar v)|\\
&\quad\quad+\mathcal{I}_{2i}(\bar v)+\deltao\left(\frac{\eps_i}{\lambda}\right)|\mathcal{I}_{2i}(\bar v)|-\left(1-\deltao\left(\frac{\eps_i}{\lambda}\right)\right)\mathcal{G}_{2i}(\bar v)-(1-\deltao)\mathcal{D}_i(\bar v) \\
&\le 0,\qquad \forall t\ge t_0.
\end{aligned}
\end{align}

For simplicity, we here use the following notations $G_{Y_i}$ to denote the parts of $Y_i$ in $\mathcal{R}$:
\begin{align*}
\begin{aligned}
G_{Y_1}(U)&:= -\frac{1}{\eps_1^4} |Y_1(U)|^2 {\mathbf 1}_{\{0\le Y_1(U) \le \eps_1^2\}} 
+\frac{\s_1}{2\eps_1^2} |Y_1(U)|^2 {\mathbf 1}_{\{ -\eps_1^2 \le Y_1(U) \le 0 \}} 
-\frac{\s_1}{2} Y_1(U) {\mathbf 1}_{\{Y_1(U) \le -\eps_1^2 \}}, \\
G_{Y_2}(U)&:=-\frac{1}{\eps_2^4} |Y_2(U)|^2 {\mathbf 1}_{\{-\eps_2^2\le Y_2(U) \le 0 \}} 
-\frac{\s_2}{2\eps_2^2}  |Y_2(U)|^2 {\mathbf 1}_{\{0\le Y_2(U) \le\eps_2^2 \}}
-\frac{\s_2}{2} Y_2(U) {\mathbf 1}_{\{Y_2(U) \ge \eps_2^2 \}}.
\end{aligned}
\end{align*}
That is,
\begin{align*}
\mathcal{R}(U) &= G_{Y_1}(U)+G_{Y_2}(U) +  {B}_{\deltao}(U)+\delta_0\frac{\min(\eo,\et)}{\lambda} |{B}_{\deltao}(U)|
-{G}_{11}^{-}(U)-{G}_{11}^{+}(U)  \\
&\quad-{G}_{12}^{-}(U)-{G}_{12}^{+}(U)  -\left(1-\delta_0\frac{\eps_1}{\lambda}\right){G}_{21}(U) -\left(1-\delta_0\frac{\eps_2}{\lambda}\right){G}_{22}(U) -(1-\delta_0){D}(U) .
\end{align*}

\noindent{\bf Step 1)} We first get a rough bound for short time $t\le t_0$ as follows. \\
Using $G_{Y_1}(U)\le 0$ and $G_{Y_2}(U)\le 0$ (by $\s_1<0<\s_2$), and $\deltaz<1/2$, we find that for all $t>0$,
\[
\mathcal{R}(U)\le 2 |{B}_{\deltao}(U)| -(1-\delta_0){D}(U).
\]
Thus, using \eqref{n2} of Proposition \ref{prop_out} and taking $\deltaz\ll1$, we find that for all $U$ satisfying  $Y_1(U)\le\eps_1^2$ and $Y_2(U) \ge -\eps_2^2$, 
\[
\mathcal{R}(U) \le C.
\]

\vskip0.2cm
\noindent{\bf Step 2)} This step is for long time $t\ge t_0$.
Without loss of generality, we assume $\eps_1 < \eps_2$.
We split this step into the following three cases, depending on the strength of the dissipation term ${D}(U)$ :
(i) ${D}(U)> \frac{2 C^*}{\sqrt\deltaz} \frac{\eps_2^2}{\lambda}$; (ii) ${D}(U) \le \frac{2 C^*}{\sqrt\deltaz} \frac{\eps_1^2}{\lambda}$; (iii) $\frac{2 C^*}{\sqrt\deltaz} \frac{\eps_2^2}{\lambda}\ge {D}(U)> \frac{2 C^*}{\sqrt\deltaz} \frac{\eps_1^2}{\lambda}$, where $C^*$ is the constant of Proposition \ref{prop_out}.\\

\noindent{\bf Case i) } 
Assume ${D}(U)> \frac{2 C^*}{\sqrt\deltaz} \frac{\eps_2^2}{\lambda}$.
Then, ${D}(U)> \frac{2 C^*}{\sqrt\deltaz} \frac{\eps_i^2}{\lambda}$ for all $i=1, 2$ (by $\eps_1 < \eps_2$).
We first have
\begin{align*}
\begin{aligned}
\mathcal{R}(U)&\le 2 |{B}_{\deltao}(U)|
-\sum_{i=1}^2  \Big( {G}_{1i}^-(U) +{G}_{1i}^+(U)+ \frac{1}{2} \big({G}_{2i}(U)-{G}_{2i}(\bar U)\big) + \frac{1}{2}{G}_{2i}(\bar U) \Big) -\frac{1}{2} {D}(U).
\end{aligned}
\end{align*}
Since it follows from \eqref{n1}, \eqref{ib12}, \eqref{n3} and \eqref{n6}  with $\eps_i/\lam<\deltaz\le\deltao\ll 1$ that  for some constant $C_\eps$ (depending on $\eps_1, \eps_2$),
\begin{align*}
& |{B}_{\deltao}(U)|\le
2 \sum_{i=1}^2 \Big( |{B}_{1i}(U)-{B}_{1i}(\bar U)| + |{B}_{2i}^+(U)-{B}_{2i}^+(\bar U)|+ |{B}_{2i}^-(U)| + |{B}_{3i}(U)|+ |{B}_{4i}(U)| \Big)  \\
&\quad\qquad + 2\sum_{i=1}^2 \big(|{B}_{1i}(\bar U)|+|{B}_{2i}^+(\bar U)| \big) + 2|{B}_{5}(U)|+ 2|{B}_6(U)| \\
&\quad \le C\sqrt\deo D(U)  
+C\deltao \sum_{i=1}^2  \Big( {G}_{1i}^-(U) +{G}_{1i}^+(U)+ \left({G}_{2i}(U)-{G}_{2i}(\bar U)\right)\Big) +C\deltaz \sum_{i=1}^2\frac{\eps_i}{\lambda} {G}_{2i}(\bar U) \\
&\quad\qquad  +2C^* \sum_{i=1}^2 \frac{\eps_i^2}{\lambda}  +C \exp\Big( -C_\eps t \Big),
\end{align*}
 we find that for any $t>0$,
\begin{align*}
\begin{aligned}
\mathcal{R}(U)&\le 2C^* \sum_{i=1}^2 \frac{\eps_i^2}{\lambda}  -\frac{1}{4} {D}(U)+ C \exp\Big( -C_\eps t \Big) \\
& \leq C \exp\Big( -C_\eps t \Big) .
\end{aligned}
\end{align*}

\noindent{\bf Case ii) } 
We first notice that \eqref{yabs} of Lemma \ref{lemmeC2} implies that 
\begin{align}
\begin{aligned}\label{ylinear}
-\frac{\s_1}{2} Y_1(U) {\mathbf 1}_{\{Y_1(U) \le -\eps_1^2 \}} &\le -\frac{\s_1}{2} Y_1(U) {\mathbf 1}_{\{Y_1(U)\in  [-C_0\eps_1^2/\lam,-\eps_1^2] \}} \\
&\le \frac{\s_1\lam}{2C_0\eps_1^2} |Y_1(U)|^2 {\mathbf 1}_{\{Y_1(U)\in  [-C_0\eps_1^2/\lam,-\eps_1^2]\}},\\
-\frac{\s_2}{2} Y_2(U) {\mathbf 1}_{\{Y_2(U) \ge \eps_2^2 \}} &\le -\frac{\s_2}{2} Y_2(U) {\mathbf 1}_{\{Y_2(U)\in  [\eps_2^2, C_0\eps_2^2/\lam] \}} \le -\frac{\s_2\lam}{2C_0\eps_2^2} |Y_2(U)|^2{\mathbf 1}_{\{Y_2(U)\in  [\eps_2^2, C_0\eps_2^2/\lam] \}}.
\end{aligned}
\end{align}
Since it follows from \eqref{defyi}  that for each $i=1, 2$,
$$
Y_i(U)=\mathcal{Y}^g_i(\bar v) + \big(Y^g_i(U)-Y^g_i(\bar U)\big) +\big(Y^g_i(\bar U)-\mathcal{Y}^g_i(\bar v)\big) + Y^b_i(U) +Y^l_i(U)+Y^s_i(U),
$$
we have
\begin{align*}
|\mathcal{Y}^g_i(\bar v)|^2 &\le 4\Big(|Y_i(U)|^2+\big|Y^g_i(U)-Y^g_i(\bar U)\big|^2+ \big|Y^g_i(\bar U)-\mathcal{Y}^g_i(\bar v)\big|^2 \\
&\quad+ | Y^b_i(U)|^2+| Y^l_i(U)|^2+| Y^s_i(U)|^2\Big),
\end{align*}
and so,
\begin{align}
\begin{aligned}\label{simpyi}
-4|Y_i(U)|^2 &\leq -|\mathcal{Y}^g_i(\bar v)|^2+4\Big(\big|Y^g_i(U)-Y^g_i(\bar U)\big|^2+ \big|Y^g_i(\bar U)-\mathcal{Y}^g_i(\bar v)\big|^2 \\
&\quad+ | Y^b_i(U)|^2+| Y^l_i(U)|^2+| Y^s_i(U)|^2\Big).
\end{aligned}
\end{align}
Then, using \eqref{ylinear}, we find that for any $\eps_i/\lam<\deltaz$,
\begin{align*}
&G_{Y_1}(U) \le -\frac{4}{\eps_1\deltao}|Y_1(U)|^2 {\mathbf 1}_{\{Y_1(U)\in [-C_0\eps_1^2/\lam,\eps_1^2]\}} ,\\
&G_{Y_2}(U) \le -\frac{4}{\eps_2\deltao}|Y_2(U)|^2 {\mathbf 1}_{\{Y_2(U)\in  [-\eps_2^2, C_0\eps_2^2/\lam] \}} .
\end{align*}
Thus, it follows from \eqref{simpyi} and \eqref{yabs} of Lemma \ref{lemmeC2} that for any $U$ satisfying $Y_1(U)\le\eps_1^2$ and $Y_2(U)\ge-\eps_2^2$, 
\begin{align}
\begin{aligned}\label{gyif}
G_{Y_i}(U)& \le -\frac{|\mathcal{Y}^g_i(\bar v)|^2}{\eps_i\deltao} 
+ \frac{4}{\eps_i\deltao} \Big(\big|Y^g_i(U)-Y^g_i(\bar U)\big|^2+ \big|Y^g_i(\bar U)-\overline{{Y}^g_i}\big|^2 \\
&\quad + \big|\overline{{Y}^g_i }-\mathcal{Y}^g_i(\bar v)\big|^2 + | Y^b_i(U)|^2+| Y^l_i(U)|^2+| Y^s_i(U)|^2\Big) .
\end{aligned}
\end{align}
Next, for the diffusion term, we use \eqref{fwsd} to have
\begin{align*}
\begin{aligned}
& -(1-\delta_0){D}(U) \le  -((\deltao/2)-\delta_0){D}(U) -(1-\deltao/2){D}(\bar U)\\
&\quad \le -\Big(\frac{\deltao}{2}-\delta_0\Big){D}(U) -  \frac{1-\deltao/2}{(1+\deltaz)^2} \sum_{i=1}^2 \mathcal{D}_i(\bar v) + C \left(\exp\big( -C_\eps t\big) + \frac{1}{t^4} \right)+  \frac{C}{\deltaz} \frac{1}{t^2} \int_\bbr  \eta(U|\tilde U) dx\\
&\quad \le -\Big(\frac{\deltao}{2}-\delta_0\Big){D}(U) -  (1-\deltao) \sum_{i=1}^2 \mathcal{D}_i(\bar v) + C  \left(\exp\big( -C_\eps t\big) + \frac{1}{t^4} \right)+  \frac{C}{\deltaz} \frac{1}{t^2} \int_\bbr  \eta(U|\tilde U) dx .
\end{aligned}
\end{align*}
Therefore, this and \eqref{gyif} imply that for any  $\eps_i/\lambda<\delta_0$,
\begin{align*}
\begin{aligned}
\mathcal{R}(U)\le \sum_{i=1}^2 \big( \mathcal{R}_{\deltao}^i(\bar v)  +\mathbb{Y}_i+2\mathbb{B}_i  + \mathbb{G}_i \big) +\mathbb{D} + C  \left(\exp\big( -C_\eps t\big) + \frac{1}{t^4} \right)+  \frac{C}{\deltaz} \frac{1}{t^2} \int_\bbr  \eta(U|\tilde U) dx  ,
\end{aligned}
\end{align*}
where $\mathcal{R}_{\deltao}^i$ denotes the functional in \eqref{newinside} as 
\begin{align*}
\mathcal{R}_{\deltao}^i(\bar v)& =-\frac{1}{\eps_i\deltao}|\mathcal{Y}^g_i(\bar v)|^2 +\mathcal{I}_{1i}(\bar v)+\deltao|\mathcal{I}_{1i}(\bar v)|\\
&\quad\quad+\mathcal{I}_{2i}(\bar v)+\deltao\left(\frac{\eps_i}{\lambda}\right)|\mathcal{I}_{2i}(\bar v)|-\left(1-\deltao\left(\frac{\eps_i}{\lambda}\right)\right)\mathcal{G}_{2i}(\bar v)-(1-\deltao)\mathcal{D}_i(\bar v),
\end{align*}
and
\begin{align*}
\mathbb{Y}_i&:=  \frac{4}{\eps_i\deltao} \Big(\big|Y^g_i(U)-Y^g_i(\bar U)\big|^2+ \big|Y^g_i(\bar U)-\mathcal{Y}^g_i(\bar v)\big|^2  + | Y^b_i(U)|^2+| Y^l_i(U)|^2+| Y^s_i(U)|^2\Big)
 ,\\
\mathbb{B}_i&:=   |{B}_{1i}(U)-{B}_{1i}(\bar U)| + \left|{B}_{1i}(\bar U) -\mathcal{I}_{1i}(\bar v) \right| + |{B}_{2i}^+(U)-{B}_{2i}^+(\bar U)|\\
&\qquad + \left|{B}_{2i}^+(\bar U) -\mathcal{I}_{2i}(\bar v) \right| + |{B}_{2i}^-(U)| + |{B}_{3i}(U)|+ |{B}_{4i}(U)| + |{B}_{5}(U)|+ |{B}_6(U)| \\
&\qquad -\left(1-\deltao\frac{\eps_i}{\lambda}\right) \Big({G}_{2i} (\bar U)-\mathcal{G}_{2i} (\bar v)  \Big) ,
\end{align*}
and $\mathbb{G}_i$ and $\mathbb{D}$ contain the good terms as
\begin{align*}
\mathbb{G}_i &:= -{G}_{1i}^{-}(U)-{G}_{1i}^{+}(U)-\left(1-\delta_0\frac{\eps_i}{\lambda}\right)\big({G}_{2i}(U)-{G}_{2i}(\bar U)\big) -(\deltao-\deltaz)\frac{\eps_i}{\lambda}{G}_{2i} (\bar U) ,\\
\mathbb{D} &:= -\Big(\frac{\deltao}{2}-\delta_0\Big){D}(U) .
\end{align*}
Since we consider the case when ${D}(U) \le \frac{2 C^*}{\sqrt\deltaz} \frac{\eps_1^2}{\lambda}$, we have ${D}(U) \le \frac{2 C^*}{\sqrt\deltaz} \frac{\eps_i^2}{\lambda}$ for all $i=1, 2$ (by $\eps_1 < \eps_2$). 
Thus, using Proposition \ref{prop_out} and Lemma \ref{lem:ws}, we find that for any $\eps_i/\lambda<\delta_0$ and $\lambda<\delta_0$,
\begin{align*}
\begin{aligned}
&\sum_{i=1}^2\mathbb{Y}_i  \\
&\le \frac{C}{\deltao}\sum_{i=1}^2 \frac{\eps_i}{\lambda}\bigg( \sqrt{\frac{\eps_i}{\lambda}}{D}(U)+\left({G}_{2i}(U)- {G}_{2i} (\bar U) \right) +\left(\frac{\eps_i}{\lambda}\right)^{1/4} {G}_{2i}(\bar U) +{G}_{1i}^-(U) + \left(\frac{\lambda}{\eps_i}\right)^{1/4} {G}_{1i}^+(U)  \bigg) \\
& \quad+ C_{\eps} \exp\Big( -C_\eps t \Big) \\
&\le \frac{C}{\delta_1}\sum_{i=1}^2  \left(\frac{\eps_i}{\lambda}\right)^{1/4} \left( {D}(U)+\left({G}_{2i}(U)- {G}_{2i} (\bar U) \right)+{G}_{1i}^-(U) +  {G}_{1i}^+(U) +\frac{\eps_i}{\lambda} {G}_{2i}(\bar U) \right) + C_{\eps} e^{ -C_\eps t } ,
\end{aligned}
\end{align*}
which together with taking $\deltaz$ small enough as $\deltaz\ll\delta_1^8$ implies
\begin{align*}
\begin{aligned}
\sum_{i=1}^2\mathbb{Y}_i&\le C\frac{\delta_0^{1/4}}{\delta_1} \sum_{i=1}^2 \left( {D}(U)+\left({G}_{2i}(U)- {G}_{2i} (\bar U) \right)+{G}_{1i}^-(U) +  {G}_{1i}^+(U) +\frac{\eps_i}{\lambda} {G}_{2i}(\bar U) \right)+ C_{\eps} e^{ -C_\eps t } \\
&\le -\frac{1}{4}\Big(\mathbb{D} +\sum_{i=1}^2\mathbb{G}_i \Big)+ C_{\eps} e^{ -C_\eps t }   .
\end{aligned}
\end{align*}
Likewise, we have
\begin{align*}
\begin{aligned}
&2\sum_{i=1}^2\mathbb{B}_i  \\
&\le C\sqrt\deo D(U)  
+C\deltao \sum_{i=1}^2  \Big( {G}_{1i}^-(U) +{G}_{1i}^+(U)+ \left({G}_{2i}(U)-{G}_{2i}(\bar U)\right)\Big) +C\deltaz \sum_{i=1}^2\frac{\eps_i}{\lambda} {G}_{2i}(\bar U) \\
&\quad + C e^{ -C_\eps t } \\
&\le -\frac{1}{4}\Big(\mathbb{D} +\sum_{i=1}^2\mathbb{G}_i \Big) + C e^{ -C_\eps t } .
\end{aligned}
\end{align*}
Therefore,
\begin{align*}
\begin{aligned}
\mathcal{R}(U)&\le \sum_{i=1}^2 \mathcal{R}_{\deltao}^i(\bar v)
+   C_\eps \left(\exp\big( -C_\eps t\big) + \frac{1}{t^4} \right)+  \frac{C}{\deltaz} \frac{1}{t^2} \int_\bbr  \eta(U|\tilde U) dx  .
\end{aligned}
\end{align*}
Hence, it follows from \eqref{newinside} that
\[
\mathcal{R}(U) \le  C_\eps \left(\exp\big( -C_\eps t\big) + \frac{1}{t^4} \right)+  \frac{C}{\deltaz} \frac{1}{t^2} \int_\bbr  \eta(U|\tilde U) dx, \quad \forall t\ge t_0.
\]

\noindent{\bf Case iii) } 
Since $\frac{2 C^*}{\sqrt\deltaz} \frac{\eps_2^2}{\lambda}\ge {D}(U)> \frac{2 C^*}{\sqrt\deltaz} \frac{\eps_1^2}{\lambda}$, we may combine the strategies of the previous cases. We first have
\begin{align*}
\begin{aligned}
\mathcal{R}(U) &\le  \big( \mathcal{R}_{\deltao}^2(\bar v)  +\mathbb{Y}_2+2\mathbb{B}_2  +\mathbb{G}_2 \big) +\mathbb{D} + C \left(\exp\big( -C_\eps t\big) + \frac{1}{t^4} \right)+  \frac{C}{\deltaz} \frac{1}{t^2} \int_\bbr  \eta(U|\tilde U) dx \\
&\quad + 2  \Big( |{B}_{11}(U)-{B}_{11}(\bar U)| + |{B}_{21}^+(U)-{B}_{21}^+(\bar U)|+ |{B}_{21}^-(U)| + |{B}_{31}(U)|+ |{B}_{41}(U)| \Big)  \\
&\quad +  2\big(|{B}_{11}(\bar U)|+|{B}_{21}^+(\bar U)| \big) - \Big( {G}_{11}^-(U) +{G}_{11}^+(U)+ \frac{1}{2} \big({G}_{21}(U)-{G}_{21}(\bar U)\big) + \frac{1}{2}{G}_{21}(\bar U) \Big)
\end{aligned}
\end{align*}
Since ${D}(U)\le \frac{2 C^*}{\sqrt\deltaz} \frac{\eps_2^2}{\lambda}$, using the same estimates as in Case i) and ii) together with \eqref{newinside}, we have
\[
\mathbb{Y}_2 \le -\frac{1}{4}\Big(\mathbb{D} + \mathbb{G}_2 \Big)+ \deo G_{21}(U)  + C_{\eps} e^{ -C_\eps t } ,
\]
and
\begin{align*}
\begin{aligned}
&2 \Big(\mathbb{B}_2 + |{B}_{11}(U)-{B}_{11}(\bar U)| + |{B}_{21}^+(U)-{B}_{21}^+(\bar U)|+ |{B}_{21}^-(U)| + |{B}_{31}(U)|+ |{B}_{41}(U)| \Big) \\
& \le -\frac{1}{4}\Big(\mathbb{D} + \mathbb{G}_2 \Big)+ C_{\eps} e^{ -C_\eps t } +\frac{1}{2} \Big( {G}_{11}^-(U) +{G}_{11}^+(U)+ \frac{1}{2} \big({G}_{21}(U)-{G}_{21}(\bar U)\big) + \frac{1}{2}{G}_{21}(\bar U) \Big).
\end{aligned}
\end{align*}
In addition, using \eqref{ib12} and \eqref{newinside}, we have
\begin{align*}
\begin{aligned}
\mathcal{R}(U) &\le \frac{1}{2} \mathbb{D} +   C_\eps \left(\exp\big( -C_\eps t\big) + \frac{1}{t^4} \right)+  \frac{C}{\deltaz} \frac{1}{t^2} \int_\bbr  \eta(U|\tilde U) dx   \\
&\quad + 2C^* \frac{\eps_1^2}{\lambda} +\sqrt{\deo}{D}(U).
\end{aligned}
\end{align*}
Since $\deltaz$ is small enough (as $\deltaz\ll\delta_1$) such that
\[
2\sqrt{\deo}{D}(U) \le -\frac{1}{2} \mathbb{D},
\]
we have
\[
\mathcal{R}(U) \le   C_\eps \left(\exp\big( -C_\eps t\big) + \frac{1}{t^4} \right)+  \frac{C}{\deltaz} \frac{1}{t^2} \int_\bbr  \eta(U|\tilde U) dx  + 2C^* \frac{\eps_1^2}{\lambda} -\sqrt{\deo}{D}(U).
\]
Therefore, by ${D}(U)> \frac{2 C^*}{\sqrt\deltaz} \frac{\eps_1^2}{\lambda}$, we have
\[
\mathcal{R}(U) \le    C_\eps \left(\exp\big( -C_\eps t\big) + \frac{1}{t^4} \right)+  \frac{C}{\deltaz} \frac{1}{t^2} \int_\bbr  \eta(U|\tilde U) dx  .
\]

Hence we complete the proof of Proposition \ref{prop:main}.\\

\section{Proof of Theorem \ref{thm_inviscid}}\label{sec:main}
\setcounter{equation}{0}
This section basically follows the same argument as in \cite[Section 5]{KV-unique19}. Therefore, we omit the details of the proof, but briefly present non-trivial parts of the proof for completeness.  
Contrary to \cite[Theorem 1.1]{KV-unique19}, we need to show the estimates \eqref{limX12} and  \eqref{X-control}  by using the separation property \eqref{amsepX12}. \\
First, we choose  $\{(v^{\nu}_0, u^{\nu}_0)\}_{\nu>0}$ a sequence of smooth functions satisfying \eqref{ini_conv}. 
Indeed, such a sequence is obtained by using the same construction as in  \cite[Section 5.1]{KV-unique19}, since
\begin{align*}
\mbox{as }\nu\to0,\qquad 
&\tiv^\nu(0,x)=\tilde v_1(x/\nu)+\tilde v_2(x/\nu) -v_m \to 
\left\{ \begin{array}{ll}
       v_-,\quad \mbox{if } x<0, \\
       v_+,\quad \mbox{if } x>0,\end{array}\ \right. \\
 &\tilde u^\nu(0,x)=\tilde u_1(x/\nu)+\tilde u_2(x/\nu) -u_m \to 
\left\{ \begin{array}{ll}
       u_-,\quad \mbox{if } x<0, \\
       u_+,\quad \mbox{if } x>0.\end{array}\ \right.
\end{align*}

\subsection{Proof of \eqref{wconv}}
Consider $\{(v^{\nu}, u^{\nu})\}_{\nu>0}$ a sequence of solutions on $(0,T)$ to \eqref{inveq} corresponding to the initial datum $(v^{\nu}_0, u^{\nu}_0)$. \\
Applying \eqref{acont_main} to the following functions: 
\begin{align*}
\begin{aligned}
&v(t,x)=v^{\nu}(\nu t, \nu x),\quad \tilde v(t,x):= \tilde v^{\nu}(\nu t, \nu x),\quad u(t,x)=u^{\nu}(\nu t, \nu x),\quad \tilde u(t, x):= \tilde u^{\nu}(\nu t,\nu x),
\end{aligned}
\end{align*}
and using  \eqref{d-weight}, we have
\begin{align*}
\begin{aligned}
&\int_{-\infty}^\infty E\big((v,u)(t,x)| (\tilde v, \tilde u)^{X_1, X_2}(t,x) \big) dx \\
&\qquad+\int_{0}^{T/\nu}\int_{-\infty}^{\infty} |(\partial_x\tilde v)^{X_1, X_2}(t,x)| \, Q\left(v(t,x)|\tilde v^{X_1, X_2}(t,x)\right) dx dt \\
&\qquad +\nu\int_{0}^{T/\nu}\int_{-\infty}^{\infty} v^{\gamma-\alpha}\big|\partial_x\big(p(v(t,x))-p(\tilde v^{X_1, X_2}(t,x))\big)\big|^2dxdt \\
&\le   C\int_{-\infty}^{\infty} E\big((v_0^\nu,u_0^\nu)(\nu x)| (\tilde v, \tilde u)(0,x)\big) dx.
\end{aligned}
\end{align*}
Then by the change of variables $t\mapsto t/\nu, x\mapsto x/\nu$, we have
\begin{align*}
\begin{aligned}
&\int_{-\infty}^{\infty}  E_\nu \big((v^{\nu},u^{\nu})(t,x)| (\tilde v^{\nu}, \tilde u^{\nu})^{X_1^\nu, X_2^\nu}(t,x)\big) dx \\
&\qquad+\int_{0}^{T}\int_{-\infty}^{\infty} |(\partial_x\tilde v^\nu)^{X_1^\nu, X_2^\nu}(t,x) | Q\left(v^{\nu}(t,x)|(\tilde v^\nu)^{X_1^\nu, X_2^\nu}(t,x)\right) dx dt \\
&\qquad +\nu\int_{0}^{T}\int_{-\infty}^{\infty} (v^{\nu})^{\gamma-\alpha}(t,x)\big|\partial_x\big(p(v^{\nu}(t,x))-p((\tilde v^\nu)^{X_1^\nu, X_2^\nu}(t,x))\big)\big|^2dxdt\\
&\quad \le C\int_{-\infty}^{\infty} E_\nu\big((v^{\nu}_0,u^{\nu}_0)(x)| (\tilde v^{\nu}, \tilde u^{\nu})(0,x) \big) dx,
\end{aligned}
\end{align*}
where $X^{\nu}_i(t):= \nu X_i(t/\nu)$ for each $i=1, 2$, and 
\beq\label{E_nu}
E_\nu((v_1,u_1)|(v_2,u_2)) :=\frac{1}{2}\left(u_1 +\nu \Big(p(v_1)^{\frac{\alpha}{\gamma}}\Big)_x -u_2 -\nu \Big(p(v_2)^{\frac{\alpha}{\gamma}}\Big)_x  \right)^2 +Q(v_1|v_2).
\eeq
By considering the variables: 
\beq\label{effective}
h^{\nu}:=u^{\nu}+\nu \Big(p(v^\nu)^{\frac{\alpha}{\gamma}}\Big)_x,\quad \tilde h^{\nu}:=\tilde u^{\nu}+\nu \Big(p(\tilde v^\nu)^{\frac{\alpha}{\gamma}}\Big)_x, 
\eeq
which satisfy
\[
E_\nu \big((v^{\nu},u^{\nu})| (\tilde v^{\nu}, \tilde u^{\nu})\big) = \eta\big((v^{\nu},h^{\nu})| (\tilde v^{\nu}, \tilde h^{\nu})\big),
\]
and using \eqref{ini_conv}, we find that
\begin{align}
\begin{aligned}\label{ineq-m}
&\mbox{for any $\delta\in(0,1)$, there exists $\nu_*$ such that for all $\nu<\nu_*$}, \\
&\int_{-\infty}^{\infty}  \eta \big((v^{\nu},h^{\nu})(t,x)| (\tilde v^{\nu}, \tilde h^{\nu})^{X_1^\nu, X_2^\nu}(t,x)\big) dx \\
&\qquad+\int_{0}^{T}\int_{-\infty}^{\infty} |(\partial_x\tilde v^\nu)^{X_1^\nu, X_2^\nu}(t,x) | Q\left(v^{\nu}(t,x)|(\tilde v^\nu)^{X_1^\nu, X_2^\nu}(t,x)\right) dx dt \\
&\qquad +\nu\int_{0}^{T}\int_{-\infty}^{\infty} (v^{\nu})^{\gamma-\alpha}(t,x)\big|\partial_x\big(p(v^{\nu}(t,x))-p((\tilde v^\nu)^{X_1^\nu, X_2^\nu}(t,x))\big)\big|^2dxdt\\
&\quad \le C\mathcal{E}_0 +\delta.
\end{aligned}
\end{align}
Therefore, following \cite[Section 5.2.2]{KV-unique19} together with the uniform estimate \eqref{ineq-m}, 
there exist limits $v_{\infty}\in L^\infty(0,T; L^\infty(\bbr)+\mathcal{M}(\bbr))$ and $u_\infty\in  L^\infty(0,T;L^2_{loc}(\bbr))$ such that
\beq\label{vwc}
v^\nu \rightharpoonup v_{\infty}, \quad h^\nu \rightharpoonup u_{\infty} ,\quad \mbox{in} ~\mathcal{M}_{\mathrm{loc}}((0,T)\times\bbr).
\eeq
In addition,
\[
u^\nu \rightharpoonup u_{\infty} \quad \mbox{in} ~\mathcal{M}_{\mathrm{loc}}((0,T)\times\bbr) .
\]
 
\subsection{Proof of \eqref{limX12}}
As in the proof of \cite[Lemma 5.1]{KV-unique19}, we use \eqref{aest-shift} to have 
\[
\Big|\frac{d}{dt}X_i^{\nu}(t)\Big|=|X_i'(t/\nu)| \le C\left( f_\nu (t)+\int_{-\infty}^{\infty}E_1\big((v_0^1,u_0^1)| (\tilde v, \tilde u)\big) dx +1 \right),
\]
where $f_\nu(t):= f\big(\frac{t}{\nu}\big)$. Since \eqref{aest-shift} implies that 
\begin{align*}
\begin{aligned} 
\|f_\nu \|_{L^1(0,T)}&=\nu\|f \|_{L^1(0,T/\nu)} \le C \nu,
\end{aligned}
\end{align*}
$f_\nu$ is uniformly bounded in $L^1(0,T)$. Therefore, $\frac{d}{dt}X_i^{\nu}$ is uniformly bounded in $L^1(0,T)$.\\
Moreover, since $X_i^\nu (0)=0$ and so,
\beq\label{convx}
|X_i^{\nu}(t)|\le Ct+ C\int_0^t f_\nu (s) ds \le C(t+\nu),
\eeq
$X_i^{\nu}$ are also uniformly bounded in $L^1(0,T)$.\\
Therefore, by the compactness of BV, there exist $X_1^\infty,  X_2^\infty\in BV(0,T)$ such that for each $i=1, 2,$
\beq\label{temxc}
X_i^\nu \to X_i^\infty \quad \mbox{in } L^1(0,T),\quad \mbox{up to subsequence as }\nu\to0.
\eeq
Especially, \eqref{temxc} and the uniform (in $\nu$) bound \eqref{convx} imply that
\beq\label{contx}
|X_i^\infty(t)| \le C t\quad\mbox{for a.e. } t\in [0,T].
\eeq
Since it follows from \eqref{amsepX12} that
\[
X_1^\nu(t) \le -\frac{\s_1}{2} t,\quad X_2^\nu (t) \ge -\frac{\s_2}{2} t,\quad\forall t\in (0,T],
\]
we use \eqref{temxc} to have
\[
X_1^\infty(t) \le -\frac{\s_1}{2} t,\quad X_2^\infty (t) \ge -\frac{\s_2}{2} t\quad\mbox{for a.e. } t\in (0,T].
\]
Thus we have
\beq\label{seplimx}
\sigma_1 t +X_1^\infty(t)\le \frac{\s_1}{2} t <0<\frac{\s_2}{2} t\le \sigma_2 t +X_2^\infty(t)\quad\mbox{for a.e. } t\in (0,T],
\eeq
which proves \eqref{limX12}.\\

\subsection{Proof of \eqref{uni-est}} 
Consider a mollifier
\beq\label{def-time}
\phi_\eps(t):=\frac{1}{\eps}\phi \big(\frac{t}{\eps}\big)\quad\mbox{for any }\eps>0,
\eeq
where $\phi:\bbr\to\bbr$ is a nonnegative smooth function such that  $\int_{\bbr}\phi=1$ and $\mbox{supp } \phi = [-1,1]$.\\
Following \cite[Section 5.2.4]{KV-unique19}, the convergence \eqref{wconv} and \eqref{temxc} together with the uniform estimate \eqref{ineq-m} imply that
\begin{align}
\begin{aligned} \label{conv-hv}
& \liminf_{\nu\to 0}\int_{(0,T)\times\bbr} \phi_\eps(s)  \frac{|h^\nu(s,x)-\bar u^{X_1^\infty,X_2^\infty}(s,x)|^2}{2}  ds dx \\
&\quad+ \liminf_{\nu\to 0} \int_{(0,T)\times\bbr} \phi_\eps(s)  Q(v^\nu(s,x) | \bar v^{X_1^\infty,X_2^\infty}(s,x) ) ds dx \le C(\mathcal{E}_0 +\delta).
\end{aligned}
\end{align}
First, using \eqref{vwc} with the weakly lower semi-continuity of the $L^2$-norm (for example see \cite{evans-w}), we have
\begin{align}
\begin{aligned} \label{conv-h1}
&\int_{(0,T)\times\bbr} \phi_\eps(s)  \frac{|u_\infty(s,x)-\bar u^{X_1^\infty,X_2^\infty}(s,x)|^2}{2}  ds dx \\
&\quad \le  \liminf_{\nu\to 0}\int_{(0,T)\times\bbr} \phi_\eps(s)  \frac{|h^\nu(s,x)-\bar u^{X_1^\infty,X_2^\infty}(s,x)|^2}{2}  ds dx .
\end{aligned}
\end{align}
However, to show the weakly lower semi-continuity for the second term on the left-hand side of \eqref{conv-hv}, we use the generalized relative functional \eqref{dQ} to handle the measure $v_\infty$, as mentioned in Section 1.1.\\
More precisely, we set 
\begin{align}
\begin{aligned}\label{def-OM}
&\Omega_M^- :=\{ (t,x)\in(0,T)\times\bbr~|~ x< \s_1 t + X_1^\infty(t) \} ,\\
&\Omega_M^+ :=\{ (t,x)\in(0,T)\times\bbr~|~ x<\s_2 t + X_2^\infty(t) \} .
\end{aligned}
\end{align}
Since $X_1^\infty, X_2^\infty\in BV(0,T)$,
\beq\label{bdry}
\mbox{$\partial\Omega_M^\pm$ (:= the boundary of $\Omega_M^\pm$) has measure zero in } \bbr^2,
\eeq
and the complement of $\overline{\Omega_M^-}\cup\overline{\Omega_M^+}$ on $(0,T)\times\bbr$ is as follows:
\beq\label{com-Om}
\left(\overline{\Omega_M^-}\cup\overline{\Omega_M^+}\right)^c= \{ (t,x)\in(0,T)\times\bbr~|~ \sigma_1 t +X_1^\infty(t)  <x<\sigma_2 t +X_2^\infty(t)\}.
\eeq
Note that 
\beq\label{def-Omv}
 \bar v^{X_1^\infty,X_2^\infty}(t,x) =\left\{ \begin{array}{ll}
        v_- \qquad \mathrm{for  \ \ } (t,x)\in \Omega_M^-,\\
        v_m \qquad \mathrm{for  \ \ } (t,x)\in \left(\overline{\Omega_M^-}\cup\overline{\Omega_M^+}\right)^c,\\
        v_+ \qquad \mathrm{for  \ \ } (t,x)\in \Omega_M^+ .\end{array} \right.
\eeq
The following lemma is on the weakly lower semi-continuity of the functional 
\[
v\mapsto dQ(v |  \bar v^{X_1^\infty,X_2^\infty}).
\] 
\begin{lemma}\label{lem:mlsc}
Assume \eqref{def-OM}-\eqref{def-Omv}.
Let $\Phi:\bbr^+\times\bbr\to\bbr$ be any compactly supported nonnegative function.\\
Let $\{v^k\}_{k=1}^{\infty}$ be a sequence of positive measures in $\mathcal{M}((0,T)\times\bbr)$ such that 
 $v^k \rightharpoonup v_\infty$ in $\mathcal{M}_{\mathrm{loc}}(\bbr^+\times\bbr)$, and there exists a constant $C_0>0$ (independent of $k$) such that
\[
\int_{(0,T)\times\bbr} \Phi(t,x)~d Q\left(v^k |  \bar v^{X_1^\infty,X_2^\infty} \right) (t,x) \le C_0 ,
\]
where for the decomposition $d v^k (t,x) := v^k_a(t,x) dtdx + dv^k_s (t,x)$,
\[
dQ\left(v^k | \bar v^{X_1^\infty,X_2^\infty}\right)(t,x) := Q\left(v^k_a| \bar v^{X_1^\infty,X_2^\infty}\right) dt dx +  |Q'(\overline V(t,x))| dv^k_s (t,x) ,
\]
where 
\beq\label{def-vom}
\overline{V}(t,x):=\left\{ \begin{array}{ll}
        v_- \qquad \mathrm{for  \ \ } (t,x)\in \overline{\Omega_M^-},\\
        v_m \qquad \mathrm{for  \ \ } (t,x)\in \left(\overline{\Omega_M^-}\cup\overline{\Omega_M^+}\right)^c,\\
        v_+ \qquad \mathrm{for  \ \ } (t,x)\in \overline{\Omega_M^+} .\end{array} \right.
\eeq
Then,
\[
\int_{(0,T)\times\bbr}  \Phi(t,x)~d Q\left(v_\infty |  \bar v^{X_1^\infty,X_2^\infty}\right) (t,x)  \le  C_0.
\]
where 
\beq\label{RNv}
d v_\infty (t,x) = v_a(t,x) dtdx + dv_s (t,x),
\eeq
for some $v_a \in L^1(\bbr)$ and $dv_s$ (singular part of $v_\infty$)
\end{lemma}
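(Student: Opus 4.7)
The strategy is to reduce Lemma~\ref{lem:mlsc} to a classical Reshetnyak-type lower semi-continuity statement for convex functionals of nonnegative measures, applied separately on each of the three regions where $\bar v^{X_1^\infty,X_2^\infty}$ is constant. First, using \eqref{bdry}, partition $(0,T)\times\bbr$ into the three disjoint open pieces $\Omega_M^-$, $\Omega_M^+$, and $\Omega_M^0:=(\overline{\Omega_M^-}\cup\overline{\Omega_M^+})^c$, together with the Lebesgue-null boundary set $N:=\partial\Omega_M^-\cup\partial\Omega_M^+$. On the closure of each piece, $\overline V$ takes a single constant value ($v_-$, $v_+$, or $v_m$). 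Correspondingly, choose a continuous nonnegative partition of unity $\Phi=\Phi_-+\Phi_++\Phi_0$ with $\Phi_\bullet$ supported in the closure of the corresponding region. The null set $N$ contributes nothing to the Lebesgue integral pieces of the functional, and by construction $\overline V$ is defined (with the appropriate constant value) on every point of each closure, so any mass of $dv_s$ sitting on $N$ is still weighted correctly.

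Next, on each piece with constant value $c\in\{v_-,v_m,v_+\}$, expand the relative functional using $Q(v|c)=Q(v)-Q(c)-Q'(c)(v-c)$ and the sign identity $|Q'(c)|=-Q'(c)=p(c)>0$ to obtain the measure identity
\[
\Phi_c\,dQ(v^k|c)=\Phi_c Q(v_a^k)\,dtdx-\Phi_c[Q(c)-cQ'(c)]\,dtdx-Q'(c)\Phi_c\,dv^k.
\]
The middle term is $k$-independent, and the third term is linear in $v^k$ with a continuous compactly supported test function $-Q'(c)\Phi_c$, so it passes to the limit under $v^k\rightharpoonup v_\infty$. The only remaining issue is to show that
\[
\liminf_{k\to\infty}\int\Phi_c\, Q(v_a^k)\,dtdx\ \ge\ \int\Phi_c\, Q(v_a)\,dtdx+\int\Phi_c\,|Q'(c)|\,dv_s,
\]
where $v_\infty=v_a\,dtdx+dv_s$ is the Radon–Nikodym decomposition.

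This last inequality is the Reshetnyak/Goffman–Serrin lower semi-continuity for the convex functional $\mu\mapsto\int\Phi_c[Q(\mu_a)dtdx+Q^\infty dv_s]$ on nonnegative measures, where $Q^\infty$ is the recession function of $Q$ at $+\infty$. The point is that $Q(v)=v^{-\gamma+1}/(\gamma-1)$ is convex on $(0,\infty)$, blows up at $v\to 0$, and is asymptotically linear at $+\infty$ with slope $0$; however, the relative functional $Q(\cdot|c)$ has recession slope $-Q'(c)=p(c)=|Q'(c)|$ at $+\infty$, matching exactly the coefficient chosen in the definition \eqref{dQ} of $dQ(v|\overline V)$. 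Hence the extension is the canonical one and the Reshetnyak theorem applies. Summing the three pieces and recombining the linear contributions into $-Q'(\overline V)\,\Phi\,dv_\infty$ yields
\[
\int\Phi\,dQ(v_\infty|\bar v^{X_1^\infty,X_2^\infty})\ \le\ \liminf_{k\to\infty}\int\Phi\,dQ(v^k|\bar v^{X_1^\infty,X_2^\infty})\ \le\ C_0.
\]

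\textbf{Main obstacle.} The genuinely delicate point is not the convex lower semi-continuity itself but the bookkeeping at the boundary set $N$. A priori the weak-$*$ limit $v_\infty$ could deposit singular mass precisely on the moving lines $x=\sigma_i t+X_i^\infty(t)$ (physically, cavitation aligned with a shock). The argument survives this because (i) the weight $\overline V$ in \eqref{dQ} is defined on the \emph{closure} of each constant region and agrees with $v_\pm$ on these boundaries, so the coefficient $|Q'(\overline V)|$ is unambiguously defined there and equals the recession slope used in the Reshetnyak extension; and (ii) the test function $\Phi$ is continuous, so different choices of partition $\Phi_-,\Phi_+,\Phi_0$ (approximating indicator functions of the open regions) yield the same limit. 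One must check that the $Q'(c_-)$ and $Q'(c_+)$ weights assigned to singular mass sitting on $\partial\Omega_M^\pm$ from the two adjacent pieces agree with $Q'(\overline V)$ as defined; this is exactly why the closures (not the open sets) are used in the definition of $\overline V$, and is therefore a matter of checking that the inequality $|Q'(v_\pm)|\le|Q'(v_m)|$ (already noted after \eqref{dQ}) gives the correct signed comparison.
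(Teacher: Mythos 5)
Your overall route---regionwise application of the Goffman--Serrin/Reshetnyak lower semicontinuity theorem for convex functionals of measures, with the interface handled via $|Q'(v_\pm)|\le|Q'(v_m)|$---is genuinely different from the paper's proof, which establishes the same lower semicontinuity by hand (truncating $v^k_a$ at a level $\xi$, replacing $Q$ by the linearized $Q_\xi$, proving the measure inequality \eqref{Q-cut} from \eqref{large-con}, inserting the interface cutoff $\psi_1^\delta$ of \eqref{defpsi1}, and then sending $\delta\to0$, $\xi\to\infty$, $\eps\to0$). Citing the general theorem is a legitimate shortcut, but your central displayed inequality is false as stated, and the error comes from inconsistent bookkeeping of the recession function. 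Once you split $\Phi_c\,dQ(v^k|c)$ into $\Phi_c Q(v^k_a)\,dtdx$, a $k$-independent term, and the linear term $-Q'(c)\Phi_c\,dv^k$, the limit of the linear term already carries the full singular weight, since $-Q'(c)\int\Phi_c\,dv_\infty$ contains $|Q'(c)|\int\Phi_c\,dv_s$. What remains to be proved is only
\[
\liminf_{k\to\infty}\int\Phi_c\,Q(v^k_a)\,dtdx\ \ge\ \int\Phi_c\,Q(v_a)\,dtdx,
\]
with no singular term on the right: this is the convex-functionals-of-measures theorem applied to the integrand $Q$ itself, whose recession function is $0$ (as $Q$ is bounded at infinity). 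The inequality you wrote, with the extra $\int\Phi_c\,|Q'(c)|\,dv_s$, fails: take $v^k=(c+k^2\,{\mathbf 1}_{E_k})\,dtdx$ with $|E_k|=k^{-2}$ and $E_k$ shrinking to a point $p_0$ where $\Phi_c>0$, so that $v^k\rightharpoonup c\,dtdx+\delta_{p_0}$; the left-hand side tends to $\int\Phi_c\,Q(c)\,dtdx$ while your right-hand side exceeds it by $|Q'(c)|\Phi_c(p_0)>0$ (and this sequence satisfies the uniform bound hypothesis of the lemma). Alternatively, had you not split off the affine part, you could apply the theorem directly to $v\mapsto Q(v|c)$ (extended by $+\infty$ on $v\le0$), whose recession slope is exactly $|Q'(c)|$, and obtain $\liminf_k\int\Phi_c\,dQ(v^k|c)\ge\int\Phi_c\,dQ(v_\infty|c)$ in one stroke. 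Your proposal mixes these two options and double counts the singular weight; the fix is easy, but the step as written is wrong, and one should also note that the general theorem must be checked to allow integrands that blow up as $v\to0^+$ (this is where the paper instead works with $Q_\xi$).

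A second concrete point: the exact ``continuous nonnegative partition of unity $\Phi=\Phi_-+\Phi_++\Phi_0$ with each piece supported in the closure of its region'' does not exist when $\Phi>0$ on an interface. Indeed $\Phi_0$ vanishes on the open sets $\Omega_M^\pm$ and $\Phi_\pm$ vanish on the open middle region, so by continuity all three vanish on the interfaces, contradicting $\Phi_-+\Phi_++\Phi_0=\Phi$ there. You must instead use cutoffs overlapping a $\delta$-neighborhood of the interfaces (as the paper does with $\psi_1^\delta$) and pass $\delta\to0$; it is precisely in assigning interface-adjacent singular mass the smaller weight $|Q'(v_\pm)|$---consistent with $\overline V=v_\pm$ on the closures in \eqref{def-vom} and with $|Q'(v_\pm)|\le|Q'(v_m)|$ noted after \eqref{dQ}---that the argument closes. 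You identify both issues in your ``main obstacle'' paragraph, but neither the false intermediate inequality nor the $\delta$-limiting argument at the interface is actually repaired or carried out, so as submitted the proposal has a genuine gap in its central step, even though the strategy itself is sound and, once corrected, would give a shorter proof than the paper's self-contained one.
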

The proof of Lemma \ref{lem:mlsc} follows the same argument as that of \cite[Lemma 5.3]{KV-unique19}, since $|Q'(v_\pm)|\le |Q'(v_m)|$. For completeness, we present the proof of Lemma \ref{lem:mlsc} in Appendix \ref{app:mlsc}.\\
To apply Lemma \ref{lem:mlsc}, we consider a smooth function $\psi_0^R$ such that for any $R>0$,
\[
\mathbf{1}_{|x|\le R} \le \psi_0^R (x) \le \mathbf{1}_{|x|\le 2R} .
\]
Then, 
\begin{align*}
\begin{aligned} 
& \int_{(0,T)\times\bbr} \phi_\eps(s)  \psi_0^R (x)   Q(v^\nu(s,x) | ^{X_1^\infty,X_2^\infty}(s,x)) ds dx \\
&\quad \le \int_{(0,T)\times\bbr} \phi_\eps(s)  Q(v^\nu(s,x) | \bar v^{X_1^\infty,X_2^\infty}(s,x) ) ds dx.
\end{aligned}
\end{align*}
Thus, using Lemma \ref{lem:mlsc} together with the weak convergence \eqref{vwc}, we have
\begin{align}
\begin{aligned}\label{lastqn} 
& \int_{(0,T)\times\bbr} \phi_\eps(s)  \psi_0^R (x)  ~ d Q(v_\infty | \bar v^{X_1^\infty,X_2^\infty}) (s,x) \\
&\quad \le \liminf_{\nu\to \infty}\int_{(0,T)\times\bbr} \phi_\eps(s)  Q(v^\nu(s,x) | \bar v^{X_1^\infty,X_2^\infty}(s,x) ) ds dx ,
\end{aligned}
\end{align}
where for the decomposition \eqref{RNv},
\[
dQ\left(v_\infty |\bar v^{X_1^\infty,X_2^\infty} \right)(t,x) = Q\left(v_a|\bar v^{X_1^\infty,X_2^\infty}\right) dt dx +  |Q'(\overline V(t,x))| dv_s (t,x) ,
\]
where $\overline{V}(t,x)$ is defined by \eqref{def-vom} with \eqref{def-OM}.\\
Then, using $\bbr^+\times (-R,R)\nearrow \bbr^+\times\bbr$ as $R\to\infty$, we have
\begin{align*}
\begin{aligned} 
& \int_{(0,T)\times\bbr} \phi_\eps(s)   ~ d Q(v_\infty | \bar v^{X_1^\infty,X_2^\infty}) (s,x)  \\
&\quad \le \liminf_{R\to 0} \int_{(0,T)\times\bbr} \phi_\eps(s)  \psi_0^R (x)  ~ d Q(v_\infty | \bar v^{X_1^\infty,X_2^\infty}) (s,x) .
\end{aligned}
\end{align*}
Therefore, this together with \eqref{lastqn}, \eqref{conv-hv} and \eqref{conv-h1} yields
\begin{align*}
\begin{aligned} 
&\int_{(0,T)\times\bbr} \phi_\eps(s)  \frac{|u_\infty(s,x)-\bar u^{X_1^\infty,X_2^\infty}(s,x)|^2}{2}  ds dx \\
& \quad + \int_{(0,T)\times\bbr} \phi_\eps(s)   ~ d Q(v_\infty | \bar v^{X_1^\infty,X_2^\infty}) (s,x)   \le C(\mathcal{E}_0 +\delta) .
\end{aligned}
\end{align*}
Taking $\eps\to0$ (by \eqref{def-time}), we obtain that
\[
d Q(v_\infty | \bar v^{X_1^\infty,X_2^\infty}) \in L^\infty(0,T;\mathcal{M}(\bbr)),
\]
and, for a.e. $s\in(0,T)$,
\[
\int_{\bbr }  \frac{|u_\infty(s,x)-\bar u^{X_1^\infty,X_2^\infty}(s,x)|^2}{2}   dx +\int_{x\in \bbr} d Q(v_\infty | \bar v^{X_1^\infty,X_2^\infty}) (s,x)  \le C(\mathcal{E}_0 +\delta).
\]
Since $\delta>0$ is arbitrary, we have the desired estimate \eqref{uni-est}.

\subsection{Proof of \eqref{X-control}}
This will be shown by using the fact that it follows from \eqref{wconv} and the first (linear) equation of $\eqref{inveq}_1$ that
the limits $v_\infty, u_\infty$ satisfy $\partial_t v_\infty -\partial_x u_\infty=0$ in the sense of distributions, and by using the stability estimate \eqref{uni-est} together with \eqref{seplimx} on the separation estimate  and \eqref{contx} on the continuity at $t=0$ of the shifts.\\
First, thanks to the fact that $X_1^\infty, X_2^\infty \in BV ((0,T))$, we choose positive constants $r_1=r_1(T)$ and $r_2=r_2(T)$ such that $\|X_i^\infty\|_{L^{\infty}((0,T))}\le r_i$ for each $i=1,2$. \\
Let $t_0\in (0,T)$ be any constant at which \eqref{contx}-\eqref{seplimx} and \eqref{uni-est} hold (Note that \eqref{contx}-\eqref{seplimx} and \eqref{uni-est} hold for almost everywhere on $(0,T)$).
We consider $\psi_1, \psi_2: \bbr \to\bbr$ nonnegative smooth functions defined by
\begin{align*}
\begin{aligned} 
&\psi_1(x)=\left\{ \begin{array}{ll}
       1\quad\mbox{if }  x\in [-r_1+\s_1T, \frac{\s_1 t_0}{2}],\\
       0\quad\mbox{if }  x\le -2r_1+\s_1T\mbox{ or } x\ge 0,\end{array} \right.
\quad \mbox{and}\quad \|\psi_1'\|_{L^\infty(\bbr)}\le \max\left(\frac{2}{|r_1|}, \frac{4}{|\s_1|t_0}\right), \\
& \psi_2(x)=\left\{ \begin{array}{ll}
       1\quad\mbox{if }  x\in [\frac{\s_2 t_0}{2}, r_2 + \s_2 T],\\
       0\quad\mbox{if }  x\le 0\mbox{ or } x\ge 2r_2+\s_2T,\end{array} \right.
\quad \mbox{and}\quad \|\psi_2'\|_{L^\infty(\bbr)}\le \max\left(\frac{2}{r_2}, \frac{4}{\s_2t_0}\right)
\end{aligned}
\end{align*}
Let $\theta:\bbr\to\bbr$ be a nonnegative smooth function such that $\theta (s)=\theta(-s)$, $\int_{\bbr}\theta=1$ and $\mbox{supp } \theta = [-1,1]$, and let 
\[
\theta_\delta(s):=\frac{1}{\delta}\theta \big(\frac{s-\delta}{\delta}\big)\quad\mbox{for any }\delta>0.
\]
For any $t\in(t_0,T)$ at which \eqref{seplimx} and \eqref{uni-est} hold, and 
for any $\delta<\min(t-t_0,T-t)$, we define a nonnegative smooth function
\[
\varphi_{t, \delta} (s) := \int_0^s \Big(\theta_\delta (\tau -t_0) -\theta_\delta (\tau- t)  \Big) d\tau.
\]
Since  $\partial_t v_\infty -\partial_x u_\infty=0$ in the sense of distributions, we have
\beq\label{limit-con}
\int_{(0,T)\times\bbr} \big(  \varphi_{t, \delta}'(s) \psi_i(x) d v_{\infty}(s,x) - \varphi_{t, \delta} (s) \psi_i'(x) u_{\infty}(s,x) dsdx \big)  =0,\quad\mbox{for each } i=1,2.
\eeq
We rewrite the left-hand side above as
\[
I_1^\delta+I_2^\delta+I_3^\delta=0,
\]
where
\begin{align*}
\begin{aligned} 
&I_1^\delta:=\int_{(0,T)\times\bbr}  \theta_\delta (s-t_0) \psi_i(x) d v_{\infty}(s,x)  ,\\
&I_2^\delta:= -\int_{(0,T)\times\bbr}  \theta_\delta (s-t)   \psi_i(x) d v_{\infty}(s,x) ,\\
&I_3^\delta:= -\int_{(0,T)\times\bbr} \varphi_{t, \delta} (s) \psi_i'(x) u_{\infty}(s,x)dxds.
\end{aligned}
\end{align*}
Since $v_\infty$ is weakly continuous in time (as in  \cite[Section 5]{KV-unique19}), we find that as $\delta\to0$ :
\[
I_1^\delta\to \int_\bbr  \psi_i(x) v_{\infty}(t_0,dx),\quad I_2^\delta \to -\int_\bbr  \psi_i(x) v_{\infty}(t,dx) ,
\]
and 
\[
I_3^\delta \to -\int_{t_0}^t \int_\bbr \psi_i'(x) u_{\infty}(s,x) dxds.
\]
Therefore, it follows from \eqref{limit-con} that
\[
\underbrace{\int_\bbr  \psi_i(x) \big(v_{\infty}(t,dx) -v_{\infty}(t_0,dx) \big) }_{=:J_1} + \underbrace{\int_{t_0}^t \int_\bbr \psi_i'(x) u_{\infty}(s,x) dxds}_{=:J_2} =0.
\]
Our strategy is to use the stability estimate \eqref{uni-est} and the Rankine-Hugoniot condition. 
For that, we use the shifted Riemann solution to decompose $J_1$ into three parts:
\[
J_1=J_{11}^i+J_{12}^i+J_{13}^i,
\]
where 
\begin{align*}
\begin{aligned} 
J_{11}^i &=\int_\bbr  \psi_i(x) \big(v_{\infty}(t,dx) -\bar v^{X_1^\infty,X_2^\infty}(t,x)dx\big) , \\
J_{12}^i &= \int_\bbr  \psi_i(x) \big(\bar v^{X_1^\infty,X_2^\infty}(t,x)-\bar v^{X_1^\infty,X_2^\infty}(t_0,x) \big)  dx,\\
J_{13}^i &=\int_\bbr  \psi_i(x) \big(\bar v^{X_1^\infty,X_2^\infty}(t_0,x) dx - v_{\infty}(t_0,dx) \big).
\end{aligned}
\end{align*}
Likewise, we decompose $J_2$ into two parts:
\[
J_2=J_{21}^i+J_{22}^i,
\]
where 
\begin{align*}
\begin{aligned} 
J_{21}^i &=\int_{t_0}^t \int_\bbr  \psi_i'(x) \big(u_{\infty}(s,x) -\bar u^{X_1^\infty,X_2^\infty}(s,x) \big)  dxds, \\
J_{22}^i &= \int_{t_0}^t \int_\bbr  \psi_i'(x) \bar u^{X_1^\infty,X_2^\infty}(s,x) dx ds.
\end{aligned}
\end{align*}
Since it follows from \eqref{seplimx} that
\begin{align*}
&-r_1 + \s_1T \le \sigma_1 t +X_1^\infty(t)\le \frac{\s_1}{2} t_0 <0\\
&\qquad<\frac{\s_2}{2} t_0\le \sigma_2 t +X_2^\infty(t)\le r_2+\s_2T \quad\mbox{for a.e. } t \in [t_0,T),
\end{align*}
we have
\begin{align*}
&J_{12}^i = \left\{ \begin{array}{ll}
       (v_- -v_m)(X_1^\infty(t)-X_1^\infty(t_0)+\s_1 (t-t_0))\quad\mbox{when } i=1,\\
        (v_m -v_+)(X_2^\infty(t)-X_2^\infty(t_0)+\s_2 (t-t_0))\quad\mbox{when } i=2,\end{array} \right. \\
&J_{22}^i = \left\{ \begin{array}{ll}
       (t-t_0)(u_--u_m)\quad\mbox{when } i=1,\\
       (t-t_0)(u_m-u_+)\quad\mbox{when } i=2,\end{array} \right.
\end{align*}
Then using $\s_1= -\frac{u_m-u_-}{v_m-v_-}, \s_2= -\frac{u_+-u_m}{v_+-v_m}$ by the condition \eqref{end-con}, we have
\[
J_{12}^i +J_{22}^i = \left\{ \begin{array}{ll}
       (v_- -v_m)(X_1^\infty(t)-X_1^\infty(t_0))\quad\mbox{when } i=1,\\
        (v_m -v_+)(X_2^\infty(t)-X_2^\infty(t_0))\quad\mbox{when } i=2,\end{array} \right.
\]
Now, it remains to control the remaining terms by the initial perturbation $\mathcal{E}_0$ as follows.
First, recall the (unique) decomposition of the measure $v_\infty$ by
\[
d v_\infty (t,dx) = v_a(t,x) dx + v_s (t,dx) .
\]
Using \eqref{rel_Q}, we have
\begin{align*}
\begin{aligned} 
|J_{11}^1| &\le  \int_{-2r_1+\s_1T}^{0} \big|v_a (t,x) -\bar v^{X_1^\infty,X_2^\infty}(t,x)\big| {\mathbf 1}_{\{v\le 3v_-\}} dx  \\
&\quad + \int_{-2r_1+\s_1T}^{0} \big|v_a (t,x) -\bar v(x-X_\infty(t))\big| {\mathbf 1}_{\{v\ge 3v_-\}} dx +\int_\bbr \psi(x) v_s (t,dx) \\
&\le  \frac{1}{\sqrt{c_1}}  \int_{-2r_1+\s_1T}^{0} \sqrt{Q\big(v_a (t,x)|\bar v(x-X_\infty(t))\big)} dx \\
&\quad + \frac{1}{c_2} \int_{-2r_1+\s_1T}^{0} Q\big(v_a(t,x)|\bar v(x-X_\infty(t))\big) dx\\
&\quad +\frac{1}{|Q'(\max(v_-,v_+))|}\int_\bbr \psi(x)|Q'(\overline V)| v_s (t,dx) ,
\end{aligned}
\end{align*}
where note that $|Q'(\overline V)|\ge |Q'(\max(v_-,v_+))|>0$ by  \eqref{barvd}.\\
Thus, we use the stability estimate \eqref{uni-est} to have
\[
|J_{11}^1|  \le C \sqrt{\mathcal{E}_0} + C\mathcal{E}_0.
\]
Likewise, we have
\[
|J_{11}^2|  \le C \sqrt{\mathcal{E}_0} + C\mathcal{E}_0,
\]
and 
\[
|J_{13}^i|  \le C \sqrt{\mathcal{E}_0} + C\mathcal{E}_0\quad\mbox{for } i=1,2.
\]
Similarly, there exists a constant $C(t_0)$ depending on $t_0$ such that
\begin{align*}
|J_{21}^1| & \le \|\psi_1'\|_{L^\infty(\bbr)} \int_{t_0}^t \int_{[-2r_1+\s_1T,-r_1+\s_1T]\cup[\s_1t_0/2,0]} \big|u_{\infty}(s,x) -\bar u^{X_1^\infty,X_2^\infty}(s,x) \big| dxds \\
&\le  C(t_0)t\sqrt{\mathcal{E}_0},
\end{align*}
and 
\[
|J_{21}^2| \le C(t_0)t\sqrt{\mathcal{E}_0}.
\]
Therefore, we have shown that for each $i=1, 2$,
\[
 | X_i^\infty(t)-X_i^\infty(t_0) | \le C(t_0)\Big( \mathcal{E}_0 + (1+t)\sqrt{\mathcal{E}_0} \Big),  \quad\mbox{for a.e. } t \in (t_0,T).
\]
Since it follows from \eqref{contx} that
\[
|X_i^\infty(t_0)|\le C t_0,
\]
we have
\[
|X_i^\infty(t)| \le C(t_0)\Big( \mathcal{E}_0 + (1+t)\sqrt{\mathcal{E}_0} \Big),  \quad\mbox{for a.e. } t \in (t_0,T).
\]
Hence, this and \eqref{contx} imply the desired estimate \eqref{X-control}.\\

\begin{appendix}
\setcounter{equation}{0}
\section{Useful inequalities}  \label{app-use}
\setcounter{equation}{0}

We here present the useful inequalities developed in \cite[Section 2.4 and Lemma 2.2]{Kang-V-NS17}.\\
First, the following lemma provides some global inequalities on the relative function $Q(\cdot|\cdot)$ corresponding to the convex function $Q(v)=\frac{v^{-\gamma+1}}{\gamma-1}$, $v>0$, $\gamma>1$. 

\begin{lemma}\label{lem-pro}  \cite[Lemma 2.4]{Kang-V-NS17}
For given constants $\gamma>1$, and $v_*>0$, there exists constants $c_1, c_2>0$ such that  the following inequalities hold.\\
1)  For any $w\in (0,2v_*)$,
\begin{align}
\begin{aligned}\label{rel_Q}
& Q(v|w)\ge c_1 |v-w|^2,\quad \mbox{for all } 0<v\le 3v_*,\\
 & Q(v|w)\ge  c_2 |v-w|,\quad  \mbox{for all } v\ge 3v_*.
\end{aligned}
\end{align}

2) Moreover if $0<w\leq u\leq v$ or $0<v\leq u\leq w$ then 
\beq\label{Q-sim}
Q(v|w)\geq Q(u|w),
\eeq
and for any $\delta_*>0$ there exists a constant $C>0$ such that if, in addition, 
$|w-v_*|\le \delta_*/2$ and $|w-u|>\delta_*$, we have
\beq\label{rel_Q1}
Q(v|w)-Q(u|w)\geq C|u-v|. 
\eeq
\end{lemma}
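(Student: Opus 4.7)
The plan rests on the explicit forms $Q'(v)=-v^{-\gamma}$ and $Q''(v)=\gamma v^{-\gamma-1}$, together with the Taylor identity
\[
Q(v|w)=(v-w)^{2}\int_{0}^{1}(1-t)\,Q''\!\big(w+t(v-w)\big)\,dt,
\]
from which one reads off that $Q(\cdot|w)$ is strictly convex with unique minimum at $w$, and that its $v$-derivative $\partial_v Q(v|w)=w^{-\gamma}-v^{-\gamma}$ has the sign of $v-w$. Part 2, inequality \eqref{Q-sim}, will then be an immediate consequence: on either side of $w$, the map $s\mapsto Q(s|w)$ is monotone, so an intermediate point $u$ between $w$ and $v$ gives a smaller value than $v$.

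For the quadratic bound \eqref{rel_Q}$_1$, I will exploit that when $w\in(0,2v_{*})$ and $v\in(0,3v_{*}]$, every point $w+t(v-w)$ lies in $(0,3v_{*}]$, so $Q''\!\big(w+t(v-w)\big)\geq\gamma(3v_{*})^{-\gamma-1}$. Plugging this into the Taylor identity and integrating in $t$ yields \eqref{rel_Q}$_1$ with $c_{1}=\tfrac{\gamma}{2}(3v_{*})^{-\gamma-1}$.

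For the linear bound \eqref{rel_Q}$_2$, I will start from the observation that for $s\geq 3v_{*}$ and $w\in(0,2v_{*})$,
\[
\partial_{v}Q(s|w)=w^{-\gamma}-s^{-\gamma}\geq(2v_{*})^{-\gamma}-(3v_{*})^{-\gamma}=:\kappa>0,
\]
and then integrating on $[3v_{*},v]$ to get $Q(v|w)\geq Q(3v_{*}|w)+\kappa(v-3v_{*})$. To convert this into a bound in $|v-w|$, I will split the regime $v\geq 3v_{*}$ into two windows. On $v\in[3v_{*},6v_{*}]$ the already-proved quadratic estimate combined with the trivial lower bound $|v-w|\geq v_{*}$ gives $Q(v|w)\geq c_{1}v_{*}|v-w|$. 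On $v\geq 6v_{*}$ the elementary comparison $v-3v_{*}\geq\tfrac{3}{8}(v-w)$, which follows from $v-w\leq v+2v_{*}\leq\tfrac{4}{3}v$ and $v-3v_{*}\geq\tfrac{v}{2}$, upgrades the previous inequality to $Q(v|w)\geq\tfrac{3\kappa}{8}(v-w)$. Taking $c_{2}=\min\{c_{1}v_{*},\tfrac{3\kappa}{8}\}$ closes the case. This matching across $v\simeq 3v_{*}$ is the only place where one must be slightly careful; everywhere else the computations are automatic.

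Finally, for the sharpening \eqref{rel_Q1} I will handle, for concreteness, the case $w\leq u\leq v$; the mirror case $v\leq u\leq w$ is identical. The hypothesis $|w-v_{*}|\leq\delta_{*}/2$ confines $w$ to a fixed compact subinterval of $(0,\infty)$, and $|u-w|>\delta_{*}$ with $u\geq w$ gives $u\geq w+\delta_{*}$. Consequently, for every $s\geq u$,
\[
\partial_{v}Q(s|w)=w^{-\gamma}-s^{-\gamma}\geq w^{-\gamma}-(w+\delta_{*})^{-\gamma}=\int_{w}^{w+\delta_{*}}\gamma\,\tau^{-\gamma-1}\,d\tau\geq C(v_{*},\delta_{*},\gamma)>0,
\]
and integrating in $s$ from $u$ to $v$ delivers $Q(v|w)-Q(u|w)\geq C|v-u|$, as required.
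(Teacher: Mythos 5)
Your proposal is correct in substance, and it necessarily differs from the paper, which gives no proof at all: the lemma is simply quoted from \cite[Lemma 2.4]{Kang-V-NS17}. Your route — Taylor's formula with integral remainder for the quadratic bound, monotonicity of $s\mapsto Q(s|w)$ on each side of $w$ (via $\partial_v Q(s|w)=w^{-\gamma}-s^{-\gamma}$) for \eqref{Q-sim}, and uniform lower bounds on that derivative for the linear estimates — is a clean, self-contained way to recover all four statements, and the final step for \eqref{rel_Q1} (including the mirror case $v\le u\le w$, where $u\le w-\delta_*$ and $w\in[v_*-\delta_*/2,\,v_*+\delta_*/2]$ give a uniform negative upper bound on $\partial_v Q(s|w)$ for $s\le u$) goes through as written. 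The one point to repair is the window $v\in[3v_*,6v_*]$: there you invoke ``the already-proved quadratic estimate'' with the constant $c_1=\tfrac{\gamma}{2}(3v_*)^{-\gamma-1}$, but that estimate was established only for $v\le 3v_*$, since for larger $v$ the intermediate points $w+t(v-w)$ may reach $6v_*$. This is a trivial fix, not a real gap: either rerun the Taylor bound on $(0,6v_*]$ to get $Q(v|w)\ge \tfrac{\gamma}{2}(6v_*)^{-\gamma-1}|v-w|^2$ and then use $|v-w|\ge v_*$, or use \eqref{Q-sim} directly, $Q(v|w)\ge Q(3v_*|w)\ge c_1 v_*^2$ together with $|v-w|\le 6v_*$, so that $Q(v|w)\ge \tfrac{c_1 v_*}{6}|v-w|$ on this window. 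With that adjustment, taking $c_2$ as the minimum of the two window constants and $\tfrac{3\kappa}{8}$ closes \eqref{rel_Q}.
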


The following lemma provides some global inequalities on the pressure $p(v)=v^{-\gamma}$, $v>0$, $\gamma>1$,  and on the associated relative function $p(\cdot|\cdot)$.

\begin{lemma}\label{lem-pro2}  \cite[Lemma 2.5]{Kang-V-NS17}
For given constants $\gamma>1$, and $v_*>0$, there exist constants $c_3, C>0$ such that  the following inequalities hold.\\
For any $w>v_*/2$,
\beq\label{pressure2}
|p(v)-p(w)| \le c_3 |v-w|,\qquad \forall v\ge v_*/2,
\eeq
\beq\label{pressure0}
p(v|w) \le C |v-w|^2,\qquad \forall v\ge v_*/2 ,
\eeq
\beq\label{pressure4}
p(v|w)\leq C(|v-w|+|p(v)-p(w)|),\qquad \forall v>0.
\eeq
\end{lemma}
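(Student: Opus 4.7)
\textbf{Proof proposal for Lemma \ref{lem-pro2}.}  All three estimates are elementary consequences of the smoothness of $p(v)=v^{-\gamma}$ on any interval bounded away from $0$; the only work is tracking where the uniform constants come from. Throughout we will use that $p'(v)=-\gamma v^{-\gamma-1}$ and $p''(v)=\gamma(\gamma+1)v^{-\gamma-2}$ are continuous and strictly monotone on $(0,\infty)$, and that any $z\geq v_*/2$ satisfies $|p'(z)|\leq \gamma(v_*/2)^{-\gamma-1}$ and $p''(z)\leq \gamma(\gamma+1)(v_*/2)^{-\gamma-2}$.

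For \eqref{pressure2}, I would apply the mean value theorem to $p$ between $v$ and $w$: there exists $\xi$ between $v$ and $w$ with $p(v)-p(w)=p'(\xi)(v-w)$. Since both $v,w\geq v_*/2$, the intermediate point satisfies $\xi\geq v_*/2$, so $|p'(\xi)|\leq \gamma(v_*/2)^{-\gamma-1}=:c_3$, which gives the claim.

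For \eqref{pressure0}, recalling the definition $p(v|w)=p(v)-p(w)-p'(w)(v-w)$ from \eqref{pressure-relative}, Taylor's formula with integral remainder gives
\[
p(v|w)=\int_w^v (v-s)\,p''(s)\,ds=\tfrac12 p''(\xi)(v-w)^2
\]
for some $\xi$ between $v$ and $w$. Under the hypothesis $v,w\geq v_*/2$, we again have $\xi\geq v_*/2$, so $p''(\xi)\leq \gamma(\gamma+1)(v_*/2)^{-\gamma-2}$, yielding the constant $C$ in \eqref{pressure0}.

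For \eqref{pressure4}, which is the only one valid for \emph{all} $v>0$ (where $p(v)$ can blow up as $v\to 0^+$), I would simply use the triangle inequality on the definition of the relative pressure:
\[
p(v|w)\leq |p(v)-p(w)|+|p'(w)|\,|v-w|.
\]
The hypothesis $w>v_*/2$ (only on $w$, not on $v$) bounds $|p'(w)|\leq \gamma(v_*/2)^{-\gamma-1}$ uniformly, so one can take $C=\max(1,\gamma(v_*/2)^{-\gamma-1})$. The main subtlety, if any, is being careful not to require a lower bound on $v$ in this last inequality, because the application in the body of the paper needs to handle $v$ potentially close to $0$ (vacuum); that is precisely why \eqref{pressure4} is stated with only a first-order bound rather than the quadratic one of \eqref{pressure0}. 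No further steps are required.
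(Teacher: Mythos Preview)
Your proof is correct and complete; each of the three inequalities follows exactly as you argue, by the mean value theorem, Taylor's formula with Lagrange remainder, and the triangle inequality applied to the definition \eqref{pressure-relative}, respectively. The paper itself does not give a proof of this lemma but merely cites \cite[Lemma 2.5]{Kang-V-NS17}, so there is nothing to compare against beyond confirming that your elementary argument matches what that reference contains.
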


The following lemma presents some local estimates on $p(v|w)$ and $Q(v|w)$ for $|v- w|\ll 1$, based on  Taylor expansions.

\begin{lemma}\label{lem:local}
For given constants $\gamma>1$ and $v_*>0$ 
there exist positive constants $C$ and $\delta_*$ such that for any $0<\delta<\delta_*$, the following is true.\\
1) For any $(v, w)\in \bbr_+^2$  
satisfying $|p(v)-p(w)|<\delta$, and  $|p(w)-p(v_*)|<\delta$ the following estimates \eqref{p-est1}-\eqref{Q-est1} hold:
\begin{align}
\begin{aligned}\label{p-est1}
p(v|w)&\le \bigg(\frac{\gamma+1}{2\gamma} \frac{1}{p(w)} + C\delta \bigg) |p(v)-p(w)|^2,
\end{aligned}
\end{align}
\beq\label{Q-est11}
Q(v|w)\ge \frac{p(w)^{-\frac{1}{\gamma}-1}}{2\gamma}|p(v)-p(w)|^2 -\frac{1+\gamma}{3\gamma^2} p(w)^{-\frac{1}{\gamma}-2}(p(v)-
p(w))^3,
\eeq
\beq\label{Q-est1}
Q(v|w)\le \bigg( \frac{p(w)^{-\frac{1}{\gamma}-1}}{2\gamma} +C\delta  \bigg)|p(v)-p(w)|^2.
\eeq
2) For any $(v, w)\in \bbr_+^2$ such that  $|p(w)-p(v_*)|\leq \delta$,  and satisfying either $Q(v|w)<\delta$ or $|p(v)-p(w)|<\delta$,
\beq\label{pQ-equi0}
|p(v)-p(w)|^2 \le C Q(v|w).
\eeq
\end{lemma}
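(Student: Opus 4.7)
The plan is to pass to the pressure variable by setting $P := p(v)$ and $W := p(w)$, so that $v = P^{-1/\gamma}$ and $w = W^{-1/\gamma}$. Since $Q'(v) = -p(v)$, one has $Q(v|w) = \int_w^v (p(w)-p(s))\,ds$, and the change of variable $s = \tau^{-1/\gamma}$ produces the clean integral representation
\[
Q(v|w) \;=\; \frac{1}{\gamma}\int_0^{P-W} r\,(W+r)^{-1/\gamma-1}\,dr.
\]
The second derivative of $r\mapsto (W+r)^{-1/\gamma-1}$ equals $(1/\gamma+1)(1/\gamma+2)(W+r)^{-1/\gamma-3}>0$, so this function is convex and lies above its tangent at $r=0$:
\[
(W+r)^{-1/\gamma-1} \;\geq\; W^{-1/\gamma-1} - \tfrac{1+\gamma}{\gamma}\,W^{-1/\gamma-2}\,r, \qquad r>-W.
\]
Multiplying by $r$ and integrating from $0$ to $P-W$ gives \eqref{Q-est11} directly; the inequality is preserved in the case $P<W$ because $r\le 0$ on the integration range reverses the pointwise inequality but the flipped orientation of the integral reverses it back. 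This convexity-based argument is the main (minor) obstacle: it is what upgrades an asymptotic Taylor expansion to an exact inequality valid on the whole regime $|P-W|<\delta$.

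For \eqref{Q-est1} I would use the same representation but expand $(W+r)^{-1/\gamma-1} = W^{-1/\gamma-1}[1 + O(|r|/W)]$ uniformly for $|r|\le 2\delta$; the hypotheses $|P-W|<\delta$ and $|W-p(v_*)|<\delta$ keep $W$ (and hence $W+r$) bounded away from $0$ provided $\delta<\delta_*$ is small, so the remainder is controlled by $C\delta(P-W)^2$, with $C$ depending only on $v_*$ and $\gamma$. The same recipe delivers \eqref{p-est1}: starting from
\[
p(v|w) = (P-W) + \gamma\,W^{1+1/\gamma}\bigl(P^{-1/\gamma}-W^{-1/\gamma}\bigr),
\]
I would Taylor-expand $P^{-1/\gamma}$ around $W$; the constant and linear terms cancel the $(P-W)$, the quadratic term is precisely $\frac{\gamma+1}{2\gamma\,W}(P-W)^2$, and the $O(|P-W|^3)$ remainder is absorbed into $C\delta(P-W)^2$ exactly as before.

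For part 2, when $|p(v)-p(w)|<\delta$ I would rewrite \eqref{Q-est11} as
\[
Q(v|w) \;\geq\; \frac{W^{-1/\gamma-1}}{2\gamma}\,(P-W)^2\left[\,1 - \frac{2(1+\gamma)}{3\gamma\,W}(P-W)\right]
\]
and choose $\delta_*$ small enough that the bracket is $\ge 1/2$ for $|P-W|\le\delta\le\delta_*$, yielding $(P-W)^2 \le C\,Q(v|w)$. When instead $Q(v|w)<\delta$, I would use \eqref{rel_Q} to deduce $|v-w|\lesssim \sqrt{Q(v|w)}$ on $\{v\le 3v_*\}$ and $|v-w|\lesssim Q(v|w)$ on $\{v>3v_*\}$; in either regime $|v-w|$ is small, so by the Lipschitz bound \eqref{pressure2} (valid because $w$ stays near $v_*$, hence $v$ stays away from the singular point $0$) $|p(v)-p(w)|$ is also small, and the previous case finishes the proof.
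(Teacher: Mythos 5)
Your argument is correct. There is nothing in this paper to compare it against line by line: Lemma \ref{lem:local} is stated in Appendix \ref{app-use} without proof, being imported from \cite{Kang-V-NS17}, where the estimates are likewise obtained by expanding in the pressure variable around $p(w)$; your write-up follows that spirit but is self-contained. Its main merit is the representation $Q(v|w)=\frac{1}{\gamma}\int_0^{P-W}r\,(W+r)^{-1/\gamma-1}\,dr$ (with $P=p(v)$, $W=p(w)$) combined with the tangent-line bound from convexity, which yields \eqref{Q-est11} as an exact inequality for all $v,w>0$ — consistent with the absence of any $C\delta$ error in that estimate — while \eqref{p-est1} and \eqref{Q-est1} only need the crude remainder bound $C\delta|p(v)-p(w)|^2$, for which your observation that $W$ and $W+r$ stay in a fixed neighborhood of $p(v_*)$ is exactly what is required. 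Two points in part 2 deserve an explicit line each: (i) when $Q(v|w)<\delta$, the chain \eqref{rel_Q} plus \eqref{pressure2} gives $|p(v)-p(w)|\le C\sqrt{\delta}$, not $|p(v)-p(w)|<\delta$, so you cannot literally quote ``the previous case''; what saves the argument is that the bracket $1-\frac{2(1+\gamma)}{3\gamma W}(P-W)\ge \frac{1}{2}$ only requires $|P-W|$ below a fixed threshold of order $\frac{3\gamma p(v_*)}{8(1+\gamma)}$, which $C\sqrt{\delta_*}$ satisfies after shrinking $\delta_*$; (ii) \eqref{pressure2} requires $v\ge v_*/2$, which does follow from $|v-w|\le C\sqrt{\delta}$ together with $w$ close to $v_*$ (invert $p$ near $p(v_*)$), but should be stated. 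With these clarifications the constants depend only on $\gamma$ and $v_*$, as the lemma requires.
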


The following lemma presents an estimate based on the inverse of the pressure function, which will be used in Appendix \ref{app-exp}. 
\begin{lemma}\label{lem:press}
For any $r>0$, there exist $\eps_0>0$ and $C>0$ such that the following holds.
For any $p_-, p_+, p>0$ such that $p_-\in (r/2, 2r)$,
$p_+-p_-=:\eps\in (0, \eps_0)$, $p_-\leq p\leq p_+$, and $v, v_-,v_+$ such that 
$p(v)=p, p(v_\pm)=p_\pm$, we have
$$
\left|\frac{v-v_-}{p-p_-}+\frac{v-v_+}{p_+-p}+\frac{1}{2}\frac{p''(v_-)}{p'(v_-)^2}(v_--v_+)\right|\leq C\eps^2.
$$
\end{lemma}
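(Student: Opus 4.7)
The plan is to exploit that the three fractions are all discrete divided differences of the inverse pressure function $\phi:=p^{-1}$, which is $C^\infty$ on a neighborhood of $p_-$ with derivatives bounded uniformly for $p_-\in(r/2,2r)$. First I would apply Taylor's theorem to $\phi$ at $p_-$ and at $p_+$ up to third order. Dividing out the first difference gives
\begin{align*}
\frac{v-v_-}{p-p_-} &= \phi'(p_-)+\tfrac{1}{2}\phi''(p_-)(p-p_-)+R_1,\\
\frac{v-v_+}{p_+-p} &= -\phi'(p_+)+\tfrac{1}{2}\phi''(p_+)(p_+-p)+R_2,
\end{align*}
where $|R_i|\le C\eps^2$ uniformly, since each cubic remainder is divided by a factor of $|p-p_-|$ or $|p_+-p|$ and what remains is controlled by $\|\phi'''\|_{L^\infty}$ on the fixed compact interval $[p_-,p_+]\subset(r/4,4r)$.

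Next I would expand $\phi'(p_+)=\phi'(p_-)+\phi''(p_-)\eps+O(\eps^2)$ and $\phi''(p_+)=\phi''(p_-)+O(\eps)$, plug these into the sum of the two displayed equations, and observe the telescoping
\[
\tfrac{1}{2}\phi''(p_-)(p-p_-)+\tfrac{1}{2}\phi''(p_-)(p_+-p)=\tfrac{1}{2}\phi''(p_-)\eps,
\]
which combined with $\phi'(p_-)-\phi'(p_+)=-\phi''(p_-)\eps+O(\eps^2)$ gives
\[
\frac{v-v_-}{p-p_-}+\frac{v-v_+}{p_+-p} = -\tfrac{1}{2}\phi''(p_-)\eps+O(\eps^2).
\]

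The final step is to translate $\phi''(p_-)$ and $\eps$ back into the $v$-variables. Differentiating $p(\phi(p))=p$ twice yields the standard identity $\phi''(p_-)=-p''(v_-)/p'(v_-)^3$, while a first-order Taylor expansion of $\phi$ at $p_-$ gives $v_+-v_-=\eps/p'(v_-)+O(\eps^2)$, equivalently $\eps=-p'(v_-)(v_--v_+)+O(\eps^2)$. Substitution yields
\[
-\tfrac{1}{2}\phi''(p_-)\eps = -\tfrac{1}{2}\frac{p''(v_-)}{p'(v_-)^2}(v_--v_+)+O(\eps^2),
\]
which is the claimed cancellation.

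The only subtlety is to make the error bounds uniform in $p$ as $p$ approaches either endpoint $p_\pm$: this is handled automatically because the dropped remainders are cubic in the small increments $|p-p_-|$ and $|p_+-p|$, each of which is bounded by $\eps$, so after dividing by the first power the remaining factor is still $O(\eps^2)$. No genuinely hard step is anticipated; the entire argument is a smoothness-plus-Taylor computation relying only on $p\in C^\infty((r/4,4r))$ and $p'\neq 0$ there.
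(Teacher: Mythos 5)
Your argument is correct: writing all three quotients as divided differences of the inverse pressure $\phi=p^{-1}$, Taylor-expanding at $p_-$ and $p_+$ with cubic remainders, and then converting $\phi'(p_-)=1/p'(v_-)$, $\phi''(p_-)=-p''(v_-)/p'(v_-)^3$, $\eps=-p'(v_-)(v_--v_+)+O(\eps^2)$ gives exactly the stated cancellation, with all constants uniform since $[p_-,p_+]$ stays in a fixed compact subinterval of $(0,\infty)$ where $\phi$ is smooth and $p'\neq 0$. This is essentially the same elementary Taylor computation behind the lemma as imported from the authors' earlier work (\cite[Lemma 2.2]{Kang-V-NS17}), so nothing further is needed.
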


The following identity of \cite[Lemma 4.1]{KVARMA} will be used when splitting the composite wave. 
\begin{lemma}\cite[Lemma 4.1]{KVARMA} \label{lem:tri}
For any function $F$, its relative function $F(\cdot|\cdot)$ satisfies
\[
F(u|w)+F(w|v) = F(u|v) + (F'(w)-F'(v)) (w-u),\quad\forall u, v, w.
\]
\end{lemma}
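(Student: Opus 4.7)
\medskip

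\noindent\textbf{Proof plan for Lemma \ref{lem:tri}.} The statement is a purely algebraic identity about the relative functional $F(a|b):=F(a)-F(b)-F'(b)(a-b)$, so the plan is simply to expand both sides according to this definition and check that they coincide. No analytic input is needed.

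\medskip

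First, I will write out the left-hand side:
\begin{align*}
F(u|w)+F(w|v)&=\bigl(F(u)-F(w)-F'(w)(u-w)\bigr)+\bigl(F(w)-F(v)-F'(v)(w-v)\bigr)\\
&=F(u)-F(v)-F'(w)(u-w)-F'(v)(w-v).
\end{align*}
Next I will expand the right-hand side, using $(F'(w)-F'(v))(w-u)=-F'(w)(u-w)+F'(v)(u-w)$:
\begin{align*}
F(u|v)+(F'(w)-F'(v))(w-u)&=F(u)-F(v)-F'(v)(u-v)-F'(w)(u-w)+F'(v)(u-w)\\
&=F(u)-F(v)-F'(w)(u-w)+F'(v)\bigl((u-w)-(u-v)\bigr)\\
&=F(u)-F(v)-F'(w)(u-w)+F'(v)(v-w),
\end{align*}
which agrees with the left-hand side. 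This completes the verification.

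\medskip

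There is no real obstacle here: the identity is a two-line consequence of the definition, and is independent of any convexity, regularity, or sign assumption on $F$. The only care needed is bookkeeping of signs when grouping the $F'(v)$ and $F'(w)$ terms. Since this lemma is used in the paper to split the composite wave (e.g., in the estimates of Lemma~\ref{lem:ws} where terms like $Q(\bar v|\tilde v)$ are rewritten in terms of $Q(\bar v|\tilde v_i)$ and $Q(\tilde v_i|\tilde v)$), it suffices to have the identity stated for arbitrary $F$, which is exactly what the above expansion gives.
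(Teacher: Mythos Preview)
Your verification is correct: the identity is a direct consequence of expanding the definition of the relative functional, and the sign bookkeeping is handled properly. The paper does not supply its own proof of this lemma (it is simply quoted from \cite{KVARMA}), so your two-line algebraic check is exactly the expected argument.
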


\section{Proof of Proposition \ref{prop:main3}} \label{app-exp}
\setcounter{equation}{0}

First of all, given $(v_*,u_*)$, fix the value of $\eps_0$ corresponding to the constant $r:=p(v_*)$ in Lemma \ref{lem:press}.
Then we consider any constant $\deltao\in(0,1/2)$ such that
$$
\deltao < \min\left(\frac{\delta_*}{ |p'(v_*/2)|}, \frac{\delta_*}{2}, \frac{p(v_*)}{2}, \sqrt\eps_0\right),
$$
where $\delta_*$ is the constant as in Lemma \ref{lem:local}.  \\
Note that using $|\tilde v_0- v_*|\le \delta_1$, we have
\[
|p(\tilde v_0)-p(v_*)|\le |p'(v_*/2)| |\tilde v_0- v_*| \le   |p'(v_*/2)| \deltao <\delta_*. 
\]
Moreover, since $|p(v)-p(\tilde v_0)|\le 2\deltao<\delta_*$, we can apply the results of Lemma \ref{lem:local} to the case of  $w=\tilde v_0$. 
Since $\eps \le \lam\deltao<\delta_1^2< \eps_0$, 
 we can also apply Lemma \ref{lem:press} to the case of $\{p_-,p_+\}=\{p(v_l),p(v_r)\}$.

Without loss of generality, we assume $v_l<v_r$, and so, $\s_0=\sqrt{-\frac{p(v_r)-p(v_l)}{v_r-v_l}}>0$ (that is, assume the case of $2$-shock).\\
We will rewrite the functionals $\mathcal{Y}^g, \mathcal{I}_{1}, \mathcal{I}_{2}, \mathcal{G}_{2}, \mathcal{D}$ with respect to the following variables:
\beq\label{notw}
w:= \big(p(v)-p(\tilde v_0)\big)\phi,\qquad W:=\frac{\lambda}{\eps} w,\qquad  y:=\frac{p(\tilde v_0)-p(v_r)}{\eps} ,
\eeq
where note that $\eps=p(v_l)-p(v_r)$.\\
Since  $p(\tilde v_0)$ is decreasing in $x$,
we can use the change of variables $x\mapsto y\in[0,1]$. \\
Notice from the assumptions that 
\beq\label{ach}
\partial_x a_0 = -\lam \frac{\partial_x p(\tilde v_0)}{\eps},\qquad   \partial_x y= \frac{\partial_x p(\tilde v_0)}{\eps} ,
\eeq
and
\beq\label{approx1}
\|a-1\|_{L^\infty(\bbr)}\le 2\lam \leq 2\deltao .
\eeq

First of all, note that the functionals $\mathcal{Y}^g, \mathcal{I}_{1}, \mathcal{I}_{2}$  with  $\phi\equiv 1$
are  the same as $Y_g, \mathcal{I}_{1}, \mathcal{I}_{2}$ in \cite[Proposition 4.2]{KV-unique19} (and as $Y_g, \mathcal{B}_{1}, \mathcal{B}_{21}$ in \cite[Proposition 4.2]{Kang-V-NS17}).
 As in \cite[Proposition 3.4]{Kang-V-NS17} and \cite[Appendix A]{KV-unique19}, we use the notations:
\[
\s_* = \sqrt{-p'(v_*)},\qquad \alpha_* := \frac{\gamma \sqrt{-p'(v_*)} p(v_*)} {\gamma+1}.
\]
Note that
\beq\label{sigma-f}
 |\s_0 -\s_*| \le C\deltao ,
\eeq 
and 
\beq\label{sigma-p}
\|\s_*^2+p'(\tilde{v}_0)\|_{\infty} \leq C \deltao,
\qquad
\left\|\frac{1}{\sigma_*^2}-\frac{p(\tilde{v}_0)^{-\frac{1}{\gamma}-1}}{\gamma}\right\|_{\infty} \leq C \deltao.
\eeq
Thus, following the same estimates as in \cite[(3.30), (3.33),  (3.34)]{Kang-V-NS17} together with using Lemma \ref{lem:local} and \eqref{notw}-\eqref{sigma-p} above, we find that 
\begin{align}
&\left|\sigma_*^2 \frac{\lambda}{\eps^2 }\mathcal{Y}^g-\int_0^1W^2\,dy-2\int_0^1 W\,dy\right|\leq C\deltao\left(\int_0^1W^2\,dy+\int_0^1|W|\,dy\right) ,\label{Yfunc}\\
&{2\alpha_*} \frac{\lambda^2}{\eps^3}|\mathcal{I}_{1}|\leq \left(1 +C\deltao\right)\int_0^1 W^2\,dy,\label{B1func}\\
&{2\alpha_*} \frac{\lambda^2}{\eps^3}|\mathcal{I}_{2}|\leq \left(\frac{\alpha_\gamma}{\sigma_*}\left(\frac{\lambda}{\eps}\right)+C\deltao\right)\int_0^1 W^2\,dy. \label{B2func}
\end{align}
Using \eqref{Yfunc} and the assumption \eqref{assYp}, we have
$$
\int_0^1W^2\,dy-2\left|\int_0^1W\,dy\right|\leq C_2\sigma_*^2+C\deltao \left(\int_0^1W^2\,dy+\int_0^1|W|\,dy\right).
$$
Using
$$
\left|\int_0^1W\,dy\right|\leq \int_0^1|W|\,dy\leq \frac{1}{8} \int_0^1W^2\,dy+8 ,
$$
we have
$$
\int_0^1W^2\,dy\leq 2\left|\int_0^1W\,dy\right|+C_2\sigma^2+C\deltao\left(\int_0^1W^2\,dy+\int_0^1|W|\,dy\right)\leq C+24+\frac{1}{2}\int_0^1W^2\,dy,
$$
for $\delta_1$ small enough.
Thus there exists a  constant $C_1>0$  depending on $C_2$ (but not on $\eps$ nor $\lambda$)  such that 
\begin{equation}\label{controlW}
\int_0^1W^2\,dy\leq C_1.
\end{equation}

For an estimate on the $|\mathcal{Y}^g|^2$ terms, we use \eqref{Yfunc} to have
\begin{align*}
\begin{aligned}
-2\alpha_*\left(\frac{\lambda^2}{\eps^3}\right) \frac{|\mathcal{Y}^g|^2}{\eps \delta_1}
& =-\frac{2\alpha_*}{\deltao\sigma_*^4}\left| \frac{\sigma_*^2\lambda}{\eps^2}\mathcal{Y}^g\right|^2 \\
&\leq
  -\frac{\alpha_*}{\delta_1\sigma_*^4}\left| \int_0^1W^2\,dy+2\int_0^1 W\,dy\right|^2 +C\deltao\left(\int_0^1W^2\,dy+\int_0^1|W|\,dy \right)^2 ,
 \end{aligned}
\end{align*}
which together with \eqref{controlW} yields
\beq\label{Yfinal}
-2\alpha_*\left(\frac{\lambda^2}{\eps^3}\right) \frac{|\mathcal{Y}^g|^2}{\eps \delta_1} \le   -\frac{\alpha_*}{\delta_1\sigma_*^4}\left| \int_0^1W^2\,dy+2\int_0^1 W\,dy\right|^2 +C\deltao \int_0^1W^2\,dy .
\eeq

For the $\mathcal{G}_{2}$, we use  \eqref{notw}-\eqref{sigma-p} to have
\begin{align*}
\begin{aligned}
\mathcal{G}_{2} &= - \frac{\sigma_0\lam}{2\gamma}   \int_0^1 p(\tilde v_0)^{-\frac{1}{\gamma}-1} w^2 dy +\sigma_0\lam \frac{1+\gamma}{3\gamma^2}  \int_0^1 p(\tilde v_0)^{-\frac{1}{\gamma}-2} w^3 dy \\
&\ge \Big(  \frac{\lam}{2\sigma_*} -C\eps \deltao \Big)  \int_0^1 w^2 dy - \frac{\lam}{3\alpha_*} \int_0^1 w^3 dy - C\frac{\eps\lam}{\alpha_*} \int_0^1 |w|^3 dy,
 \end{aligned}
\end{align*}
which yields
\begin{align}
\begin{aligned}\label{G2final}
 -2\alpha_* \frac{\lambda^2}{\eps^3}\mathcal{G}_{2}\leq  \left(-\frac{\alpha_*}{\sigma_*}\left(\frac{\lambda}{\eps}\right) +C\deltao    \right)\int_0^1W^2\,dy +\frac{2}{3}\int_0^1W^3\,dy+C\eps\int_0^1|W|^3\,dy.
 \end{aligned}
\end{align}

Therefore, it remains to estimate the diffusion $\mathcal{D}$ as follows.\\
First, by the change of variable, the diffusion $\mathcal{D}$ is written as
\[
\mathcal{D}=\int_0^1 a |\partial_{y} w|^2 v^\beta \Big(-\frac{\partial y}{\partial x}\Big) dy.
\]
Since integrating \eqref{re_shock} over $(-\infty,x]$ yields that
\[
\tilde v_0^\beta \partial_x p(\tilde v_0)=\sigma_0 (\tilde v_0 -v_l) + \frac{p(\tilde v_0)-p(v_l)}{\sigma_0},
\]
we use \eqref{ach} to have
\[
\eps\, \tilde v_0^\beta \frac{\partial y}{\partial x}=\frac{1}{\sigma_0}\Big(\sigma_0^2 (\tilde v_0-v_l) + p(\tilde v_0)-p(v_l)\Big).
\]
Following the proof of  \cite[Lemma 3.1]{Kang-V-NS17}, with $1-y=\frac{p(v_l)-p(\tilde v_0)}{\eps}$, we have
\[
\frac{\tilde v_0^\beta}{y(1-y)}  \Big(-\frac{\partial y}{\partial x}\Big) =\frac{\eps}{\sigma_0 (v_l-v_r)}\left( \frac{\tilde v_0-v_r}{p(\tilde v_0)-p(v_r)}+\frac{\tilde v_0-v_l}{p(v_l)-p(\tilde v_0)} \right).
\]
Then
\begin{eqnarray*}
&&\left|\frac{\tilde v_0^\beta}{y(1-y)}  \Big(-\frac{\partial y}{\partial x}\Big) - \eps \frac{p''(v_*)}{2p'(v_*)^2\sigma_*}\right|\\
&&\qquad \le \underbrace{\left| \frac{\tilde v_0^\beta}{y(1-y)}  \Big(-\frac{\partial y}{\partial x}\Big) - \eps \frac{p''(v_r)}{2p'(v_r)^2\sigma_0} \right|}_{=:I_1} + \underbrace{\frac{\eps}{2}\Big| \frac{p''(v_r)}{p'(v_r)^2\sigma_0} -\frac{p''(v_*)}{p'(v_*)^2\sigma_*} \Big|}_{=:I_2}.
\end{eqnarray*}
Applying Lemma \ref{lem:press} to the case where $p_-=p(v_r)$, $p_+=p(v_l)$ and $p=p(\tilde v_0)$, we have
\begin{align*}
\begin{aligned}
I_1=\frac{\eps}{|\sigma_0 |(v_r - v_l)}\left| \frac{\tilde v_0-v_r}{p(\tilde v_0)-p(v_r)}+\frac{\tilde v_0-v_l}{p(v_l)-p(\tilde v_0)}+\frac{p''(v_r)}{2p'(v_r)^2}(v_r-v_l)  \right|\le C\eps^2.
\end{aligned}
\end{align*}
Using \eqref{sigma-f} and $|v_* -v_r|\le \deltao$, we have
$I_2\le C\eps \deltao$. Thus, we get
\[
\left|\frac{\tilde v_0^\beta}{y(1-y)}  \Big(-\frac{\partial y}{\partial x}\Big) - \eps \frac{p''(v_*)}{2p'(v_*)^2\sigma_*}\right| \le C\eps\deltao.
\]
Since $p(v)=v^{-\gamma}$, we have 
$$
\frac{p''(v_*)}{p'(v_*)^2\sigma_*}=\frac{\gamma+1}{\gamma \sigma_* p(v_*)}=\frac{1}{\alpha_*},
$$
which yields
\[
\left|\frac{\tilde v_0^\beta}{y(1-y)}  \Big(-\frac{\partial y}{\partial x}\Big) -\frac{ \eps}{2\alpha_*}\right| \le C\eps\deltao.
\]
Thus, using $|(v^\beta/\tilde v_0^\beta)-1| \le C\deltao$ and \eqref{approx1}, we have
\begin{align*}
\begin{aligned}
\mathcal{D}&\ge (1-2\delta_1)\int_0^1 |\partial_{y} w|^2 v^\beta \Big(-\frac{\partial y}{\partial x}\Big) dy\\
& =  (1-2\delta_1)\int_0^1 |\partial_{y} w|^2  \frac{v^\beta}{\tilde v_0^\beta}\tilde v_0^\beta \Big(-\frac{\partial y}{\partial x}\Big) dy\\
&\geq(1-2\delta_1) \left(\frac{\eps}{2\alpha_*}-C\eps\deltao \right)   \int_0^1y(1-y)  |\partial_{y} w|^2  \, dy\\
&\geq\frac{\eps}{2\alpha_*}(1-C\deltao)  \int_0^1y(1-y)  |\partial_{y} w|^2  \, dy .
\end{aligned}
\end{align*}
After the normalization, we obtain
\begin{equation}\label{newD}
-2\alpha_* \frac{\lambda^2}{\eps^3}\mathcal{D}\leq -(1-C\deltao) \int_0^1y(1-y)  |\partial_{y} W|^2  \, dy.
\end{equation}

To finish the proof, we first observe that 
for any $\delta\le \deltao$, 
 \begin{eqnarray*}
 &&\mathcal{R}_{\delta} (v) \le -\frac{1}{\eps\deltao}|\mathcal{Y}^g(v)|^2 +\mathcal{I}_{1}(v)+\deltao|\mathcal{I}_{1}(v)|\\
&&\quad\quad+\mathcal{I}_{2}(v)+\deltao\left(\frac{\eps}{\lambda}\right)|\mathcal{I}_{2}(v)|-\left(1-\deltao\left(\frac{\eps}{\lambda}\right)\right)\mathcal{G}_{2}(v)-(1-\deltao)\mathcal{D}(v).
 \end{eqnarray*}
Then,  \eqref{B1func},  \eqref{B2func}, \eqref{Yfinal},  \eqref{G2final}, \eqref{newD} imply that for some constants $C_\gamma, C_*>0$,
\begin{align*}
\begin{aligned}
\mathcal{R}_{\delta} (v) & \leq \frac{ \eps^3}{2\alpha_*\lam^2}\bigg[
-\frac{1}{C_\gamma\deltao}\left(\int_0^1W^2\,dy+2\int_0^1 W\,dy\right)^2+(1+C_* \deltao)\int_0^1 W^2\,dy\\
&\quad+\frac{2}{3}\int_0^1 W^3\,dy +C_*\delta_1 \int_0^1 |W|^3\,dy -(1-C_* \delta_1)\int_0^1 y(1-y)|\partial_{y} W|^2\,dy \bigg].
 \end{aligned}
\end{align*}

To finish the proof, we will use the nonlinear Poincar\'e type inequality \cite[Proposition 3.3]{Kang-V-NS17} as follow:
\begin{proposition}\label{prop:W}{\cite[Proposition 3.3]{Kang-V-NS17}}
For a given $C_1>0$, there exists $\deltat>0$, such that for any $\delta<\deltat$ the following is true.\\ For any $W\in L^2(0,1)$ such that 
$\sqrt{y(1-y)}\partial_yW\in L^2(0,1)$, if $\int_0^1 |W(y)|^2\,dy\leq C_1$, then
\begin{align}
\begin{aligned}\label{Winst}
&-\frac{1}{\delta}\left(\int_0^1W^2\,dy+2\int_0^1 W\,dy\right)^2+(1+\delta)\int_0^1 W^2\,dy\\
&\qquad\qquad+\frac{2}{3}\int_0^1 W^3\,dy +\delta \int_0^1 |W|^3\,dy  -(1-\delta)\int_0^1 y(1-y)|\partial_y W|^2\,dy \leq0.
 \end{aligned}
\end{align}
\end{proposition}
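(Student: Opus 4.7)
The plan is to argue by contradiction and compactness in the degenerate weighted Sobolev space associated with the bilinear form $\int_0^1 y(1-y)|\partial_y W|^2\,dy$. Suppose the conclusion fails for some $C_1>0$. Then there exist sequences $\delta_n \downarrow 0$ and admissible $W_n$ with $\int_0^1 W_n^2\,dy \leq C_1$ satisfying the reverse strict inequality. The first step is to establish a uniform bound on the weighted Dirichlet energy. Using Cauchy--Schwarz against $\int_y^{1/2} \frac{d\xi}{\xi(1-\xi)}$ one can control $W_n(y)$ pointwise away from the endpoints by $|W_n(1/2)|+\sqrt{-\log(y(1-y))}\cdot \bigl(\int y(1-y)|\partial_y W_n|^2\bigr)^{1/2}$, and anchoring $W_n(1/2)$ through $\int W_n^2\leq C_1$ yields an interpolation inequality $\int_0^1 |W_n|^3 \leq \eta \int_0^1 y(1-y)|\partial_y W_n|^2 + C(\eta,C_1)$ for any $\eta>0$. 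Inserting this into the reverse inequality absorbs the cubic terms, giving $\sup_n \int_0^1 y(1-y)|\partial_y W_n|^2 \leq C$. The penalty term is then $O(1)$, which forces $\int W_n^2 + 2\int W_n \to 0$ at rate $\sqrt{\delta_n}$.

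Next, by a weighted Rellich--Kondrachov embedding I would extract a subsequence with $W_n \rightharpoonup W_\infty$ weakly in the weighted space and strongly in $L^p(0,1)$ for every $p<\infty$. Passing to the limit in the reverse inequality yields both the constraint $\int W_\infty^2 + 2\int W_\infty = 0$ and the limiting bound $\int y(1-y)|\partial_y W_\infty|^2 \leq \int W_\infty^2 + \tfrac{2}{3}\int W_\infty^3$. The substitution $Z := W_\infty + 1$ then recasts the constraint as $\int Z^2 = 1$ and, after algebraic simplification, the limiting inequality reads $\int y(1-y)|\partial_y Z|^2 \leq \tfrac{2}{3}\bigl(\int Z^3 - 1\bigr)$.

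The closing step combines the sharp weighted Poincar\'e inequality $\int (Z-\bar Z)^2 \leq \int y(1-y)|\partial_y Z|^2$ (with $\bar Z := \int_0^1 Z\,dy$, whose kernel consists of constants) with the constraint $\int Z^2 = 1$ to rule out any nontrivial $W_\infty$. Using $\int Z^3 \leq \|Z\|_{L^\infty}\int Z^2 = \|Z\|_{L^\infty}$ together with the log-type pointwise bound from Step~1, one shows that $\tfrac{2}{3}(\int Z^3 - 1)$ is strictly dominated by the weighted Dirichlet energy unless $Z$ is constant; the constant case forces $Z\equiv 1$, i.e.\ $W_\infty\equiv 0$. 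To discard this trivial limit, I would rerun the compactness argument on the rescaled sequence $\hat W_n := W_n/\epsilon_n$, where $\epsilon_n := \|W_n\|_{L^2}$, in the regime $\epsilon_n\to 0$. To leading order the cubic terms are $O(\epsilon_n)$ and negligible, reducing the inequality to a homogeneous quadratic problem. A case analysis on the ratio $\delta_n/\epsilon_n^2$ then either sends the penalty to $+\infty$ unless $\int \hat W_\infty = 0$ (mean-zero case, where the sharp Poincar\'e inequality gives the contradiction), or places the argument in a regime where the subleading cubic correction is the decisive sign-breaking term.

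The principal obstacle I anticipate is the delicate handling of the degenerate limit $W_\infty\equiv 0$: the weighted Poincar\'e inequality is an equality on constants, so distinguishing the trivial limit from a genuinely degenerate minimizing sequence requires tracking the next-order expansion of $\mathcal{N}_\delta$ around $W=0$. Making the rescaling argument precise, in particular verifying that the rescaled quadratic problem constrained by the rescaled penalty cannot produce a positive value uniformly across all regimes of $\delta_n/\epsilon_n^2$, is the technical heart of the proof; the required logarithmic pointwise/Sobolev-type control in the degenerate weighted space, although classical in spirit, must be carefully established first because the weight $y(1-y)$ vanishes exactly at the integration boundary.
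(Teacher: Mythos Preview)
The paper does not contain a proof of this proposition: it is quoted verbatim as \cite[Proposition 3.3]{Kang-V-NS17} and invoked as a black box in Appendix~B. There is therefore no proof in this paper to compare your proposal against.

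That said, your contradiction--compactness strategy is broadly the one used in the cited reference: one extracts a limit $W_\infty$ subject to the constraint $\int W_\infty^2+2\int W_\infty=0$ and the limiting inequality, then reduces to the sharp spectral gap $\int_0^1 y(1-y)|\partial_y f|^2\ge 2\int_0^1(f-\bar f)^2$ for the Legendre operator. Your substitution $Z=W_\infty+1$ and the resulting identity $\tfrac{2}{3}(\int Z^3-1)=\int W_\infty^2+\tfrac{2}{3}\int W_\infty^3$ are correct. The genuine gap in your outline is the closing step: bounding $\int Z^3$ by $\|Z\|_{L^\infty}$ via the logarithmic pointwise estimate is too crude to beat the Dirichlet form, because the Poincar\'e constant is exactly $2$ and the cubic term can be of comparable size. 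In the original argument this is handled not by a soft $L^\infty$ bound but by an explicit expansion in Legendre polynomials (or an equivalent algebraic manipulation) that exploits the precise constants; your sketch would need a sharper interpolation or the spectral decomposition to close. Likewise, the rescaled analysis near $W_\infty\equiv 0$ that you flag as ``the technical heart'' is indeed where the work lies, and your case split on $\delta_n/\epsilon_n^2$ is the right organizing principle, but as written it remains a plan rather than a proof.
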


To apply the Proposition, let us fix the value of the $\delta_2$ of Proposition \ref{prop:W} corresponding to  the constant $C_1$ of \eqref{controlW}.\\
Then we retake  $\delta_1$ small enough such that  $\max(C_\gamma, C_*) \delta_1\le\delta_2$. Thus we find that
\begin{align*}
\mathcal{R}_{\delta} (v)
& \leq \frac{ \eps^3}{2\alpha_*\lam^2}\bigg[
-\frac{1}{\delta_2}\left(\int_0^1W^2\,dy+2\int_0^1 W\,dy\right)^2+(1+\delta_2)\int_0^1 W^2\,dy\\
&\quad+\frac{2}{3}\int_0^1 W^3\,dy +\delta_2 \int_0^1 |W|^3\,dy -(1-\delta_2)\int_0^1 y(1-y)|\partial_{y} W|^2\,dy \bigg].
\end{align*}
Therefore, using Proposition  \ref{prop:W}, we have 
\[
\mathcal{R}_{\delta}(v) \le 0,
\]
which completes the proof.

\section{Existence and uniqueness of shifts} \label{app-shift}
\setcounter{equation}{0}

\setcounter{equation}{0}
We here prove the existence and uniqueness of the shift functions defined by the non-autonomous system \eqref{X-def}. For any fixed $\eps_1, \eps_2$ and $U\in\mathcal{X}_T$, let
$F:[0,T]\times\bbr^2\rightarrow\bbr^2$ denote the right-hand side of \eqref{X-def} by
\[
F\big(t, X_1, X_2\big) = {
  \Phi_{\eps_1} (Y_1(U)) \Big(2|\mathcal{J}^{bad}(U)|+1 \Big) -\frac{\s_1}{2}  \Psi_{\eps_1} (Y_1(U)) 
\choose     -\Phi_{\eps_2} (-Y_2(U)) \Big(2|\mathcal{J}^{bad}(U)|+1 \Big) -\frac{\s_2}{2}  \Psi_{\eps_2} (-Y_2(U)) } .
\]
We may show that there exists $a, b\in L^q(0,T)$ (for some $q\ge1$) such that
\[
\sup_{(X_1,X_2)\in\bbr^2 }|F(t,X_1,X_2)|\leq a(t),\qquad \sup_{(X_1,X_2)\in\bbr^2 }|\nabla_{X_1,X_2} F(t,X_1,X_2)|\leq b(t) .
\]
To this end, we use the facts that
\begin{align*}
&\mbox{ $\Phi_{\eps_i}$ and $\Psi_{\eps_i}$ are Lipschitz and bounded};\\
&\mbox{for the solution $(v,h)\in\mathcal{X}_T,$}\quad v, v^{-1} \in L^\infty((0,T)\times\bbr) \quad \mbox{and}\quad h-\tih, v_x \in L^\infty(0,T;L^2(\bbr)); \\
&|a^{X_1,X_2}|+|\tih^{X_1,X_2}|\le C,   C^{-1}\le \tiv^{X_1,X_2} \le C,  \mbox{where the constant $C$ is uniform w.r.t. } X_1, X_2 ; \\
&\mbox{for any $r\in[1,\infty]$},\quad L^r(\bbr)\mbox{-norms of }  (a_i)_x^{X_i}, (a_i)_{xx}^{X_i}, (\tiv_i)_x^{X_i}, (\tiv_i)_{xx}^{X_i}, (\tiv_i)_{xxx}^{X_i},  (\tih_i)_x^{X_i}, (\tih_i)_{xx}^{X_i} \\
&\qquad\qquad\qquad\qquad\mbox{ are uniform w.r.t. } X_1, X_2 .
\end{align*}
Thus, there exists a constant $C_*$ (uniform w.r.t. $ X_1, X_2$) such that
\begin{align*}
|F(t,X_1,X_2)|&\le C_*\sum_{i=1}^2 \bigg[ \|(a_i)_x^{X_i}\|_{L^2(\bbr)} \|h-\tih\|_{L^\infty(0,T;L^2(\bbr))} + \|(a_i)_x^{X_i}\|_{L^1(\bbr)} \|\tih-\tih^{X_1,X_2}\|_{L^\infty(\bbr)} \\
& + \Big(\|(a_i)_x^{X_i}\|_{L^2(\bbr)} + \|(\tiv_i)_x^{X_i}\|_{L^2(\bbr)} \Big)  \Big(  \|v_x\|_{L^\infty(0,T;L^2(\bbr))} +  \|(\tiv_i)_x^{X_i}\|_{L^2(\bbr)} \Big) +1\bigg] \\
&\le C_*,
\end{align*}
which especially implies
\[
\sup_{(X_1,X_2)\in\bbr^2 }|F(t,X_1,X_2)|\leq a(t) \in L^1(0,T).
\]
Likewise, we have
\[
|\nabla_{X_1,X_2} F(t,X_1,X_2)|\le C_*,
\]
which implies
\[
\sup_{(X_1,X_2)\in\bbr^2 }|\nabla_{X_1,X_2} F(t,X_1,X_2)|\leq b(t) \in L^2(0,T).
\]
Therefore, the system \eqref{X-def} has a unique absolutely continuous solution thanks to the following lemma. This lemma is a simple extension of \cite[Lemma A.1]{CKKV} (which is for scalar ODE). So, we omit the proof. 
 
 \begin{lemma}\label{lem_cauchy_lip}
 Let $p>1$, $T>0$ and $n\in\mathbb{N}$. Suppose that a function 
 $F:[0,T]\times\bbr^n\rightarrow\bbr^n$  satisfies that
 \[
\sup_{X\in\bbr^n }|F(t,X)|\leq a(t),\qquad \sup_{X\in\bbr^n }|\nabla_{X} F(t,X)|\leq b(t),
\]
for some functions $a \in L^1(0,T)$ and $\, b\in L^p(0,T)$. Then for any $x_0\in\bbr$, there exists a unique absolutely continuous solution $X:[0,T]\rightarrow \bbr^n$ to the system of ODEs:
\[
\left\{ \begin{array}{ll}
        \dot X(t) = F(t,X(t))\quad\mbox{for \textit{a.e.} }t\in[0,T],\\
       X(0)=x_0 .\end{array} \right.
\]
 \end{lemma}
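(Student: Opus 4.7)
The plan is to recast the system as the integral fixed-point equation
\[
X(t) = x_0 + \int_0^t F(s, X(s))\, ds,
\]
whose absolutely continuous solutions are exactly the Cauchy solutions sought. The hypothesis $|F(t,X)|\le a(t)\in L^1(0,T)$ ensures the integral makes sense for every $X\in C([0,T];\bbr^n)$ and that the operator $\mathcal{T}:X\mapsto x_0+\int_0^{\cdot} F(s,X(s))\,ds$ maps $C([0,T];\bbr^n)$ into itself, while the gradient bound $|\nabla_X F(t,X)|\le b(t)$ (with $b\in L^p(0,T)\subset L^1(0,T)$ since $T<\infty$) yields the pointwise Lipschitz estimate $|F(t,X)-F(t,Y)|\le b(t)|X-Y|$.

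For existence, I would run a Picard iteration on a suitable partition. Since $b\in L^1(0,T)$, one can choose finitely many subintervals $[t_j,t_{j+1}]$ so that $\int_{t_j}^{t_{j+1}} b(s)\,ds <\tfrac12$. On each such subinterval, the operator $\mathcal{T}$ (initialized at $t_j$ with the value produced on the previous piece) is a strict contraction in the sup norm, so Banach's fixed-point theorem supplies a unique absolutely continuous solution there; gluing these local pieces across the partition produces a solution on all of $[0,T]$.

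Uniqueness is then an immediate consequence of Gronwall's lemma: two absolutely continuous solutions $X_1,X_2$ with the same initial datum $x_0$ satisfy
\[
|X_1(t)-X_2(t)| \le \int_0^t b(s)|X_1(s)-X_2(s)|\,ds,
\]
and the integral version of Gronwall (valid for $b\in L^1$) forces $X_1\equiv X_2$ on $[0,T]$.

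The only mildly subtle point, rather than a genuine obstacle, is verifying that $t\mapsto F(t,X(t))$ is Lebesgue measurable for continuous $X$, so the integral defining $\mathcal{T}$ is unambiguously defined. This is implicit in the statement: the pointwise gradient bound renders $F(t,\cdot)$ continuous for a.e.\ $t$, while $F(\cdot,X)$ is assumed measurable for each fixed $X$ (the standard Carath\'eodory setting), and the composition of a Carath\'eodory function with a continuous curve is measurable. With this verification in place the argument is routine and parallels the scalar proof of \cite[Lemma A.1]{CKKV}, the passage from $\bbr$ to $\bbr^n$ being purely notational.
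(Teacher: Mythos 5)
Your proof is correct and is exactly the standard Carath\'eodory-type Cauchy--Lipschitz argument (integral reformulation, Picard/Banach contraction on subintervals where $\int b<\tfrac12$, gluing, and Gr\"onwall for uniqueness); the paper itself omits the proof and simply cites the scalar version in \cite{CKKV}, which proceeds along the same lines, so there is no substantive difference in approach. Your remark that measurability of $t\mapsto F(t,X(t))$ must be read into the statement (Carath\'eodory setting) is the right caveat, and it is indeed satisfied in the paper's application where $F$ is built from the solution $U\in\mathcal{H}_T$.
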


\section{Proof of Lemma \ref{lem:mlsc}}\label{app:mlsc}
Since $v^k$ are positive measures in $\mathcal{M}((0,T)\times\bbr)$, Radon-Nikodym's theorem implies that there exist positive measures $v^k_a \in L^1(\bbr)$ and $dv^k_s$ (singular part of $v^k$) such that 
\[
d v^k (t,x) = v^k_a(t,x) dtdx + dv^k_s (t,x) .
\]
For simplicity, we set ${\bf{\bar v}}:=\bar v^{X_1^\infty,X_2^\infty}(t,x)$. 
To truncate $ v^k_a$ by a large constant, we first use the definition of the relative functional $Q(\cdot|\cdot)$, and the fact that $Q(v)\to 0$ as $v\to\infty$, which imply 
that for any $\eps>0$, there exists a constant $\xi>0$ with $\xi>\max(2 v_-, 2v_+, 2 v_m^{-1})$ such that for all $v>\xi$,
\beq \label{large-con}
(|Q'( {\bf{\bar v}})| +\eps) v \ge Q(v|{\bf{\bar v}}) \ge (|Q'({\bf{\bar v}})| -\eps) v .
\eeq
For such a constant $\xi$, we define
\[
v^k_\xi := \min (v^k_a,\xi) ,
\]
and 
\[
Q_\xi (v):=\left\{ \begin{array}{ll}
       Q(v),\quad\mbox{if }  v\ge \xi^{-1},\\
        Q'(\xi^{-1}) (v-\xi^{-1}) + Q(\xi^{-1}),\quad\mbox{if } v\le\xi^{-1}.\end{array} \right.
\]
Note that $v\mapsto Q_\xi(v)$ is nonnegative and convex $C^1$-function on $[0,\infty)$, and $Q_\xi'({\bf{\bar v}})=Q'({\bf{\bar v}})$ (by $\xi^{-1}<v_m/2<{\bf{\bar v}}$). 
Then, consider its relative functional: for any nonnegative $v_1,v_2\ge 0$,
\[
Q_\xi(v_1|v_2) := Q_\xi(v_1) -Q_\xi(v_2)-Q_\xi'(v_2)(v_1-v_2) .
\]
Then, using \eqref{large-con}, we have
\beq\label{Q-cut}
d Q_\xi (v^k | {\bf{\bar v}}) \ge Q_\xi (v^k_\xi | {\bf{\bar v}}) dtdx + (|Q_\xi'(\overline V)| -\eps) (dv^k - v^k_\xi dtdx) - 2\eps d v^k ,
\eeq
which means that $d Q_\xi (v^k | {\bf{\bar v}}) -\big[ Q_\xi (v^k_\xi | {\bf{\bar v}}) dtdx + (|Q_\xi'(\overline V)| -\eps) (dv^k - v^k_\xi dtdx) - 2\eps d v^k\big]$ is nonnegative measure.
Indeed, this is verified as follows: If $v^k_a\le\xi$, then $v^k_\xi=v^k_a$, and so
\begin{align*}
\begin{aligned}
\mbox{LHS} &:=Q_\xi (v^k_a | {\bf{\bar v}}) dtdx+ |Q_\xi'(\overline V)| dv^k_s = Q_\xi(v^k_\xi| {\bf{\bar v}}) dtdx+ |Q_\xi'(\overline V)| dv^k_s \ge \mbox{RHS} ,
\end{aligned}
\end{align*}
where the last inequality follows from the facts that (by Radon-Nikodym's theorem) the measure $v^k - v^k_\xi$ is positive and
$v^k - v^k_\xi= (v^k_a - v^k_\xi) + v^k_s = v^k_s$.\\
If $v^k_a>\xi$, then $v^k_\xi=\xi$, and using \eqref{large-con}, $\mbox{LHS} \ge (|Q_\xi'({\bf{\bar v}})| -\eps) v^k_a dtdx + |Q_\xi'(\overline V)| dv^k_s $.\\
Since ${\bf{\bar v}}=\overline V$ $dtdx$-a.e. (by \eqref{def-Omv} and \eqref{def-vom}), we have
\begin{align*}
\begin{aligned}
\mbox{LHS} &\ge (|Q_\xi'(\overline V)| -\eps) dv^k = (|Q_\xi'(\overline V)| +\eps) \xi dtdx + (|Q_\xi'(\overline V)| -\eps) (d v^k- \xi dtdx) - 2\eps\xi dtdx\\
&= (|Q_\xi'({\bf{\bar v}})| +\eps) \xi dtdx + (|Q_\xi'(\overline V)| -\eps) (d v^k- v^k_\xi dtdx) - 2\eps\xi dtdx \\
&\ge  Q_\xi (\xi | {\bf{\bar v}}) dtdx + (|Q_\xi'(\overline V)| -\eps) (d v^k- v^k_\xi dtdx) - 2\eps\xi dtdx .
\end{aligned}
\end{align*}
Thus, using $\xi=v^k_\xi\le v^k_a \le v^k$, we have \eqref{Q-cut}.\\

Therefore, we use \eqref{Q-cut} to have
\begin{align*}
\begin{aligned} 
C_0 &\ge \limsup_{k\to\infty}\int_{(0,T)\times\bbr} \Phi(t,x)~d Q\left(v^k | {\bf{\bar v}}\right) (t,x) \\
&\ge \limsup_{k\to\infty}\int_{(0,T)\times\bbr} \Phi(t,x) \Big[ Q_\xi (v^k_\xi | {\bf{\bar v}})dtdx  + (|Q'_\xi(\overline V)| -\eps) d(v^k - v^k_\xi) -2\eps d v^k \Big] .
\end{aligned}
\end{align*}
We set $\Omega_m:=\left(\overline{\Omega_M^-}\cup\overline{\Omega_M^+}\right)^c$, and define  
\[
\Omega_m^{\delta} := \{ (t,x)\in \Omega_m~|~ d((t,x)|\Omega_m^c)>\delta \},\quad \forall \delta>0.
\]
Then we define a smooth function $\psi_1^{\delta}$ such that
\beq\label{defpsi1}
\psi_1^\delta (t,x) :=\left\{ \begin{array}{ll}
       1,\quad\mbox{on } (\Omega_m^\delta)^c ,\\
        0,\quad\mbox{on } \Omega_m^{2\delta} .\end{array} \right.
\eeq
Then, using  the fact that $|Q'_\xi(v_\pm)|\le |Q_\xi'(v_m)|$ together with \eqref{def-Omv} and \eqref{def-vom},
we have
\begin{align*}
\begin{aligned} 
C_0 &\ge \limsup_{k\to\infty} \Big[ \sum_{\pm}\int_{\Omega_M^\pm} \Phi  Q_\xi (v^k_\xi | v_\pm)dtdx +\int_{\Omega_m} \Phi  Q_\xi (v^k_\xi | v_m)dtdx \\
&\quad + (|Q_\xi'(v_-)| -\eps) \int_{x<0} \Phi \psi_1^\delta d(v^k - v^k_\xi)+ (|Q_\xi'(v_+)| -\eps) \int_{x\ge 0} \Phi \psi_1^\delta d(v^k - v^k_\xi) \\
&\quad  + (|Q_\xi'(v_m)| -\eps) \int \Phi (1-\psi_1^\delta) d(v^k - v^k_\xi) -2\eps \int \Phi d v^k  \Big].
\end{aligned}
\end{align*}
Note that since $|v^k_\xi|\le \xi$ for all $k$, there exists $v_*$ such that
\[
v^k_\xi  \rightharpoonup v_* \quad\mbox{in }~ L^\infty .
\]
Moreover, since the function $v\mapsto Q_\xi(v|c)$ with any fixed $c$ is convex, the weakly lower semi-continuity of convex functions (for example, see \cite{evans-w}) implies
\begin{align*}
\begin{aligned} 
& \liminf_{k\to\infty} \Big[ \sum_{\pm}\int_{\Omega_M^\pm} \Phi  Q_\xi (v^k_\xi | v_\pm)dtdx +\int_{\Omega_m} \Phi  Q_\xi (v^k_\xi | v_m)dtdx  \Big] \\
&\quad \ge  \sum_{\pm}\int_{\Omega_M^\pm} \Phi  Q_\xi (v_*| v_\pm)dtdx +\int_{\Omega_m} \Phi  Q_\xi( v_* | v_m)dtdx .
\end{aligned}
\end{align*}
In addition, since $v^k \rightharpoonup v_\infty$ in $\mathcal{M}_{\mathrm{loc}}(\bbr^+\times\bbr)$, and so
\beq\label{twoconv}
v^k -v^k_\xi \rightharpoonup v_\infty - v_* \quad\mbox{ in }~\mathcal{M}_{\mathrm{loc}}(\bbr^+\times\bbr)\quad\mbox{(by the uniqueness of the decomposition)}, 
\eeq
we have
\begin{align}
\begin{aligned} \label{cest1}
C_0 & \ge  \sum_{\pm}\int_{\Omega_M^\pm} \Phi  Q_\xi (v_*| v_\pm)dtdx +\int_{\Omega_m} \Phi  Q_\xi( v_* | v_m)dtdx \\
&\quad + (|Q_\xi'(v_-)| -\eps) \int_{x<0} \Phi \psi_1^\delta d( v_\infty - v_*)+ (|Q_\xi'(v_+)| -\eps) \int_{x\ge 0} \Phi \psi_1^\delta d( v_\infty - v_*) \\
&\quad  + (|Q_\xi'(v_m)| -\eps) \int \Phi (1-\psi_1^\delta) d( v_\infty - v_*)
-2\eps \int \Phi d v_\infty \\
&=: \mathcal{R} .
\end{aligned}
\end{align}
Note that for the decomposition $d v_\infty (t,x) = v_a(t,x) dtdx + dv_s (t,x)$, since the measure $v^k -v^k_\xi $ is positive, it follows from \eqref{twoconv} and \eqref{RNv} that $v_\infty -v_* $, $v_a - v_* $ and $dv_s$ are all nonnegative.\\
Since $dv_\infty - v_*  dtdx= (v_a-v_* ) dtdx+ dv_s$ (by the uniqueness of the decomposition), we rewrite $\mathcal{R} $ in \eqref{cest1} as
\[
\mathcal{R} = \mathcal{R}_1 +\mathcal{R}_2+\mathcal{R}_3,
\]
where
\begin{align*}
\begin{aligned} 
 \mathcal{R}_1  & :=  |Q_\xi'(v_-)|  \int_{x<0} \Phi \psi_1^\delta dv_s +|Q_\xi'(v_+)|  \int_{x\ge0} \Phi \psi_1^\delta dv_s+  |Q_\xi'(v_m)| \int \Phi (1-\psi_1^\delta) dv_s ,\\
 \mathcal{R}_2 &  :=   \sum_{\pm}\int_{\Omega_M^\pm} \Phi  Q_\xi (v_*  | v_\pm) dtdx+ |Q'_\xi(v_-)| \int_{x<0}\Phi \psi_1^\delta (v_a - v_*  ) dtdx  \\
&\quad + |Q'_\xi(v_+)| \int_{x\ge0}\Phi \psi_1^\delta (v_a - v_*  ) dtdx 
    +\int_{\Omega_m} \Phi   Q_\xi (v_*  | v_m) dtdx\\
 &\quad   + |Q'_\xi(v_m)|  \int \Phi (1-\psi_1^\delta)  (v_a - v_*  )  dtdx ,\\
\mathcal{R}_3 &  := - 3\eps \int \Phi  d v_\infty +\eps \int \Phi v_*  dtdx.
\end{aligned}
\end{align*}
Using $\overline{\Omega_M}\subset (\Omega_m^\delta)^c$ and \eqref{defpsi1}, we have
\begin{align*}
\begin{aligned} 
 \mathcal{R}_1  & \ge \sum_\pm |Q'_\xi(v_\pm)|  \int_{\overline{\Omega_M^\pm}} \Phi  dv_s +  |Q'_\xi(v_m)| \int_{\Omega_m^{2\delta}} \Phi dv_s .
\end{aligned}
\end{align*}
Since $\Phi dv_s$ is a positive measure, and
\beq\label{conv-om}
\Omega_m^{2\delta} \nearrow \Omega_m = \left(\overline{\Omega_M^-}\cup\overline{\Omega_M^+}\right)^c,
\eeq
we have
\[
\lim_{\delta\to 0} \int_{\Omega_m^{2\delta}} \Phi dv_s = \int_{\left(\overline{\Omega_M^-}\cup\overline{\Omega_M^+}\right)^c} \Phi dv_s .
\]
Thus,
\[
 \mathcal{R}_1  \ge \int   |Q'_\xi(\overline V)| \Phi dv_s .
\]
For $ \mathcal{R}_2$, we use \eqref{defpsi1} to have
\begin{align*}
\begin{aligned} 
 \mathcal{R}_2 &  \ge\sum_\pm \int_{\Omega_M^\pm} \Phi \Big[ Q_\xi (v_*  | v_\pm) + |Q'_\xi(v_\pm)|  (v_a - v_*  ) \Big] dtdx  \\
&\quad    +\int_{\Omega_m^{2\delta}} \Phi \Big[   Q_\xi (v_*  | v_m) + |Q'_\xi(v_m)|  (v_a - v_*  ) \Big] dtdx .\\
\end{aligned}
\end{align*}
Then, we have
\begin{align*}
\begin{aligned} 
 \mathcal{R}_2 &  \ge \sum_\pm \int_{\Omega_M^\pm}  \Phi Q_\xi (v_a | v_\pm)  dtdx    +\int_{\Omega_m^{2\delta}} \Phi   Q_\xi (v_a | v_m)  dtdx ,
\end{aligned}
\end{align*}
where we used the inequality that for any $w_1, w_2\ge 0$ and any $c>0$,
\[
Q_\xi (w_1+w_2 | c) \le Q_\xi (w_1 | c) + |Q'_\xi (c)|w_2.
\]
Indeed, it follows from $Q_\xi'\le 0$ and the definition of $Q_\xi(\cdot|\cdot)$ that
\[
Q_\xi (w_1+w_2 | c) -Q_\xi (w_1 | c) - |Q'_\xi (c)|w_2 = Q_\xi (w_1+w_2)-Q_\xi(w_1) \le 0 .
\]
Since \eqref{conv-om} imiplies
\[
\lim_{\delta\to 0} \int_{\Omega_m^{2\delta}} \Phi   Q_\xi (v_a | v_+)  dtdx= \int_{\left(\overline{\Omega_M^-}\cup\overline{\Omega_M^+}\right)^c} \Phi   Q_\xi (v_a | v_m)  dtdx ,
\]
we use \eqref{bdry} to have
\[
 \mathcal{R}_2 \ge \int \Phi   Q_\xi (v_a | \bar v^{X_1^\infty,X_2^\infty})  dtdx .
\]
Therefore, we have
\[
\mathcal{R} \ge \int \Phi   Q_\xi (v_a | \bar v^{X_1^\infty,X_2^\infty})  dtdx +  \int  \Phi  |Q'_\xi(\overline V)| dv_s - 3\eps \int \Phi  d v_\infty ,
\]
that is,
\[
\int \Phi   Q_\xi (v_a | \bar v^{X_1^\infty,X_2^\infty})  dtdx +  \int  \Phi  |Q'_\xi(\overline V)| dv_s \le \mathcal{R}  + 3\eps \int \Phi  d v_\infty.
\]
Therefore, taking $\xi\to\infty$ with Fatou's lemma, we have
\[
\int \Phi   Q (v_a | \bar v^{X_1^\infty,X_2^\infty})  dtdx +  \int  \Phi  |Q'(\overline V)| dv_s \le \mathcal{R}  + 3\eps \int \Phi  d v_\infty.
\]
Then taking $\eps\to 0$, we have
\[
\int \Phi   Q (v_a |\bar v^{X_1^\infty,X_2^\infty})  dtdx +  \int  \Phi  |Q'(\overline V)| dv_s \le \mathcal{R} 
\]
This completes the proof.

\end{appendix}

\bibliography{Kang-Vasseur2019}
\end{document}